\documentclass{amsart}
\usepackage{graphicx} 
\usepackage{tikz}
\usepackage{tikz-cd} 
\usepackage{amsmath}
\usepackage{xcolor}
\usepackage{amsfonts}
\usepackage{amssymb}
\usepackage{amsthm}
\usepackage{mathrsfs}
 \usepackage{enumerate}
 \usepackage{enumitem}
\usepackage[margin = 1.24 in]{geometry} 
\usepackage{bbm}

\usepackage[citecolor=teal,linkcolor=blue,urlcolor=cyan]{hyperref} 
\hypersetup{colorlinks}

\numberwithin{equation}{section}
\usepackage{bm}

\usepackage{textcomp}
\renewcommand{\div}{\mathrm{div}}
\newcommand{\spann}{\mathrm{span}}
\newcommand{\Harm}{\mathrm{Harm}}

\newcommand{\Sym}{\mathrm{Sym}}
\newcommand{\Hom}{\mathrm{Hom}}
\newcommand{\diag}{\mathrm{diag}}

\theoremstyle{plain}

\newtheorem{theorem}{Theorem}[section]
\newtheorem{proposition}[theorem]{Proposition}
\newtheorem{lemma}[theorem]{Lemma}
\newtheorem{corollary}[theorem]{Corollary}

\newtheorem{example}{Example}[section]

\newtheorem{definition}[theorem]{Definition}
 
\newtheorem{remark}[theorem]{Remark}

\newcommand{\PD}{\mathcal{PD}}
\newcommand{\Ein}{\mathbf{Ein}^{1,2}}
\newcommand{\Einh}{\widehat{\mathbf{Ein}}{}^{1,2}}
\renewcommand{\H}{\mathbf{H}^{2,2}}
\newcommand{\Hh}{\widehat{\mathbf{H}}^{2,2}}
\newcommand{\R}{\mathbb{R}}
\newcommand{\E}{\mathcal{E}} 
\newcommand{\C}{\mathbb{C}}
\newcommand{\Z}{\mathbb{Z}}
\renewcommand{\P}{\mathbf{P}}
\newcommand{\RP}{\mathbb{R}\P}
\newcommand{\CP}{\mathbb{C}\P}
\newcommand{\K}{\mathcal{K}}
\newcommand{\X}{\mathbb{X}} 
\newcommand{\Aff}{\mathbf{Aff}}

\newcommand{\T}{\mathbf{T}}
\renewcommand{\L}{\mathbf{L}}
\newcommand{\I}{\mathrm{I}}
\newcommand{\N}{\mathbf{N}}
\renewcommand{\S}{\mathbf{S}}
\newcommand{\D}{\mathbf{D}}
\newcommand{\Max}{\mathcal{M}\mathrm{ax}(\H)}

\newcommand{\der}[1]{\frac{\partial}{\partial #1}}
\newcommand{\id}{\mathrm{id}}
\newcommand{\zbar}{\overline{z}}
\newcommand{\del}{\partial}
\newcommand{\delb}{\overline{\partial}}
\newcommand{\II}{\mathrm{II}}
\renewcommand{\r}{\mathbf{r}}

\newcommand{\Q}{\mathbf{Q}}
\newcommand{\dz}{\mathrm{d}z}
\newcommand{\dzb}{\mathrm{d}\overline{z}}

\newcommand{\End}{\mathsf{End}}
\newcommand{\g}{\mathfrak{g}}
\newcommand{\so}{\mathfrak{so}}
\newcommand{\spl}{\mathfrak{sp}}
\newcommand{\GL}{\mathsf{GL}}
\newcommand{\U}{\mathsf{U}}
\newcommand{\SL}{\mathsf{SL}}
\newcommand{\Sp}{\mathsf{Sp}}
\newcommand{\SO}{\mathsf{SO}}
\renewcommand{\O}{\mathsf{O}}
\renewcommand{\leq}{\leqslant}
\renewcommand{\geq}{\geqslant}
\newcommand{\gl}{\mathfrak{gl}}
\newcommand{\pr}{\mathsf{pr}}

\title[Polynomial Maximal Surfaces in \texorpdfstring{$\H$}{H22} and spherical polygons]{Polynomial Maximal Surfaces in \texorpdfstring{$\H$}{H22}\\and spherical polygons}
\author{Parker Evans and Alex Moriani}
\address{Parker Evans: Department of Mathematics, Washington University in St. Louis, St. Louis, MO}
\email{evans.p@wustl.edu}
\address{Alex Moriani: Dipartimento di Matematica \emph{Giuseppe Peano}, Università degli Studi di Torino, Via Carlo Alberto 10, 10123 Torino, Italy}
\email{alex.moriani@unito.it}
\date{}
\begin{document}

\begin{abstract}
We study the moduli space of complete maximal surfaces in pseudo-hyperbolic space $\H$ with polygonal asymptotic boundary a lightlike $(n+4)$-gon in the Einstein universe $\Ein$, up to the action of $\SO_0(2,3)$. Importantly, this space $\mathcal{M}_{n+4}(\H)$ is disconnected for $n\geq 2$. We introduce a \emph{turning invariant} $\Theta$ that classifies the components in terms of the geometry of the boundary polygons in the Einstein universe $\Ein$, which is related to \emph{spherical polygons} in $\S^2$. We provide a geometric parametrization of the moduli space $\mathcal{M}_{n+4}(\H)$ via holomorphic data, namely a pair of polynomials $\alpha$, $\beta$. These polynomials appear in relation to certain components of the complexified second fundamental form of the corresponding  surface. 
We then relate the holomorphic data $\alpha,\beta$ with the turning invariant $\Theta$. This leads to characterizations of the \emph{Hitchin component} of polygonal maximal surfaces studied by Tamburelli-Wolf. 
\end{abstract}

\maketitle

\tableofcontents

\setcounter{section}{-1}

\section{Introduction}

\subsection{Background}
Minimal surfaces in 4-dimensional space forms $\S^4$, $\mathbf{E}^4$, $\mathbf{H}^4$ have become a well-investigated class of objects in differential geometry. 
In this work, we study surfaces in the \emph{pseudo-Riemannian} space form $\H$, pseudo-hyperbolic space of signature $(2,2)$. Concretely, $\H$ can be seen as the space of negative-definite lines in pseudo-Euclidean space $\R^{2,3}$. We consider immersed \emph{spacelike} surfaces $\Sigma \subset \H$, those with positive-definite tangent planes, which in addition are critical points of the area functional under compact deformations. Such surfaces are called \emph{maximal}, as the normal bundle is timelike and hence local deformations decrease the area \cite[Corollary 3.24]{LTW24}. 

Maximal surfaces in $\mathbf{H}^{2,q}$ more generally, for $q\geq 2$, have become popular objects in higher rank Teichm\"uller theory and have appeared in many recent works \cite{Bar10, CTT19, TW24, LTW24, DL24, LT23, MV25, Mor25, Via25, Lem26}. Furthermore, the higher-dimensional case of $p$-dimensional maximal submanifolds in $\mathbf{H}^{p,q}$, for $p\geq 3$, has also been recently studied in \cite{SST23, MT25, Via25}.
Returning to the case $p=2$, every complete maximal surface $\Sigma$ in $\mathbf{H}^{2,q}$ is an embedded disk. The surface $\Sigma$ is also compactified by its \emph{asymptotic boundary}, which is a topological circle $\Lambda$ in the Einstein universe $\mathbf{Ein}^{1,q}$, the frontier of pseudo-hyperbolic space $\mathbf{H}^{2,q}$ in projective space $\RP^{2+q}$. Given this structure, one is naturally led to consider the \emph{asymptotic Plateau problem}. 
A comprehensive solution was given recently by Labourie-Toulisse-Wolf in \cite{LTW24}: they show that every complete maximal surface $\Sigma \subset \mathbf{H}^{2,q}$ has asymptotic boundary $\Lambda \subset \mathbf{Ein}^{1,q}$ that is a \emph{semi-positive loop} (see \textsection\ref{subsec:SemiPositive}), and conversely, every semi-positive loop in $\mathbf{Ein}^{1,q}$ uniquely bounds a complete maximal surface in $\mathbf{H}^{2,q}$. 
Henceforth, all surfaces in $\mathbf{H}^{2,q}$ will be embedded and complete unless otherwise stated. 

Given the uniqueness results of \cite{LTW24}, a natural problem is to describe all \emph{polygonal} maximal surfaces, namely all maximal surfaces that bound a semi-positive loop achieved by concatenation of projective line segments. 
Recently, the second named author determined some constraints for such surfaces. In particular, a maximal surface $\Sigma$ in $\mathbf{H}^{2,q}$ has polygonal boundary exactly when its induced metric $g_{\Sigma}$ has finite total curvature, or equivalently, when $g_{\Sigma}$ is asymptotically flat \cite{Mor25}.

We can also change our perspective as follows. We realize a maximal surface $\Sigma$ as the image of a conformal embedding $\sigma: X \hookrightarrow \mathbf{H}^{2,q}$ from a Riemann surface $X$. In this light, $\sigma$ is a harmonic map. While the classical Hopf differential of $\sigma$ vanishes, we instead 
obtain a higher order Hopf differential of sorts, namely a holomorphic quartic differential $Q_4=Q_4(\sigma) \in H^0(X, \K_{X}^4)$. 
In \cite{Mor25}, it was proven that every polygonal maximal surface $\Sigma$ is biholomorphic to the complex plane $\C$ and has holomorphic quartic differential $Q_4$  that is \emph{polynomial} in a uniformizing coordinate.
Moreover, its asymptotic boundary $\Lambda = \partial_{\infty}\Sigma$ is an $(n+4)$-gon, where $n= \deg(Q_4)$. 

This polynomial-polygon theme now has many manifestations in the geometric analysis of maps $\sigma: \C \rightarrow M$ to various spaces $M$ \cite{HTTW95,DW15,Tam19, TW24, Eva24, GS25, MNS26}. Excluding the exceptional case of Dumas-Wolf, studying affine spheres in $\R^3$ with polynomial Pick differential $Q_3$, the ambient space $M$ under consideration in each of these works was (the projectivization of) a quadric hypersurface of pseudo-Euclidean space, such as $\mathbf{H}^2, \mathbf{H}^{2,1}, \mathbf{H}^{2,2}, \mathbf{S}^{2,4}, \mathbf{H}^3$. We continue this direction of studying harmonic maps with polygonal asymptotic boundaries. 

\subsection{Main results} 
In this work, we study the moduli space $\mathcal{M}_{n+4}(\H)$ of all polygonal maximal surfaces in $\H$ with asymptotic boundary an $(n+4)$-gon in the Einstein universe $\Ein$, up to the action of $\SO_0(2,3)$. 
An important feature in this case is that $\mathcal{M}_{n+4}(\H)$ is \emph{disconnected} for $n\geq 2$. On the other hand, we show that the moduli space $\mathcal{M}_{n+4}(\mathbf{H}^{2,q})$ of polygonal maximal surfaces in $\mathbf{H}^{2,q}$ is connected when $q\geq 3$, $n\geq 0$. Thus, the case $q=2$ has some additional subtlety. 

In \cite{TW24}, Tamburelli and Wolf described one component of the moduli space $\mathcal{M}_{n+4}(\H)$, which we shall call the \emph{Hitchin component}. These special polygonal maximal surfaces correspond to solutions to the $\so(2,3)$ affine Toda equations, and are associated to Higgs bundles in the Hitchin section for $G=\SO_0(2,3)$, now over the complex plane, rather than a closed Riemann surface. However, there are many more polygonal maximal surfaces beyond the Hitchin case, which one can think of as analogues of the Gothen components \cite{Got01} in the moduli space of $\SO_0(2,3)$-Higgs bundles for a closed Riemann surface $\Sigma$ (see \cite[Section 2.5]{CTT19}). 

\subsubsection{Parametrizing the moduli space}\label{subsubsec:ParamModuliSpace}
We now describe the moduli space $\mathcal{M}_{n+4}(\H)$ in terms of holomorphic data. 

Let us start in the double cover $\Hh$, the negative quadric in $\R^{2,3}$. We denote by $\Q$ the pseudo-Riemannian metric of $\Hh$, or the complex bilinear extension when appropriate. Take $\Sigma \subset \Hh$ to be a polygonal maximal surface and $\sigma:\C\rightarrow \Sigma$ to be a conformal parametrization. By working upstairs, $\sigma$ obtains a canonically oriented normal bundle $\N$. The oriented rank two real bundle $\N$ has structure group $\SO(2)$ and thus holds a complex structure $J$. Hence, its complexification $\N_{}^{\C}$ decomposes as a sum of line bundles, the $\pm i$-eigenspaces of $J$, denoted $\N'$ and $\N''_{}$, respectively. 
We can then decompose the $(2,0)$-part of the second fundamental form $\II^{2,0} \in \Omega^0(\K^2\otimes \N^{\C}_{})$ as:
\[ \II^{2,0} = \alpha + \beta\qquad \text{where}\qquad \; \alpha\in \Omega^0(\K^2_{\C}\otimes \N_{}')\ , \  \beta \in \Omega^0(\K^2_{\C}\otimes \N_{}'') \ .\]
The quartic differential of $\sigma$ is given by 
$Q_4=\mathbf{Q}(\alpha, \beta)$, as each of $\N'_{}$, $\N''_{}$ are $\Q$-isotropic. 
In fact, $\alpha$, $\beta$ are holomorphic after upgrading $\N'_{}$, $\N_{}''$ with their natural holomorphic structures $\mathcal{N}'$, $\mathcal{N}''$.

By \cite{Mor25}, the holomorphic quartic differential $Q_4(\sigma)$ of the polygonal surface $\Sigma$ is \emph{polynomial}, and hence 
$\alpha$, $\beta$ each have finitely many zeros.
Now, relative to an appropriate pair of dual holomorphic trivializations of $\mathcal{N}'$, $\mathcal{N}''$, 
the bundle-valued differentials $\alpha$, $\beta$ become true holomorphic quadratic differentials $\alpha_{QD}=A(z)\dz^2$, 
$\beta_{QD} = B(z)\dz^2$. 
With $\sigma$ still fixed, by requiring that 
the resulting differentials are \emph{polynomial}, meaning $A=A(z),B=B(z)$ are polynomial, the pair $(A\dz^2,B\dz^2)$ is well-defined up to the $\C^*$-action by
$\lambda \cdot (A,B) = (\lambda A, \lambda^{-1}B)$. 
Now, across all such conformal parametrizations $\sigma$, the pair $(\alpha_{QD},\beta_{QD})$ remains well-defined 
up to the diagonal action of the group $\Aff(\C)$ of affine transformations by pushforward and the aforementioned $\C^*$-action. 

We show this embedding data yields a parametrization of the moduli space of polygonal surfaces.

\begin{theorem}[The moduli space of polygonal surfaces]
\label{thm:A}
Let $\mathcal{M}_n(\mathcal{PD}^{o})$ and $\mathcal{M}_n(\mathcal{PD})$ be the moduli spaces of ordered (resp. unordered) pairs of holomorphic polynomial quadratic differentials $A\dz^2$, $B\dz^2$ such that the product $AB$ is of degree $n$, up to the described action of the group $\Aff(\C) \times \C^*$. 
\begin{enumerate}[label=(\roman*),itemsep=3pt]
    \item For any $\delta \in \mathcal{M}_n(\mathcal{PD}^{o})$, there is unique maximal surface $\Sigma^{o}(\delta)\subset \Hh$, up to $\SO_0(2,3)$, that admits a conformal parametrization $\sigma:\C \rightarrow \Sigma^{o}(\delta)$ such that 
    $[ (\alpha_{QD}(\sigma),\beta_{QD}(\sigma)) ] = \delta.$
    \item The map $\mathbf{
    \Sigma}^o: \mathcal{M}_n(\mathcal{PD}^{o}) \rightarrow \mathcal{M}_{n+4}(\Hh)$ given by 
    $\mathbf{\Sigma}^o(\delta) = [\,\Sigma^{o}(\delta)\,]$ is a homeomorphism. 
    \item The map $\mathbf{\Sigma}^{o}$ descends to a homeomorphism
    $\mathbf{\Sigma}: \mathcal{M}_n(\mathcal{PD})\rightarrow \mathcal{M}_{n+4}(\H)$ by 
    \[ \mathbf{\Sigma}(\,\overline{\delta}\,) =  [\,\mathbf{\Sigma}^{o}(\delta)\,] \,,\  \]
    where $\delta \in \mathcal{M}_n(\mathcal{PD}^{o})$ is any lift of $\overline{\delta} \in \mathcal{M}(\mathcal{PD}_{n})$.
\end{enumerate}
\end{theorem}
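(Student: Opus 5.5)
Existence and uniqueness in (i) come from a Higgs-bundle/harmonic-map formulation of the structure equations for maximal surfaces in $\Hh$. We then combine them with the Labourie--Toulisse--Wolf solution of the asymptotic Plateau problem and the finite-total-curvature characterization of \cite{Mor25}.

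\textbf{Step 1 (structure equations).} For a conformal maximal immersion $\sigma:\C\to\Hh$ we write the Gauss--Codazzi--Ricci equations in terms of the data $(g_\Sigma, h_{\N}, \alpha,\beta)$. Here $g_\Sigma=e^{u}|\dz|^2$, and $h_{\N}$ is the hermitian metric on $\N'$ induced by $\Q$; in the holomorphic trivialization of $\mathcal{N}'$ it has the form $e^{v}$, and dually $e^{-v}$ on $\mathcal{N}''$. Codazzi says exactly that $\alpha,\beta$ are holomorphic. Gauss and Ricci become a coupled system of two semilinear elliptic PDEs in $(u,v)$, with coefficients $|A|^2$ and $|B|^2$, of Toda/Hitchin type. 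Conversely, given holomorphic $(A\dz^2,B\dz^2)$ and a solution $(u,v)$, the fundamental theorem of submanifolds produces a maximal immersion. The immersion is unique up to $\SO_0(2,3)$, since the frame is a flat $\SO_0(2,3)$-connection on the trivial bundle $T\Sigma\oplus\N\oplus\R\sigma$.

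\textbf{Step 2 (existence and uniqueness of complete solutions).} For polynomial $A,B$ with $\deg AB=n$ we must produce a solution such that the metric $g_\Sigma$ is complete, and show that this solution is unique. Completeness gives embeddedness and a semi-positive boundary by \cite{LTW24}.

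For existence, we would use sub/super-solutions. Near infinity the natural model is the flat metric built from $|AB|^{1/2}|\dz|^2$, corrected in the normal direction by $|A|/|B|$. Alternatively, we exhaust $\C$ by discs, solve the Dirichlet problems there, and pass to the limit using a priori bounds from the Cheng--Yau maximum principle.

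Uniqueness is the main obstacle, because the system is coupled and not scalar. We would first try a maximum-principle comparison for two complete solutions, applied to suitable combinations of $u$ and $v$, following the Omori--Yau argument used by Tamburelli--Wolf and Li--Mochizuki. An alternative route is the uniqueness in \cite{LTW24}, given that both surfaces have the same asymptotics. The cleaner option seems to be the PDE route: show that any complete solution is asymptotic to the flat model, with finite total curvature by \cite{Mor25}, and then compare.

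\textbf{Step 3 (surjectivity and the moduli quotient).} Every polygonal surface is conformally $\C$ with polynomial $Q_4=\Q(\alpha,\beta)$ by \cite{Mor25}. Hence $\alpha$ and $\beta$ have finitely many zeros, so they become polynomial in suitable dual trivializations; this uses that a holomorphic line bundle on $\C$ is trivial and that the ratio of two trivializations is $e^{f}$ with $f$ entire. Requiring polynomiality forces the change of trivialization to be constant, which is exactly the $\C^*$ ambiguity. Reparametrizing by $\Aff(\C)$ accounts for the remaining freedom. The boundary is an $(n+4)$-gon, so the map $\mathbf{\Sigma}^o$ is well defined, surjective, and, by the uniqueness in Step 2, injective.

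\textbf{Step 4 (continuity and descending).} To show $\mathbf{\Sigma}^o$ is a homeomorphism, we prove continuity of the solution $(u,v)$ in $(A,B)$ on compacta via elliptic estimates and uniqueness. This gives continuity of the frames, hence of the boundary polygons. Continuity of the inverse follows from properness: if the boundary polygons converge in the polygon space, the surfaces converge smoothly on compacta, and then the coefficients of $A,B$ converge, using the degree normalization and the $\Aff(\C)$-normalization (monic and centered).

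Finally, passing from $\Hh$ to $\H$ forgets the orientation of the normal bundle. Reversing the orientation interchanges $\N'$ and $\N''$, and therefore swaps $\alpha$ and $\beta$. So $\mathbf{\Sigma}^o$ is equivariant for the swap $(A,B)\mapsto(B,A)$ and descends to the unordered quotient. The antipodal map on $\Hh$ is handled by noting that $-\id\notin\SO_0(2,3)$ acts compatibly with this swap.
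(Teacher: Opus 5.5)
Your overall skeleton matches the paper: structure equations or the Higgs-bundle correspondence, surjectivity via \cite{Mor25} together with re-trivializing $\mathcal{N}$, and descent via the swap $\alpha\leftrightarrow\beta$. However, the two steps that carry the real content are not supplied.

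\textbf{Well-definedness of $\mathbf{\Sigma}^o$.} You write ``the boundary is an $(n+4)$-gon'', but \cite{Mor25} only gives the other direction: polygonal implies parabolic with polynomial $Q_4$, and polygonal is equivalent to finite total curvature. A solution with complete $g_\Sigma$ gives, via \cite{LTW24}, only a semi-positive loop, and your exhaustion/Cheng--Yau route gives no control at infinity. What is needed is that $g_\Sigma$ is bi-Lipschitz to $|Q_4|^{1/2}|\dz|^2$ off a compact set. The paper gets this from Li--Mochizuki by transporting the diagonal solutions to the Hitchin section of $Q_4$ on $\C\setminus\mathcal{Z}(Q_4)$ via the maps $\Upsilon_1,\Upsilon_2$. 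It then argues as in \cite{Mor25}: quasi-isometry to the flat metric, compact Tits boundary of perimeter $\frac{\pi}{2}(n+4)$, finite total curvature, hence an $(n+4)$-gon. Two-sided barriers asymptotic to the flat model for the coupled non-Hitchin system are exactly the analytic step the paper avoids, and you leave them unspecified.

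\textbf{Uniqueness.} Completeness of $g_\Sigma$ alone is not a class in which uniqueness is known for this coupled system. The available theorem \cite{LM20b} gives uniqueness of diagonal harmonic metrics with $\|\varphi_i\|_h^2$ complete for $i=1,2,3$ in an \emph{ordered} cyclic splitting. That condition involves $\|\alpha\|_h^2$ or $\|\beta\|_h^2$, i.e.\ the normal metric $e^{v}$, not just $e^{u}$. None of your three options closes this:
\begin{itemize}
\item The Omori--Yau comparison is not carried out, and the system is coupled, as you note.
\item The \cite{LTW24} route is circular: two surfaces with the same $(A,B)$ are not known to share a boundary, since the boundary depends on the harmonic metric.
\item ``Asymptotic to the flat model by \cite{Mor25}'' presupposes polygonality and controls only $g_\Sigma$.
\end{itemize}
The missing ideas are these. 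First, the two ordered completeness notions yield the same metric, so there is a unique ``strongly complete'' real diagonal solution. Second, every polygonal surface's metric is strongly complete. To see this, combine $g\approx|Q_4|^{1/2}$, the identity $\|\alpha\|_h^2\|\beta\|_h^2=|Q_4|^2/g^2$, and the Gauss equation $\kappa_g=\frac{2}{g}\big(\|\alpha\|_h^2+\|\beta\|_h^2-\|2\|_h^2\big)\to 0$ to get $\|\alpha\|_h^2\approx g\approx\|\beta\|_h^2$. This gives uniqueness among polygonal surfaces, which is all that bijectivity onto $\mathcal{M}_{n+4}(\Hh)$ requires. Your claim of uniqueness among all complete surfaces is stronger and unproved.

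\textbf{Continuity.} The same issue affects your continuity step: a limit of complete solutions obtained from local elliptic estimates need not stay in the uniqueness class. The paper instead uses the compactness theorem of \cite{LTW24} for pointed surfaces and observes that the limit boundary is a polygon. So the limit surface is parabolic with converging degree-$n$ data, and injectivity of $\boldsymbol{\delta}^o$ identifies it.
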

We briefly remark that the labeling of $\alpha$, $\beta$ is not intrinsic to a parametrized surface in $\H$, but depends on the choice of lift to $\Hh$; the two antipodal lifts $\Sigma^{o}$, $-{\Sigma}^{o}$ have oppositely oriented normals and thus the labeled decomposition of $\II^{2,0}$ differs for these two surfaces. It is for this reason that exchanging the polynomial differentials leads to the same surface downstairs in $\H$. 

We can also parametrize the moduli spaces $\mathcal{M}_{n+4}(\Einh)$ and $\mathcal{M}_{n+4}(\Ein)$ of semi-positive polygons in $\Einh$ and $\Ein$, respectively, up to the $\SO_0(2,3)$-action.  
Indeed, a consequence of the uniqueness and compactness theorems of \cite{LTW24} is that the asymptotic boundary map defines a homeomorphism $\partial_{\infty}: \mathcal{M}_{n+4}(\H)\rightarrow \mathcal{M}_{n+4}(\Ein)$, as well as a  
similar map for the double covers. Thus, Theorem \ref{thm:A} is equivalent to the following result.

\begin{theorem}(The moduli space of semi-positive polygons) \label{thm:B}
The maps $\partial_{\infty}\circ \mathbf{\Sigma}^{o}: \mathcal{M}_n(\mathcal{PD}^{o}) \rightarrow \mathcal{M}_{n+4}(\Einh)$ and $\partial_{\infty}\circ \mathbf{\Sigma} : \mathcal{M}_n(\mathcal{PD}) \rightarrow \mathcal{M}_{n+4}(\Ein)$ are homeomorphisms. 
\end{theorem}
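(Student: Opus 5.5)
Theorem~\ref{thm:B} follows from Theorem~\ref{thm:A} by composing with the asymptotic boundary map, so the plan has two parts. First we check that $\partial_\infty$ gives homeomorphisms $\mathcal{M}_{n+4}(\Hh)\to\mathcal{M}_{n+4}(\Einh)$ and $\mathcal{M}_{n+4}(\H)\to\mathcal{M}_{n+4}(\Ein)$. Then we compose with $\mathbf{\Sigma}^o$ and $\mathbf{\Sigma}$.

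For the first part, fix the downstairs case; the double cover is identical.

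\emph{Well-definedness.} By \cite{Mor25}, a maximal surface whose quartic differential has degree $n$ has as asymptotic boundary a semi-positive $(n+4)$-gon. Conversely, a semi-positive $(n+4)$-gon bounds a maximal surface $\Sigma$ by \cite{LTW24}, and $\Sigma$ is polygonal. Its quartic differential then has degree exactly $n$, again by \cite{Mor25}, since the number of vertices equals $\deg Q_4+4$. So $\partial_\infty$ sends $\mathcal{M}_{n+4}(\H)$ into $\mathcal{M}_{n+4}(\Ein)$.

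\emph{Equivariance.} The map $\partial_\infty$ commutes with the $\SO_0(2,3)$-action, because the action is by projective transformations preserving $\H\cup\Ein$. Hence $\partial_\infty$ descends to the quotients.

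\emph{Bijectivity.} Surjectivity is the existence theorem of \cite{LTW24}. Injectivity is their uniqueness theorem: two surfaces with the same boundary coincide, and two surfaces whose boundaries are $\SO_0(2,3)$-equivalent are themselves equivalent.

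\emph{Continuity in both directions.} Continuity of $\partial_\infty$ uses convergence of surfaces on compacta together with control of the boundary. For the inverse, take polygons $\Lambda_k\to\Lambda$. The compactness theorem of \cite{LTW24} gives a subsequence of the corresponding surfaces converging smoothly on compacta to a maximal surface, and its boundary must be $\Lambda$. Uniqueness then identifies the limit, so the whole sequence converges to the surface bounded by $\Lambda$. This argument runs on representatives and needs the quotient topologies to be compatible with the choice of representatives, e.g.\ via normalized representatives or local sections of the quotient maps.

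The second part is formal: $\partial_\infty\circ\mathbf{\Sigma}^o$ and $\partial_\infty\circ\mathbf{\Sigma}$ are compositions of homeomorphisms, using Theorem~\ref{thm:A}(ii),(iii). For the unordered case one also checks that antipodal lifts $\pm\Sigma^o$ have antipodal boundaries, so $\partial_\infty$ commutes with the covering maps $\Hh\to\H$ and $\Einh\to\Ein$.

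The main obstacle is the continuity of $\partial_\infty$ and of its inverse on the moduli spaces. Concretely, we must show that convergence of polygons modulo $\SO_0(2,3)$ is equivalent to convergence of the surfaces modulo $\SO_0(2,3)$. Along a sequence, vertices could collide or polygons could degenerate to lower-vertex polygons, and semi-positivity could fail in the limit. The plan is to handle this with the compactness of \cite{LTW24} applied to a fixed limiting $(n+4)$-gon: the limit polygon is a genuine semi-positive $(n+4)$-gon, so there is no degeneration to rule out.
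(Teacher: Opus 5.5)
Your proposal is correct and follows the paper's route. The paper shows that $\partial_\infty:\mathcal{M}_{n+4}(\H)\to\mathcal{M}_{n+4}(\Ein)$ is a homeomorphism (continuity via the graph description over a splitting, bijectivity from the existence and uniqueness theorem of \cite{LTW24}, continuity of the inverse from \cite[Theorem 6.1(ii)]{LTW24}) and then composes with the homeomorphism $\mathbf{\Sigma}$ of Theorem~\ref{thm:A}. One small correction: the implication ``degree-$n$ quartic differential $\Rightarrow$ $(n+4)$-gon boundary'' is not in \cite{Mor25}; it is proved in this paper via Lemma~\ref{Lem:PolynomialPolygon}. You do not actually need it, since the vertex count is built into the definition of $\mathcal{M}_{n+4}(\H)$ and into Theorem~\ref{thm:A}.
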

Theorems \ref{thm:A} and \ref{thm:B} describe the moduli space of polygonal surfaces in $\H$, or equivalently, their boundaries, via holomorphic quartic differentials and their refinements $\alpha$, $\beta$. 

As already noted, \cite{Mor25} proved that a parametrized maximal surface in $\mathbf{H}^{2,q}$ with polygonal asymptotic boundary has a polynomial quartic differential. However, the converse was not addressed. We obtain this result for $q=2$, completing the polynomial-polygon correspondence for $\H$.

\begin{corollary}[Polynomial $\leftrightarrow$ Polygon]\label{thm:C}
Let $\Sigma \subset \H$ be a maximal surface. Then $\Sigma$ has polygonal asymptotic boundary if and only if $\Sigma$ admits a conformal parametrization $\sigma: \C \rightarrow \Sigma$ with polynomial quartic differential $Q_4(\sigma)$.  
\end{corollary}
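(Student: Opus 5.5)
The forward direction is exactly the result of \cite{Mor25} recalled in the introduction: if $\Sigma$ has polygonal asymptotic boundary, then $\Sigma$ is biholomorphic to $\C$, and in a uniformizing coordinate its quartic differential $Q_4$ is polynomial. So the work lies in the converse, and the plan is to deduce it from Theorem \ref{thm:A}.

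Suppose $\sigma:\C\to\Sigma$ is a conformal parametrization with $Q_4(\sigma)$ polynomial of degree $n$. First lift $\Sigma$ to one of its two lifts in $\Hh$. This gives an oriented normal bundle and the decomposition $\II^{2,0}=\alpha+\beta$ with $Q_4=\Q(\alpha,\beta)$, where $\alpha,\beta$ are holomorphic sections of $\K^2\otimes\mathcal N'$ and $\K^2\otimes\mathcal N''$.

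Since $\mathcal N'$ and $\mathcal N''$ are dual holomorphic line bundles over $\C$, they are holomorphically trivial. Choose dual trivializations, so that $\alpha=A\,\dz^2$ and $\beta=B\,\dz^2$ with $A,B$ entire and $AB=Q_4$ polynomial. Then $A$ and $B$ have finitely many zeros, so $A=p\,e^{h}$ and $B=q\,e^{-h}$ with $p,q$ polynomials and $h$ entire. Replacing the trivialization of $\mathcal N'$ by its multiple by $e^{h}$, and dually for $\mathcal N''$, makes both $A$ and $B$ polynomial. This step is the only place where care is needed. The degree of freedom in the trivializations is multiplication by a nowhere-vanishing entire function, and that is precisely enough to absorb $e^{h}$.

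This produces a class $\delta=[(A\,\dz^2,B\,\dz^2)]\in\mathcal M_n(\mathcal{PD}^o)$. By Theorem \ref{thm:A}(i), there is a unique maximal surface $\Sigma^o(\delta)\subset\Hh$, up to $\SO_0(2,3)$, admitting a conformal parametrization with these data. Our lifted surface is such a surface. Therefore it coincides with $\Sigma^o(\delta)$ up to the $\SO_0(2,3)$-action.

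By Theorem \ref{thm:A}(ii), $\Sigma^o(\delta)$ lies in $\mathcal M_{n+4}(\Hh)$, meaning its asymptotic boundary is a lightlike $(n+4)$-gon. Projecting to $\H$, the boundary of $\Sigma$ is a polygon, which completes the proof. The main obstacle is confirming that the uniqueness in Theorem \ref{thm:A}(i) is a uniqueness among all complete maximal surfaces with these holomorphic data, rather than among polygonal ones only. This is how part (i) is phrased, so the argument applies as stated.
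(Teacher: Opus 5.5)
Your reduction is the route the paper implicitly takes. You get the forward direction from \cite{Mor25}. For the converse you gauge $\alpha,\beta$ to polynomials exactly as in Corollary~\ref{Cor:AllPolynomialMaximalSurfaces} and invoke Theorem~\ref{thm:A}. The gap is precisely the step you flag, and appealing to the wording of Theorem~\ref{thm:A}(i) does not close it. In the body, the uniqueness half of Theorem~\ref{thm:A} is Lemma~\ref{LemmaSigmaDeltaInverse}, and that lemma is proved only for $[\Sigma]\in\mathcal{M}_{n+4}(\Hh)$. There, strong completeness of the surface's own harmonic metric $h$ comes from Corollary~\ref{cor:MutualBoundedness}. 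That corollary assumes the induced metric $g$ is asymptotically flat, which by \cite{Mor25} means $\Sigma$ is already polygonal. So for a complete surface known only to have polynomial $Q_4$, citing that uniqueness is circular.

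What is missing is a direct proof of one fact. For a complete maximal $\sigma:\C\to\Hh$ with polynomial $A,B$, the diagonal real metric $h$ of Proposition~\ref{Prop:MaximaltoHiggs} must be strongly complete. Once that holds, Lemma~\ref{Lem:TrueComplete} identifies $h$ with the preferred metric, and Lemma~\ref{Lem:PolynomialPolygon} gives polygonality.

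Completeness of $\|1\|_h^2$ is just completeness of $g$. For the other two metrics, two facts combine:
\begin{itemize}
\item $K_g\leq 0$ (\cite{LT23}) gives $\|\alpha\|_h^2+\|\beta\|_h^2\leq\|2\|_h^2$, which is a constant multiple of $g$.
\item As in the proof of Corollary~\ref{cor:MutualBoundedness}, $\|\alpha\|_h^2\|\beta\|_h^2=|Q_4|^2/g^2$.
\end{itemize}
Together these give $\|\alpha\|_h^2\gtrsim |Q_4|^2/g^3$, and likewise for $\beta$. So everything reduces to an a priori bound $g\lesssim |Q_4|^{1/2}$ outside a compact set. That bound is the real content of the converse, and it must come from an estimate that does not presuppose polygonality. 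Two candidates:
\begin{itemize}
\item a Simpson-type main estimate for $|\varphi|_h$ on flat $|Q_4|^{1/2}$-discs far from $\mathcal{Z}(Q_4)$, where the eigenvalues of $\varphi$ are bounded in a flat coordinate;
\item the Li--Mochizuki comparison used in Lemma~\ref{Lem:TrueComplete}, applied to $\Upsilon_1^{-1}(h)$, provided its hypotheses can be verified for this $h$.
\end{itemize}
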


\subsubsection{Connections with spherical geometry}\label{Sec:IntroSpherical}
There is another layer to the story, perhaps of independent interest, purely involving the geometry of the sphere $\S^2$. To avoid writing $n+4$, in the discussion below, we take $N=n+4$. Let us recall that the double cover $\Einh$ is topologically $\S^1 \times \S^2$, and that semi-positive $N$-gons, for $N\geq 4$, are exactly graphs of $N$-piecewise isometries from $\S^1$ to $\S^2$ in this splitting. Thus, starting from a semi-positive $N$-gon $(p_i)_{i \in \Z_{N}}$ in $\Einh$, which carries a natural orientation, we obtain a 
\emph{spherical polygon} by projecting to $\S^2$. 
Here, the natural notion of polygon in this situation is  as follows.

\begin{definition}[Spherical polygons]\label{defn:sphericalIntro}
Let $N \geq 4$. A \textbf{spherical $N$-gon} $\Delta$ is a cyclically ordered $N$-tuple $(v_i)_{i \in \Z_{N}}$  of points in $\S^2$, containing no antipodal pairs, such that
\begin{enumerate}[label=(\roman*)]
    \item $\Delta$ has total length $2\pi$, namely $\sum_{i=1}^{N} d_{\S^2}(v_i,v_{i+1})=2\pi$, 
    \item No vertex $v_i$ lies on the minimizing geodesic segment between $v_{i-1}$ and $v_{i+1}$.
\end{enumerate}
\end{definition}
Here, we conflate the vertex set of the polygon with the cyclically oriented loop in $\S^2$ obtained as the concatenation of the unique minimizing geodesic segments between consecutive vertices.

Note that the second condition asserts that the polygon \emph{must} make a nonzero turn at each vertex $v_i$, but is permitted to head directly backwards along the same great circle.
Condition (i) says our polygons have flexible edge lengths, but rigid total length. Thus, Definition \ref{defn:sphericalIntro} differs meaningfully from the notion of `polygons' that have traditionally been studied, often called `polygonal linkages', in which \emph{all} side lengths are fixed. See \cite{Gal,KM99} for such studies of polygonal linkages on $\S^2$ and \cite{KM95, KMT00, CDR03} for polygonal linkages in hyperbolic or Euclidean space.

Let us now denote by $\mathcal{C}_{N}(\cdot)$ and $\mathcal{M}_{N}(\cdot)$ the configuration space and moduli space, respectively, of either spherical $N$-gons in $\S^2$ or semi-positive $N$-gons in $\Ein$ or $\Einh$. For the former moduli space, we quotient by $\SO(3)$ and for the latters by $\SO_0(2,3)$. 
Now, we introduce a \emph{turning invariant} $\Theta$ 
that labels the connected components of $\mathcal{M}_{N}(\S^2)$ and $\mathcal{M}_{N}(\Einh)$. 
 
To describe $\Theta$, we recall that any spherical polygon, in the above sense, is contained in an open hemisphere $H\subset \S^2$. Loosely speaking, $\Theta(\Delta)$ measures the oriented number of turns $\Delta$ makes around any great circle $\partial H$ bounding an enclosing hemisphere $H$. However, we caution that our polygons can be degenerate, for example alternating forwards and backwards entirely along a single great circle when $N$ is even. Thus, $\Theta$ is not exactly a `winding number', but is something akin to one. A cyclically ordered $N$-tuple $\Delta$ in $\S^2$ obtains the extremal values $\Theta=1$ or $\Theta=N-1$ exactly when convex or anti-convex, respectively. Thus, our spherical polygons cannot obtain such $\Theta$, as they would necessarily violate either the length $2\pi$ condition or the non-collinearity condition. As a consequence, $\Theta$ obeys the inequality $2\leq \Theta \leq N-2$. 

We now use $\Theta$ to count components. 
Here is the setup. 
Relative to an orthogonal space-time splitting $\R^{2,3}=\R^{2,0}\oplus \R^{0,3}$, we can identify
$\Einh \simeq \S^1\times \S^2$. 
Given a polygon $\Lambda$ in $\Einh$, via the projection $\pi_-: \Einh \rightarrow \S^2$, we then declare $\Theta(\Lambda):= \Theta(\pi_-(\Lambda))$. The resulting value $\Theta(\Lambda)$ is independent of the chosen splitting used to project. 

\begin{theorem}[Components of polygons]
Let $N \geq 4$. 
The turning invariant $\Theta$ defines continuous maps 
$\Theta: \mathcal{C}_{N}(\S^2)\rightarrow \{2,3,\dots, N-2\}$ and $\Theta: \mathcal{C}_{N}(\Einh) \rightarrow \{2,3,\dots, N-2\}$ that label the connected components of each space.
Moreover, these maps are $\SO(3)$ and $\SO_0(2,3)$-invariant, respectively. 
\end{theorem}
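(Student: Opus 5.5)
Here is the plan. First reduce the Einstein statement to the spherical one, then prove the spherical statement by a deformation argument.

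\emph{Reduction to $\S^2$.} Fix a splitting $\R^{2,3}=\R^{2,0}\oplus\R^{0,3}$, so that $\Einh\simeq\S^1\times\S^2$. A semi-positive $N$-gon is the graph of an $N$-piecewise isometry $\S^1\to\S^2$. Its projection $\pi_-$ is therefore a spherical $N$-gon in the sense of Definition~\ref{defn:sphericalIntro}:
\begin{itemize}
\item the total length equals the length $2\pi$ of $\S^1$;
\item no vertex lies on the minimizing segment between its neighbours, since otherwise two consecutive lightlike edges would merge and the vertex would not be a genuine vertex of a semi-positive polygon;
\item there are no antipodal pairs, because $\S^1$-distances less than $\pi$ force $\S^2$-distances less than $\pi$ (the edge case should be handled by semi-positivity).
\end{itemize}
Conversely, a spherical $N$-gon lifts to a semi-positive $N$-gon uniquely up to the $\S^1$-rotation, by integrating arclength. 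Hence $\mathcal{C}_N(\Einh)\to\mathcal{C}_N(\S^2)$ is a fibration with connected fiber $\S^1$. Connectedness of fibers of $\Theta$ then transfers between the two spaces.

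\emph{Independence of the splitting and invariance.} The space of splittings is $\SO_0(2,3)/(\SO(2)\times\SO(3))$, which is contractible. Changing the splitting continuously deforms the projected spherical polygon inside $\mathcal{C}_N(\S^2)$. So once $\Theta$ is continuous and integer-valued on $\mathcal{C}_N(\S^2)$, it is locally constant along such deformations, hence independent of the splitting. This gives $\SO_0(2,3)$-invariance, and $\SO(3)$-invariance on $\S^2$ is immediate.

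\emph{The spherical invariant.} Define $\Theta(\Delta)$ as follows. Choose an open hemisphere $H$ containing $\Delta$, with pole $e$. Gnomonically project $H$ to the plane $\R^2$, which sends great circles to lines. The turning number is then computed from the sum of signed exterior angles $\epsilon_i\in(-\pi,\pi]$. The value $\pi$ (backtracking) is allowed, and its sign is fixed by a convention that is made continuous using the rotation direction relative to $e$. A cleaner formulation counts, for a generic point $p$ of $\partial H$, how many edges cross the great semicircle through $p$. Continuity and independence of $H$ follow because the set of enclosing hemispheres of $\Delta$ is convex, hence connected, and the count is locally constant. The bounds $2\le\Theta\le N-2$ come from the convex and anticonvex cases: in those cases the length condition forces total length below $2\pi$ or collinearity, exactly as indicated above.

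\emph{Main step: connectedness of each level set.} This step is the main obstacle. For each $k\in\{2,\dots,N-2\}$, the plan is to exhibit a normal form and flow every polygon with $\Theta=k$ to it.
\begin{itemize}
\item The normal form is a polygon with $k$ forward edges and $N-k$ backward edges alternating along one great circle, appropriately distributed. It has length $2\pi$ and is degenerate but admissible.
\item To reach it, induct on $N$. Shrink an edge to length $0$ while maintaining the length-$2\pi$ constraint by rescaling the others. When two consecutive vertices collide, this produces an $(N-1)$-gon, and $\Theta$ drops by $0$ or $1$ depending on the local turn.
\item Conversely, every $(N-1)$-gon with $\Theta=k$ or $k-1$ can be re-inflated into an $N$-gon with $\Theta=k$, and the space of inflations is connected.
\end{itemize}
The difficulty is to ensure that non-collinearity never fails during these deformations, and to handle the base case $N=4$, where only $\Theta=2$ occurs. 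Connectedness there should follow by directly parametrizing by edge lengths and turning angles.

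Finally, nonemptiness of each level, for surjectivity onto $\{2,\dots,N-2\}$, is witnessed by the normal forms above.
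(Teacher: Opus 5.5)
The reduction between $\Einh$ and $\S^2$ through the circle bundle $\mathcal{C}_N(\Einh)\to\mathcal{C}_N(\S^2)$ is fine, and so is your splitting-independence argument. The paper uses the same projection but runs it the other way: it proves the $\Einh$ statement first and then projects.

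\textbf{First gap: your normal forms mostly do not exist.} The theorem rests on two facts: every level $\{\Theta=k\}$ is nonempty, and it is connected. Any admissible polygon contained in a single great circle has $\Theta=N/2$, so your normal form exists only when $k=N/2$.
\begin{itemize}
\item By hemisphere confinement, such a polygon lies in an open semicircle of that great circle.
\item If two consecutive edges pointed the same way, the middle vertex would lie on the minimizing segment between its neighbours, violating condition (ii).
\item So the edge directions must strictly alternate, which forces $N$ even and $k=N-k$.
\item The boundary projection of such a polygon alternates between two antipodal points of $\partial H$, so every $a_i=1/2$ and $\Theta=N/2$.
\end{itemize}
For $k\neq N/2$ you therefore have neither a witness of nonemptiness nor a target for your deformation.

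You should also work with the paper's boundary-projection $\Theta$ rather than a signed exterior-angle sum. Angles in $(-\pi,\pi]$ jump by $2\pi$ when a backtracking vertex is pushed to one side or the other, whatever sign convention you fix. The paper's $\Theta$ is continuous precisely because each turn $a_i$ is measured in $(0,1)$. Likewise, the correct ``generic point'' count is over vertices $i$ for which $p$ lies in the positive arc from $x_i$ to $x_{i+1}$, not over edges crossing a semicircle.

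\textbf{Second gap: the edge-shrinking induction is not yet an argument.}
\begin{itemize}
\item Prescribing new edge lengths does not determine a closed spherical polygon. You give no vertex motion that preserves closure, total length $2\pi$, condition (ii) and the absence of antipodal pairs.
\item A collision can leave the merged vertex on the segment joining its neighbours, so the limit may be an $(N-2)$-gon rather than an $(N-1)$-gon.
\item Three claims are asserted without proof: that every $N$-gon can be pushed near such a degeneration; that inflations of the $\Theta=k-1$ and $\Theta=k$ levels of $(N-1)$-gons lie in one component of $\{\Theta=k\}$; and the base case $N=4$.
\end{itemize}
This intrinsic deformation is exactly what the paper flags as hard and avoids.

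\textbf{How the paper proceeds.} Its normal forms are the symmetric polygons of Proposition~\ref{Prop:SymmetricSphericalPolygon}. Their vertices lie on a small circle about a pole, consecutive ones rotated by $2\pi\Theta/N$, with the radius tuned so the length is $2\pi$; these exist for every $\Theta\in\{2,\dots,N-2\}$. Connectedness comes from the homeomorphism $\mathcal{M}_n(\mathcal{PD}^o)\cong\mathcal{M}_{n+4}(\Einh)$ with $n=N-4$, given by Theorems~\ref{thm:A} and~\ref{thm:B}. On the polynomial side, $\deg A$ and $\deg B$ are locally constant. Each stratum $\{\deg B=k\}$ is connected and contains the monomial pair $(z^{n-k}\dz^2,z^{k}\dz^2)$. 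That pair gives a symmetric polygon with $\Theta=k+2$ by Corollary~\ref{cor:SymmetricMonomials}. Continuity of $\Theta$ then identifies each level set with the image of one stratum. The $\S^2$ statement follows by projection. To make your route work you would need a genuinely new intrinsic deformation argument in $\mathcal{C}_N(\S^2)$, aimed at these symmetric polygons.
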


It is through this count that we find the number of components of polygonal surfaces upstairs. 

\begin{corollary}
Let $N \geq 4$. Then $\mathcal{M}_{N}(\Einh)$ and $\mathcal{M}_{N}(\Hh)$ have $N-3$ connected components. 
\end{corollary}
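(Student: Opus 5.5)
The count should follow directly from the preceding theorem together with the homeomorphism $\partial_\infty: \mathcal{M}_N(\Hh)\to\mathcal{M}_N(\Einh)$ supplied by the uniqueness and compactness results of \cite{LTW24}. Since that map is a homeomorphism, it suffices to count the components of $\mathcal{M}_N(\Einh)$. The preceding theorem gives a continuous, $\SO_0(2,3)$-invariant map $\Theta:\mathcal{C}_N(\Einh)\to\{2,\dots,N-2\}$ whose fibers are exactly the connected components of $\mathcal{C}_N(\Einh)$. The target set has $N-3$ elements, so the proof reduces to two points. First, each value $k\in\{2,\dots,N-2\}$ is actually attained. Second, taking the quotient by $\SO_0(2,3)$ neither merges nor splits components.

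\textbf{Quotient step.} Because $\Theta$ is invariant, it descends to a continuous map $\overline{\Theta}:\mathcal{M}_N(\Einh)\to\{2,\dots,N-2\}$, and preimages of distinct values are disjoint open-closed sets. So the quotient cannot merge components. It also cannot split them: each fiber $\Theta^{-1}(k)$ is connected in $\mathcal{C}_N(\Einh)$, and its image under the continuous quotient map is connected. Hence the components of $\mathcal{M}_N(\Einh)$ are exactly the nonempty sets $\overline{\Theta}^{-1}(k)$.

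\textbf{Realizability.} For each $k$ with $2\le k\le N-2$, I will produce a spherical $N$-gon $\Delta$ with $\Theta(\Delta)=k$. I then lift it via the correspondence between semi-positive $N$-gons and graphs of $N$-piecewise isometries $\S^1\to\S^2$. Concretely, parametrize $\S^1$ by arclength $2\pi$, and place the $\S^1$-coordinates of the vertices at cumulative edge lengths. Since the total length is $2\pi$, the graph closes up, and by the characterization recalled in \textsection\ref{Sec:IntroSpherical} it is a semi-positive $N$-gon. Its projection is $\Delta$, so its turning invariant equals $\Theta(\Delta)=k$.

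For the explicit polygon, work near a great circle: take an $m$-gon winding around the equator, perturbed so that consecutive vertices are never collinear, and insert back-and-forth zigzag vertices. The idea is that each reversal of direction contributes a turn, so the number of reversals controls $\Theta$, which interpolates between the convex value $1$ and the anti-convex value $N-1$. Adding vertices that alternately turn slightly left and right (or reverse) should shift $\Theta$ by one per added reversal. Then rescale the edge lengths so the total is $2\pi$; this preserves condition (ii) of Definition \ref{defn:sphericalIntro} and keeps the polygon free of antipodal pairs.

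\textbf{Main obstacle.} I expect the realizability step to be the hard part. It requires computing $\Theta$ for the explicit examples, which depends on the precise definition of $\Theta$ given later in the paper. The extreme cases $k=2$ and $k=N-2$ are the most delicate, since they are "almost convex" and "almost anti-convex." I would first try a nearly degenerate family living close to a single great circle, where $\Theta$ can be read off from the pattern of left and right turns. Alternatively, if the paper records that $\Theta$ takes all values (for instance from its proof of the preceding theorem), I would simply invoke that.
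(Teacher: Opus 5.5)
Your argument is correct and is essentially the paper's. The count for $\mathcal{M}_N(\Einh)$ is Corollary \ref{Cor:ConnectedComponentCountUp}, obtained from Theorem \ref{thm:EinhPolygonCount} by passing to the $\SO_0(2,3)$-quotient; your clopen-fibers and connected-image argument is a more careful version of the paper's one-line remark. The count then transfers to $\mathcal{M}_N(\Hh)$ via the homeomorphism $\partial_\infty$ of Proposition \ref{Prop:AsymptoticBoundaryHomeo}, in its double-cover form. In the paper, realizability of every value $2\le\Theta\le N-2$ is already part of Theorem \ref{thm:EinhPolygonCount}, so your fallback of simply invoking it is legitimate.

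The zigzag construction you sketch in its place would not work as written.
\begin{itemize}
\item On $\S^2$ you cannot rescale all edge lengths of a closed geodesic polygon and keep it closed. Unlike in the plane, the closing condition is not homogeneous in the lengths.
\item The claim that each reversal adds exactly $+1$ to $\Theta$, and the absence of antipodal pairs after adjustment, are asserted rather than checked.
\end{itemize}

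The paper's device (Proposition \ref{Prop:SymmetricSphericalPolygon}) is much simpler. Place $N$ equally spaced points $w_1,\dots,w_N$ on the circle of radius $r<\pi/2$ about the north pole, and join them in the order $w_\Theta,w_{2\Theta},w_{3\Theta},\dots$.
\begin{itemize}
\item The rotation by $2\pi\Theta/N$ about the pole preserves this polygon and shifts the cyclic index by one. So the boundary projection to the equator advances by exactly $\Theta/N$ of a turn at each step, and the turning invariant is $\Theta$ for every $r<\pi/2$.
\item The polygon lies in the open northern hemisphere, so it has no antipodal pairs.
\item Three distinct points of a small circle never lie on one great circle. For $\Theta=N/2$ the polygon simply backtracks, which Definition \ref{defn:sphericalIntro} allows.
\item Its length varies continuously in $r\in[0,\pi/2]$ from $0$ to $2\pi\min(\Theta,N-\Theta)\ge 4\pi$. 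The intermediate value theorem therefore gives a genuine spherical $N$-gon of length $2\pi$.
\end{itemize}
Lifting it with $\S^1$-coordinates $e^{2\pi i k/N}$ gives the required semi-positive $N$-gon in $\Einh$ with turning invariant $\Theta$ (Proposition \ref{Prop:SymmetricLightlikePolygon}).
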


The turning invariant $\Theta$ of a spherical polygon $\Delta$ depends on both the orientation of $\Delta$ and the orientation of $\S^2$. 
For this reason, $\Theta$ can only be used to define an invariant for $\Einh$-polygons and not $\Ein$-polygons. This situation is easily remedied: given a semi-positive $N$-gon $\Lambda \subset \Ein$, its two antipodal lifts $\hat{\Lambda},-\hat{\Lambda}\subset \Einh$ 
satisfy $\Theta(\hat{\Lambda}) + \Theta(-\hat{\Lambda})=N$.
Hence, we can simply imagine the turning invariant for polygons in $\Ein$ to live in the space $\{2,3,\dots, N-2\}/\sim$, where $k \sim N-k$. Consequently, the connected components are cut roughly in half after descent. 

\begin{corollary}
The spaces $\mathcal{M}_{n+4}(\Ein)$ and $\mathcal{M}_{n+4}(\H)$ have $\lceil \frac{n+1}{2}\rceil$ connected components. 
\end{corollary}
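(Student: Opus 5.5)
The count follows by descending the previous corollary along the antipodal double cover. By Theorem \ref{thm:A}(iii) and Theorem \ref{thm:B}, $\partial_\infty$ gives a homeomorphism $\mathcal{M}_{n+4}(\H)\to\mathcal{M}_{n+4}(\Ein)$, so it suffices to count components of $\mathcal{M}_{n+4}(\Ein)$.

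Write $N=n+4$. There is a natural quotient map
\[
\mathcal{M}_N(\Einh)\to\mathcal{M}_N(\Ein).
\]
It is the quotient by the involution $\iota:[\hat\Lambda]\mapsto[-\hat\Lambda]$, since every polygon in $\Ein$ has exactly two antipodal lifts and the $\SO_0(2,3)$-action commutes with $-\id$. By the preceding theorem and corollary, the components of $\mathcal{M}_N(\Einh)$ are exactly the level sets $\Theta^{-1}(k)$ for $k\in\{2,\dots,N-2\}$. By the stated identity $\Theta(\hat\Lambda)+\Theta(-\hat\Lambda)=N$, the involution $\iota$ carries $\Theta^{-1}(k)$ homeomorphically onto $\Theta^{-1}(N-k)$.

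The components of $\mathcal{M}_N(\Ein)$ are then the images of the components upstairs, with $\Theta^{-1}(k)$ and $\Theta^{-1}(N-k)$ identified. Three facts justify this:
\begin{itemize}
\item the quotient map is continuous and surjective, so each image of a connected set is connected;
\item it is open, being a quotient by a finite group action, so images of open sets are open;
\item the images of $\Theta^{-1}(k)$ and $\Theta^{-1}(j)$ are disjoint unless $j\in\{k,N-k\}$, which follows directly from $\Theta(-\hat\Lambda)=N-\Theta(\hat\Lambda)$.
\end{itemize}
Consequently the images of the distinct orbits $\{k,N-k\}$ are pairwise disjoint, open and connected, and they cover the quotient. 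So the number of components equals the number of classes in $\{2,\dots,N-2\}/(k\sim N-k)$.

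This is pure counting. The set $\{2,\dots,N-2\}$ has $N-3=n+1$ elements. The map $k\mapsto N-k$ has a fixed point exactly when $N$, equivalently $n$, is even, namely $k=N/2$, and all other classes are pairs. The number of classes is therefore $\frac{n+2}{2}$ when $n$ is even and $\frac{n+1}{2}$ when $n$ is odd, which in both cases equals $\lceil\frac{n+1}{2}\rceil$.

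The only genuine input beyond the earlier results is the identity $\Theta(\hat\Lambda)+\Theta(-\hat\Lambda)=N$, which the paper asserts. If it needed proof, I would argue as follows. The antipodal map on $\Einh\simeq\S^1\times\S^2$ is $(-1,-1)$. The projected spherical polygon of $-\hat\Lambda$ is the antipode of that of $\hat\Lambda$, and antipody on $\S^2$ reverses orientation. Reversing the orientation of $\S^2$ sends turning number $k$ to $N-k$, since convex and anti-convex polygons are exchanged. This orientation bookkeeping is the one step I would check carefully.
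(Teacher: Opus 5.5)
Your proposal is correct and follows the paper's route. The paper likewise notes that the antipodal map exchanges the $\Theta=k$ and $\Theta=N-k$ components of $\mathcal{M}_N(\Einh)$, so these merge in $\mathcal{M}_N(\Ein)$, and it transfers the count to $\mathcal{M}_{n+4}(\H)$ through the homeomorphism $\partial_\infty$ (Proposition \ref{Prop:AsymptoticBoundaryHomeo}); your openness and disjointness argument spells out what the paper leaves implicit. On the step you flagged, the antipodal map acts on the $\S^1$ factor by a rotation, so $-\hat\Lambda$ projects to $-\Delta$ traversed in the same direction, and Proposition \ref{prop:turningantipodal} then gives $\Theta(-\hat\Lambda)=N-\Theta(\hat\Lambda)$.
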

Thus, the components of polygonal maximal surfaces in $\H$ are labeled by the $\Z_2$-orbit of turning invariants achieved by the pair of antipodal lifts $\hat{\Lambda}$, $-\hat{\Lambda}$ of the asymptotic boundary $\Lambda = \partial_{\infty}\Sigma$. 

To finish this discussion, we describe the relation between invariants introduced thus far. 

\begin{theorem}[Holomorphic invariants to turning invariant]\label{thm:ConvertingInvariants}
Let $\delta =[(A\dz^2,B\dz^2)] \in \mathcal{M}_n({\mathcal{PD}}^o)$ and $\Sigma \subset \Hh$ be a maximal surface representing the class $\mathbf{{\Sigma}}^{o}(\delta)$ from Theorem \ref{thm:A} with asymptotic boundary $\Lambda \subset \Einh$. Then $\Theta(\Lambda) =\deg(B)+2=n-\deg(A)+2$. 
\end{theorem}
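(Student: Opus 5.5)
The plan is a deformation argument: $\Theta$ and $\deg(B)$ are both constant on connected components, so it suffices to compute each at one explicit model point in every component.

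The two ingredients are already available. By the components theorem, $\Theta$ is continuous and locally constant on $\mathcal{C}_{n+4}(\Einh)$, hence on $\mathcal{M}_{n+4}(\Einh)$. Composing with the homeomorphism $\partial_\infty\circ\mathbf{\Sigma}^o$ of Theorem~\ref{thm:B}, $\Theta(\Lambda)$ is therefore a locally constant function of $\delta\in\mathcal{M}_n(\mathcal{PD}^o)$.

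On the holomorphic side, $\mathcal{M}_n(\mathcal{PD}^o)$ splits according to $\deg(A)=k$, $\deg(B)=n-k$ for $k=0,\dots,n$. Each stratum is connected: pairs of polynomials with fixed degrees and nonzero leading coefficients form a connected set, and so does its quotient by $\Aff(\C)\times\C^*$. The strata are also open and closed, because $\deg(AB)=n$ is fixed, so neither leading coefficient can degenerate in a limit. This gives $n+1=N-3$ components, matching the count $N-3$ for $\mathcal{M}_N(\Einh)$.

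It follows that $k\mapsto\Theta$ is a bijection from $\{0,\dots,n\}$ onto $\{2,\dots,n+2\}$, and it remains to identify it. I would do this with explicit model surfaces, one for each $k$, taking $A=z^k$ and $B=z^{n-k}$. These have a $\Z_{n+4}$ rotational symmetry, $z\mapsto e^{2\pi i/(n+4)}z$, which acts on $\alpha$ and $\beta$ by compensating phases through the $\C^*$-action. By the uniqueness in Theorem~\ref{thm:A}, this symmetry is realized by an element $g\in\SO_0(2,3)$, and $g$ cyclically shifts the vertices of $\Lambda$ by one. Because $g$ has finite order, it preserves a space-time splitting, and it acts on $\S^2$ as a rotation by some angle $2\pi m/(n+4)$ about an axis. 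The projected spherical polygon is then a regular star-type polygon, and its turning invariant is read off from $m$: $\Theta$ equals the rotation number $m$, or $N-m$, depending on orientation conventions.

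The main obstacle is determining $m$ from $k$. To do this I would compute how $g$ acts on the normal bundle. The rotation $z\mapsto\omega z$ multiplies $\alpha$ by $\omega^{k+2}$ and $\beta$ by $\omega^{n-k+2}$ (counting $\dz^2$ and $z^k$). To preserve the equivariant data, the induced action on $\N'$ must therefore be by $\omega^{-(k+2)}$, up to the $\C^*$ normalization, which is fixed by the $\Q$-pairing between $\N'$ and $\N''$. The timelike normal plane at the fixed point $\sigma(0)$ is, together with the point itself, the negative-definite $\R^{0,3}$ of the splitting. The rotation of $\S^2\subset\R^{0,3}$ is therefore the action on $\N$, giving angle $2\pi(n-k+2)/(n+4)$ in the convention where $\N''$ carries $\beta$.

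This should yield $\Theta=n-k+2=\deg(B)+2$. The sign and orientation conventions here are delicate, so I would check them against the symmetry $\Theta(\hat\Lambda)+\Theta(-\hat\Lambda)=N$. Swapping $A$ and $B$ corresponds to the antipodal lift, and it sends $\deg(B)+2$ to $\deg(A)+2=N-(\deg(B)+2)$, which is consistent. As a further check, the Hitchin case of Tamburelli--Wolf has $\deg(A)=n$ or $0$, and it should correspond to the extremal values $\Theta=2$ or $N-2$.
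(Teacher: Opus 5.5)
Your overall route is the paper's. The paper's proof combines Corollary \ref{cor:SymmetricMonomials} (symmetric polygons carry monomial data), the $\Theta$-labelling of components (Corollary \ref{Cor:ConnectedComponentCountUp}) and the homeomorphism $\partial_\infty$. You run the monomial computation in the other direction, as in Proposition \ref{Prop:HiggsSymmetricPolygon}, and you only need local constancy of $\Theta$ plus connectedness of each stratum $\{\deg A=k\}$; that is a clean way to organize it. The gap is in the one step with real content, the determination of $m$ from $k$. This step is exactly what separates the claimed $\deg(B)+2$ from its mirror $\deg(A)+2=N-(\deg(B)+2)$, and your derivation of it does not hold up. Neither proposed check can detect an error here: the antipodal/$A\leftrightarrow B$ check and the Hitchin check are both invariant under the global flip $\Theta\mapsto N-\Theta$.

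Concretely, let $g\in\SO_0(2,3)$ satisfy $g\circ\sigma=\sigma\circ m_\omega$ with $m_\omega(z)=\omega z$. It acts on $\T'_{\sigma(0)}=\C\,\sigma_z(0)$ by $\omega$, i.e.\ it rotates $\T_{\sigma(0)}$ by $+2\pi/N$ in the orientation coming from $\C$. Since $g$ preserves all the orientations that define $\N'$, comparing the $\N'$-components of $\II$ for $g\circ\sigma$ and $\sigma\circ m_\omega$ gives $z^k\,g\eta(z)=\omega^{k+2}z^k\,\eta(\omega z)$, so $g\eta(0)=\omega^{k+2}\eta(0)$. For $k=0$ this is simply $g\,\II(\partial_z,\partial_z)|_0=\omega^2\,\II(\partial_z,\partial_z)|_0$ with $\II(\partial_z,\partial_z)|_0=\alpha(0)\in\N'$.

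Thus $g$ acts on $\N'_{\sigma(0)}$ by $\omega^{k+2}$, not $\omega^{-(k+2)}$. The factor $\omega^{-(k+2)}$ is the $\C^*$-factor that restores the coefficient $A$, and it is inverse to the factor picked up by the trivializing section. So the normal plane turns by $2\pi(k+2)/N$ per step, measured in the canonical orientation of $\N$. Converting this into $\Theta$ requires an explicit comparison of the orientations of $\T_{\sigma(0)}$ and $\N_{\sigma(0)}$ with those of $\S^1$ (which orients $\Lambda$), of $\S^2$, and of $\partial H$ in the definition of $\Theta$.

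With the evident choices (the sphere oriented by the outward normal in the time-oriented $\R\sigma(0)\oplus\N_{\sigma(0)}$, and $\partial H$ oriented as the boundary of $H$), this bookkeeping returns $\deg(A)+2$. Obtaining the stated $\deg(B)+2$ therefore requires fixing these conventions explicitly, not leaving the answer as ``$m$ or $N-m$''. Agreeing with the paper is not an independent check either. Corollary \ref{cor:SymmetricMonomials} gets its relation $k+2+\Theta\in N\Z$ from a one-line expansion of $\r^*\alpha$ that places the factor $e^{2\pi i\Theta/N}$ of $\r_*$ on the pulled-back side, which is precisely where the sign is decided.
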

In other words, the vanishing data of the complexified second fundamental form governs the turning of the boundary of $\Sigma$ at infinity. 

\subsubsection{Special polygonal surfaces}
As noted earlier, \cite{TW24} described a special component of polygonal maximal surfaces in $\H$, which we call the \emph{Hitchin component}, via holomorphic quartic differentials. 
Their surfaces correspond to those with $\alpha_{QD}=\dz^2$ or $\beta_{QD} =\dz^2$ in our notation. A matter left open in their work was which component of semi-positive polygons in $\Ein$ is realized by these special maximal surfaces.
We obtain the following geometric characterization. 

\begin{corollary}[Hitchin polygonal surfaces]\label{cor:HitchinSurfaces}
A polygonal maximal surface $\Sigma$ in $\H$ is Hitchin exactly when the turning invariant $\Theta$ of either lift of its asymptotic boundary to $\Einh$ is extremal. 
\end{corollary}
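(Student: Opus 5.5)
The plan is to deduce Corollary \ref{cor:HitchinSurfaces} directly from Theorem \ref{thm:ConvertingInvariants} together with the parametrization of Theorem \ref{thm:A}. First I would fix conventions. By \cite{TW24}, and as recalled above, a polygonal maximal surface $\Sigma\subset\H$ is \emph{Hitchin} exactly when some (equivalently any) lift $\hat\Sigma\subset\Hh$ and conformal parametrization $\sigma$ satisfy $\alpha_{QD}=\dz^2$ or $\beta_{QD}=\dz^2$, up to the $\Aff(\C)\times\C^*$-action. Since this action rescales $A$ and $B$ by nonzero constants and precomposes them with affine maps, the condition is equivalent to $\deg(A)=0$ or $\deg(B)=0$. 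This condition is well defined on the class $\delta\in\mathcal{M}_n(\mathcal{PD}^o)$, and it is symmetric under swapping $A$ and $B$, so it descends to $\mathcal{M}_n(\mathcal{PD})$. By Theorem \ref{thm:A}(iii), this matches passing from $\Hh$ to $\H$.

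Next I apply Theorem \ref{thm:ConvertingInvariants} to a lift $\hat\Sigma$ with boundary $\hat\Lambda$. Writing $N=n+4$, it gives $\Theta(\hat\Lambda)=\deg(B)+2$. The antipodal lift $-\hat\Sigma$ corresponds to the swapped pair $(B\dz^2,A\dz^2)$, because the normal orientation reverses and $\N'$, $\N''$ are exchanged. Hence $\Theta(-\hat\Lambda)=\deg(A)+2=n-\deg(B)+2=N-\Theta(\hat\Lambda)$, which is consistent with the stated relation $\Theta(\hat\Lambda)+\Theta(-\hat\Lambda)=N$.

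On polygons the range of $\Theta$ is $\{2,\dots,N-2\}$, so "extremal" means $\Theta\in\{2,N-2\}$. Then:
\begin{itemize}
\item $\Theta(\hat\Lambda)=2$ if and only if $\deg B=0$;
\item $\Theta(\hat\Lambda)=N-2$ if and only if $\deg B=n$, if and only if $\deg A=0$.
\end{itemize}
So the turning invariant of a given lift is extremal exactly when $\deg A=0$ or $\deg B=0$, which is exactly the Hitchin condition. Since $k\mapsto N-k$ preserves the set $\{2,N-2\}$, one lift is extremal if and only if the other is, which justifies the phrase "either lift".

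The main point needing care is the identification of the antipodal lift with the swapped pair, including its orientation convention. I would verify it directly: negating $\sigma$ negates $\II$ and reverses the induced orientation on $\N$. This changes $J$ to $-J$ and so swaps the $\pm i$-eigenbundles. The overall sign is absorbed by the $\C^*$-action together with $\lambda=-1$ scaling. This swap is the same fact used in Theorem \ref{thm:A}(iii). The remaining steps are bookkeeping with degrees. The degenerate case $n=0$ is also covered: there $\Theta=2=N-2$, and every surface is Hitchin, since $A$ and $B$ are both constants.
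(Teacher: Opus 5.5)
Your proposal is correct and follows essentially the same route as the paper: the Hitchin condition means one of $A$, $B$ is constant, and $\Theta(\hat\Lambda)=\deg(B)+2$ (Corollary \ref{Cor:ThetaDegreeAB}) converts this into $\Theta$ being extremal. Your write-up is more complete than the paper's short proof, which only records the value $\Theta=2$ for one lift. You treat both directions of the equivalence, both extremal values $2$ and $N-2$, and the antipodal lift via the $\alpha\leftrightarrow\beta$ swap.
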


In \cite{Tam19}, Tamburelli studied polygonal maximal surfaces in anti de-Sitter 3-space $\mathbf{H}^{2,1}$. 
In our case, we find that $\alpha$, $\beta$ determine which polygonal surfaces in $\H$ fail to be \emph{linearly full} and are instead contained in a copy of $\mathbf{H}^{2,1}$. 

\begin{corollary}[Reducible polygonal surfaces]\label{cor:ADS}
Let $\Sigma \subset \H$ be a polygonal maximal surface with asymptotic boundary $\Lambda = \partial_{\infty}\Sigma$ a $2k$-gon in $\Ein$. Then the following are equivalent: 
\begin{enumerate}[label=(\roman*)]
    \item $\Sigma$ is contained in a copy of $\mathbf{H}^{2,1}$.
    \item $\partial_{\infty}\Sigma$ is contained in a copy of $\mathbf{Ein}^{1,1}$.
    \item For any conformal parametrization $\sigma:\C \rightarrow \Sigma$, we have $\mathrm{div}(\alpha)=\mathrm{div}(\beta)$. 
\end{enumerate}
\end{corollary}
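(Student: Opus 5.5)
We prove the three implications (i)$\Rightarrow$(ii)$\Rightarrow$(iii)$\Rightarrow$(i). Throughout we work with a lift $\Sigma^o\subset\Hh$ and use the decomposition $\II^{2,0}=\alpha+\beta$ together with the parametrization of Theorem \ref{thm:A}.

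\textbf{(i)$\Rightarrow$(ii).} If $\Sigma$ lies in a copy of $\mathbf{H}^{2,1}$, this copy is $\P(V)\cap \H$ for a nondegenerate subspace $V\subset\R^{2,3}$ of signature $(2,2)$. The asymptotic boundary is then contained in the projective closure of this copy intersected with $\Ein$. That set is $\P(V)\cap\Ein\simeq\mathbf{Ein}^{1,1}$.

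\textbf{(ii)$\Rightarrow$(i).} Conversely, if $\Lambda\subset \mathbf{Ein}^{1,1}=\P(V)\cap\Ein$, then $\Lambda$ is semi-positive in $\mathbf{Ein}^{1,1}$. By the existence result of \cite{LTW24} (or of \cite{Tam19}) applied in $\mathbf{H}^{2,1}$, it bounds a complete maximal surface $\Sigma'\subset\P(V)\cap\H$. A maximal surface in the totally geodesic copy $\mathbf{H}^{2,1}$ is also maximal in $\H$. By the uniqueness of \cite{LTW24}, $\Sigma=\Sigma'$.

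\textbf{(i)$\Rightarrow$(iii).} Suppose $\Sigma\subset\P(V)$, and let $\nu$ be the unit spacelike vector spanning $V^\perp$. Then $\nu$ is a constant section of the normal bundle $\N$, and it is $\Q$-orthogonal to $\II$. Write $\N=\spann(\nu,\nu')$ with $\nu'$ the unit timelike normal. Then $J\nu=\pm\nu'$, up to the sign conventions on signature, and $\N'$, $\N''$ are spanned by $e=\nu- i\nu'$ and $\bar e$ (after normalizing). Since $\II^{2,0}$ takes values in $\spann_\C(\nu')$, we get
\[\II^{2,0}=\Q(\II^{2,0},\nu')\,\nu'/\Q(\nu',\nu') ,\]
a multiple of $e-\bar e$. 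Hence $\alpha$ and $\beta$ are $q\otimes e$ and $-q\otimes\bar e$ for one and the same holomorphic quadratic differential $q$. It follows that $\mathrm{div}(\alpha)=\mathrm{div}(\beta)=\mathrm{div}(q)$, which does not depend on the parametrization.

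\textbf{(iii)$\Rightarrow$(i).} This is the direction where the moduli description is used. Assume $\mathrm{div}(\alpha)=\mathrm{div}(\beta)$. In polynomial trivializations, $A$ and $B$ then have the same zeros with multiplicity, so $B=cA$ for some $c\in\C^*$. Using the $\C^*$-action $\lambda\cdot(A,B)=(\lambda A,\lambda^{-1}B)$ with $\lambda^2=c$, we may normalize to $B=A$. In particular $\deg A=\deg B=n/2$, which is consistent with $\Lambda$ being a $2k$-gon.

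Now let $q=A\,\dz^2$ and let $\Sigma_1\subset\mathbf{H}^{2,1}\subset\H$ be the polygonal maximal surface with quartic differential $-q^2$ (or the appropriate sign), which exists by Tamburelli \cite{Tam19}. By the computation in (i)$\Rightarrow$(iii), its data is $[(q,q)]$ after normalizing. Theorem \ref{thm:A}(i) gives uniqueness up to $\SO_0(2,3)$, so $\Sigma$ is congruent to $\Sigma_1$, which lies in a copy of $\mathbf{H}^{2,1}$. This proves (i).

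If one prefers to avoid invoking \cite{Tam19}, there is a direct route. With $B=A$, the involution of the normal bundle exchanging $\N'$ and $\N''$ (reflection in a spacelike normal direction) preserves the structure equations. Uniqueness in Theorem \ref{thm:A} then gives an isometry $g\in\O(2,3)$ fixing $\Sigma$ pointwise that acts as a reflection on $\N$. The fixed subspace of $g$ is a $(2,2)$-subspace containing $\Sigma$.

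The main obstacle is getting the conventions right. One must match Tamburelli's $\mathbf{H}^{2,1}$ data with the normalization $(q,q)$ exactly, tracking the sign and the factor in $\II^{2,0}$, including the choice of dual trivializations. Alternatively, one must check that the reflection argument respects the orientation requirements in Theorem \ref{thm:A}, since a reflection reverses the normal orientation and so swaps $(A,B)$. The equality $A=B$ is precisely what makes the swapped data coincide.
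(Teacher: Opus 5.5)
Your argument is correct, but it is organized differently from the paper's. The paper, like you, treats (i)$\Leftrightarrow$(ii) as a consequence of \cite{Tam19, LTW24}, and proves only that (ii) and (iii) are equivalent.

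\textbf{Forward direction.} For (ii)$\Rightarrow$(iii) the paper uses a symmetry. Let $g\in\O(2,3)$ be the identity on the $\S^1$-factor and the reflection of the $\S^2$-factor across the great circle containing the projection of $\hat\Lambda$. Then $g$ fixes $\hat\Lambda$, hence preserves $\Sigma$ by uniqueness in \cite{LTW24}. Since $g$ reverses time orientation, it exchanges the labels of $\alpha$ and $\beta$. Your (i)$\Rightarrow$(iii) is instead a local computation with the constant normal $\nu$. It needs no uniqueness theorem, and it gives more: $\alpha$ and $\beta$ come from a single holomorphic quadratic differential in a parallel frame.

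\textbf{Converse.} For (iii)$\Rightarrow$(i) the paper normalizes $A=B$ and notes that \eqref{SO(2,3)TrivializedNormal} then reduces to an $\SO_0(2,2)$-Higgs bundle. This implicitly uses that the unique strongly complete metric of Lemma \ref{Lem:TrueComplete} is invariant under the involution exchanging $\mathcal{N}$ and $\mathcal{N}^{-1}$, which commutes with $\varphi$ exactly when $A=B$. That argument stays inside the paper's own machinery. Your main route instead imports Tamburelli's existence theorem in $\mathbf{H}^{2,1}$ and then applies Theorem \ref{thm:A}(i). Your alternative reflection route is essentially the paper's symmetry idea run in the converse direction. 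It needs the \emph{parametrized} form of uniqueness to get $g$ fixing $\Sigma$ pointwise: the exact data $(A\dz^2,A\dz^2)$ determines $\sigma$ up to post-composition, as in the proof of Lemma \ref{LemmaSigmaDeltaInverse}. Theorem \ref{thm:A}(i) as stated only gives $g(\Sigma)=\Sigma$.

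\textbf{Small corrections.} Since $V$ has signature $(2,2)$ in $\R^{2,3}$, the line $V^\perp$ is negative definite. So $\nu$ is a unit \emph{timelike} normal, as is every normal vector, since $\N$ has signature $(0,2)$. Likewise, ``reflection in a spacelike normal direction'' should read timelike. The argument is unaffected by either slip.

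Holomorphicity of your $q$ comes from $\nu$ and $\nu'$ being parallel for the normal connection, so $e=\nu-i\nu'$ is a holomorphic frame of $\mathcal{N}$. Because $\Q(e,\bar e)=-2$, the data of $\Sigma_1$ is $[(c\,q,c\,q)]$ for a universal constant $c\neq 0$. Taking Tamburelli's differential to be $c^{-1}A\dz^2$ therefore matches $[(A\dz^2,A\dz^2)]$ exactly. The constant-matching you flag as the main obstacle is immediate.
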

Here, the equivalence (i) $\leftrightarrow$ (ii) is an immediate consequence of \cite{Tam19, LTW24}, which we  state solely for completeness. 
The new content of the result is as follows: the polygonal maximal surfaces $\Sigma$ of $\H$ that are contained in a copy of $\mathbf{H}^{2,1}$ are exactly those with  $\Lambda=\partial_{\infty}\Sigma$ of even size $2k$,  $\Theta(\Lambda)=k+2$, and additionally with equal divisors $\div(\alpha)$, $\div(\beta)$ for any conformal parametrization. 

\subsection{Techniques and related work}\label{Sec:Technique}
We begin with a comparison to the closely related work \cite{TW24}. In this paper, Tamburelli-Wolf consider the polygonal maximal surfaces arising in $\H$ from the $\so(2,3)$ affine Toda equations. Their work is analytic in nature: in studying this $(2\times 2)$-system of elliptic PDE, they produce sub and super-solutions along with precise error estimates, which are leveraged to understand the asymptotic geometry of the associated maximal surfaces. 

We use an entirely different approach, which is based on the recent advancements in \cite{Mor25}. This change is 
necessary as the analytic arguments in \cite{TW24} do not seem to have obvious analogues for the accompanying systems of PDE governing non-Hitchin maximal surfaces. 
Avoiding the analytic bottleneck, we instead use our newfound capacity to certify polygonal surfaces by verifying that their induced metrics are asymptotically flat. This allows us to deduce much more quickly that the surfaces described in Theorem \ref{thm:A} are polygonal. We remark further on this point momentarily.

Despite the fact that we do not consider the $\so(2,3)$ affine Toda equations, other affine Toda equations nevertheless indirectly play a key role in our work  through the analysis of Li-Mochizuki in \cite{LM20b, LM25}. 
In these two works, the authors study the existence, uniqueness, and asymptotics of solutions to the $\mathfrak{sl}_n\C$ affine Toda equations on non-compact Riemann surfaces. Through the exceptional isomorphism $\mathfrak{sp}_4\R\cong \so(2,3)$, we can invoke their work on the $\mathfrak{sl}_4$ affine Toda equations to deduce the existence of solutions to the integrability equations for any maximal surfaces in $\H$ with polynomial quartic differentials. Thus, the study of Higgs bundles over the complex plane plays a major role for us, as in previous works studying planar surfaces with polygonal boundaries \cite{TW24, Eva24}. A commonality in these works is the study of \emph{wild} Higgs bundles, which was initiated in \cite{BB04} by Biquard-Boalch, and continued over $\CP^1$ by Fredrickson-Neitzke \cite{FN21}. 

In pursuit of a self-contained article, we recall in Section \ref{Sec:Preliminaries} the correspondence between harmonic $\SO_0(2,3)$-bundles and parametrized maximal surfaces in $\Hh$. 
We should emphasize that we do not explicitly invoke the framework of wild Higgs bundles over $\CP^1$. Instead, we prefer the approach suggested by Li-Mochizuki in their study of 
Higgs and harmonic bundles on non-compact surfaces as in \cite{LM20b, LM25}. In these works, they prove that certain Higgs bundles over non-compact Riemann surfaces carry harmonic metrics that are uniquely characterized by certain completeness conditions. We find this viewpoint quite attractive. 
In our setting, we then take the following approach. We begin with an $\SO_0(2,3)$-Higgs bundle $(\E, \Phi)$ over $\C$ with polynomial Higgs field $\Phi=\Phi(\alpha_{QD},\beta_{QD})$ encoded by a pair of polynomial quadratic differentials. We then search for a `natural' harmonic metric $h$, encoded by certain reality and completeness properties. 
The essential difficulty here is to make a careful comparison of various notions of completeness, and eventually obtain equivalence with a certain `strong' completeness. 
We then verify that the obtained harmonic bundle $(\E,\Phi,h)$ describes a complete maximal surface $\Sigma \subset \H$ that is asymptotically flat and hence polygonal. In particular, this process is used to define the map $\mathbf{\Sigma}$ in Theorem \ref{thm:A}. To verify that the associated differential construction 
$\boldsymbol{\delta}: \mathcal{M}_{n+4}(\H) \rightarrow \mathcal{M}_n(\mathcal{PD})$ outlined preceding the statement of Theorem \ref{thm:A} is truly the inverse of $\boldsymbol{\Sigma}$, we show also that the harmonic bundle $(\E,\Phi,h)$ naturally associated to a parametrized polygonal surface $\sigma: \C\rightarrow \Hh$ has such a strongly complete harmonic metric $h$. 

Tamburelli and Wolf showed in \cite{TW24} that the moduli space $\mathcal{M}_{n+4}^{pt}(\Ein)$ of \emph{pointed} semi-positive polygons, up to the $\SO_0(2,3)$-action, is a union of cells. In other words, polygons are considered now with a marked vertex. However, they obtained an explicit component count only for $0\leq n\leq 2$, and conjectured that $\mathcal{M}_{n+4}^{pt}(\Ein)$ is disconnected for $n\geq 2$ generally. We affirm their conjecture. Indeed, the space $\mathcal{M}_{n+4}^{pt}(\Ein)$ is the universal cover of the orbifold $\mathcal{M}_{n+4}(\Ein)$, and our techniques classify the components for both via the turning invariant $\Theta$. To guarantee all possible $\Theta$-invariants are realized, we construct special ``symmetric'' maximal surfaces $\Sigma$ in $\Hh$ whose asymptotic boundaries $\partial_{\infty}\Sigma$ realize all the values $2\leq \Theta \leq n+2$. 
A natural strategy to show $\Theta$ classifies components is then to take an arbitrary $\Ein$-polygon and attempt to deform to a symmetric one. Indeed, a similar idea was recently proposed independently by Ma in \cite{Ma26}. It is non-trivial to define such a deformation algorithm that works in general, for polygons in any $\Einh$, $\Ein$, or $\S^2$, by working only in these spaces. We use the relation with maximal surfaces in Theorems \ref{thm:A}, \ref{thm:B} to prove that $\Theta$ offers a classification. By deforming through the space of polynomial quadratic differentials, we reach  monomials $\alpha_{QD} =z^k\dz^2$ and $\beta_{QD} = z^{n-k}\dz^2$, whose associated asymptotic boundary polygons in $\Einh$, $\Ein$, and the projections to $\S^2$, are symmetric. 

In \cite{Eva24}, the first named author studied a problem close to the present setting: polynomial alternating almost-complex curves in the pseudosphere $(\mathbf{S}^{2,4}, J)$, which has automorphism group $\mathsf{G}_2'$, the split real adjoint group of type $G_2$. 
In that work, the polynomial curves associated to the $\g_2$ affine Toda equation are considered, in connection with the $\mathsf{G}_2'$-Hitchin system. The appropriate asymptotic boundaries of such objects are found to be \emph{annihilator polygons} in the Einstein universe $\mathbf{Ein}^{2,3}$. An obvious problem is to characterize all such polynomial almost-complex curves. 
However, there is substantially less known about alternating curves in $\S^{2,4}$ in general compared to maximal surfaces in $\mathbf{H}^{2,q}$. Indeed, there is no work on the Plateau problem for alternating curves in $\S^{2,4}$, such complete and immersed surfaces are not obviously embedded, nor are their asymptotic boundaries obviously topologically embedded, nor is the analogue of semi-positivity in $\mathbf{Ein}^{2,3}$ for annihilator polygons evident.
It would be interesting to explore these issues further.

The origin of the polynomial-polygon theme was the work \cite{HTTW95} of Han-Tam-Treibergs-Wan concerning harmonic maps $\C \rightarrow \mathbf{H}^2$ with polynomial Hopf differentials. 
Recently, there has been a resurgence of interest in this direction. 

In \cite{GS25}, Gupta-Sau study harmonic maps $\C \rightarrow \mathbf{H}^3$ with polynomial Hopf differentials and describe the possible resulting \emph{twisted polygons} that can be realized as asymptotic boundaries. In particular, they extend the picture developed by \cite{HTTW95} to the codimension one setting. 

More recently, Li-Liu-Xu study harmonic maps $\sigma:X \setminus D \rightarrow \mathbf{H}^2$ from punctured Riemann surfaces, where $X$ is closed and $D \subset X$ is an appropriate finite set, with asymptotics at each puncture prescribed by a certain polynomial harmonic map $\C\rightarrow \mathbf{H}^2$ \cite{LLX26}. This work extends prior study of harmonic maps from punctured surfaces to $\mathbf{H}^2$ by Huang in \cite{Hua16}.

Finally, we note the forthcoming work \cite{MNS26}, where Maloni-Nguyen-Sagman consider \emph{non-immersed} polynomial harmonic maps $\sigma :\C \rightarrow \mathbf{H}^2$. They have informed us that in the case of such \emph{branched} polynomial harmonic maps a certain turning invariant also appears. 
In their context, the `turning invariant' of interest encodes the number of turns the asymptotic boundary of the image of the harmonic map makes around the visual boundary $\partial_{\infty} \mathbf{H}^2$. The turning is then related to the branching data of the harmonic map. 
\section*{Organization of the paper}
The layout of the paper is as follows. 
\begin{itemize}
    \item In Section \ref{Sec:Preliminaries}, we introduce the basic objects and background, including maximal surfaces in $\H$, semi-positive loops in $\Ein$, and the work of \cite{LTW24} on the Plateau problem.
    \item In Section \ref{Sec:PolygonalSurfaces}, we describe the correspondence between harmonic $\SO_0(2,3)$-bundles and maximal surfaces in $\Hh$. Via Higgs bundles with polynomial Higgs field, we then produce polygonal maximal surfaces in $\H$. 
    \item In Section \ref{Sec:Polygon}, we study the moduli space of semi-positive polygons in $\Ein$ (and $\Einh)$ and the related moduli space of spherical polygons in $\S^2$, and count the components of each. 
    \item In Section \ref{Sec:ModuliSpaces}, we prove the main results: the parametrization of $\mathcal{M}_{n+4}(\H)$ and $\mathcal{M}_{n+4}(\Ein)$ via polynomial data, namely Theorems \ref{thm:A} and \ref{thm:B}. We then deduce Theorem \ref{thm:ConvertingInvariants} and Corollaries \ref{cor:HitchinSurfaces}, \ref{cor:ADS}. 
\end{itemize}

\section*{Acknowledgments}

We are grateful to Jérémy Toulisse, who proposed this problem to us, and for hosting the first named author in Nice in 2023 at the start of this project. We thank 
Enrico Trebeschi, Andrea Seppi, and Andrea Tamburelli for useful comments on an earlier draft of this work. We additionally thank Francesco Bonsante, Brian Collier, Colin Davalo, Timoth\'e Lemistre, François Labourie, Qiongling Li, Sara Maloni, Lam Nguyen, Nathaniel Sagman, Jérémy Toulisse and Mike Wolf for interesting discussions and valuable remarks.

A.~Moriani acknowledges funding by the European Research Council under ERC-Advanced grant 101095722 AnSur (Geometric Analysis and Surface Groups) and ERC-consolidator grant 101124349 GENERATE (GeomEtry and aNalysis for $(G, X)-$structurEs and their
defoRmATion spacEs). Views and opinions expressed are however those of the author(s) only and do not necessarily reflect those of the European Union or the European Research Council Executive Agency. Neither the European Union nor the granting authority can be held responsible for them.

\section*{AI Declaration} 

No AI was used in the creation of this article in any aspect whatsoever. The authors generated and edited all ideas, proofs, exposition, and figures in AI-unassisted fashion.

\section{Preliminaries}\label{Sec:Preliminaries}

\subsection{The pseudo-hyperbolic space and its boundary}

Let $\R^{2,3}$ denote the real vector space $\R^5$ endowed with the signature $(2,3)$ quadratic form $\Q$ whose matrix in the canonical basis of $\R^5$ is
\[\begin{pmatrix}
    I_{2} & \\
     & -I_3
\end{pmatrix}\ .\]
We endow $\R^{2,3}$ with its canonical space+time orientation. The action of the group $\SO_0(2,3)$ on $\R^{2,3}$ respects both the space and time-orientations. The models for the pseudo-hyperbolic space and the Einstein universe will be built using $\R^{2,3}$.

\subsubsection{Models for the pseudo-hyperbolic space}

We define the \emph{pseudo-hyperbolic space} of signature $(2,2)$, and its double cover, denoted $\H$ and $\Hh$, respectively, as follows:
\begin{definition}
    $\Hh$ is the negative quadric in $\R^{2,3}$ and $\H$ is its projectivization. That is:
    \begin{align*}
         \Hh &= \{x\in\R^{2,3}\mid \Q(x)=-1\}\ ,\\
    \H &= \Hh/\{\pm 1\} = \{[x]\in\R\P^4\mid \Q(x)<0\}\ .
    \end{align*}
   
\end{definition}
The space $\Hh$ is equipped with the metric induced by $\Q$, which descends to $\H$, and is denoted by the same symbol there. Both $\H$ and $\Hh$ are pseudo-Riemannian manifolds is of signature $(2,2)$, geodesically complete, and of constant curvature $-1$. 

The space $\H$ is an open submanifold of $\R\P^4$, whose boundary is the so-called \emph{Einstein universe} $\Ein$ of signature $(1,2)$, which we describe now.

\subsubsection{Models for the Einstein universe}
\begin{definition}
$\Einh$ is the space of isotropic rays in $\R^{2,3}$ and $\Ein$ is its projectivization. That is:
\begin{align*}
     \Einh &=\{x\in\R^{2,3}\setminus\{0\}\mid \Q(x)=0\}/\R_{>0}\ ,\\
    \Ein &= \{[x]\in\RP^4\ |\ \Q(x)=0\}\ .
\end{align*}
\end{definition}
The quadratic form $\Q$ restricts to a conformal class of signature $(1,2)$ pseudo-Riemannian metrics on $\Ein$. Thus, the type of tangent vectors (lightlike, timelike, spacelike) is canonically defined, but norms require the choice of a conformal representative.

\subsubsection{Photons and lightcones} We introduce some more common definitions. 

\begin{definition}
A \textbf{photon} $\omega \subset \Ein$ is the projectivization of an isotropic plane. 

Let $x \in \Ein$. The \textbf{lightcone} of $x$ is the set of points in $\Ein$ orthogonal to $x$:
\[ L(x) := \{ y \in \Ein\mid \Q(x,y)=0\}\ .\]
Photons and lightcones in $\Einh$ are defined similarly.
\end{definition}

\subsection{Maximal surfaces in \texorpdfstring{$\H$}{H(2,2)}}

We now describe the class of surfaces of interest.

\begin{definition}
Let $\Sigma \subset \H$ be a smoothly immersed surface. Then 
\begin{itemize}
    \item $\Sigma$ is \textbf{spacelike} when $ \mathrm{T}_p\Sigma$ is spacelike for every point $p$ in $\Sigma$, 
    \item a spacelike surface $\Sigma$ is \textbf{maximal} when $\Sigma$ is a critical point for the area functional with respect to compactly supported deformations, 
    \item a spacelike surface $\Sigma$ is \textbf{complete} when the induced Riemannian metric from $\H$ defines a complete metric on $\Sigma$. 
\end{itemize}
\end{definition}
We shall be interested \emph{only} in \emph{complete maximal surfaces}. Going forward, by default, all maximal surfaces are complete unless otherwise stated. We shall note any  occasion that we remove this assumption. 

\begin{remark}\label{rem:MaximalVanishMeanCurvature}
Maximal surfaces are exactly the spacelike surfaces whose mean curvature vector vanish. The name `maximal' is due to the fact that the second variation of such a surface, under any compactly supported deformation, is negative. See \cite[Lemma 3.22 and Corollary 3.24]{LTW24}. 
\end{remark}

We may take also the following perspective. Beginning with  a Riemann surface $X=(S,J)$, a (parametrized) maximal surface is as a conformal harmonic map $\sigma: X \rightarrow \H$ that is spacelike. Hence, $\sigma$ has vanishing Hopf differential $q_2(\sigma) \in H^0(\K_{X}^2)$. On the other hand, maximal surfaces  
have an associated higher Hopf differential, a holomorphic quartic differential, defined in \textsection\ref{subsubsec:HolDiff}. 

\subsubsection{Tangent and normal bundles}
First, we introduce some notation for the decomposition of the pullback of the tangent bundle of $\H$. 
Let $\sigma:X\to\H$ be a parametrized maximal surface. 

\begin{definition}
We write $\sigma^* \mathrm{T}\H = \T_{\sigma} \oplus \N_{\sigma}$, where $\T_{\sigma}$ is the tangent bundle to $\sigma$, of signature $(2,0)$ and $\N_{\sigma}$ is the orthogonal complement to $\T_{\sigma}$, of signature $(0,2)$.
\end{definition}

On a related note, we may consider the trivial bundle $\underline{\R^{2,3}} \rightarrow \H$, whose pullback decomposes as $\sigma^*\underline{\R^{2,3}}= \L_{\sigma} \oplus \T_{\sigma} \oplus \N_{\sigma}$, where $\L_{\sigma}|_{p} = \R \{\sigma(p)\}$ is the position line. 

\subsubsection{Second fundamental form}
Recall that the second fundamental form of $\sigma$ is defined as follows: given $u$, $v$ tangent vectors of $\Sigma$ at a point $x$,
\[\II(u,v):=\left(\nabla_{\sigma_*u} \sigma_*v\right)^{\N_{\sigma}}\ ,\]
where $\nabla$ is the Levi-Civita connection of $\H$ and the subscript $\N_{\sigma}$ denotes the orthogonal projection to $\N_{\sigma}$. The second fundamental form is a symmetric covariant 2-tensor in $\Sigma$, with values in $\N_{\sigma}$.

The complexification of $\II$ is denoted by $\II^{\C}$ and can be decomposed by type according to the complex structure of $\Sigma$. In a conformal coordinate $z$, we have
\[\II^\C=\II^\C(\partial_z,\partial_z)\dz^2 + \II^\C(\partial_z,\partial_{\zbar}) \dz\dzb + \II^\C(\del_{\zbar},\del_{\zbar}){\dzb}^2\ .\]
The $(2,0)$ part is then the smooth section of $\K_\Sigma^2\otimes\N_{\sigma}$ defined in coordinates by
\[\II^{2,0}:=\II^\C(\del_z,\del_z)\dz^2\ .\]

\subsubsection{Associated holomorphic differentials}\label{subsubsec:HolDiff}
We first recall a basic differential geometry construction. 
Let $E \rightarrow M$ be a real rank two vector bundle. Suppose $E$ is equipped with a Riemannian metric $g$ and an orientation $\omega$. Together, the pair $(g,\omega)$ reduces the structure of $E$ to $\SO(2,\R)\cong \U(1)$ and thus induces an almost-complex structure $J: E \rightarrow E$.  
Hence, we may decompose the complexification $E^{\C}$ by
$E^{\C} = E'\oplus E''$, where the complex line bundles $E'$ and $E''$ are the $\pm i$-eigenspaces of $J$.
We shall now apply this discussion to maximal surfaces momentarily. 

At this point, we wish to consider maximal surfaces $\Sigma$ in the double cover $\Hh$. This choice 
is for a simple reason: a surface upstairs carries a 
canonical orientation for $\N_{\sigma}$, as we now note. 

\begin{remark}[Oriented normal bundle upstairs]
Let $\sigma: X \rightarrow \Hh$ be a parametrized maximal surface. We observe that the splitting $\sigma^*\underline{\R^{2,3}}=\L_{\sigma}\oplus \T_{\sigma}\oplus \N_{\sigma}$ prescribes a canonical orientation on $\N_{\sigma}$ as follows: 
\begin{itemize}
    \item The position line $\L_{\sigma}$ is oriented via the trivializing tautological section given by $\sigma$. 
    \item The tangent plane $\T_{\sigma}$ inherits a fixed orientation from the tangent bundle $\mathrm{T}X$. \item The normal plane $\N_{\sigma}$ obtains a canonical orientation from the fixed orientation on $\R^{2,3}$ and the prescribed orientations on $\L_{\sigma}, \T_{\sigma}$. 
\end{itemize}
\end{remark}
Equivalently, the lift orients $\L_{\sigma}$, which orients $\N_{\sigma}$ according to the time-orientation on $\L_{\sigma} \oplus \N_{\sigma}$. \medskip 

With this discussion in place, we shall henceforth regard the normal bundle $\N_{\sigma}$ of a parametrized maximal surface $\sigma: X \rightarrow \Hh$ as having a fixed orientation. As this minor detail plays an outsize role going forward, we felt this structure important to highlight.

Now, we resume our discussion, with $\sigma: X \rightarrow \Hh$ maximal being given. Note that $-\Q$ defines a Euclidean quadratic form on $\N_{\sigma}$. With its metric and its orientation, $\N_{\sigma}^\C$ can be decomposed as $\N_{\sigma}^\C=\N_{\sigma}'\oplus\N_{\sigma}''$, where $\N_{\sigma}',\N_{\sigma}''$ that are respectively the $i$ and $-i$ eigenspaces of the induced almost-complex structure.

Recall that $\II_\Sigma^{2,0}$ is the $(2,0)$ part of $\II_\Sigma^\C$, which is a section of the rank two complex bundle $\K_\Sigma^2\otimes\N^{\C}_{\sigma}$. Via the decomposition of $\N_{\sigma}^{\C}$, we can split $\II^{2,0}$ into two parts. 

\begin{definition}[Associated Differentials]\label{def:HoloDiffAlphaBeta}
    Let $\sigma:X\to\Hh$ be an immersed maximal surface. Then define $\alpha(\sigma)$ and $\beta(\sigma)$ as the components of $\II_\sigma^{2,0}$ with respect to the decomposition $\N_{\sigma}^\C = \N_{\sigma}'\oplus\N_{\sigma}''$. That is:
    \[\alpha(\sigma) \in \Omega^0(X, \K^2_X\otimes\N_{\sigma}'), \qquad \beta(\sigma)\in \Omega^0(X, \K^2_X\otimes\N_{\sigma}'')\ .\]
\end{definition}
In fact, $\alpha$ and $\beta$ are holomorphic, once we upgrade $\N_{\sigma}',\N_{\sigma}''$ to become holomorphic bundles $\mathcal{N},\mathcal{N}^{-1}$ under their natural holomorphic structures. 

\begin{proposition}\label{Prop:AlphaBetaHolomorphic}
Let $\sigma: X \rightarrow \Hh$ be a parametrized maximal surface. Then $\alpha \in H^0(\K^2_X\otimes \mathcal{N})$ and $\beta \in H^0(\K_{X}^2 \otimes \mathcal{N}^{-1})$. 
\end{proposition}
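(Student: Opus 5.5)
We prove this by the Codazzi equation for $\sigma$ in $\Hh$, which has constant curvature $-1$, and then split the result by type with respect to $\N_\sigma^\C = \N'_\sigma\oplus \N''_\sigma$. The holomorphic structures are the natural ones. The normal connection $\nabla^\perp$ on $\N_\sigma$ preserves $-\Q$ and the orientation, so it commutes with $J$. Hence its complexification preserves the eigenbundles $\N'_\sigma$ and $\N''_\sigma$. Their $(0,1)$-parts define holomorphic structures $\mathcal{N}$ and $\mathcal{N}''$. Since $\Q$ restricts to a nondegenerate pairing $\N'_\sigma\otimes\N''_\sigma\to\C$ that is parallel, we get $\mathcal{N}''\cong\mathcal{N}^{-1}$ holomorphically. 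Together with the Levi-Civita (Chern) connection on $\K_X$, this gives $\bar\partial$-operators on $\K_X^2\otimes\mathcal{N}^{\pm1}$. It then suffices to show that $\II^{2,0}$ is holomorphic as a section of $\K^2_X\otimes \N^\C_\sigma$ with respect to $(\nabla^{\K^2}\otimes\nabla^{\perp})^{0,1}$. Taking the $\N'$ and $\N''$ components then gives both claims, because the connection preserves the splitting.

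To show this, we work in a local conformal coordinate $z$ with induced metric $g=e^{2u}|dz|^2$. Write $\II^{2,0} = \II(\partial_z,\partial_z)\,dz^2$, extended complex-bilinearly. The Codazzi equation for a submanifold of a space form states that $(\tilde\nabla_X\II)(Y,Z)$ is symmetric in $X,Y,Z$; here $\tilde\nabla$ is the connection on $T^*\otimes T^*\otimes \N$. Applying this with $X=\partial_{\bar z}$ and $Y=Z=\partial_z$ gives
\[(\tilde\nabla_{\partial_{\bar z}}\II)(\partial_z,\partial_z) = (\tilde\nabla_{\partial_z}\II)(\partial_{\bar z},\partial_z).\]
Maximality means the mean curvature vanishes, so $\II(\partial_z,\partial_{\bar z}) = \tfrac14 e^{2u} H = 0$ identically, by Remark \ref{rem:MaximalVanishMeanCurvature}. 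Expanding the right-hand side gives
\[\nabla^\perp_{\partial_z}\big(\II(\partial_{\bar z},\partial_z)\big) - \II(\nabla_{\partial_z}\partial_{\bar z},\partial_z) - \II(\partial_{\bar z},\nabla_{\partial_z}\partial_z).\]
The first term vanishes. Since $\nabla_{\partial_z}\partial_{\bar z}=0$ for a conformal metric and $\nabla_{\partial_z}\partial_z = 2u_z\partial_z$, the remaining terms are multiples of $\II(\partial_z,\partial_{\bar z})=0$. So the right-hand side vanishes. On the left-hand side, we use that $\nabla_{\partial_{\bar z}}\partial_z=0$. This gives $\nabla^\perp_{\partial_{\bar z}}(\II(\partial_z,\partial_z))=0$, which is exactly the holomorphicity of $\II(\partial_z,\partial_z)\,dz^2$ for the $\bar\partial$-operator $dz^2\mapsto 0$ on $\K^2$ combined with $(\nabla^\perp)^{0,1}$.

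The only delicate point, and the main obstacle, is bookkeeping. We must check that the Codazzi equation holds in the form used, with no curvature term. This is true because the ambient space has constant curvature, so $(\bar R(X,Y)Z)^{\perp}=0$ for $X,Y,Z$ tangent. We must also check that the chosen $\bar\partial$-operator on $\K_X$ is the standard holomorphic structure. This is true because $\nabla^{0,1}$ of the Levi-Civita connection kills $dz$. Finally, we decompose $\II(\partial_z,\partial_z)=a+b$ with $a\in\N'_\sigma$ and $b\in\N''_\sigma$. Since $\nabla^\perp_{\partial_{\bar z}}$ commutes with $J$, it preserves each eigenbundle, so we obtain $\nabla^\perp_{\partial_{\bar z}} a = 0$ and $\nabla^\perp_{\partial_{\bar z}} b=0$ separately. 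Thus $\alpha\in H^0(\K_X^2\otimes\mathcal{N})$ and $\beta\in H^0(\K_X^2\otimes\mathcal{N}^{-1})$.
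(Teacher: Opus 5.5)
Your proof is correct and follows the same route as the paper. The paper simply cites that the Codazzi equation (ambient constant curvature) together with minimality gives $d^{\nabla}\II=0$, and hence the $\bar\partial$-holomorphicity of $\II^{2,0}$ (via Labourie--Toulisse). You write out that local computation explicitly and also make explicit the step the paper leaves implicit: $\nabla^\perp$ commutes with $J$, so $\nabla^\perp$ preserves $\N'$ and $\N''$, and the holomorphicity splits into the two components $\alpha$ and $\beta$.
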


\begin{proof}
The result is an immediate consequence of $d^{\nabla}\II=0$. Here, parallelism of $\II$ follows from the Codazzi equation and the minimality of $\Sigma$. See \cite[Proposition 4.7, Lemma 5.6]{LT23}. 
\end{proof}

\subsubsection{Associated holomorphic quartic differential} The vanishing of the Hopf differential of a maximal surface leads to the following higher order holomorphic differential.  
\begin{definition}[Associated holomorphic quartic differential]\label{def:QuarticHoloDif}
Let $\sigma: X \rightarrow \Hh$ be an immersed maximal surface. Then define its associated quartic differential $Q_4=Q_4(\sigma)$ as follows: 
\[ Q_4 := \mathbf{Q}(\II^{2,0}, \II^{2,0})\ .\]
\end{definition}

\begin{remark}
    Let $\alpha$, $\beta$ and $Q_4$ be defined as above. Then $Q_4=\mathbf{Q}(\alpha, \beta)$.
    Indeed, this follows from the fact that $\N_{\sigma}'$ and $\N_{\sigma}''$ are each $\Q$-isotropic. 
\end{remark}

\begin{remark}
    By Codazzi equation, in fact, $Q_4 \in H^0(X, \K_{X}^{4})$. See \cite[Lemma 5.6]{LT23}. 
\end{remark}

In other words, $Q_4 = \hat{Q}_4(z)\dz^4$ in local coordinates, where $\hat{Q}_4= \mathbf{Q}\big(\II^{\C}(\der{z}, \der{z}), \II^{\C}(\der{z},\der{z})\big)$ is a holomorphic function.

\subsection{Splittings and graphs}

For each orthogonal decomposition of $\R^{2,3}$ into a positive definite plane and a negative definite 3-space, the quadric $\Hh$ obtains a description as a product $\S^2_+\times\S^2$, as we explain now. For additional references, see \cite[Section 3.2]{SST23} or \cite[Section 3.1]{CTT19}.\medskip 

\subsubsection{Warped product structure and splitting}\label{sec:splitting}

We first introduce a geometric notion that we shall repeatedly reference. 
\begin{definition}
$\R^{2,3}=l\oplus P\oplus R$ be an orthogonal decomposition with $l$ a negative line, $P$ a positive-definite plane, and $R$ a negative-definite plane. Note that 
$\mathbb{P}(l \oplus R)$ is a hyperbolic plane in $\H$, with a fixed basepoint $\mathbb{P}(l)$. 
We call $(l, P,R)$ a \textbf{pointed hyperbolic plane} in $\H$.

If $l$ is moreover endowed with an orientation, meaning the choice of a lift $o \in \Hh$ of $l$, then $(l,P,R)$ lifts to a \textbf{pointed hyperbolic plane} in $\Hh$ with fixed basepoint $o$. 
\end{definition}

Let $(l,P,R)$ be a pointed hyperbolic plane. Denote by $\S^2$ the unit sphere of $l\oplus R$. Now the map
\[\psi : \D^2\times\S^2\to\Hh, \qquad (x,v)\mapsto \frac{1+\|x\|^2}{1-\|x\|^2}\left(\frac{2x}{1+\|x\|^2},v\right)\]
is a diffeomorphism, which expresses the metric $\Q$ as a warped product as follows:
\[\psi^*Q_{(x,v)} = \left(\frac{1+\|x\|^2}{1-\|x\|^2}\right)^2\left[ \bigg(\frac{2}{1+\|x\|^2}\bigg)^2g_{\D^2}{}_{|x}\oplus-g_{\S^2}{}_{|v} \right]\ ,\]
where $g_{\D^2}$ denotes the Euclidean metric on $\D^2$ and $g_{\S^2}$ denote the round metric on $\S^2$.

Using the quadratic form $-\Q$ on $l$ and $\Q$ on $P$, the sum $l\oplus P$ is positive-definite. Choose an orientation on $l$ (that is, chose a unit norm vector $o$ in $l$) and write $\S^2_+$ for the open upper hemisphere of $(l\oplus P,-\Q_{|l}+\Q_{|P})$, centered at $o$. The stereographic projection from $\S^2_+$ to $\D^2$ goes as follows: both $\S^2_+$ and $\D^2$ lie in $l\oplus P$, and an affine line passing through $-o$ and a point $u$ in $\S^2_+$ intersects $\D^2$ at a unique point $s(u)$. Now the map $s:\S^2_+\to\D^2$ is a diffeomorphism. More explicitly, any point $u$ in $\S^2_+$ writes uniquely as $u=x+ao$ where $x\in\D^2$ and $a\in\R$ and we have
\[s(u) = \frac{1}{1+a}x\quad\mbox{and}\quad s^{-1}(x)=\frac{2}{1+\|x\|^2}x+\frac{1-\|x\|^2}{1+\|x\|^2}o\ .\]
If we express the warped product given by $\psi$ instead on $\S^2_+\times \S^2$, using the stereographic projection $s:\D^2\to\S^2_+$, the pullback metric is a conformal metric: 
\[\left(\psi\circ(s,\id_{\S^2})\right)^*\Q_{\Hh}{}_{|(u,v)}=\left(\frac{1+\|s(u)\|^2}{1-\|s(u)\|^2}\right)^2(g_{\S^2_+}{}_{|u}-g_{\S^2}{}_{|v})\ ,\]
where $g_{\S^2_+}$ and $g_{\S^2}$ denote the round metrics on the upper hemisphere and on the sphere, respectively.

Moreover, the splitting $\psi\circ(s,\id_{\S^2})$ extends continuously to to $\partial(\S^2_+\times\S^2)=\S^1\times\S^2$ as the diffeomorphism
\[\S^1\times\S^2\to\Einh,\qquad (u,v)\mapsto u+v\ .\]
We can summarize the above discussion as follows. 
\begin{proposition}
    The data of a pointed hyperbolic plane in $\Hh$ gives rise to two conformal diffeomorphisms $\Hh\simeq\S^2_+\times\S^2$ and $\Einh\simeq\S^1\times\S^2$, called \textbf{splittings}, where $\S^1$, $\S^2_+$ and $\S^2$ are endowed with the round metrics $g_{\S^1}$, $g_{\S^2_+}$ and $-g_{\S^2}$, respectively. Moreover, the latter splitting is the continuous extension of the former.
\end{proposition}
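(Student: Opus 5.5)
The plan is to do everything by hand in the coordinates of the orthogonal decomposition $\R^{2,3}=P\oplus(l\oplus R)$, where $\Q$ is positive definite on $P$ and negative definite on $l\oplus R$. Write $y=y_P+y_-$ and set $|y_-|^2:=-\Q(y_-)$. The equation $\Q(y)=-1$ becomes $|y_-|^2=1+|y_P|^2$. Hence the map $y\mapsto (y_P,\,y_-/|y_-|)$ identifies $\Hh$ with $P\times\S^2$, with inverse $(w,v)\mapsto w+\sqrt{1+|w|^2}\,v$.

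\textbf{Step 1: $\psi$ is a diffeomorphism.} Composing with the diffeomorphism $\D^2\to P$, $x\mapsto \frac{2x}{1-|x|^2}$, gives exactly $\psi$. Indeed,
\[\sqrt{1+\tfrac{4|x|^2}{(1-|x|^2)^2}}=\tfrac{1+|x|^2}{1-|x|^2},\]
which matches the prefactor in the formula for $\psi$. So $\psi$ is a diffeomorphism.

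\textbf{Step 2: the warped product formula.} Use polar-type coordinates $y=\sinh\rho\,\theta+\cosh\rho\,v$, with $\theta$ a unit vector in $P$ and $v\in\S^2$. Since $v\perp dv$ and $\theta\perp d\theta$, the cross terms cancel and
\[\Q=d\rho^2+\sinh^2\!\rho\,d\theta^2-\cosh^2\!\rho\,g_{\S^2}=\cosh^2\!\rho\Big(\tfrac{1}{\cosh^2\rho}g_{\mathbf H^2}-g_{\S^2}\Big).\]
In the disk coordinate, $\cosh\rho=\frac{1+|x|^2}{1-|x|^2}$ and $g_{\mathbf H^2}=\frac{4}{(1-|x|^2)^2}g_{\D^2}$. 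Substituting yields the stated formula for $\psi^*\Q$.

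\textbf{Step 3: the hemisphere description.} The explicit inverse $s^{-1}$ is the standard inverse stereographic projection. It satisfies $(s^{-1})^*g_{\S^2_+}=\frac{4}{(1+|x|^2)^2}g_{\D^2}$, which can be checked by direct differentiation or quoted as classical. This turns the bracket in Step 2 into $g_{\S^2_+}-g_{\S^2}$. Therefore $\psi\circ(s,\id)$ is conformal with the displayed conformal factor.

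\textbf{Step 4: the boundary.} Normalize the projective class by the conformal factor: $\psi(x,v)\sim\big(\frac{2x}{1+|x|^2},v\big)=(u',v)$, where $u'$ is the $P$-component of $s^{-1}(x)$. As $|x|\to1$, the point $s^{-1}(x)\to u\in\S^1=\partial\S^2_+$, and $u'\to u$. So the ray converges to $u+v$, which is isotropic because $|u|=|v|=1$. This gives the continuous extension.

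The map $(u,v)\mapsto u+v$ is a diffeomorphism onto $\Einh$. Every isotropic ray has a unique representative with $|y_P|=|y_-|=1$, since the isotropy condition forces $|y_P|=|y_-|\neq0$. The inverse is therefore $y\mapsto(y_P/|y_P|,\,y_-/|y_-|)$, which is smooth.

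For conformality, the conformal class on $\Einh$ is represented by $\Q$ restricted to any section of the ray bundle. On the section $\{|y_P|=|y_-|=1\}$, the orthogonality $P\perp(l\oplus R)$ gives $\Q=g_{\S^1}-g_{\S^2}$.

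\textbf{Main obstacle.} The only step that is not purely formal is Step 2, namely keeping track that the cross terms vanish and matching the conformal factors across the three models: hyperboloid, disk, and hemisphere. I would organize that computation through the $\rho$ coordinate as above, rather than differentiating $\psi$ directly.
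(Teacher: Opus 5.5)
Your proposal is correct and follows the paper's route. The paper builds exactly the same maps ($\psi$, the stereographic projection $s$, and $(u,v)\mapsto u+v$) and asserts the warped-product formula, the conformality and the boundary extension without computation; you supply those verifications via hyperboloid polar coordinates with $|x|=\tanh(\rho/2)$, the stereographic pullback, and the normalized ray $[\pi_P(s^{-1}(x))+v]$, which is visibly jointly continuous on $\overline{\S^2_+}\times\S^2$.
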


\subsubsection{Complete spacelike surfaces}
In this moment, we wish to highlight the role completeness plays, so we unsuppress this adjective. 
The following is a weaker version of \cite[Proposition 3.11]{LTW24}.

\begin{proposition}[Graphical surfaces]\label{prop:CompleteGraph}
    Let $\Sigma$ be a complete spacelike surface in $\Hh$. Then over any splitting $\Hh \simeq \S^2_+\times\S^2$, the surface $\Sigma$ is the graph of a strictly 1-Lipschitz map from $\S^2_+$ to $\S^2$.
\end{proposition}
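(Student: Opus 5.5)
The plan is to show that $\Sigma$ meets every fibre $\{u\}\times\S^2$ of the splitting $\Hh\simeq\S^2_+\times\S^2$ exactly once, and then to read off the Lipschitz bound from the spacelike condition. Fix a splitting and let $\pi_+:\Hh\to\S^2_+$ be the first projection. The first step is local. In the splitting the metric is conformal to $g_{\S^2_+}-g_{\S^2}$, and a tangent vector $(\dot u,\dot v)$ is spacelike exactly when $|\dot v|_{\S^2}<|\dot u|_{\S^2_+}$. A spacelike tangent plane therefore contains no nonzero vector with $\dot u=0$, so $d\pi_+$ is injective on $\T_p\Sigma$. Hence $\pi_+|_\Sigma$ is a local diffeomorphism.

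The second step globalizes this by showing $\pi_+|_\Sigma:\Sigma\to\S^2_+$ is a covering map; since $\S^2_+$ is simply connected and $\Sigma$ is connected, it is then a diffeomorphism. For the covering property I would show that $\pi_+|_\Sigma$ has the path-lifting property. Equip $\Sigma$ with the auxiliary metric $\hat g=(\pi_+|_\Sigma)^*g_{\S^2_+}$, which is Riemannian by the first step. Write $\Omega^2$ for the conformal factor, so that $\Q=\Omega^2(g_{\S^2_+}-g_{\S^2})$ and $g_\Sigma\le \Omega^2\hat g$ on $\Sigma$. The plan is to show that $\hat g$ is complete, using completeness of $g_\Sigma$; a local isometry from a complete Riemannian manifold is a covering. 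The difficulty is that $\Omega\to\infty$ near $\partial\S^2_+$, so the inequality goes the wrong way for a direct comparison. I would instead argue via lifts of curves. Take a $\hat g$-finite-length path in $\Sigma$ that leaves every compact set. Its projection to $\S^2_+$ has finite spherical length, so it converges to a point of $\overline{\S^2_+}$. Convergence to an interior point $u_0$ is the problematic case. There the path stays in $\pi_+^{-1}(K)$ for a compact $K\ni u_0$, on which $\Omega$ is bounded, so its $g_\Sigma$-length is finite. Completeness of $g_\Sigma$ then forces the path to converge in $\Sigma$, a contradiction. Paths that approach $\partial\S^2_+$ are not a problem, because they are not lifts of paths inside $\S^2_+$. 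This is exactly the lifting needed, so every path in $\S^2_+$ lifts. This step is the main obstacle, and I expect the care to lie in handling the blow-up of $\Omega$ by localizing to compact subsets of $\S^2_+$.

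The third step concludes. $\Sigma$ is the graph of $f=\pi_-\circ(\pi_+|_\Sigma)^{-1}:\S^2_+\to\S^2$. The tangent vectors of the graph are $(\dot u,df(\dot u))$, so the spacelike condition gives $|df(\dot u)|<|\dot u|$ for all $\dot u\neq0$. Hence $\|df\|<1$ pointwise. For the Lipschitz bound, take two points in the open hemisphere. The minimizing great-circle arc between them lies in $\S^2_+$, and integrating along it gives $d_{\S^2}(f(u_1),f(u_2))<d_{\S^2}(u_1,u_2)$ for $u_1\neq u_2$, using compactness of the arc to get strictness. So $f$ is strictly 1-Lipschitz, as claimed in Proposition \ref{prop:CompleteGraph}.
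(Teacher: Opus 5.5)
Your proof is correct. The paper gives no argument of its own here; it quotes the statement as a weakening of \cite[Proposition 3.11]{LTW24}. Your outline is the standard one: the spacelike condition makes $\pi_+|_\Sigma$ a local diffeomorphism, completeness upgrades it to a covering of the simply connected $\S^2_+$, and integrating $\|df\|<1$ along minimizing arcs in the convex open hemisphere gives the strict 1-Lipschitz bound.

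You diverge only in the covering step, and there your stated plan needs correcting. The metric $\hat g$ cannot be complete: once the result holds, $(\Sigma,\hat g)$ is isometric to the open hemisphere. What your argument actually proves is a relative lifting statement for finite-length paths inside $\S^2_+$. Take a maximal lift on $[0,t_0)$. Near $t_0$ its $g_\Sigma$-length is finite, because $\Omega$ is bounded over a compact neighbourhood of the limit point. So the lift is $g_\Sigma$-Cauchy and extends past $t_0$; you do not need it to leave every compact set. Lifting geodesic segments is enough for the covering property by the usual normal-ball argument, so you should state that rather than ``every path lifts''.

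The detour is also avoidable. By the warped-product formula for the splitting, $\Omega^2 g_{\S^2_+}$ is exactly the Poincar\'e metric $\frac{4}{(1-\|x\|^2)^2}g_{\D^2}$. Hence $g_\Sigma\le \Omega^2\hat g=(\pi_+|_\Sigma)^*g_{\mathbf{H}^2}$ globally. The pulled-back hyperbolic metric therefore dominates a complete metric and is itself complete. Then $\pi_+|_\Sigma$ is a local isometry from a complete surface onto $\mathbf{H}^2$, hence a covering by the textbook lemma. The blow-up of $\Omega$ at $\partial\S^2_+$ is what makes the comparison go the right way, once you use the hyperbolic rather than the spherical metric on the base. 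Your localization to compact subsets reaches the same conclusion by hand, at the cost of justifying the covering criterion from geodesic lifting yourself.
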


\begin{corollary}\label{Cor:CompleteSpacelike}
    Let $\Sigma$ be a complete spacelike surface in $\Hh$. Then 
    \begin{enumerate}[label=(\roman*)]
        \item $\Sigma$ is a smoothly embedded disk.
        \item The frontier $\Lambda = \partial_{\infty}\Sigma$ of $\Sigma$ in $\Einh$ is the graph of a 1-Lipschitz map from $\S^1$ to $\S^2$, relative to any splitting. 
        \item $\Lambda$ is a topologically embedded circle in $\Einh$. 
    \end{enumerate}
\end{corollary}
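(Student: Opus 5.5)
Relative to a fixed splitting $\Hh\simeq \S^2_+\times\S^2$, Proposition \ref{prop:CompleteGraph} presents $\Sigma$ as the graph of a strictly $1$-Lipschitz map $f:\S^2_+\to\S^2$. Part (i) follows immediately: the graph map $u\mapsto \psi\circ(s,\id)(u,f(u))$ is a topological embedding of the open disk $\S^2_+$. It is proper, since the projection to the first factor is a homeomorphism onto $\S^2_+$. Its image is the smooth surface $\Sigma$, so $\Sigma$ is a smoothly embedded disk.

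For (ii), I will extend $f$ to the closed hemisphere $\overline{\S^2_+}$. A $1$-Lipschitz map into the compact metric space $\S^2$ is uniformly continuous. It therefore extends uniquely to a $1$-Lipschitz map $\bar f:\overline{\S^2_+}\to\S^2$, and its restriction $g=\bar f|_{\S^1}$ is again $1$-Lipschitz. One point needs checking: the distances are the round ones on $\S^2_+$. Restricted to the boundary circle these give the intrinsic round distance on $\S^1$, possibly up to the comparison between chordal and geodesic distance. Since the round hemisphere is geodesically convex, its intrinsic distance equals the ambient spherical distance, so no constant is lost.

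Next I identify the frontier. The closure of $\Sigma$ in $\overline{\S^2_+}\times\S^2$ is the graph of $\bar f$. Indeed, any limit point $(u,v)$ with $u\in\S^1$ arises from a sequence $(u_k,f(u_k))$ with $u_k\to u$, and continuity of $\bar f$ forces $v=\bar f(u)$. Conversely, every boundary point of the graph is such a limit. Using the continuous extension of the splitting to $\S^1\times\S^2\simeq\Einh$ from the preceding proposition, $\Lambda=\partial_\infty\Sigma$ is the graph of $g$.

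Part (iii) follows, because the graph of a continuous map from $\S^1$ is homeomorphic to $\S^1$ via the projection to the first factor. The only delicate point is making sure the frontier in $\Einh$ (taken inside the compactification of $\Hh$ in the sphere of rays of $\R^{2,3}$) agrees with the boundary in the product model. I expect this to follow from the fact that the extension $\overline{\S^2_+}\times\S^2\to \Hh\cup\Einh$ is a homeomorphism onto its image, which is closed, so that closures can be computed in either model.
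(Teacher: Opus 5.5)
Your proposal is correct and follows the paper's route: the paper gives no separate proof, treating the corollary as an immediate consequence of Proposition \ref{prop:CompleteGraph} together with the continuous extension of the splitting to $\S^1\times\S^2\simeq\Einh$. You supply the details — the $1$-Lipschitz extension to the closed hemisphere, density of the graph, and the fact that the extended splitting is a homeomorphism from the compact space $\overline{\S^2_+}\times\S^2$ onto the closed set $\Hh\cup\Einh$ of non-positive rays (true, being a continuous bijection from a compact space) — and the only thing to tighten is in (i): the smoothness of the embedding comes from spacelikeness, which makes the projection of $\Sigma$ to $\S^2_+$ a local diffeomorphism, hence (being bijective) a diffeomorphism, so $f$ is smooth and the immersion is a homeomorphism onto its image.
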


\subsection{Semi-positive loops}\label{subsec:SemiPositive}
Due to prior work on maximal surfaces \cite{CTT19, LTW24, LT23, SST23}, the notion of a \emph{semi-positive loop} now admits many characterizations. We begin with the following description. 

\begin{definition}[Positivity of triples]
    Let $\mathcal{\tau}=(u,v,w)$ be a triple of points in $\Ein$ or $\Einh$. We call $\tau$
            \begin{itemize}
                \item \textbf{negative} when $\spann \{u,v,w\}$ is a subspace of signature $(1,2)$, 
                \item \textbf{positive} when $\spann\{u,v,w\}$ is a subspace of signature $(2,1)$. 
    \end{itemize}
\end{definition}

One way to define semi-positivity of a loop is by the configuration of triples contained inside. 
\begin{definition}
    A \textbf{semi-positive loop} in $\Ein$ or $\Einh$ is a topologically embedded circle that contains no negative triple and at least one positive triple.
\end{definition}

\subsubsection{Alternative description of semi-positive loops} The given definition of semi-positivity has the benefit of involving only the intrinsic geometry of the Einstein universe. However, we shall later be interested in another characterizations that relates to the  structure of the curve relative to a splitting $\Einh\simeq \S^1\times \S^2$. 

\begin{proposition}[Equivalence of semi-positivity]\label{Prop:SemiPositive}
Let $\Lambda \subset \Einh$ be topological circle. Then the following are equivalent:
\begin{enumerate}[label=(\roman*)]
    \item $\Lambda$ is a semi-positive loop,
    \item With respect to any splitting,  $\Lambda$ is the graph of a 1-Lipschitz map $\S^1\rightarrow \S^2$ that  contains no antipodal points. 
\end{enumerate}
\end{proposition}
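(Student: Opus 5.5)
The plan is to translate positivity of triples into a pointwise inequality in a fixed splitting $\Einh\simeq\S^1\times\S^2$, and then read off both directions. Write points of $\Einh$ as $u+v$ with $u\in\S^1\subset P'$ and $v\in\S^2\subset l\oplus R$, where $P'=P$ is the positive plane and $\Q(u+v)=|u|^2-|v|^2=0$. For two points $x_i=u_i+v_i$ we have
\[\Q(x_1,x_2)=\cos\theta_{12}-\cos\phi_{12}\ ,\]
where $\theta_{12}=d_{\S^1}(u_1,u_2)$ and $\phi_{12}=d_{\S^2}(v_1,v_2)$. The Gram matrix of a triple $(x_1,x_2,x_3)$ therefore has zero diagonal and off-diagonal entries $a_{ij}=\cos\theta_{ij}-\cos\phi_{ij}$. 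Its determinant is $2a_{12}a_{23}a_{13}$. When the span is $3$-dimensional, the signature is $(2,1)$ exactly when $\det>0$ and $(1,2)$ exactly when $\det<0$: a zero-diagonal symmetric $3\times 3$ matrix is never definite, and the determinant is the product of the eigenvalues. So a triple is negative iff $a_{12}a_{23}a_{13}<0$.

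For (ii)$\Rightarrow$(i), suppose $\Lambda$ is the graph of a $1$-Lipschitz map $f:\S^1\to\S^2$. Then $\phi_{ij}\le\theta_{ij}\le\pi$, so $\cos\phi_{ij}\ge\cos\theta_{ij}$ and every $a_{ij}\le 0$.
\begin{itemize}
\item If all three entries are nonzero, the product is strictly negative and the triple is negative. We must therefore rule out a triple with all $\phi_{ij}<\theta_{ij}$.
\item If some $a_{ij}=0$, then $\det=0$ and the triple is neither positive nor negative.
\end{itemize}
In fact the sign convention needs a check: I expect to recompute and find $\Q(x_1,x_2)=\cos\theta-\cos\phi$ with the opposite sign in the paper's normalization, since $\Q$ is positive on $P$. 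Then $a_{ij}\le0$ gives a product $\le0$, and $\det=2\prod a_{ij}$ must be matched against the signature carefully. This is the main bookkeeping obstacle. The geometric fact I intend to use is that for three points on $\S^1$ one has $\theta_{12}+\theta_{23}+\theta_{13}=2\pi$ when they are not in a semicircle. The $1$-Lipschitz condition together with the triangle inequality in $\S^2$ then forces the correct sign pattern.

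To produce a positive triple, I take three points where $f$ is strictly contracting on the chords. Such points exist because a $1$-Lipschitz map $\S^1\to\S^2$ with no antipodal image points cannot be an isometry onto a great circle. Indeed, an isometry onto a great circle sends $u$ and $-u$ to antipodal points, contradicting the hypothesis. Choosing three points with all $a_{ij}$ nonzero gives the positive triple.

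For (i)$\Rightarrow$(ii), I first show $\Lambda$ is a graph. If two points $u_1+v_1$ and $u_1+v_2$ of $\Lambda$ shared the same $\S^1$-coordinate, then $a_{12}=1-\cos\phi_{12}>0$ has the wrong sign. Combining this with a third point, using continuity and the existence of a positive triple, produces a negative triple; the same argument shows the projection to $\S^1$ is injective. Because $\Lambda$ is a topological circle, the projection is then a homeomorphism onto $\S^1$. The $1$-Lipschitz property follows the same way, since a pair with $\phi>\theta$ flips the sign of one entry, and perturbing the third point of a positive triple yields a negative one. No antipodal points follows because $v_2=-v_1$ gives $\phi=\pi\ge\theta$, with equality only in an isometric-great-circle configuration. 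That configuration contains no positive triple, since all $a_{ij}$ vanish along the antipodal pair, so its determinant is zero.

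The hardest step is the sign and signature bookkeeping, together with the perturbation argument that produces a negative triple from a single violating pair. I would handle the latter by using the positive triple guaranteed by (i) and sliding one of its vertices along $\Lambda$ until a single $a_{ij}$ changes sign while the others stay nonzero.
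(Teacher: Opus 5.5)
\textbf{The proposal has genuine gaps.} The most serious is in (i)$\Rightarrow$(ii); there is also a sign error and an unproved existence claim in (ii)$\Rightarrow$(i).

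\textbf{Direction (ii)$\Rightarrow$(i).} Your formula $\Q(x_1,x_2)=\cos\theta_{12}-\cos\phi_{12}$ is correct as it stands. The error you sense is in the determinant--signature dictionary instead. A nondegenerate trace-zero $3\times 3$ Gram matrix has signature $(2,1)$ or $(1,2)$. Its determinant is the product of the eigenvalues, so signature $(2,1)$ means $\det<0$. Hence a triple is positive iff $a_{12}a_{13}a_{23}<0$, and negative iff $a_{12}a_{13}a_{23}>0$.

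With this fixed, ``no negative triple'' follows at once from $a_{ij}\le 0$. The triples with all $\phi_{ij}<\theta_{ij}$, which your first bullet wants to rule out, are exactly the positive triples you need. The discussion of $\theta_{12}+\theta_{23}+\theta_{13}=2\pi$ is unnecessary.

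Their existence, however, does not follow from ``$f$ is not an isometry onto a great circle''. That gives one strictly contracted pair. You need three points with all three pairs strictly contracted, and on a polygon any two points of a common edge are isometric. One way to close this:
\begin{itemize}
\item Every pair $(u,-u)$ is strictly contracted, since the image has no antipodal points.
\item Suppose no third point works for a given $u$. On each open semicircle from $u$ to $-u$, the points $w$ with $(u,w)$ isometric form an initial arc, and those with $(-u,w)$ isometric form a terminal arc; these two arcs cover the semicircle.
\item A nondegenerate overlap would make $f$ run along one great circle for length $\pi$, giving $f(-u)=-f(u)$. So $f$ is a geodesic polygon with corners contained in $\{u,-u,c^+,c^-\}$.
\item A closed piecewise geodesic loop of length $2\pi$ with at most three corners contains antipodal points. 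Hence $u$ must be a corner, and choosing $u$ not a corner yields a positive triple.
\end{itemize}

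\textbf{Direction (i)$\Rightarrow$(ii).} This carries all the content. You must show that a circle with no negative triple and at least one positive triple satisfies $\Q(x,y)\le 0$ for all normalized $x,y\in\Lambda$. Both the graph property and the $1$-Lipschitz property are exactly this inequality.

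Your step ``a bad pair, a third point, and continuity produce a negative triple'' is asserted, not proved, and the obvious continuity argument does not work. If $\Q(x_1,x_2)>0$, the absence of negative triples says precisely that $\Q(x_1,z)\Q(x_2,z)\le 0$ for every $z\in\Lambda$. Running $z$ along an arc from $x_1$ to $x_2$ then only produces a point orthogonal to both, which is no contradiction.

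The same problem affects sliding a vertex of a positive triple. Two entries $a_{ij}$ can vanish simultaneously and change sign together, keeping $a_{12}a_{13}a_{23}\le 0$. Nothing in your sketch rules this out, so a genuinely global argument is required. This is exactly the content the paper does not reprove but imports from \cite[Proposition 2.23]{LT23}.

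\textbf{Antipodal step.} This is also misdescribed. Once $f$ is a $1$-Lipschitz graph, $f(u')=-f(u)$ forces $u'=-u$. It also forces each semicircle to map isometrically onto a half great circle, giving a lune rather than necessarily a single great circle. The correct reason there is no positive triple is that any three points have two on a common closed semicircle, so some $a_{ij}$ vanishes. Vanishing ``along the antipodal pair'' only handles triples containing both $x_1$ and $x_2$.
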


\begin{proof}
The equivalence is proven by \cite[Proposition 2.23]{LT23}. 
\end{proof}

Note that combining Proposition \ref{Prop:SemiPositive} and Corollary \ref{Cor:CompleteSpacelike}, we find that the asymptotic boundary $\Lambda =\partial_{\infty}\Sigma$ of a complete maximal surface $\Sigma$ is necessarily a semi-positive loop. Next, we shall discuss the converse. 

\subsection{Asymptotic Plateau problem}

The asymptotic Plateau problem in $\mathbf{H}^{2,q}$ was solved completely by Labourie-Toulisse-Wolf. 

\begin{theorem}[Asymptotic Plateau problem in $\mathbf{H}^{2,q}$ {\cite{LTW24}}]\label{Thm:Plateau}
Let $\Lambda \subset \mathbf{Ein}^{1,q}$ be a semi-positive loop. Then there exists a unique complete maximal surface $\Sigma$ with asymptotic boundary $\Lambda$. 
\end{theorem}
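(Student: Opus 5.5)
This statement is the main theorem of \cite{LTW24}, and the paper uses it as a black box. The proof I would give follows their strategy, with existence and uniqueness handled separately.

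For \emph{existence}, I would approximate the semi-positive loop $\Lambda$ by a sequence of smooth spacelike (positive) loops $\Lambda_k \to \Lambda$. For each $\Lambda_k$, I would solve a compact Plateau problem. Concretely, fix a splitting $\Hh\simeq \S^2_+\times\S^2$ as in \textsection\ref{sec:splitting}, so that $\Lambda_k$ is the graph of a map $\S^1\to\S^2$ by Proposition \ref{Prop:SemiPositive}. Exhaust $\Hh$ by compact regions, and find maximal surfaces spanning curves in these regions that converge to $\Lambda_k$. Each such surface is obtained by a variational argument: maximize area among spacelike graphs of strictly $1$-Lipschitz maps (Proposition \ref{prop:CompleteGraph}), or equivalently run a mean-curvature-flow or continuity method for the graph equation.

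The key step is a set of \emph{uniform a priori estimates} on these surfaces, independent of $k$ and of the exhaustion. These consist of a bound on the second fundamental form, or equivalently on the norm of $\II$, obtained from Bochner/Simons-type identities for maximal surfaces of constant curvature $-1$ ambient space. Alongside this I need uniform spacelike (gradient) bounds on compact subsets of $\H$. With these bounds, Arzelà--Ascoli gives a limit surface that is a complete maximal surface. Its asymptotic boundary equals $\Lambda$ because of barrier arguments near $\Ein$: the graph is pinned between "pleated" or lightcone-type barrier surfaces determined by $\Lambda$. Completeness of the limit follows from comparing the induced metric with the hyperbolic metric of a pointed hyperbolic plane.

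For \emph{uniqueness}, suppose $\Sigma_1$ and $\Sigma_2$ are complete maximal surfaces with the same boundary $\Lambda$. I would consider the function $\Q(x,y)$ or a time-like distance between the two graphs over a splitting. I would show that it satisfies an elliptic inequality on $\Sigma_1$ and tends to its extremal value at infinity. An Omori--Yau maximum principle, available because $\Sigma_1$ is complete with curvature bounded below, then forces $\Sigma_1=\Sigma_2$.

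I expect the main obstacle to be the behaviour at infinity when $\Lambda$ is only semi-positive. Photon segments in $\Lambda$ make the limiting surfaces degenerate near those segments, which threatens both the curvature estimates and the identification of the asymptotic boundary. I would first try to handle this by proving the estimates with constants depending only on the absence of negative triples. The existence of a positive triple rules out the degenerate case where the limit is a lightlike cone.
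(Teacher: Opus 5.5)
You are right that the paper gives no proof of this statement. It is imported from \cite{LTW24} and used as a black box, as is the compactness theorem \cite[Theorem 6.1]{LTW24} later on. Read as an actual proof, though, your sketch has genuine gaps, and one step is false precisely in the regime this paper studies: completeness. In a splitting, write $\Sigma=\mathrm{graph}(f)$ with $f:\S^2_+\to\S^q$. The induced metric is $g_\Sigma=\rho^2\big(g_{\S^2_+}-f^*g_{\S^q}\big)$, where $\rho^2g_{\S^2_+}$ is the hyperbolic metric of the pointed hyperbolic plane. Comparison with it therefore gives only $g_\Sigma\leq\rho^2g_{\S^2_+}$, which is the wrong direction. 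A lower bound $g_\Sigma\geq c\,\rho^2g_{\S^2_+}$ with $c>0$ is equivalent to $f$ being $\sqrt{1-c}$-Lipschitz. That forces the boundary map $\partial f$ to be $\sqrt{1-c}$-Lipschitz on $\S^1$, but $\partial f$ is an isometry along every photon segment of $\Lambda$. So for any loop containing a photon segment, in particular every polygon, no such bound holds. Completeness must instead come from maximality (curvature and compactness estimates), which are exactly the estimates your sketch leaves unproved.

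Those estimates, and uniqueness, are the real content of \cite{LTW24}, and you only assert them.

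Your constants cannot depend ``only on the absence of negative triples''. That condition is invariant under the non-compact group $\SO_0(2,q+1)$, while gradient bounds on a fixed compact set are not. Moreover, limits of semi-positive loops can lose every positive triple (e.g.\ two half-photons from $p$ to $-p$ in $\widehat{\mathbf{Ein}}^{1,q}$). What is needed is compactness of pointed maximal surfaces modulo the group, with non-degeneracy coming from $\Lambda_k\to\Lambda$ semi-positive.

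Area maximization is also not ``equivalent'' to a continuity method. Spacelikeness is an open condition, so maximizing sequences converge only to $1$-Lipschitz graphs that may contain lightlike pieces, and you give no regularity argument.

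For uniqueness, matching points of the two graphs over a common base point of a splitting is not a geometric correspondence. There is no reason for the resulting function on $\Sigma_1$ to satisfy an elliptic inequality. The natural function is $F(x,y)=\Q(x,y)$ on $\Sigma_1\times\Sigma_2$. Since $\Delta x=2x$ on a maximal surface, one gets $\Delta F=4F$, which at a maximum yields only $F\leq0$. That is no contradiction, because $\sup F>-1$ as soon as $\Sigma_1\neq\Sigma_2$.

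The missing idea is the mixed second variation along paired frames. At a maximum with $-1<F\leq 0$, the first-order conditions put $T_x\Sigma_1$ and $T_y\Sigma_2$ inside $\spann(x,y)^\perp$, of signature $(2,q-1)$. Orthonormal frames can then be chosen with $\sum_i\Q(e_i,f_i)\geq2$, whence $\sum_i\mathrm{Hess}\,F\big((e_i,f_i),(e_i,f_i)\big)=4F+2\sum_i\Q(e_i,f_i)\geq4(F+1)>0$. Since the supremum may only be approached at infinity, this must be run through a Hessian-type maximum principle at infinity. Omori's principle on the product works, using $K\geq-1$ and an a priori upper bound on $F$; the Laplacian Omori--Yau principle on $\Sigma_1$ does not.
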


The compactified surface $\overline{\Sigma} = \Sigma \sqcup \Lambda$ is topologically a closed 2-disk, as $\Sigma$ is an embedded 2-cell, and $\Lambda \cong \S^1$. However, the conformal type of the surface $(\Sigma,[g])$ is not fixed: it can either be biholomorphic to the disk or to the complex plane. For example, in the former case, $\Sigma$ could be a totally geodesic hyperbolic plane in $\H$, and in the latter case, $\Sigma$ could be flat, such as the \emph{Barbot surface} (see Figure \ref{fig:BarbotSurface}).
\begin{figure}[h!]
    \centering
    \includegraphics[width=0.5\linewidth]{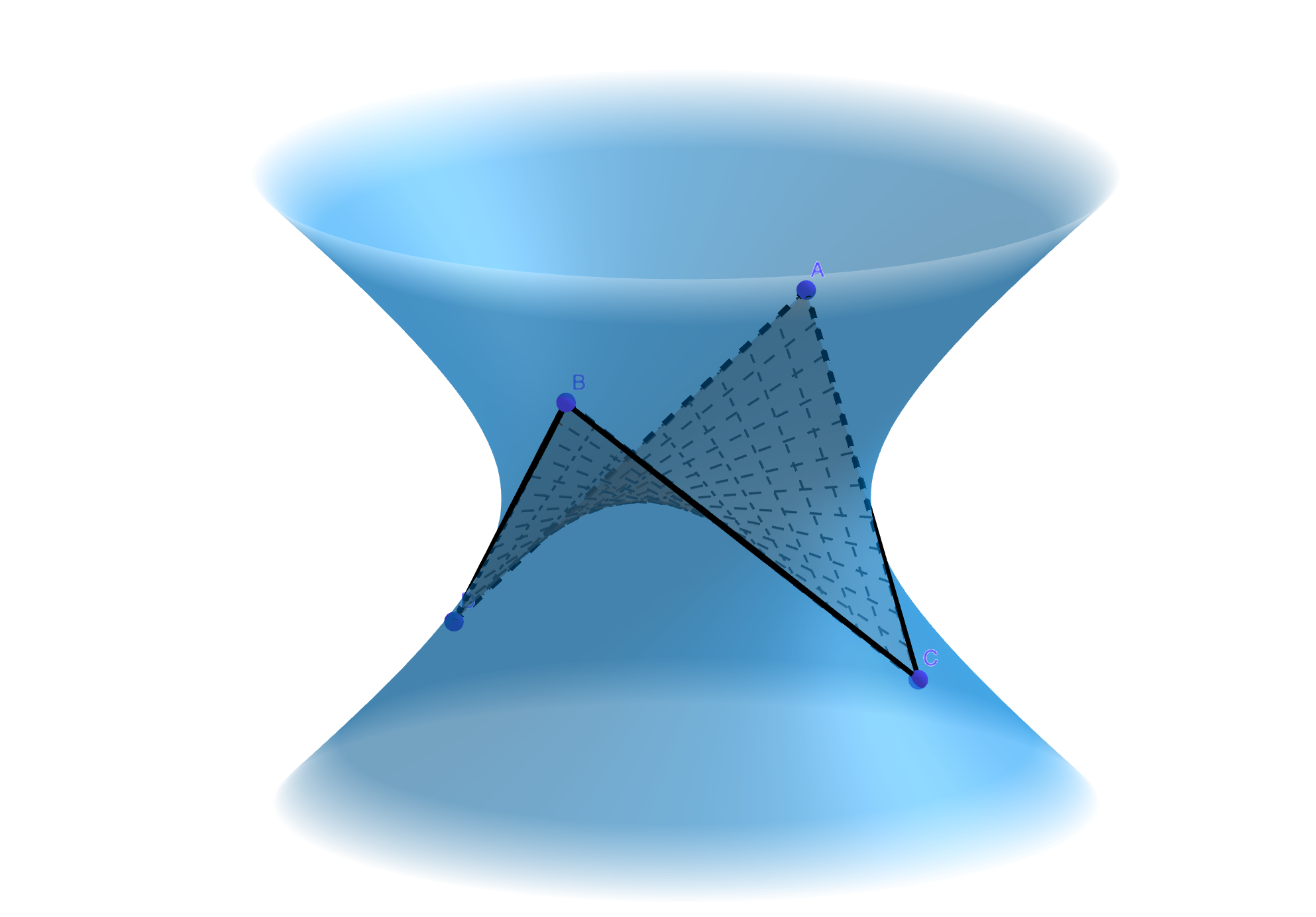}
    \caption{The Barbot surface $\Sigma$ in $\mathbf{H}^{2,1}$, that can be visualized at \href{https://www.geogebra.org/classic/bhxfrxkz}{Geogebra}. The asymptotic boundary $\Lambda=\partial_{\infty}\Sigma$ is a semi-positive $4$-gon in $\mathbf{Ein}^{1,1}$.}
    \label{fig:BarbotSurface}
\end{figure}

\section{Polynomial maximal surfaces in \texorpdfstring{$\H$}{H(2,2)}}\label{Sec:PolygonalSurfaces}

In this section, we build all planar maximal surfaces in $\H$ with polynomial quartic differential via Higgs bundles over the complex plane. We then prove that the resulting maximal surfaces are complete, asymptotically flat, and as a consequence of \cite{Mor25}, have polygonal asymptotic boundaries in the Einstein universe $\Ein$.

\subsection{Higgs bundles to maximal surfaces}\label{subsec:HiggstoMaximal}
We shall now describe $\SO_0(2,3)$-Higgs bundles $(\mathcal{E},\varphi)$ on $\C$, which will correspond to the polygonal maximal surfaces $\sigma: \C \rightarrow \H$ of interest.
\subsubsection{Higgs bundles}
We first recall the notion of a Higgs bundle, on both the vector and principal bundle settings. Below, $\K= \K_X$ denotes the canonical bundle of a Riemann surface $X$.
\begin{definition}[Higgs bundle]
Let $X$ be a Riemann surface (possibly non-compact). 
\begin{itemize}
    \item A \textbf{Higgs (vector) bundle} on $X$ is a pair $(\E,\varphi)$ consisting of holomorphic vector bundle $\E$ on $X$ and a \textbf{Higgs field} $\varphi \in H^0(\End(\E) \otimes \K)$. 
    \item More generally, given a real reductive Lie group $G$, a $\boldsymbol{G}$\textbf{-Higgs bundle} $(\mathcal{P}, \Phi)$ on $X$ consists of a holomorphic principal $K^{\C}$-bundle $\mathcal{P}$ on $X$, where $K < G$ is a maximal compact subgroup, and a Higgs field $\Phi \in H^0(\mathcal{P}[\mathfrak{p}^{\C}]\otimes \K_{X})$, where 
$\g^{\C} = \mathfrak{k}^{\C} \oplus \mathfrak{p}^{\C}$ is the complexified Cartan decomposition. 
\end{itemize}
\end{definition}
We will follow the usual convention of freely passing between the vector and principal bundle formulations. For example, given a rank $n$ Higgs vector bundle $(\E,\varphi)$, then setting $\mathcal{P}$ to be the holomorphic frame bundle of $\E$, one can identify
$\mathcal{P}[\gl_n\C ]\cong \End(\E)$, and thus $(\E,\varphi)$ becomes a principal $\GL(n,\C)$-Higgs bundle. 
In similar fashion, for $G <\GL(n,\C)$ a real Lie subgroup, we can often describe $G$-Higgs bundles as `decorated' Higgs vector bundles, whereby the decorations encode the structure required to build the $K^{\C}$-bundle $\mathcal{P}$. We shall illustrate this idea for $G=\SO_0(2,3)$ now. See \cite{Col19} for further examples.

\begin{example}[$\SO_0(2,3)$-Higgs bundles]
Let $G = \SO_0(2,3)$. The maximal compact subgroup $K \cong  \SO(2)\times \SO(3)$ is the stabilizer of an orthogonal space-time splitting $\R^{2,3} = U\oplus V$, where $U \cong \R^{2,0}$ and $V\cong \R^{0,3}$. Thus, $K^{\C} \cong\SO(2,\C) \times \SO(3,\C)$. Hence, an $\SO_0(2,3)$-Higgs bundle on a Riemann surface $X$ is described by a tuple $(\mathcal{U}, \mathcal{V}, \mathcal{Q}_{\mathcal{U}}, \mathcal{Q}_{\mathcal{V}}, \eta)$, where 
\begin{itemize}
    \item $\mathcal{U}, \mathcal{V}$ are holomorphic vector bundles of ranks $2, 3$, such that  $\det(\mathcal{U})\cong \mathcal{O}\cong \det(\mathcal{V})$,  
    \item $\mathcal{Q}_{\mathcal{U}}, \mathcal{Q}_{\mathcal{V}}$ are non-degenerate holomorphic bilinear forms on $\mathcal{U}, \mathcal{V}$, 
    \item $\eta  \in H^0(\Hom(\mathcal{U},\mathcal{V})\otimes \K)$.
\end{itemize}
Indeed, $\mathcal{P}$ is then the frame bundle of $(\mathcal{U}\oplus \mathcal{V},\mathcal{Q}_{\mathcal{U}} \oplus \mathcal{Q}_{\mathcal{V}}, \omega_{\mathcal{U}} \oplus \omega_{\mathcal{V}})$, where $\omega_{\mathcal{U}},\omega_{\mathcal{V}}$ are volume forms, and the Higgs field is $\Phi = \begin{pmatrix} & -\eta^{*\mathcal{Q}} \\ \eta & \end{pmatrix}  \in H^0(\mathcal{P}[\mathfrak{p}^{\C}]\otimes \K)$, where $\mathcal{Q} = \mathcal{Q}_{\mathcal{U}}\oplus (-\mathcal{Q}_{\mathcal{V}})$. 
\end{example}

We now describe the $\SO_0(2,3)$-Higgs bundles of interest $(\mathcal{E},\varphi)$ on $X =\C$. 
First, we make a small remark regarding notation. While all holomorphic vector bundles over $\C$ are holomorphically trivial, we endeavor to use similar notation here as in the case of Higgs bundles on closed Riemann surface, where the corresponding bundles are non-trivial. Hopefully, the notational parallels aid the reader. 

Now, Higgs bundles $(\E, \varphi)$ of interest obtain the following form:
\begin{equation}\label{SO(2,3)Higgs}
\begin{tikzcd}
\mathcal{N}^{-1} \arrow[r, "\alpha"] & \mathcal{K}\arrow[r, "2"]&\mathcal{O}\arrow[r, "2"]&\mathcal{K}^{-1} \arrow[r, "\alpha"]\arrow[bend left, lll, "\beta"]&\mathcal{N}\arrow[bend left, lll, "\beta"]
\end{tikzcd}\ .
\end{equation}
Here, $\alpha\in H^0(\K^2\otimes \mathcal{N})$, $\beta\in H^0(\K^2\otimes \mathcal{N}^{-1})$. Hence $\alpha\beta\in H^0(\K^4)$. 
Going forward, we denote by $Q_4$ the holomorphic quartic differential $Q_4=\alpha\beta$. 

The $\SO_0(2,3)$-structure on $(\E,\varphi)$ is described as follows: 
\begin{itemize}
    \item $\mathcal{U} = \mathcal{K} \oplus \mathcal{K}^{-1}$,
    \item $\mathcal{V} = \mathcal{N}^{-1} \oplus \mathcal{O}\oplus \mathcal{N}$ 
    \item $\mathcal{Q}_{\mathcal{U}} = \begin{pmatrix} & \mathbbm{1} \\ \mathbbm{1} &  \end{pmatrix} $, $\mathcal{Q}_{\mathcal{V}} = \begin{pmatrix} &  & \mathbbm{1} \\  & 1 & \\ \mathbbm{1} &  & \end{pmatrix} $,
\end{itemize}
where $\mathbbm{1}$ denotes the canonical pairing between a vector bundle and its dual. 

The appearance of `$2$', rather than `$1$', in \eqref{SO(2,3)Higgs} is due to the correspondence with the associated $\mathbb{P}\Sp(4,\R)$-Higgs bundle, under the isomorphism $\mathbb{P}\Sp(4,\R) \cong \SO_0(2,3)$, as we explain in Section \ref{Subsec:CorrespondingSpHiggsB}. 

The following insightful remark was shared with us by Andrea Tamburelli, which highlights an important distinction between maximal surface in $\H$ versus in the double cover $\Hh$. 
\begin{remark}
Observe the following symmetry in \eqref{SO(2,3)Higgs}: by exchanging $\mathcal{N}, \mathcal{N}^{-1}$ and $\alpha,\beta$, the same Higgs bundle is achieved. 
This symmetry is geometric: exchanging $\mathcal{N}$ and $\mathcal{N}^{-1}$ amounts to switching the holomorphic and anti-holomorphic normal line bundles, at which point $\alpha$ and $\beta$ switch roles.
\end{remark}

\subsubsection{Hitchin's equations} As in the \emph{non-abelian Hodge correspondence}, one may search for a hermitian metric $h$ on $\mathcal{E}$ that is \emph{harmonic}.
To find $h$, one must solve a system of PDE called \emph{Hitchin's equations} \eqref{HitchinsEquation} for the flatness of the connection $\nabla = \nabla^C + \varphi + \varphi^{*h}$, where $\nabla^C = \nabla_{\delb, h}$ is the Chern connection of the hermitian holomorphic bundle $(\mathcal{E},h)$. The system of PDE of interest is as follows: 
\begin{align}\label{HitchinsEquation}
    F_{\nabla^C}+ [\varphi,\varphi^{*h}] = 0.
\end{align} 
As is well-known, $F_{\nabla} =0$ is equivalent to the harmonicity of $h$, meaning that $h$ encodes a harmonic map $h: \tilde{X} \rightarrow \GL(n,\C)/\mathrm{U}(n)$.
However, in the case $(\E, \varphi)$ is actually a $G$-Higgs bundle (seen from the vector bundle side), $h$ actually describes a map $h: \tilde{X} \rightarrow \X_{G}$, where $\X_{G}$ denotes the Riemannian symmetric space of $G$, seen as a totally geodesic subspace of $\X_{\GL(n,\C)}\cong \GL(n,\C)/\U(n)$.

\subsubsection{Harmonic bundles} Let us introduce another standard definition. 
\begin{definition}
A \textbf{harmonic bundle} $(\E,\varphi, h)$ is a Higgs bundle $(\E, \varphi)$ equipped with a harmonic metric $h$. 
\end{definition}

\subsubsection{From harmonic bundles to maximal surfaces}

Let $(\E,\varphi)$ be the $\SO_0(2,3)$-Higgs bundle \eqref{SO(2,3)Higgs}. We will consider the situation when this Higgs bundle is promoted to a harmonic bundle $(\E,\varphi, h)$ such that the metric $h$ is diagonal and real. In particular, canonically associated to such a harmonic bundle is a parametrized maximal surface $\sigma: \C \rightarrow \Hh$, up to post-composition by $G =\SO_0(2,3)$. The harmonic map 
$h: \C \rightarrow \X_{\SO_0(2,3)}$ associated to $h$ is then simply described in terms of $\sigma$. 

To describe the correspondence of interest, we first describe one feature of interest of the flat connection associated to a particular harmonic metric.  
\begin{lemma}[Real locus]
Let $h=\diag(h_1,h_2,1, h_2^{-1},h_1^{-1})$ be a diagonal, real harmonic metric on the Higgs bundle $(\E,\varphi)$ from \eqref{SO(2,3)Higgs}, with flat connection $\nabla := \nabla^{C} +\varphi +\varphi^{*h}$. Then the involution $\lambda: \E\rightarrow \E$ such that 
\[ h(\cdot, \cdot) = Q(\cdot, \lambda(\cdot))\ ,\]
is $\nabla$-parallel. 
\end{lemma}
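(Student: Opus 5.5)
The plan is to split the flat connection as $\nabla = \nabla^C + (\varphi + \varphi^{*h})$ and show separately that $\nabla^C\lambda = 0$ and that $\lambda$ commutes with the endomorphism-valued $1$-form $\varphi+\varphi^{*h}$. Here $\lambda$ is conjugate-linear, since $h$ is sesquilinear and $Q$ is complex bilinear. To fix conventions, I will work in the holomorphic frame adapted to $\E=\mathcal{N}^{-1}\oplus\K\oplus\mathcal{O}\oplus\K^{-1}\oplus\mathcal{N}$. In this frame, $Q$ is the anti-diagonal pairing of the $i$-th summand with the $(6-i)$-th summand, with the middle entry normalized as in \eqref{SO(2,3)Higgs} (signs coming from $\mathcal{Q}_{\mathcal{U}}\oplus(-\mathcal{Q}_{\mathcal{V}})$ included), and $h=\diag(h_1,h_2,1,h_2^{-1},h_1^{-1})$ with $h_i>0$. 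Writing $\lambda$ as conjugation followed by a matrix, $h(\cdot,\cdot)=Q(\cdot,\lambda\cdot)$ forces $\lambda$ to swap the $i$-th and $(6-i)$-th summands with weights built from the $h_i$. Since the entries of $h$ are inverse in pairs and the middle one is $1$, one checks directly that $\lambda^2=\id$. One also checks that $\lambda$ is simultaneously an $h$-isometry and $Q$-compatible, meaning $Q(\lambda u,\lambda v)=\overline{Q(u,v)}$. These identities are exactly what ``$h$ real'' means.

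\emph{Chern part.} Because $\lambda$ is determined algebraically by $h$ and $Q$, it suffices to show that $\nabla^C h=0$ and $\nabla^C Q=0$; then $\nabla^C\lambda=0$ follows by differentiating $h(u,v)=Q(u,\lambda v)$. The first identity holds by definition of the Chern connection. For the second, note that $Q$ is holomorphic, so its $(0,1)$-derivative vanishes. The $(1,0)$ part of $\nabla^C$ in the holomorphic frame is
\[
h^{-1}\partial h=\diag\big(\partial\log h_1,\ \partial\log h_2,\ 0,\ -\partial\log h_2,\ -\partial\log h_1\big),
\]
which is $Q$-skew because $Q$ pairs index $i$ with $6-i$. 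Hence $\nabla^C Q=0$, and therefore $\nabla^C\lambda=0$. Equivalently, the diagonal real metric reduces the structure group to $K=\SO(2)\times\SO(3)$, and the Chern connection preserves that reduction.

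\emph{Higgs part.} Here I will show that $\varphi^{*h}=\lambda\circ\varphi\circ\lambda$, with $\lambda$ applied to the form part by conjugation, so that $(1,0)$-forms go to $(0,1)$-forms. Then $\lambda(\varphi+\varphi^{*h})\lambda=\varphi^{*h}+\varphi$ follows from $\lambda^2=\id$. The argument uses only that $\varphi$ is $Q$-skew, which holds because it takes values in $\mathfrak{p}^{\C}\subset\so(5,\C)$. Write $\varphi^{*h}$ via $h(\varphi u,v)=h(u,\varphi^{*h}v)$. The left side equals $Q(\varphi u,\lambda v)=-Q(u,\varphi\lambda v)$. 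The right side equals $Q(u,\lambda\varphi^{*h}v)$. Comparing gives $\lambda\varphi^{*h}=-\varphi\lambda$ up to the sign convention hidden in the middle entry of $Q$ and in the form-type conjugation.

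The main obstacle is this sign bookkeeping. The identity must come out as $\lambda\varphi^{*h}\lambda=\varphi$ (a plus sign), not with a minus sign. This depends on the sign of $Q$ on $\mathcal{V}$ versus $\mathcal{U}$, since $\mathcal{Q}=\mathcal{Q}_{\mathcal{U}}\oplus(-\mathcal{Q}_{\mathcal{V}})$. The extra sign from the $\mathcal{V}$-block is what turns the naive $-\varphi$ into $+\varphi$, because $\varphi$ is off-diagonal between $\mathcal{U}$ and $\mathcal{V}$. Equivalently, $\lambda$ should be taken to be the composite of the Cartan involution $\theta$ with the compact conjugation, and $\theta$ acts by $-1$ on $\mathfrak{p}$. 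If the abstract bookkeeping becomes confusing, I will fall back on a direct check with the explicit matrices: $\varphi$ has entries $\alpha,2,2,\alpha,\beta,\beta$ in the positions of \eqref{SO(2,3)Higgs}, and $\varphi^{*h}=h^{-1}\overline{\varphi}^{T}h$. This is a short computation with five diagonal weights. Combining the two parts gives $\nabla\lambda=0$.
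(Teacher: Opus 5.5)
Your Chern-part argument is correct, but the Higgs part has a genuine gap: the sign you hope to recover does not exist. You define $\lambda$ by $h(\cdot,\cdot)=\mathcal{Q}(\cdot,\lambda\cdot)$ with the indefinite form $\mathcal{Q}=\mathcal{Q}_{\mathcal{U}}\oplus(-\mathcal{Q}_{\mathcal{V}})$. Your chain $h(\hat\varphi u,v)=\mathcal{Q}(\hat\varphi u,\lambda v)=-\mathcal{Q}(u,\hat\varphi\lambda v)$ then already uses the sign of the $\mathcal{V}$-block, because that sign is built into the $\mathcal{Q}$-skewness of $\varphi$; invoking it a second time double counts. The form-type conjugation contributes nothing either: for a real tangent vector $X$, commuting with $dz(X)\hat\varphi+\overline{dz(X)}\hat\varphi^{*h}$ is exactly the condition $\lambda\hat\varphi^{*h}=\hat\varphi\lambda$. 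So $\lambda\hat\varphi^{*h}=-\hat\varphi\lambda$ is the final answer: your $\lambda$ anticommutes with $\varphi+\varphi^{*h}$ and is not $\nabla$-parallel.

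Your matrix fallback confirms this rather than repairing it. Work in the frame $e_1,\dots,e_5$ of $\mathcal{N}^{-1}\oplus\K\oplus\mathcal{O}\oplus\K^{-1}\oplus\mathcal{N}$, with $\hat\varphi e_2=2e_3$ and $\hat\varphi e_3=2e_4$. Since $h(e_3,e_3)=1$ while $\mathcal{Q}(e_3,e_3)=-1$, one gets $\lambda e_3=-e_3$; one also finds $\lambda e_2=h_2e_4$ and $\hat\varphi^{*h}e_3=2h_2^{-1}e_2$. Hence $\lambda\hat\varphi^{*h}e_3=2e_4$, whereas $\hat\varphi\lambda e_3=-2e_4$. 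Note also that $\lambda s=-s$ for the tautological section $s=e_3$, but Proposition~\ref{Prop:HiggsToMaximalSurface} needs $s$ to lie in the real locus.

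What is true is that $\tau:=\theta\circ\lambda$ is $\nabla$-parallel, where $\theta=\id_{\mathcal{U}}\oplus(-\id_{\mathcal{V}})$. Equivalently, $\tau$ is the involution with $h(\cdot,\cdot)=(\mathcal{Q}_{\mathcal{U}}\oplus\mathcal{Q}_{\mathcal{V}})(\cdot,\tau\cdot)$. This is the reading consistent with the rest of the paper: in \S\ref{Subsec:MaximaltoHiggs}, $h=\mathbf{g}(\cdot,\lambda\cdot)$ with $\mathbf{g}$ Euclidean and $\lambda$ fixing $E$. Your remark about composing the Cartan involution with the compact conjugation produces exactly this $\tau$. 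But $\tau$ is a different involution from the one you set up, not an equivalent description of it; your $\lambda$ is the compact conjugation itself.

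With $\tau$ in place of $\lambda$, your two-step scheme goes through unchanged. For the Chern part, $\theta$ is $\nabla^C$-parallel, since $\nabla^C$ is diagonal and preserves $\mathcal{U}$ and $\mathcal{V}$. For the Higgs part, $\theta$ anticommutes with $\varphi$ (which is off-diagonal between $\mathcal{U}$ and $\mathcal{V}$). So $\varphi$ is symmetric, not skew, for $\mathcal{Q}_{\mathcal{U}}\oplus\mathcal{Q}_{\mathcal{V}}$, and your one-line computation now gives $\tau\hat\varphi^{*h}=\hat\varphi\tau$. Moreover $\tau s=s$. The paper gives no argument of its own here, only a citation to \cite[Section 2.3]{CTT19}, so the corrected version of your direct verification is a self-contained substitute for that reference.
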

Such a real structure $\lambda$ is called \emph{adapted} in \cite[Section 2.3]{CTT19}, where the above result is proven. Note that in the case $X$ is compact and $(\E,\varphi)$ is \emph{stable}, a unique harmonic metric $h$ exists and obtains the stated form. However, in the non-compact setting, one must prove existence and uniqueness. Such a task has been undertaken under various \emph{completeness} hypotheses Li and Mochizuki \cite{LM20b, LM25}, and we recall their relevant work in Section \ref{Subsec:HarmonicMetric}, from which we shall extract a preferred diagonal, real harmonic metric $h$ for the polynomial Higgs bundles of interest. \medskip 

Now, we may write $\E^{\R} \cong \R^{2,3}$ for the $\nabla$-parallel real locus of such a flat connection. 
Using this real structure, we now see the associated maximal surface. This construction is well-known since the influential thesis of Baraglia \cite{Bar10}. 

\begin{proposition}[Associated maximal surface]\label{Prop:HiggsToMaximalSurface}
Let $h=\diag(h_1,h_2,1, h_2^{-1},h_1^{-1})$ be a diagonal, real harmonic metric on the Higgs bundle $(\E,\varphi)$ from \eqref{SO(2,3)Higgs}, with flat connection $\nabla := \nabla^{C} +\varphi +\varphi^{*h}$. 

Let $s \in \Omega^0(\C, \E)$ be the tautological section $s(p)=(0,0,1,0,0)$, and let $p_0$ denote the origin in $\C$. 
Fix an identification $\E^{\R}_{\mid p_0}\cong \R^{2,3}$. 
Then $\sigma: \C \rightarrow \Hh$ defined by  
$\sigma(p) := \mathscr{P}^{\nabla}_{p,p_0}(s(p))$, where $\mathscr{P}^{\nabla}$ denotes $\nabla$-parallel transport, is an immersed maximal surface. Moreover, $\alpha$ and $\beta$ are equal to $\alpha(\sigma)$ and $\beta(\sigma)$, respectively,  from Definition \ref{def:HoloDiffAlphaBeta}.
\end{proposition}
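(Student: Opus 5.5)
The plan is to read off the geometry of $\sigma$ from the flat connection $\nabla = \nabla^C + \varphi + \varphi^{*h}$ in the frame adapted to the splitting $\E = \mathcal{N}^{-1}\oplus\mathcal{K}\oplus\mathcal{O}\oplus\mathcal{K}^{-1}\oplus\mathcal{N}$. This amounts to the standard harmonic-bundle-to-surface dictionary of \cite{Bar10}.

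First, we check that $\sigma$ lands in $\Hh$. Since $h$ is real and $\lambda$ is $\nabla$-parallel by the real locus lemma, the real locus $\E^{\R}$ is preserved by parallel transport. The tautological section $s$ is $\lambda$-fixed, because $h$ restricts to $1$ on the $\mathcal{O}$-summand. The form $\Q = \mathcal{Q}_{\mathcal{U}}\oplus(-\mathcal{Q}_{\mathcal{V}})$ is $\nabla$-parallel, since $\nabla^C$ preserves it (as $h$ is compatible with the $\SO$-structure) and $\varphi,\varphi^{*h}$ are $\Q$-skew. Hence $\Q(\sigma,\sigma) = \Q(s,s) = -1$.

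Next, we differentiate. Under parallel transport, $d\sigma$ corresponds to $\nabla s$. Since $s$ is holomorphic and $h$-unitary on the trivial summand $\mathcal{O}$, and $h$ is diagonal, we get $\nabla^C s = 0$. Therefore
\[\nabla s = \varphi(s) + \varphi^{*h}(s).\]
By \eqref{SO(2,3)Higgs}, $\varphi(s) = 2\,\mathbbm{1}\in \mathcal{K}^{-1}\otimes \mathcal{K}$, and $\varphi^{*h}(s)$ is its conjugate lying in $\mathcal{K}\otimes\overline{\mathcal{K}}$. Thus $\sigma_z$ spans the $\mathcal{K}^{-1}$-line and $\sigma_{\zbar}$ spans the $\mathcal{K}$-line. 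The real tangent bundle is therefore the real part of $\mathcal{K}\oplus\mathcal{K}^{-1}=\mathcal{U}$, which is positive definite for $\Q$. Both $\Q(\sigma_z,\sigma_z)=0$ and $\Q(\sigma_z,\sigma_{\zbar})>0$ follow from the off-diagonal shape of $\mathcal{Q}_{\mathcal{U}}$. Hence $\sigma$ is a conformal spacelike immersion, with $\L_\sigma\leftrightarrow\mathcal{O}$, $\T_\sigma^{\C}\leftrightarrow\mathcal{K}\oplus\mathcal{K}^{-1}$ and $\N_\sigma^{\C}\leftrightarrow\mathcal{N}^{-1}\oplus\mathcal{N}$.

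For maximality, we compute $\nabla_{\zbar}\nabla_z s$ and project onto the normal summands $\mathcal{N}^{\pm1}$. The mixed term lands only in $\mathcal{O}\oplus\mathcal{K}\oplus\mathcal{K}^{-1}$. The reason is that $\varphi$ maps $\mathcal{K}^{-1}$ into $\mathcal{O}$ and $\mathcal{N}$ holomorphically, while the $\bar\partial$-component $\varphi^{*h}$ acting on $\mathcal{K}^{-1}$ goes to $\mathcal{O}$ (up to the diagonal Chern part); it does not reach $\mathcal{N}^{\pm1}$. Hence $\II(\partial_z,\partial_{\zbar})=0$, so the mean curvature vanishes and $\sigma$ is maximal by Remark \ref{rem:MaximalVanishMeanCurvature}.

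Then $\II^{2,0}$ is the $\mathcal{N}^{\pm1}$-component of $\nabla_z\nabla_z s$, namely $\varphi(\varphi(s))$ projected. The arrows $\mathcal{K}^{-1}\xrightarrow{\alpha}\mathcal{N}$ and $\mathcal{K}^{-1}\xrightarrow{\beta}\mathcal{N}^{-1}$ give $\II^{2,0} = 2(\alpha+\beta)$, with the constant absorbed by the normalization of $\varphi(s)$. It remains to match $\mathcal{N}$ with $\N_\sigma'$ and $\mathcal{N}^{-1}$ with $\N_\sigma''$, using the orientation convention of Definition \ref{def:HoloDiffAlphaBeta}.

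The main obstacle is this last identification, which has two parts:
\begin{enumerate}[label=(\roman*)]
    \item We must verify that the isotropic lines $\mathcal{N}$ and $\mathcal{N}^{-1}$, intersected with the real structure, are exactly the $\pm i$-eigenspaces of $J$ on $\N_\sigma$. This holds because $\lambda$ swaps them, so their sum is the complexification of a real oriented plane, and each is $\Q$-isotropic.
    \item We must check that the canonical orientation coming from $\L_\sigma\oplus\T_\sigma\oplus\N_\sigma$ and the orientation of $\R^{2,3}$ selects $\mathcal{N}$ rather than $\mathcal{N}^{-1}$ as the $+i$-eigenspace.
\end{enumerate}
I would settle (ii) by a pointwise computation in the frame at $p_0$, choosing the identification $\E^{\R}_{p_0}\cong\R^{2,3}$ compatibly. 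If the sign comes out reversed, the remark on the $\mathcal{N}\leftrightarrow\mathcal{N}^{-1}$ symmetry shows that relabeling fixes it.
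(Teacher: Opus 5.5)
Your proposal is correct and follows essentially the paper's own proof: $\nabla$-parallelism of $\Q$ (and of the real structure) puts $\sigma$ in $\Hh$, and $\nabla s=\varphi(s)+\varphi^{*h}(s)$ with $\sigma_z$ spanning the isotropic line $\mathcal{K}^{-1}$ gives a conformal spacelike immersion; moreover $\nabla_{\zbar}\nabla_z s=\hat\varphi^{*h}\hat\varphi(s)\in\mathcal{O}$ has no normal part, and $\II^{2,0}$ is read off from $\varphi(\varphi(s))$, which is exactly what the paper compresses into ``a direct calculation.'' Two remarks on that last step, which the paper also leaves implicit: the factor in $\II^{2,0}=2(\alpha+\beta)$ is a genuine universal constant coming from the arrows labeled $2$ (harmless, but not something that can be absorbed), and the matching of $\mathcal{N}$ with $\N'_\sigma$ must be checked against a fixed orientation convention on $\mathcal{V}$ (with the identification at $p_0$ taken in the $\SO_0(2,3)$-torsor) rather than repaired by relabeling, because exchanging $(\mathcal{N},\alpha)$ with $(\mathcal{N}^{-1},\beta)$ reverses the orientation of $\mathcal{V}$, and restoring it via $-\id$ replaces $s$ by $-s$, i.e.\ $\sigma$ by $-\sigma$.
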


\begin{proof}
Note that since $\nabla$ is flat and $\C$ is simply connected, the $\nabla$-parallel transport $\mathscr{P}_{p,p_0}^{\nabla}$ from $\E_{p}$ to $\E_{p_0}$ is well-defined independent of path. 

Next, observe that $Q(s,s) =-1$. Since $h$ is $\nabla^{C}$-parallel and $\varphi+\varphi^{*h} \in \so(\mathcal{Q})$, the bundle quadratic form $\mathcal{Q}$ is $\nabla$-parallel. Thus, $\sigma(p) \in Q_-(\R^{2,3})=\Hh$. Hence, $\sigma: \C \rightarrow \Hh$ is well-defined. 
We now verify its adjectives. 

Now, the differential $d\sigma$ corresponds to $\nabla \sigma $, which allows one to easily conclude that $\sigma$ is immersed. The plane  
$\spann_{\R} \{ \nabla_{\der{x}}\sigma, \nabla_{\der{y}}\sigma\}$ corresponds to $\T_{\sigma}$ and hence $\sigma$ is spacelike. The condition $ \mathcal{Q}(\nabla_{\der{x}}\sigma) = \mathcal{Q}( \nabla_{\der{y}}\sigma) $ shows $\sigma$ is conformal.
Let us write $\varphi = \hat{\varphi}dz,\, \varphi^{*h}= \hat{\varphi}^{*h}d\zbar $. Finally, 
$ \nabla_{\zbar}\nabla_{z} \sigma = \hat{\varphi}^{*h}\circ \hat{\varphi}(\sigma) $ has zero $\mathcal{Q}$-projection onto $\sigma^\bot$, which shows that $\sigma$ is harmonic, and thus maximal. The `moreover' statement follows from a direct calculation. 
\end{proof}

Observe that the only choice in Proposition  \ref{Prop:HiggsToMaximalSurface} was the framing of $\E^{\R}_{\mid p_0}$, which is an $\SO_0(2,3)$-torsor. This explains the uniqueness of $\sigma$ up to post-composition. 

\subsection{Maximal surfaces to Higgs bundles}\label{Subsec:MaximaltoHiggs}

We now describe the converse correspondence: given a parametrized maximal surface $\sigma: \C \rightarrow \Hh$, we recover a $4$-cyclic $\SO_0(2,3)$-harmonic bundle $(\E,\Phi, h)$ over $\C$ that obtains the form described in the previous subsection. While this discussion is mildly adapted to the setting of $X=\C$, minor modifications will treat also the equivariant case, where $\sigma: \tilde{X} \rightarrow \H$ is $\rho: \pi_1(X) \rightarrow \SO_0(2,3)$-equivariant, and $(\E,\Phi)$ is a Higgs bundle on a closed Riemann surface $X$. This correspondence is well-known to experts, and the whole discussion runs in parallel to \cite[Section 3.5]{CT24}, where the analogous but more involved case of alternating holomorphic curves in $\mathbf{H}^{4,2}$ is studied in relation to harmonic $\mathsf{G}_2'$-bundles. A similar discussion occurs in \cite[Section 2.2]{LM19} for equivariant harmonic maps to hyperbolic space $\mathbf{H}^n$. 

\subsubsection{The differential geometry setup}
Now, let $\sigma: \C \rightarrow \Hh$ be a parametrized maximal surface. We consider the trivial pseudo-Euclidean vector bundle $E \rightarrow \Hh$ given by $E:=\Hh \times \R^{2,3}$. 
Here, we endow $E$ with the usual signature $(2,3)$-metric $\Q$, and also with the trivial flat connection $D$. We can consider the complexified vector bundle $E^{\C}$, where $E^{\C}$ carries the complex linear and bilinear extensions of $D$ and $\Q$ respectively, both of which we  denote by their original symbols. 

The pullback bundle $\sigma^*E$ decomposes under the Frenet framing of $\sigma$ into
$\sigma^*E= \L \oplus \T \oplus \N$, where $\mathbf{L},\mathbf{T},\mathbf{N}$ are the position line, tangent plane, and normal palne of $\sigma$, respectively. 

By construction of the Frenet framing, the pullback connection $\sigma^*D$ decomposes as: 
\begin{align}
    \sigma^*D = \begin{pmatrix} \nabla_{\L} & -\I^{*_{\Q}} & \\ \I & \nabla_{\T} & -\II^{*_{\Q}}\\
    & \II & \nabla_{\N}\end{pmatrix} .
\end{align}
Given the Frenet frame splitting, we can convert $\Q$ into a Euclidean metric $\mathbf{g}$ by flipping signature on the negative subspaces. That is,
\[ \mathbf{g} := (-\Q|_{\L}) \oplus \Q|_{\T} \oplus (-\Q|_{\N})\ .\]
We extend $\mathbf{g}$ complex bilinearly to $E^{\C}$. 

Now, let us define the $\C$-anti-linear involution $\lambda: \sigma^*E^{\C} \rightarrow \sigma^*E^{\C}$ that fixes $E$. 
Observe that there is a hermitian metric $h$ on $E^{\C}$ given by 
\[ h( \cdot, \cdot ) = \mathbf{g}( \cdot, \lambda(\cdot ))\ .\]

\subsubsection{The harmonic bundle via harmonicity of $\sigma$}

We now decompose the pullback flat connection $\sigma^*D$ on $\sigma^*E^{\C}$. Indeed, we can write 
\begin{align}
    \sigma^* D &= D_h \oplus \Psi_h,
\end{align}
where the two terms are as follows:
\begin{align}
    D_h &=  \begin{pmatrix} \nabla_{\L} &  & \\  & \nabla_{\T} & \\
    & & \nabla_{\N}\end{pmatrix}, \\
    \Psi_h &=  \begin{pmatrix}  & -\I^{*_{\Q}} & \\ \I &  & -\II^{*_{\Q}}\\
    & \II & \end{pmatrix} .
\end{align}
Here, $D_h$ is a $K\cong \SO(2)\times \SO(3)$-connection.
The 1-form $\Psi_h \in \Omega^1(\C, \End(\sigma^*E^{\C}))$ is $\Q$-anti-self-adjoint or, equivalently, $h$-self-adjoint. Thus, $\Psi_h \in \Omega^1(\so(\mathbf{Q}))$.

We finally form the associated Higgs bundle $(\E,\Phi)$ as follows. We decompose $D_h$ and $\Psi_h$ by type and then define:
\begin{align}\label{MetrictoHiggsBundle}
\begin{cases}
    \E &= (\sigma^*E^{\C}, D_h^{0,1}),\\
    \Phi &= \Psi_h^{1,0}
\end{cases}
\end{align}
To verify this construction makes sense, we have the following proposition. This result is a completely general fact, which is explained in \cite{Li19}. However, we can handle the current case more directly. 

\begin{proposition}[Maximal surface to Higgs bundle]\label{Prop:HarmonicMetricToHiggs}
Given a maximal surface $\sigma: \C \rightarrow \Hh$, the pair $(\E, \Phi)$ in \eqref{MetrictoHiggsBundle} is a Higgs bundle. That is, $D_h^{0,1}(\Psi_h^{1,0})=0$. 
\end{proposition}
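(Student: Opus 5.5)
The plan is to use the flatness of $\sigma^*D$. Since $D$ is the trivial flat connection on $E$, its pullback $\sigma^*D = D_h + \Psi_h$ is flat. We split the curvature $F_{\sigma^*D}=0$ into its $\mathfrak{k}$-part (block diagonal) and its $\mathfrak{p}$-part (block off-diagonal). This is valid because $D_h$ preserves the splitting $\L\oplus\T\oplus\N$ and $\Psi_h$ exchanges the blocks $\L\oplus\N$ and $\T$. Concretely,
\[ 0 = F_{D_h} + \tfrac12[\Psi_h\wedge\Psi_h] + d^{D_h}\Psi_h ,\]
and the first two terms are block diagonal while the last is off-diagonal. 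Hence $d^{D_h}\Psi_h = 0$.

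Next we decompose by type. Write $\Psi_h = \Psi_h^{1,0}+\Psi_h^{0,1}$ and $D_h = D_h^{1,0}+D_h^{0,1}$. The $(1,1)$-component of $d^{D_h}\Psi_h=0$ reads
\[ D_h^{0,1}\Psi_h^{1,0} + D_h^{1,0}\Psi_h^{0,1} = 0 .\]
This alone does not give the claim, so we need one further relation. Harmonicity of $\sigma$ supplies it: the tension field $\operatorname{tr}_g \nabla d\sigma$ vanishes.

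Rather than invoke general theory, we verify $D_h^{0,1}\Psi_h^{1,0}=0$ block by block in a conformal coordinate $z$.

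\textbf{The $\I$-blocks.} The entry $\I^{1,0}$ is the map $\L \to \T^{\C}$, $\sigma \mapsto \sigma_z\,dz$. Applying $D_h^{0,1}$ gives the $\T$-component of $\nabla_{\zbar}\sigma_z$. For a conformal harmonic map this component vanishes: conformality gives $\Q(\sigma_z,\sigma_z)=0$, and maximality (vanishing mean curvature, Remark \ref{rem:MaximalVanishMeanCurvature}) gives $\nabla_{\zbar}\sigma_z \in \L$. The adjoint block $-(\I^{*_\Q})^{1,0}$ then follows by $\Q$-duality, since $D_h$ is $\Q$-compatible.

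\textbf{The $\II$-blocks.} Here $\II^{1,0}(\partial_{\zbar})\,dz$ is the $(1,1)$-part $\II(\partial_z,\partial_{\zbar})$, which is zero by minimality. The remaining piece, viewed as a section of $\K\otimes\Hom(\T^{\C},\N^{\C})$, is governed by $\II^{2,0}$. The condition that $D_h^{0,1}$ kills it is exactly the holomorphicity of $\II^{2,0}$ as a section of $\K^2\otimes\N^{\C}$, which follows from Codazzi and minimality as in Proposition \ref{Prop:AlphaBetaHolomorphic}. The blocks $-\II^{*_\Q}$ follow by duality once more.

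The main obstacle is bookkeeping. We must identify $\Psi_h^{1,0}$ correctly, since $\T^{\C}=\T^{1,0}\oplus\T^{0,1}$ mixes with the form type, and we must check that the $(0,1)$-covariant derivative of each block is exactly the Codazzi or harmonicity expression. I expect to organize this by splitting $\T^{\C}$ into $\K^{-1}\oplus\K$ via $\sigma_z,\sigma_{\zbar}$, so that $\Psi_h^{1,0}$ takes the cyclic form of \eqref{SO(2,3)Higgs}. Holomorphicity of each entry then reduces to three facts: $\sigma_z$ is a holomorphic section of $\K^{-1}$-twisted $\T$ (conformality plus harmonicity), $\alpha$ and $\beta$ are holomorphic (Proposition \ref{Prop:AlphaBetaHolomorphic}), and the constant entries are trivially holomorphic.
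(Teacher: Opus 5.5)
Your proposal is correct and follows essentially the same route as the paper: a block-by-block check. In it, $D_h^{0,1}$ kills the $\I^{1,0}$-block because $\pi_{\T^{\C}}(\sigma_{z\zbar})=0$, and kills the $\II^{1,0}$-block because minimality gives $\T''\subset\ker\II^{1,0}$ while $\II^{2,0}=\alpha+\beta$ is holomorphic by Codazzi (Proposition \ref{Prop:AlphaBetaHolomorphic}). Your opening flatness/Cartan-decomposition discussion is not needed for this. Your $\Q$-duality remark for the adjoint blocks makes explicit a step the paper leaves implicit.
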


\begin{proof}
The desired result is tantamount to $D^{0,1}_h\I^{1,0} =0$ and $D^{0,1}_h\II^{1,0}=0$. 

In fact, the equation $D_h^{0,1}\I^{1,0}=0$
is \emph{precisely} the harmonicity constraint for $\sigma$, as it is equivalent to the assertion that $\I^{1,0}(\der{z},\cdot)$ maps $\sigma \in H^0(\L^{\C})$ to $\sigma_z \in H^0(\T^{\C})$. Equivalently, in coordinates $\nabla_{\T^{\C}}(\der{\zbar},\sigma_z) = \pi_{\T^{\C}}(\sigma_{z\zbar})=0$. 
Hence, we conclude $D_h^{0,1}\I^{1,0}=0$ holds. \medskip

Let us split $\T^{\C} = \T' \oplus \T''$. Using harmonicity, one finds $\II^{1,0} \in \Omega^{1,0}(\Hom(\T^{\C},\N^{\C}))$ satisfies $\T'' \subset \ker(\II^{1,0})$. Hence, $\II^{1,0}$ can be conflated with $\II^{2,0} \in \Omega^{2,0}(\N^\C)$. To verify $\II^{2,0}$ is $D_h^{0,1}$-holomorphic, note that $\II^{0,1}(\der{z},\cdot)$ sends the holomorphic section $\sigma_z$ spanning $\T^{1,0}$ to $\II^{2,0}(\der{z},\der{z}) = \alpha(\der{z},\der{z})+\beta(\der{z},\der{z})$, which is holomorphic, by Proposition \ref{Prop:AlphaBetaHolomorphic}.
Hence, $D_h^{0,1}\II^{1,0}=0$.
\end{proof}

We now explain why $\E$ decomposes into holomorphic line bundles such that $\Phi$ obtains the form $\Phi = \Phi(\alpha,\beta)$ as in \eqref{SO(2,3)Higgs}.

To this end, recall the splittings $\N^{\C} = \N' \oplus \N''$ and $\T^{\C} = \T'\oplus \T''$. We can more simply describe these sub-bundles $\T',\T'',\N',\N''$ as those determined by
$\sigma_z,\sigma_{\zbar}, \pi_{\N^{\C}}(\sigma_{zz}), \pi_{\N^{\C}}(\sigma_{\zbar \, \zbar})$. 
We have both the hermitian metric $h$ and the holomorphic structure $D_h^{1,0}$ on $\E$. 
We may then regard $\L^{\C},\T^{\C}, \N^{\C}$ as holomorphic sub-bundles of $\E$ now. 
Under this decomposition, note that $\T^{\C}$ decomposes into holomorphic sub-bundles $(\T^{\C}, D_h^{1,0})= \mathcal{T}\oplus \mathcal{T}^{-1}$ and 
$(\N^{\C}, D_h^{1,0})= \mathcal{N}\oplus \mathcal{N}^{-1}$,
where $\mathcal{T},\mathcal{T}^{-1},\mathcal{N},\mathcal{N}^{-1}$ have underlying complex line bundles $\T',\T'',\N',\N''$, respectively.

\begin{proposition}[Maximal surface to harmonic bundle]\label{Prop:MaximaltoHiggs}
Let $\sigma: \C \rightarrow \Hh$ be a parametrized maximal surface. 

Define $(\E,\Phi)$ as in \eqref{MetrictoHiggsBundle}. Under the decomposition 
\[ \E = \mathcal{N}^{-1} \oplus \mathcal{T}^{-1} \oplus \mathcal{L} \oplus \mathcal{T} \oplus \mathcal{N}\ ,\] 
the Higgs field $\Phi$ and harmonic metric $h$ obtain the form:
\begin{equation}
    \begin{tikzcd}
\mathcal{N}^{-1} \arrow[r, "\alpha"] & \mathcal{T}^{-1}\arrow[r, "\gamma"]&\mathcal{L}\arrow[r, "\gamma"]&\mathcal{T} \arrow[r, "\alpha"]\arrow[bend left, lll, "\beta"]&\mathcal{N}\arrow[bend left, lll, "\beta"],\\
    \end{tikzcd} 
\end{equation} 
\begin{equation}
 h = \diag(h_1,h_2, 1,h_2^{-1},h_1^{-1}).
\end{equation}
Here, $\gamma$ is non-vanishing. Thus, writing $\mathcal{L}\cong \mathcal{O}$, $\mathcal{T}\cong \K^{-1}$, the Higgs bundle obtains the form \eqref{SO(2,3)Higgs}. 
\end{proposition}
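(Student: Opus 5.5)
We must show three things: the block form of $\Phi=\Psi_h^{1,0}$ in the decomposition $\E=\mathcal{N}^{-1}\oplus\mathcal{T}^{-1}\oplus\mathcal{L}\oplus\mathcal{T}\oplus\mathcal{N}$, the diagonal form of $h$, and the nonvanishing of $\gamma$. The plan is to compute each block of $\Psi_h^{1,0}$ directly from the Frenet decomposition of $\sigma^*D$. For this I will use the local frame $\sigma$ of $\L$, the frame $\sigma_z,\sigma_{\zbar}$ of $\T',\T''$, and frames $\nu',\nu''$ of $\N',\N''$. I first check that these line bundles are holomorphic subbundles of $(\sigma^*E^{\C},D_h^{0,1})$. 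Since $D_h=\nabla_{\L}\oplus\nabla_{\T}\oplus\nabla_{\N}$ is block diagonal, it suffices that $\nabla_{\T}$ preserves $\T'$ and $\T''$, and that $\nabla_{\N}$ preserves $\N'$ and $\N''$. The first holds because the Levi-Civita connection of a surface commutes with the complex structure $J$ on $\T$. The second holds because $\nabla_{\N}$ is metric and orientation preserving on the rank two bundle $\N$, hence commutes with its complex structure $J$.

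The block structure of $\Phi$ then comes from type and isotropy considerations.
\begin{itemize}
\item \textbf{Tangent blocks.} $\I^{1,0}$ is $dz\otimes(\sigma\mapsto\sigma_z)$, so it maps $\mathcal{L}\to\mathcal{T}$; this is $\gamma$. Its $\Q$-adjoint $-(\I^{*_\Q})^{1,0}$ pairs $\T''$ with $\L$, since $\Q(\sigma_{\zbar},\sigma_z)\ne0$, so it maps $\mathcal{T}^{-1}\to\mathcal{L}$, again with coefficient $\gamma$ up to the normalization. Both are nonvanishing because $\sigma$ is an immersion, so $\Q(\sigma_z,\sigma_{\zbar})=\tfrac12 g(\partial_x,\partial_x)>0$.
\item \textbf{Normal blocks.} By the proof of Proposition \ref{Prop:HarmonicMetricToHiggs}, $\II^{1,0}$ kills $\T''$ (harmonicity) and sends $\sigma_z$ to $\II^{2,0}=\alpha+\beta$. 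Hence $\II^{1,0}$ gives the arrows $\mathcal{T}\to\mathcal{N}$ (via $\alpha$) and $\mathcal{T}\to\mathcal{N}^{-1}$ (via $\beta$).
\item \textbf{Adjoint normal blocks.} The $(1,0)$-part of $-\II^{*_\Q}$ is the $\Q$-adjoint of these. Since $\N'$ and $\N''$ are $\Q$-isotropic and dual to each other, as are $\T'$ and $\T''$, the adjoint of $\alpha:\mathcal{T}\to\mathcal{N}$ is a map $\mathcal{N}^{-1}\to\mathcal{T}^{-1}$, and the adjoint of $\beta$ is $\mathcal{N}\to\mathcal{T}^{-1}$.
\end{itemize}
Under the identifications $\mathcal{T}^{-1}\cong\mathcal{T}^*$ and $\mathcal{N}^{-1}\cong\mathcal{N}^*$ induced by $\Q$, these adjoints are again $\alpha$ and $\beta$. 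This reproduces exactly the diagram \eqref{SO(2,3)Higgs}, and no other blocks appear.

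For the metric, $h(\cdot,\cdot)=\mathbf{g}(\cdot,\lambda\cdot)$, and $\L,\T,\N$ are mutually $\mathbf{g}$-orthogonal real subbundles. Complex conjugation $\lambda$ swaps $\T'\leftrightarrow\T''$ and $\N'\leftrightarrow\N''$, and each of these is $\mathbf{g}$-isotropic. Hence $h$ is diagonal in the five line subbundles: for instance, $h(\sigma_z,\sigma_{\zbar})=\mathbf{g}(\sigma_z,\sigma_z)=\Q(\sigma_z,\sigma_z)=0$ by conformality. With $\sigma$ of unit length we get $h|_{\mathcal{L}}=1$. Moreover $h(\sigma_z,\sigma_z)=\Q(\sigma_z,\sigma_{\zbar})$, and in dual frames the $\mathcal{T}^{-1}$ entry is its inverse, giving $h_2$ and $h_2^{-1}$; the same reasoning in $\N$, using $-\Q$, gives $h_1$ and $h_1^{-1}$.

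Finally, since $\gamma$ is a nonvanishing holomorphic section of $\Hom(\mathcal{L},\mathcal{T})\otimes\K$, it gives an isomorphism $\mathcal{T}\cong\mathcal{L}\otimes\K^{-1}$. Also $\mathcal{L}$ is trivialized by $\sigma$, so $\mathcal{L}\cong\mathcal{O}$ and $\mathcal{T}\cong\K^{-1}$. Rescaling $\gamma$ to the constant $2$ is then only a choice of trivialization. I expect the main difficulty to be bookkeeping rather than anything conceptual: keeping track of conventions (which of $\mathcal{N},\mathcal{N}^{-1}$ is $\N'$, signs in the adjoints, and the factor $2$) so that the adjoint arrows match \eqref{SO(2,3)Higgs} exactly.
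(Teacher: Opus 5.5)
Your proposal is correct and follows essentially the same route as the paper's proof. The shared steps are:
\begin{enumerate}
\item $\Q$-isotropy exhibits $\mathcal{T}^{\pm 1}$ and $\mathcal{N}^{\pm 1}$ as dual pairs.
\item $\lambda$ swaps these pairs, which forces $h$ to be diagonal.
\item $\Phi|_{\mathcal{L}}=\I^{1,0}=\gamma$, and $\gamma$ is nonvanishing because $\sigma$ is immersed.
\item $\Phi|_{\mathcal{T}}=(\beta,\alpha)$, with $\mathcal{T}^{-1}\subset\ker(\II^{1,0})$.
\item The remaining arrows are the $\Q$-adjoints.
\end{enumerate}
You also make explicit several details the paper leaves implicit: that $D_h$ preserves the five line subbundles, the reciprocal entries of $h$ in dual frames, and the normalization of $\gamma$ to the constant $2$.
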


\begin{proof}
The argument is essentially contained in the above remarks, so we only elaborate briefly. Now, since $\mathcal{T}$ and $\mathcal{T}^{-1}$ are each $\Q$ isotropic, we see $\Q$ defines a non-degenerate pairing between the two that exhibits them as holomorphic duals. 
Similar remarks apply to $\mathcal{N}$ and $\mathcal{N}^{-1}$. Since the real structure $\lambda$ exchanges $\mathcal{N},\mathcal{N}^{-1}$ and $\mathcal{T},\mathcal{T}^{-1}$, we see these pairs are $h$-orthogonal. Thus, $h$ obtains the desired form. 

By definition, the Higgs field $\Phi$ restricts to obtain the form 
$\Phi|_{\mathcal{T}}: \mathcal{T} \rightarrow (\mathcal{N}^{-1} \oplus \mathcal{N})\otimes \K$, where 
$\Phi|_{\mathcal{T}} = (\beta, \alpha)$ in our earlier definition. We also saw in Proposition \ref{Prop:HarmonicMetricToHiggs} that $\mathcal{T}^{-1} \in \ker(\II ^{1,0})$. Similarly, $\Phi|_{\mathcal{L}} = \I^{1,0} = \gamma$ in this notation, with $\gamma$ is non-vanishing since $\sigma$ is immersed. 

Finally, the second copies of $\alpha$, $\beta$, $\gamma$ in the Higgs field $\Phi$ correspond to $-(\I^{1,0})^{*_{\Q}}$ and $-(\II^{1,0})^{*_{\Q}}$. 
\end{proof}

We record Hitchin's equations for the Higgs bundle governing maximal surfaces in order to extract an important but elementary consequence. 

\begin{proposition}[Hitchin's equations for maximal surfaces]
\label{prop:HitchinsEquationsSO(2,3)}
Let $(\E,\varphi)$ be the Higgs bundle \eqref{SO(2,3)Higgs}. Let $h= \diag(h_1, h_2,1,h_{2}^{-1},h_{1}^{-1})$ be a diagonal and real hermitian metric. Then $h$ is harmonic if and only if 
\begin{align}\label{Eq:HitchinsEquationsSO(2,3)}
    \begin{cases}
        \log(h_1)_{z\zbar} = ||\beta||_h^2-||\alpha||_h^2\ ,\\
        \log(h_2)_{z\zbar} = ||\alpha||_h^2+||\beta||_h^2-||2||_{h}^2\ . 
    \end{cases}
\end{align}
\end{proposition}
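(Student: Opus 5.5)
The plan is to compute the Chern curvature and the commutator term $[\varphi,\varphi^{*h}]$ directly, since both are diagonal for a diagonal metric. Then Hitchin's equation \eqref{HitchinsEquation} reduces entry by entry to scalar equations.

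First, the curvature. We work in a local holomorphic frame adapted to the splitting $\mathcal{N}^{-1}\oplus\K\oplus\mathcal{O}\oplus\K^{-1}\oplus\mathcal{N}$, in which $h=\diag(h_1,h_2,1,h_2^{-1},h_1^{-1})$. The Chern connection is $\partial+h^{-1}\partial h$. Its curvature is
\[F_{\nabla^C}=\delb(h^{-1}\partial h)=\diag\big(\delb\partial\log h_1,\ \delb\partial\log h_2,\ 0,\ -\delb\partial\log h_2,\ -\delb\partial\log h_1\big),\]
that is, $-\log(h_j)_{z\zbar}\,dz\wedge d\zbar$ in the $j$-th slot (up to the sign convention for $\delb\partial$ versus $\partial\delb$).

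Next, the Higgs field term. Write $\varphi=\hat\varphi\,dz$, so $\varphi^{*h}=\hat\varphi^{*h}d\zbar$ with $\hat\varphi^{*h}=h^{-1}\overline{\hat\varphi}^{T}h$. Then $[\varphi,\varphi^{*h}]=[\hat\varphi,\hat\varphi^{*h}]\,dz\wedge d\zbar$.

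The key structural point is the cyclic grading of $\varphi$. Assign the summands weights $-2,-1,0,1,2$ modulo $5$. Every arrow in \eqref{SO(2,3)Higgs} raises the weight by one mod $5$: the two $\beta$ arrows go $1\to -2$ and $2\to -1$. So $\hat\varphi$ has degree $+1$ and $\hat\varphi^{*h}$ has degree $-1$ with respect to a diagonal metric. Hence $[\hat\varphi,\hat\varphi^{*h}]$ is diagonal, and the off-diagonal parts of \eqref{HitchinsEquation} vanish automatically.

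Each diagonal entry of $[\hat\varphi,\hat\varphi^{*h}]$ is the sum of the squared norms $\|\cdot\|_h^2$ of the arrows leaving that summand, minus those of the arrows entering it (or the reverse, depending on convention). The norms are computed with $h$ on source and target and the flat metric on $dz$. For example, $\|\alpha\|_h^2=|\alpha|^2h_1^{-1}h_2^{-1}$ for $\alpha:\mathcal{N}^{-1}\to\K$.

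At $\mathcal{N}^{-1}$, the outgoing arrow is $\alpha$ and the incoming arrow is $\beta$ from $\K^{-1}$. At $\K$, the outgoing arrow is $2$ and the incoming arrows are $\alpha$ and $\beta$ (from $\mathcal{N}$). The middle entry cancels, and the last two entries are negatives of the first two by $\SO$-symmetry. Using the reality condition, the paired arrows carry the same norm; for instance, the two copies of $\alpha$ have norms $|\alpha|^2h_1^{-1}h_2^{-1}$ and $|\alpha|^2h_2 h_1\cdot(h_1h_2)^{-2}$, which agree. So the first entry is $\pm(\|\beta\|^2-\|\alpha\|^2)$ and the second is $\pm(\|\alpha\|^2+\|\beta\|^2-\|2\|^2)$.

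Equating the entries with the curvature term gives \eqref{Eq:HitchinsEquationsSO(2,3)}. Conversely, since all other entries of \eqref{HitchinsEquation} vanish identically or are determined by these two, the two scalar equations are equivalent to harmonicity.

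The main obstacle is bookkeeping: fixing sign conventions so that the final signs match the stated system, and checking that the paired arrows really carry equal $h$-norms. That equality is exactly where the reality of $h$ enters.
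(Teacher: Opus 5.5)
Your route is the paper's own: compute $F_{\nabla^C}$ and $[\hat\varphi,\hat\varphi^{*h}]$ entrywise in the adapted frame. Your diagonal bookkeeping is fine. Each diagonal entry is incoming minus outgoing norms, the $\mathcal{O}$-entry cancels, and the last two entries are the negatives of the first two, which gives \eqref{Eq:HitchinsEquationsSO(2,3)}.

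The gap is in the argument that all off-diagonal entries vanish. The weights $-2,-1,0,1,2$ do not give a $\Z_5$-grading: each $\beta$ arrow changes the weight by $-3\equiv 2 \pmod 5$, not by $1$. In fact no $\Z_5$-grading exists. The chain arrows force consecutive weights, so the $\beta$ arrows have degree $-3$, and $-3\equiv 1 \pmod m$ forces $m\mid 4$. The actual structure is the $\Z_4$-grading; the paper notes that \eqref{SO(2,3)Higgs} is $4$-cyclic as an $\SL(5,\C)$-Higgs bundle. In that grading $\mathcal{N}^{-1}$ and $\mathcal{N}$ have the \emph{same} degree.

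So the grading only shows that $[\hat\varphi,\hat\varphi^{*h}]$ is block-diagonal, and the $2\times 2$ block on $\mathcal{N}^{-1}\oplus\mathcal{N}$ is left uncontrolled. Its $\Hom(\mathcal{N}^{-1},\mathcal{N})$ entry is $\alpha\circ\beta^{*h}-\beta^{*h}\circ\alpha$, routed through $\K^{-1}$ and through $\K$ respectively. For an arbitrary diagonal metric $\diag(k_1,\dots,k_5)$, in a local frame it equals $\pm\, a\overline{b}\,(k_2/k_5-k_1/k_4)$, where $a,b$ are the local coefficients of $\alpha,\beta$. This is not zero in general. It vanishes here only because $k_1k_5=k_2k_4$, i.e.\ because $h$ is real, together with the sign relation between the two copies of $\alpha$ (resp.\ $\beta$) imposed by $\varphi\in\so(\mathcal{Q})$.

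The conclusion is true, but you need an extra step. One clean way: for real $h$, $\varphi^{*h}$ is again $\mathcal{Q}$-skew (the paper uses this in the proof of Proposition~\ref{Prop:HiggsToMaximalSurface}), so $[\varphi,\varphi^{*h}]$ is $\mathcal{Q}$-skew. A $\mathcal{Q}$-skew endomorphism $X$ has no $\Hom(\mathcal{N},\mathcal{N}^{-1})$ or $\Hom(\mathcal{N}^{-1},\mathcal{N})$ component. Indeed, for $u,v$ in the isotropic line $\mathcal{N}$, $\mathcal{Q}(Xu,v)$ is a skew form on a line bundle, hence zero, and $\mathcal{Q}$ pairs $\mathcal{N}$ nondegenerately only with $\mathcal{N}^{-1}$.

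This step is what the ``if'' direction requires. The forward implication only reads off diagonal entries. To deduce harmonicity from the two scalar equations, you must know that every off-diagonal entry of \eqref{HitchinsEquation} vanishes identically. Consequently, reality enters in two places, not only in matching the norms of paired arrows.

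A minor slip: with $h_j$ the metric on the $j$-th summand, $\|\alpha\|_h^2=|\alpha|^2h_2h_1^{-1}$ (target over source) for both copies of $\alpha$, not $|\alpha|^2h_1^{-1}h_2^{-1}$. This does not affect the final equations, which are written in terms of $\|\cdot\|_h$.
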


\begin{proof}
This is a direct calculation using $F_{\nabla^C} = \log(h)_{z\zbar} d\zbar \wedge dz $
and $[\varphi, \varphi^{*h}]= [\hat{\varphi},\hat{\varphi}^{*h}]dz\wedge d\zbar$. 
\end{proof}

\begin{remark}
Note that in a frame $(s_i)_{i=1}^5$ for $\E = \bigoplus_{i=1}^5 \mathcal{L}_i$, if $h$ is written as $h= \diag(\hat{h}_i)_{i=1}^5$ and $\phi \in H^0(\Hom(\mathcal{L}_{i},\mathcal{L}_{i+1})\otimes \K)$ is written as $\phi=\hat{\phi}_idz$, then 
\[ ||\phi||^2_h=|\hat{\phi}_i|^2\hat{h}_i\hat{h}_{i-1}^{-1}|dz|^2. \]
\end{remark}

Next, we consider the associated metrics $||\alpha||^2_h$ and $||\beta||^2_h$ when the maximal surface is asymptotically flat. Here, we denote by $\mathcal{Z}(\cdot)$ the vanishing set of a given tensor. 

\begin{corollary}[Mutual boundedness]\label{cor:MutualBoundedness}
Let $\sigma: \C \rightarrow \Hh$ be a conformally parametrized maximal surface, $g$ its complete induced metric, and $(\E,\varphi)$ the associated Higgs bundle via Proposition \ref{Prop:MaximaltoHiggs}, written in the form \eqref{SO(2,3)Higgs}. If $g$ is asymptotically flat, then it is mutually bounded with the metrics $||\alpha||_h^2$ and $||\beta||_h^2 $ on $\C \setminus K$ for any relatively compact open set $K$ containing $\mathcal{Z}(Q_4)$. 
\end{corollary}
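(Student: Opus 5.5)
The plan is to read everything off Hitchin's equations (Proposition \ref{prop:HitchinsEquationsSO(2,3)}), using the dictionary of Proposition \ref{Prop:MaximaltoHiggs}. The upper bounds will come from the Gauss equation, i.e.\ the second equation in \eqref{Eq:HitchinsEquationsSO(2,3)}. The lower bounds will come from the $h$-independence of the product $\|\alpha\|_h\|\beta\|_h$, combined with a comparison between $g$ and the flat metric $|Q_4|^{1/2}$.

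\emph{Step 1: the induced metric.} By Proposition \ref{Prop:MaximaltoHiggs}, $\Phi|_{\mathcal{L}}=\gamma=\I^{1,0}$. Hence $g$ is a fixed constant multiple of $\|\gamma\|_h^2=\|2\|_h^2$, which equals $4h_2^{-1}|dz|^2$ up to normalization. Substituting $\log h_2 = -\log(g/|dz|^2)+\mathrm{const}$ into the second equation of \eqref{Eq:HitchinsEquationsSO(2,3)} and using $K_g=-\tfrac{2}{\hat g}\,\partial_z\partial_{\zbar}\log \hat g$ (where $g=\hat g|dz|^2$), we obtain a Gauss-type identity
\[
K_g = -1 + c\,\frac{\|\alpha\|_h^2+\|\beta\|_h^2}{g}
\]
for a universal constant $c>0$. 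I will fix normalizations so that $c=1$.

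\emph{Step 2: upper bounds.} Set $a=\|\alpha\|_h^2/g$ and $b=\|\beta\|_h^2/g$, which are nonnegative functions. Asymptotic flatness means $K_g\to 0$ at infinity, so $a+b\to 1$. In particular $a,b\le C$ on $\C\setminus K$. Each of $a,b$ is continuous, and on the compact set $\overline{K'}\setminus K$ (for a large disc $K'$) they are bounded. Thus $\|\alpha\|_h^2\le Cg$ and $\|\beta\|_h^2\le Cg$ on all of $\C\setminus K$.

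\emph{Step 3: lower bounds.} By the remark computing $\|\phi\|_h^2$ in a frame, we have $\|\alpha\|_h^2=|A|^2h_1^{-1}h_2|dz|^2$ and $\|\beta\|_h^2=|B|^2h_1h_2^{-1}\cdots$. In any case the $h_1$ factors cancel in the product, giving
\[
\|\alpha\|_h^2\,\|\beta\|_h^2 = C\,|Q_4|^2 = C\,\big(|Q_4|^{1/2}\big)^{4}
\]
as an identity of (squared) conformal densities. Equivalently, $ab = C\,(|Q_4|^{1/2}/g)^2$.

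Suppose we know $g\le C'|Q_4|^{1/2}$ on $\C\setminus K$. Then $ab\ge\epsilon>0$ there, and combined with $a,b\le C$ this gives $a,b\ge \epsilon/C$. That is the desired mutual boundedness. Notably, we never need to know which of $a$ or $b$ tends to $1$.

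\emph{Main obstacle: comparing $g$ with $|Q_4|^{1/2}$.} The inequality $g\le C'|Q_4|^{1/2}$ away from $\mathcal{Z}(Q_4)$ is the one genuinely non-formal input. I would first try to import it from \cite{Mor25}, where asymptotic flatness of polygonal maximal surfaces is established precisely through the flat metric $|Q_4|^{1/2}$.

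If a self-contained argument is needed, I would apply Cheng–Yau/Omori–Yau maximum principle arguments to $u=\log(g/|Q_4|^{1/2})$ on the complete surface. This function satisfies
\[
\Delta_g u = -2K_g = 2 - 2(a+b) + (\text{lower order}),
\]
and combining this with $a+b\le 2\sqrt{ab}+|a-b|$ should control $\sup u$. If that proves delicate, I would instead work on the $|Q_4|^{1/2}$-flat half-planes near infinity, where $g$ is complete and asymptotically flat, and compare $\hat g$ with the flat coordinate via a standard Liouville-type bound.

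Everything else in the argument is bookkeeping with constants and with the compact annulus $\overline{K'}\setminus K$.
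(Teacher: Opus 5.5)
Your proposal is correct and follows the paper's proof essentially step for step. The second Hitchin equation expresses $K_g$ through $(\|\alpha\|_h^2+\|\beta\|_h^2)/g$, so asymptotic flatness gives the upper bounds; the lower bounds then come from the product $\|\alpha\|_h^2\|\beta\|_h^2$ combined with $g\approx|Q_4|^{1/2}$ imported from \cite[Proposition 5.2.1]{Mor25}, just as in the paper, and your step $a\ge \epsilon/b\ge \epsilon/C$ is slightly cleaner than the paper's second pass through the curvature inequality.

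One small correction: the product is not $h$-independent. Only the normal-bundle factors cancel, and the correct identity is $\|\alpha\|_h^2\|\beta\|_h^2=|Q_4|^2/g^2$, i.e.\ $ab=(|Q_4|^{1/2}/g)^4$ rather than a square, which changes nothing in your argument.
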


\begin{proof}
Let us take any set $K \subset \C$ as described. 
We compute the curvature of $g$ using that $g= h_{2}^{-1}$ in the notation of Proposition \ref{prop:HitchinsEquationsSO(2,3)}. We find that the curvature $\kappa_g$ of $g$ is given by 
\[ \kappa_g = -\frac{2}{g}(\log g)_{z\zbar} = \frac{2}{g}\big(||\alpha||_h^2+||\beta||_h^2-||2||_{h}^2\big)\ .\]
Thus, $||\alpha||_h^2+||\beta\|_h^2$ is mutually bounded with $g$ on $\C \setminus K$. In particular, $\frac{||\alpha||^2_h}{g}, \frac{||\beta||^2_h}{g}$ are bounded above. 
If we can produce a nonzero lower bound, the proof will be complete. 

Now, we use \cite[Proposition 5.2.1]{Mor25}, which asserts that $g$ is mutually bounded with $|Q_4|^{1/2}$ on $\C \setminus K$. Next, observe that 
\[ g^2\approx|Q_4| \approx \frac{|Q_4|^2}{g^2}=\frac{|\alpha|^2|\beta|^2}{g^2} = ||\alpha||_h^2||\beta||_h^2\ . \] 
In particular, on $\C \setminus K$, we have $||\beta||_h^2 \ge \frac{Cg^2}{||\alpha||_h^2}$ for some $C> 0$. Hence, 
when $|z|$ is large, we see
\[ 0 \approx-\frac{1}{2}\kappa_g \geq \frac{||\alpha||_h^2}{g}+\frac{Cg}{||\alpha||_h^2}-1\ .\]
This implies that the ratio $\frac{||\alpha||_h^2}{g}$ is bounded away from zero on $\C \setminus K$. The same reasoning exchanging $\alpha$ and $\beta$ shows $\frac{||\beta||_h^2}{g}$ admits a nonzero lower bound on $\C\setminus K$ as well. 
\end{proof}

We conclude this discussion with a hopefully self-evident remark: the construction in this subsection is the inverse of the construction in the previous subsection. Shortly, we shall be interested to impose a restriction on this correspondence, considering only \emph{complete} maximal surfaces with polynomial Higgs field, which we shall see realizes exactly the polygonal surfaces. 

\subsection{Corresponding \texorpdfstring{$\mathrm{Sp}_4\R$}{Sp(4,R)}-Higgs bundles}\label{Subsec:CorrespondingSpHiggsB}

Next, we recall the isomorphism $\mathbb{P}\Sp_{4}(\R)\cong \SO_0(2,3)$ and use this equivalence to translate the Higgs bundles from the previous section into Higgs bundles in the symplectic group.  

\subsubsection{$\Sp_4(\R)$-Higgs bundles} We begin by describing $\Sp(4,\R)$-Higgs bundles from the vector bundle perspective. Before the example, we review some Lie theory. Write $W= V \oplus V^*$, so that $\omega = \begin{pmatrix} &- \mathbbm{1} \\ \mathbbm{1} & \end{pmatrix}$ is a symplectic form on $W$. Define for later $Q = \begin{pmatrix} & \mathbbm{1} \\ \mathbbm{1} & \end{pmatrix}$. 
In the case $W$ is complex, the complexified Cartan decomposition $\spl(W) = \mathfrak{k}^{\C} \oplus \mathfrak{p}^{\C}$, is as follows: 
\begin{itemize}
    \item $\mathfrak{k}^{\C} = \bigg\{\begin{pmatrix} * & \\ & * \end{pmatrix} \in \spl(W)\bigg\} = \spl(W)\cap \so(Q)$,
    \item $\mathfrak{p}^{\C} = \bigg\{\begin{pmatrix} &* \\ *&  \end{pmatrix} \in \spl(W)\bigg\} = \spl(W)\cap \Sym_{Q}(W)$.
\end{itemize}
With these preliminaries in mind, we proceed. 
\begin{example}
Let $G =\Sp(4,\R)$. Then $K=\U(2)$ and hence $K^{\C} = \GL(2,\C)$. 

Thus, an $\Sp(4,\R)$-Higgs bundle can be described by a triple $(V, \gamma, \delta)$, where $V$ is a rank two holomorphic vector bundle, $\gamma \in H^0(\Sym^2(V^*)\otimes K)$ and $\delta \in H^0(\Sym^2(V)\otimes K^{-1})$.

The associated $\SL(4,\C)$-Higgs bundle $(\E, \varphi)$ to $(V,\gamma,\delta)$ is as follows: 
\begin{itemize}
    \item $\E = V\oplus V^*$,
    \item $\varphi = \begin{pmatrix} 0 & \delta \\ \gamma & 0 \end{pmatrix} \in H^0(\End(\E) \otimes \K)$.
\end{itemize}
Note that the associated $\SL(4,\R)$-Higgs bundle $(\E,\varphi,\mathcal{Q})$ is given by taking $\mathcal{Q} = \begin{pmatrix} &\mathbbm{1} \\ \mathbbm{1} & \end{pmatrix}$. 
\end{example}

 \begin{remark}
We emphasize that
$\gamma$ and $\delta$ should be viewed as $\mathcal{Q}$-symmetric bundle maps twisted by $\K$ of type $\gamma \in H^0(\Hom(V, V^*)\otimes \K)$
and $\delta \in H^0(\Hom(V^*,V)\otimes \K)$. That is, we interpret `Sym' to mean $\mathcal{Q}$-symmetric, so that $\Sym^2(V) $ identifies with $\mathcal{Q}$-symmetric maps in  $\Hom(V^*,V)$ and similarly $\Sym^2(V^*) \simeq\Hom_{\mathcal{Q}-sym}(V,V^*)$. 
 \end{remark}

We now describe the $\Sp(4,\R)$-Higgs bundles $(V,\beta,\gamma)$ of interest over $X=\C$:
\begin{itemize}
    \item  $V = \K^{1/2}\mathcal{N}^{1/2} \oplus \K^{1/2}\mathcal{N}^{-1/2}$, \\
    \item $\gamma = \begin{pmatrix} 0 & 1 \\  
                1 & 0\end{pmatrix} $,  $\delta =\begin{pmatrix} \alpha & 0 \\ 0& \beta\end{pmatrix} $. 
\end{itemize}
Here, $\alpha \in H^0(\K^2\otimes \mathcal{N})$ and $\beta \in H^0(\K^2\otimes \mathcal{N}^{-1})$, as before. \medskip

Thus, the associated $\SL_4(\C)$-Higgs bundle is as follows:
\begin{align}\label{SL4CHiggsPolynomial}
    \mathcal{E} =(\K^{1/2}\mathcal{N}^{1/2} \oplus \K^{1/2}\mathcal{N}^{-1/2})\oplus (\K^{-1/2}\mathcal{N}^{-1/2} \oplus \K^{-1/2}\mathcal{N}^{1/2}), \;\; \varphi=\begin{pmatrix} & \begin{pmatrix} \alpha & 0 \\ 0& \beta \end{pmatrix} \\
\begin{pmatrix} 0 & 1 \\  
                1 & 0 \end{pmatrix} & \end{pmatrix} 
\end{align}

\subsubsection{Exceptional isomorphism} We now briefly recall the details of the exceptional isomorphism $\mathbb{P}\Sp(4,\R) \rightarrow \SO_0(2,3)$,
as it is needed to translate between Higgs bundles of the two groups. 

For the statement, recall that the space $\Lambda^2(\R^4)$ has an inner product $g$ of signature $(3,3)$ given by
$v\wedge w= g(v,w) \mu$, where $\mu$ is a volume form on $\R^4$. Hence, there is a homomorphism 
$\SL(4,\R) \rightarrow \SO_0(3,3)$ given by $g \mapsto \Lambda^2(g)$. Note that a symplectic form $\omega \in \Lambda^2(\R^4)$ is a spacelike element. We can restrict the homomorphism $\SL(4,\R)\rightarrow \SO_0(3,3)$ to define the isomorphism of interest. 

\begin{proposition}[Exceptional isomorphism]
The homomorphism $\Lambda^2:\Sp(4,\R) \rightarrow \SO_0(2,3)$ 
by $g \rightarrow \Lambda^2(g)|_{\omega^\bot}$ is surjective, has kernel $\pm \id$, and induces an isomorphism 
$\mathbb{P}\Sp(4,\R)\cong \SO_0(2,3)$. 
\end{proposition}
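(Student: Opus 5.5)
To prove the exceptional isomorphism, we work with the representation $\Lambda^2(\R^4)$ of $\SL(4,\R)$ and the symmetric bilinear form $g$ defined by $v\wedge w = g(v,w)\mu$.

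\emph{Well-definedness.} For $A\in\SL(4,\R)$ we have $\Lambda^2A(v)\wedge\Lambda^2A(w)=\Lambda^4A(v\wedge w)=\det(A)\,g(v,w)\mu$. Hence $\Lambda^2A$ preserves $g$, which has signature $(3,3)$ by a direct computation in the basis $e_i\wedge e_j$. Now regard $\omega$ as an element of $\Lambda^2(\R^4)$ via the identification $\Lambda^2(\R^4)^*\cong\Lambda^2(\R^4)$ induced by $\mu$. An element $A\in\Sp(4,\R)$ preserves $\omega$, so $\Lambda^2A$ fixes the vector $\omega$. Since $g(\omega,\omega)\neq 0$, we normalize the sign so that $\omega$ is spacelike, and then $\omega^\perp$ is a $5$-dimensional subspace of signature $(2,3)$ preserved by $\Lambda^2A$. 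This gives a continuous homomorphism $\Sp(4,\R)\to\O(\omega^\perp)\cong\O(2,3)$. Because $\Sp(4,\R)$ is connected, its image lies in the identity component $\SO_0(2,3)$.

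\emph{Kernel.} Suppose $\Lambda^2A$ acts trivially on $\omega^\perp$. Since it also fixes $\omega$, it is the identity on all of $\Lambda^2(\R^4)$. Then $A$ preserves every decomposable $2$-plane, and hence every line, being the intersection of two such planes. So $A$ is a scalar $c\,\id$, and $\Lambda^2A=c^2\,\id=\id$ forces $c=\pm1$. Both $\pm\id$ lie in $\Sp(4,\R)$, so the kernel is exactly $\{\pm\id\}$.

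\emph{Surjectivity.} The differential $\spl_4\R\to\so(\omega^\perp)$ is injective because the kernel of the group map is discrete. Both Lie algebras have dimension $10$, so the differential is an isomorphism. The image is therefore an open subgroup of the connected group $\SO_0(2,3)$, and an open subgroup of a connected group is the whole group. The induced map $\mathbb{P}\Sp(4,\R)\to\SO_0(2,3)$ is then a bijective continuous homomorphism of Lie groups, and a bijective Lie group homomorphism is an isomorphism.

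The only step needing care is the bookkeeping in the first part: identifying $\omega$ with an element of $\Lambda^2\R^4$ via $\mu$, and fixing the sign convention so that $\omega$ is spacelike and $\omega^\perp$ has signature $(2,3)$. Everything else is standard Lie theory.
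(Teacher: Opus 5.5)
The paper states this proposition without proof, recalling it as a standard fact, so there is no argument of the authors to compare yours against. Your proof is correct and is the standard argument: the three steps all hold.
\begin{itemize}
\item $\SL(4,\R)$ preserves the wedge pairing $g$. Since $\omega$ is fixed and non-isotropic, the restriction to $\omega^\perp$ is defined, and connectedness of $\Sp(4,\R)$ puts the image in $\SO_0(2,3)$.
\item The kernel is $\{\pm\id\}$, by your argument via preserved decomposable planes and lines.
\item Surjectivity follows from the dimension count $\dim\spl_4\R=10=\dim\so(2,3)$ together with the fact that an open subgroup of a connected group is the whole group.
\end{itemize}

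Three points you use implicitly and could make explicit.
\begin{itemize}
\item You need $\Sp(4,\R)\subset\SL(4,\R)$. This holds because $\omega\wedge\omega$ is a nonzero multiple of $\mu$ and is preserved by symplectic maps. It is what makes $\Lambda^2A$ a $g$-isometry for $A\in\Sp(4,\R)$. It also makes the identification of the $2$-form $\omega$ with a bivector $\Sp(4,\R)$-equivariant, so that $\Lambda^2A$ genuinely fixes that bivector.
\item The condition $g(\omega,\omega)\neq 0$ is exactly the nondegeneracy condition $\omega\wedge\omega\neq0$. Replacing $g$ by $-g$ leaves the signature $(3,3)$ unchanged, so your sign normalization making $\omega$ spacelike is legitimate.
\item For the last step you do not need the general theorem that bijective Lie group homomorphisms are isomorphisms. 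You already showed the differential is an isomorphism, so the induced map $\mathbb{P}\Sp(4,\R)\to\SO_0(2,3)$ is a bijective local diffeomorphism, hence a diffeomorphism.
\end{itemize}
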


\subsubsection{The correspondence} We now explain how to convert our $\Sp(4,\R)$-Higgs bundle into an $\SO_0(2,3)$-Higgs bundle. 
The key representation-theoretic facts used here are as follows: for $K^{\C} \cong \GL(2,\C)$ the complexified maximal compact subgroup of $\Sp(4,\R)$, the second exterior power of the standard representation $V$ decomposes as a $K^{\C}$-representation as 
\[ \Lambda^2(V) = \C\{\omega\} \oplus \big( \,[\Lambda^2(V)\oplus \Lambda^2(V^*)] \oplus \Hom_0(V,V) \big)\ ,\]
where $\C\{\omega\}$ is the trivial sub-representation corresponding to $\omega$, and the orthogonal complement decomposes into two sub-representations of dimensions 2 and 3, respectively. 
See \cite[Proposition 6.2.1]{Col16} for further details on the correspondence. 
We note also the isomorphisms 
$\Hom_0(V,V) \cong \Lambda^2(V)\otimes \Sym^2(V^*) \cong \Lambda^2(V^*) \otimes \Sym^2(V)$ of $K^{\C}$-modules.

We now convert the $\Sp(4,\R)$-Higgs bundles $(V,\beta,\gamma)$ of interest into the $\SO_0(2,3)$ side.  
Now, let us write $V = \mathscr{L}_1\oplus \mathscr{L}_2$, where $\mathscr{L}_1 = \mathcal{N}^{1/2}\K^{1/2}$ and $\mathscr{L}_2=\mathcal{N}^{-1/2}\mathcal{K}^{1/2}$.. 
Then in this notation, the Higgs bundle \eqref{SL4CHiggsPolynomial} obtains the form 

\begin{equation}\label{Sp(4,R)HiggsRewritten}
\begin{tikzcd}
\mathscr{L}_1 \arrow[bend right, rrr, "1"] & \mathscr{L}_2 \arrow[r, "1"] & \mathscr{L}_1^{-1} \arrow[bend right, ll,swap, "\alpha"]&\mathscr{L}_2^{-1}\arrow[bend right, ll,swap, "\beta"]
\end{tikzcd}.
\end{equation}
Note that $\Lambda^2(V) = \mathscr{L}_1\mathscr{L}_2$ and $\Lambda^2(V^*) = \mathscr{L}_{1}^{-1}\mathscr{L}_2^{-1}$. Then the associated $\SO_0(2,3)$-Higgs bundle is as follows: 

\begin{equation}\label{SO(2,3)HiggsRecovered}
\begin{tikzcd}
\mathscr{L}_1^{-1}\mathscr{L}_2 \arrow[r, "\alpha"] & \mathscr{L}_1\mathscr{L}_2 \arrow[r, "2"]&\mathcal{O}\arrow[r, "2"]& \mathscr{L}_1^{-1}\mathscr{L}_2^{-1} \arrow[r, "\alpha"]\arrow[bend left, lll, "\beta"]&\mathscr{L}_1\mathscr{L}_2^{-1}\arrow[bend left, lll, "\beta"]
\end{tikzcd}.
\end{equation}
The $\SO_0(2,3)$-Higgs bundle structure is given by: 
\begin{itemize}
\item $\mathcal{U} = \mathscr{L}_1\mathscr{L}_2 \oplus \mathscr{L}_1^{-1}\mathscr{L}_2^{-1}$
\item $\mathcal{V} = \mathscr{L}_1\mathscr{L}_2^{-1} \oplus \mathcal{O}\oplus \mathscr{L}_1^{-1}\mathscr{L}_2$, 
\item$Q_{\mathcal{U}} = \begin{pmatrix} & \mathbbm{1} \\ \mathbbm{1} &  \end{pmatrix} $, $Q_{\mathcal{V}} = \begin{pmatrix} &  & \mathbbm{1} \\  & 1 & \\ \mathbbm{1} &  & \end{pmatrix} $,
\end{itemize}
In particular, the Higgs bundle \eqref{SO(2,3)HiggsRecovered} recovers the Higgs bundle \eqref{SO(2,3)Higgs} introduced earlier.

For additional clarity, we emphasize the correspondence between harmonic metrics on the corresponding bundles. 
\begin{proposition}[Correspondence of harmonic metrics]\label{Prop:CorrespondenceMetrics}
Let 
\[ h = \diag(h_1, h_2,h_1^{-1},h_2^{-1})\]
be a diagonal and real hermitian metric on $\mathscr{L}_1\oplus \mathscr{L}_2\oplus \mathscr{L}_1^{-1}\oplus\mathscr{L}_2^{-1}$.

Define the diagonal, real hermitian metric
\[ \overline{h} = \diag(h_1^{-1}h_2, h_1h_2, 1, h_1^{-1}h_2^{-1}, h_1h_2^{-1})\] 
on $\mathscr{L}_1^{-1}\mathscr{L}_2\oplus \mathscr{L}_1\mathscr{L}_2 \oplus \mathcal{O}\oplus\mathscr{L}_1^{-1}\mathscr{L}_2^{-1}\oplus \mathscr{L}_1\mathscr{L}_2^{-1}$. 

Then $h$ is harmonic on the Higgs bundle \eqref{Sp(4,R)HiggsRewritten} if and only if $\overline{h}$ is harmonic on the Higgs bundle \eqref{SO(2,3)HiggsRecovered}.
\end{proposition}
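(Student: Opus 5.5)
The plan is to use functoriality of $\Lambda^2$ and the fact that the Lie algebra map $\spl_4\C\to\so(\omega^\bot)$ is injective, rather than comparing the two PDE systems term by term. As a cross-check, one can also write Hitchin's equations for the diagonal metric $h$ on \eqref{Sp(4,R)HiggsRewritten} in the form of Proposition \ref{prop:HitchinsEquationsSO(2,3)}, substitute $\overline{h}$, and observe that the two systems are linear recombinations of one another.

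\emph{Step 1: the induced objects on $\Lambda^2\E$.} Let $\E=V\oplus V^*$ with diagonal frame $s_1,s_2,s_1^*,s_2^*$ adapted to $\mathscr{L}_1,\mathscr{L}_2,\mathscr{L}_1^{-1},\mathscr{L}_2^{-1}$. The metric $h$ induces a hermitian metric $\Lambda^2 h$ on $\Lambda^2\E$. It is diagonal in the frame $s_i\wedge s_j$, $s_i\wedge s_j^*$, $s_i^*\wedge s_j^*$, with entries the corresponding products of the $h_i^{\pm1}$.

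Because $h$ is real and diagonal of the form $\diag(h_1,h_2,h_1^{-1},h_2^{-1})$, the symplectic section $\omega=s_1\wedge s_1^*+s_2\wedge s_2^*$ has constant norm. The Chern connection of $(\E,h)$ is diagonal, so it preserves $\omega$. Since $\varphi\in\spl(\omega)$ and $\varphi^{*h}\in\spl(\omega)$ (reality of $h$), the full connection $\nabla_h=\nabla^C_h+\varphi+\varphi^{*h}$ satisfies $\Lambda^2\nabla_h\,\omega=0$. Hence $\Lambda^2\nabla_h$ preserves $\omega^\bot$.

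Restricted to $\omega^\bot$ and written in the frame $s_1^*\wedge s_2,\ s_1\wedge s_2,\ \tfrac12(s_1\wedge s_1^*-s_2\wedge s_2^*),\ s_1^*\wedge s_2^*,\ s_1\wedge s_2^*$, the data become:
\begin{itemize}
\item the Higgs field $\Lambda^2\varphi$ is exactly \eqref{SO(2,3)HiggsRecovered}, the entries `$2$' arising from $\varphi$ acting on the two summands of the middle vector;
\item the metric is $\overline{h}$, by reading off the products of the $h_i^{\pm1}$;
\item the connection $\Lambda^2\nabla_h|_{\omega^\bot}$ is $\nabla^C_{\overline h}+\Lambda^2\varphi+(\Lambda^2\varphi)^{*\overline h}$, since Chern connections and adjoints are compatible with tensor operations.
\end{itemize}
I would verify these normalizations with a short direct computation.

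\emph{Step 2: the equivalence.} The curvature of $\Lambda^2\nabla_h$ is $d\Lambda^2(F_{\nabla_h})$, where $d\Lambda^2:\spl_4\C\to\so(\omega^\bot)\cong\so_5\C$ is the derivative of the exceptional isomorphism. This derivative is an injective Lie algebra isomorphism. Therefore $F_{\nabla_h}=0$ if and only if $F$ of the $\overline h$-connection vanishes, i.e.\ $h$ is harmonic if and only if $\overline h$ is harmonic.

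\emph{Step 3: the dictionary is bijective.} Every real diagonal $\overline h$ of the shape in Proposition \ref{prop:HitchinsEquationsSO(2,3)} arises from a unique $h$. Indeed, the entries $h_1h_2$ and $h_1^{-1}h_2$ determine $h_2^2$ and $h_1^2$, and we take the positive square roots. This makes the statement apply in both directions.

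The main obstacle is bookkeeping in Step 1: checking that the chosen frame of $\omega^\bot$ is $\mathcal{Q}$-normalized so that the induced metric really equals $\overline h$ (rather than a rescaling of its middle entry), and that the induced Higgs field has the constants `$2$' rather than `$1$'. If the normalization proves awkward, I would fall back on the direct PDE comparison described at the start.
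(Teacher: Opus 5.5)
Your route differs from the paper's. The paper writes out the two diagonal Hitchin systems and compares them directly using $F_{g_1g_2}=F_{g_1}+F_{g_2}$. You instead push $(\varphi,h)$ through $\Lambda^2$. The functorial part of your argument is correct: $\nabla_h$ preserves $\omega$, and $d\Lambda^2\colon\spl_4\C\to\so(\omega^\bot)$ is injective. It therefore shows that $h$ is harmonic if and only if $\Lambda^2h|_{\omega^\bot}$ is harmonic for $\Lambda^2\varphi|_{\omega^\bot}$.

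The gap is the identification you postponed in Step 1, and no bookkeeping can fix it. In your frame, $u=\tfrac12(s_1\wedge s_1^*-s_2\wedge s_2^*)$ has $|u|^2_{\Lambda^2h}=\tfrac14(1+1)=\tfrac12$. Dropping $\dz$,
\[
\Lambda^2\varphi(s_1\wedge s_2)=s_2^*\wedge s_2+s_1\wedge s_1^*=2u,\qquad
\Lambda^2\varphi(u)=\tfrac12\big(s_2^*\wedge s_1^*-s_1^*\wedge s_2^*\big)=-\,s_1^*\wedge s_2^* .
\]
So the middle of the chain reads $2,\,-1$ with middle metric entry $\tfrac12$, not $2,\,2$ with entry $1$. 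Rescaling the middle frame to unit length, as $\overline h$ requires, makes both middle arrows $\pm\sqrt2$. The ratio $|\Lambda^2\varphi(s_1\wedge s_2)|^2/|s_1\wedge s_2|^2=2(h_1h_2)^{-1}$ is frame-independent, so no choice of frames realizes the constant $2$ together with the metric $\overline h$. Your Steps 1--2 therefore prove the statement with $\sqrt2$ in place of $2$ in \eqref{SO(2,3)HiggsRecovered}, not the statement as written.

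In fact the statement as written is false. Take $\alpha=\beta=1$ in constant frames. Then $h=\id$ is harmonic on \eqref{Sp(4,R)HiggsRewritten}: its Chern connection is flat and $[\varphi,\varphi^{*h}]=0$. But for $\overline h=\id$, the $\mathscr L_1\mathscr L_2$-entry of $[\varphi,\varphi^{*\overline h}]$ on \eqref{SO(2,3)HiggsRecovered} is $|\alpha|^2+|\beta|^2-|2|^2=-2\neq0$.

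Your fallback PDE comparison would show the same thing. Adding
\[
(\log h_1)_{z\zbar}=|\alpha|^2h_1^2-(h_1h_2)^{-1},\qquad (\log h_2)_{z\zbar}=|\beta|^2h_2^2-(h_1h_2)^{-1}
\]
produces the term $-2(h_1h_2)^{-1}$. The $\mathscr L_1\mathscr L_2$-equation of the $\SO_0(2,3)$ system instead carries $-\|2\|^2_{\overline h}=-4(h_1h_2)^{-1}$; only the $\mathscr L_1^{-1}\mathscr L_2$-equation matches.

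The paper's proof, which is exactly this direct comparison, passes over the same factor. There, $F_{h_1}+F_{h_2}$ contains the $1\wedge1^{*h}$ term twice, totalling $2(h_1h_2)^{-1}$, whereas $2\wedge2^{*\overline h}$ equals $4(h_1h_2)^{-1}$.

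What you can legitimately state and prove is the corrected version: constant $\sqrt2$ with middle entry $1$, or equivalently your frame's normalization, with arrows $2,-1$ and middle entry $\tfrac12$. This differs from the printed proposition only by constant normalizations, which a constant diagonal gauge plus a constant rescaling of $(\alpha,\beta)$ absorbs.
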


This is a straightforward calculation that we include solely for the purpose of highlighting that `$1$' in the $\Sp(4,\R)$-Higgs bundle must become a `$2$' in the $\SO_0(2,3)$-Higgs bundle for this correspondence of harmonic metrics to hold. 

\begin{proof}
The $h$-harmonicity equations are as follows: 
\begin{align}\label{h-harmonic}
    \begin{cases}
        F_{h_1} = 1 \wedge 1^{*h} -\beta\wedge \beta^{*h},\\
        F_{h_2} = 1\wedge 1^{*h}-\alpha \wedge \alpha^{*h}.
    \end{cases}
\end{align}
On the other hand, the $\overline{h}$-harmonicity equations are:  
\begin{align}\label{hbar-harmonic}
    \begin{cases}
        F_{h_1^{-1}h_2} = \alpha \wedge \alpha^{*\bar{h}} -\beta\wedge \beta^{*\bar{h}},\\
        F_{h_1h_2} = 2\wedge 2^{*\bar{h}}-\alpha \wedge \alpha^{*\bar{h}}-\beta \wedge \beta^{*\bar{h}}.
    \end{cases}
\end{align}
Now, using that $F_{g_1g_2}=F_{g_1}+F_{g_2}$ for metrics $g_i$ on a line bundle $\mathcal{L}$, and the fact that for 
$\eta \in \{1,\alpha,\beta\}$, we have 
$\eta\wedge \eta^{*h} = \eta\wedge \eta^{*\bar{h}}$, the claim follows.
\end{proof}

We note that the $\Sp(4,\R)$-Higgs bundle perspective is necessary to invoke the results of Li-Mochizuki. Indeed, the Higgs bundle \eqref{SO(2,3)Higgs} is 4-cyclic as an $\SL(5,\C)$-Higgs bundle, yet \eqref{SL4CHiggsPolynomial} is also 4-cyclic as an $\SL_4(\C)$-Higgs bundle. 

\subsection{A preferred harmonic metric}\label{Subsec:HarmonicMetric}
We shall apply the machinery of Li-Mochizuki from \cite[Section 7]{LM20b} and \cite{LM25} to prove uniqueness of a \emph{complete} diagonal solution to Hitchin's equations in our setting. Most of the subtlety involves comparing different notions of completeness. We shall endeavor, for the most part, to follow their original notation. 

\subsubsection{Cyclic Higgs bundles} To start, we consider more general $4$-cyclic Higgs bundles. Namely, we shall consider $\SL_4(\C)$-Higgs bundles $(\E, \varphi)$ of the form $\E= \bigoplus_{i=1}^4 \mathcal{L}_i$ and $\varphi=\sum_{i=1}^4 \varphi_i$, where 

\begin{equation}\label{SL4Cyclic}
    \begin{tikzcd}
 \mathcal{L}_1 \arrow[r, "\varphi_1"] & \mathcal{L}_2 \arrow[r, "\varphi_2"]& \mathcal{L}_{3} \arrow[r, "\varphi_3"]&\mathcal{L}_{4}\arrow[lll, bend left,  "\varphi_4"]
\end{tikzcd}.
\end{equation}
More precisely, we consider a \emph{fixed} ordering of the line bundles $\mathcal{L}_i$. \medskip 

Now, going forward, we consider the open Riemann surface $X = \C$. On $X$, we study the $\Sp(4,\R)$-Higgs bundle $(\E, \varphi)$ below. In particular, note the re-gauging that has taken place so that $\varphi$ obtains the usual form as a cyclic transformation. Crucially, due to the $\Sp(4,\R)$-symmetries, we have \emph{two ways} to view the Higgs bundle \eqref{SL4CHiggsPolynomial} in the form \eqref{SL4Cyclic}, corresponding to two possible permutations of the $\mathcal{L}_i$'s in our case.
To this end, we set 
\[ \begin{cases} 
        \mathcal{L}_1:=\K^{1/2}\mathcal{N}^{-1/2},\\
        \mathcal{L}_2:=\K^{-1/2}\mathcal{N}^{1/2},\\
        \mathcal{L}_3:=\K^{1/2}\mathcal{N}^{-1/2},\\
        \mathcal{L}_4:=\K^{-1/2}\mathcal{N}^{1/2},
\end{cases} \]
Observe the following cyclic description of the Higgs bundle:
\begin{equation}\label{Cyclic1}
    \begin{tikzcd}
 \mathcal{L}_1 \arrow[r, "1"] & \mathcal{L}_2 \arrow[r, "\alpha"]& \mathcal{L}_{3} \arrow[r, "1"]&\mathcal{L}_{4}\arrow[lll, bend left,  "\beta"]
\end{tikzcd}.
\end{equation}
By exchanging $\mathcal{L}_i \leftrightarrow \mathcal{L}_{i+2}$, we obtain another cyclic description as
\begin{equation}\label{Cyclic2}
    \begin{tikzcd}
 \mathcal{L}_3 \arrow[r, "1"] & \mathcal{L}_4 \arrow[r, "\beta"]& \mathcal{L}_{1} \arrow[r, "1"]&\mathcal{L}_{2}\arrow[lll, bend left,  "\alpha"]
\end{tikzcd}.
\end{equation}

\subsubsection{Completeness conditions} While the two Higgs bundles \eqref{Cyclic1} and \eqref{Cyclic2} are obviously the same, the ordering of the line bundles $\mathcal{L}_i$ affects the relevant \emph{completeness} condition from \cite{LM20b}, which will play a major role for us.
We recall this completeness notion now.

\begin{definition}[Completeness]\label{Defn:Complete}
Let $(\E, \varphi)$ be a cyclic Higgs bundle on $X=\C$, 
with (ordered) cyclic decomposition $\E = \bigoplus_{i=1}^4\mathcal{L}_i$ as in
\eqref{SL4Cyclic}. 
Define $\psi: = \varphi_3\circ \varphi_2\circ \varphi_1 \in H^0(\Hom(\mathcal{L}_1, \mathcal{L}_4)\otimes \K^3)$
and assume that $\psi$ has a finite zero set, denoted
$\tilde{D} :=\mathcal{Z}(\psi)$. 
Let $N \subset \C$ be a relatively compact open set containing $\tilde{D}$. 

Then we call a hermitian metric $h=\diag(h_i)_{i=1}^4$ on $(\E,\varphi)$ \textbf{complete} when $||\varphi_i||^2_{h}$, for $1 \leq i \leq 3$, are complete metrics on $\C \setminus N$. 
\end{definition}

We emphasize: the \emph{actual order}, not just the cyclic order, of the line bundles $\mathcal{L}_i$ is essential to the definition of completeness. In particular, we highlight the two different completeness conditions of interest, relative to the two different ordered splittings introduced above.

\begin{remark}
Let $(\E,\varphi)$ be the $\SL_4(\C)$-Higgs bundle \eqref{SL4CHiggsPolynomial} on $\C$. 
Let $\mathcal{C}_1$ and $\mathcal{C}_2$ be the ordered splittings in \eqref{Cyclic1} and \eqref{Cyclic2}, respectively. 
Let $h$ be a harmonic metric on $(\E,\varphi)$. Note that
\begin{itemize}
    \item $h$ is $\boldsymbol{\mathcal{C}_1}$\textbf{-complete} if and only if $||1_{12}||_h^2, ||\alpha||_h^2,||1_{34}||_h^2$ are complete on $\C \setminus N(\mathcal{Z}(\alpha))$, 
    \item $h$ is $\boldsymbol{\mathcal{C}_2}$\textbf{-complete} if and only if $||1_{12}||_h^2, ||\beta||_h^2,||1_{34}||_h^2$ are complete on $\C \setminus N(\mathcal{Z}(\beta))$.
\end{itemize}
\end{remark}
Recall that $\mathcal{Z}(\cdot)$ denotes the vanishing set of a given tensor. 

For later, we introduce a stronger notion of completeness.
\begin{definition}[Strong completeness]\label{Defn:StrongCompleteness}
Let $(\E,\varphi)$ be the $\SL_4(\C)$-Higgs bundle \eqref{SL4CHiggsPolynomial} on $\C$. We call a harmonic metric $h$ \textbf{strongly complete} if $h$ is $\mathcal{C}_1$-complete and $\mathcal{C}_2$-complete. 

When $(\E, \varphi)$ is an $\Sp_4(\R)$-Higgs bundle, via the equivalence in Proposition \ref{Prop:CorrespondenceMetrics}, we may use the same terminology for a harmonic metric $h$ on \eqref{SO(2,3)Higgs}, the associated $\SO_0(2,3)$-Higgs bundle. 
\end{definition}

In this case, we obtain the following existence and uniqueness result for complete metrics from \cite[Theorem 7.13]{LM20b}, with respect to a fixed splitting. We state their result only in our case, though it applies more generally to $n$-cyclic $\SL_n(\C)$-Higgs bundles on open Riemann surfaces $X$. 

\begin{theorem}[Unique complete + diagonal harmonic metric]\label{Thm:Sp4CompleteSolution}
Let $(\E,\varphi)$ be a Higgs bundle on $\C$ with cyclic decomposition $\mathcal{C}$ as in \eqref{SL4CHiggsPolynomial}
such that $\varphi_3\circ \varphi_2\circ \varphi_1$ has finite zero set $\tilde{D}$. Fix any relatively compact open set $N \subset \C$ containing $\tilde{D}$. 

Then there exists a unique diagonal harmonic metric $h=(h_i)_{i=1}^4$ on $\E=\bigoplus_{i=1}^4\mathcal{L}_i$ that is $\mathcal{C}$-complete. Moreover, $||\varphi_i||^2_h$ are mutually bounded on $X \setminus N$, for $1\leq i\leq3$, and $\frac{||\varphi_{4}||^2_h}{||\varphi_i||^2_h}$, for $1\leq i\leq 3$, are bounded on $X \setminus N$.
\end{theorem}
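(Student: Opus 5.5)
The plan is to reduce Theorem \ref{Thm:Sp4CompleteSolution} to the general existence and uniqueness theorem of Li--Mochizuki for cyclic Higgs bundles on open Riemann surfaces \cite[Theorem 7.13]{LM20b}. The proof then consists of checking hypotheses and translating notation. The setup is as follows.
\begin{enumerate}[label=(\arabic*)]
\item Using the cyclic decomposition $\mathcal{C}$, write $(\E,\varphi)$ in the form \eqref{SL4Cyclic}, with $\E=\bigoplus_{i=1}^4\mathcal{L}_i$ and $\varphi=\sum\varphi_i$.
\item Confirm that $\det\E$ is trivial and that $\mathcal{L}_i\otimes\mathcal{L}_{5-i}\cong\mathcal{O}$, as the symmetric pairing requires. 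Since every bundle on $\C$ is holomorphically trivial, this is automatic.
\item Observe that $\psi=\varphi_3\circ\varphi_2\circ\varphi_1$ is a nonzero section whose zero set $\tilde D$ is finite by assumption. This is exactly the generic condition under which Li--Mochizuki construct a complete diagonal harmonic metric on $X\setminus\tilde D$.
\end{enumerate}

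For existence, I would follow their argument. On $\C\setminus N$, use $\psi$ to build an approximate diagonal metric $h_0$ in which each $\|\varphi_i\|^2_{h_0}$, for $1\leq i\leq 3$, equals $|\psi|^{2/3}$ up to a fixed factor. This metric is complete because $\psi$ is a nonvanishing holomorphic section of a degree-3 twist away from $N$, and $|\psi|^{2/3}$ is a flat conformal metric that is complete at infinity, since $\psi$ grows polynomially there.

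Next, solve the diagonal Toda-type system by exhausting $\C$ with discs. On each disc $B_R$, take the Dirichlet problem with boundary data $h_0$; a solution exists by Donaldson's theorem, and the diagonal form is preserved by uniqueness together with the $\U(1)$-symmetry $e^{i\theta}$ acting with weights on the $\mathcal{L}_i$. Then pass to the limit as $R\to\infty$. This needs local $C^0$ bounds, which come from the Cheng--Yau maximum principle on complete metrics. Concretely, one compares the functions $\log(\|\varphi_i\|^2_h/\|\varphi_{i+1}\|^2_h)$, which satisfy a coupled system of the form $\Delta u_i\ge e^{u_i}-\ldots$, and uses the Omori--Yau principle to keep them bounded.

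The same maximum principle argument should yield the final assertions of the theorem. The quantities $\|\varphi_i\|^2_h$ for $1\leq i\leq 3$ are mutually bounded with $|\psi|^{2/3}$, and the ratio $\|\varphi_4\|^2_h/\|\varphi_i\|^2_h$ is bounded above. The latter is the key point where the ordering matters: $\varphi_4$ plays no part in $\psi$, so only an upper bound is available for it.

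For uniqueness, take two complete diagonal harmonic metrics $h$ and $h'$ and consider the functions $w_i=\log(h_i'/h_i)$. These are subharmonic-type quantities for the difference of the Toda systems. Since both metrics are complete and mutually bounded with $|\psi|^{2/3}$, one can apply the Omori--Yau maximum principle with respect to the complete metric $|\psi|^{2/3}$ to $\max_i w_i$, and conclude that it is $\leq 0$. By symmetry $w_i\equiv 0$.

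I expect this comparison step to be the main obstacle, specifically showing a priori that any $\mathcal{C}$-complete harmonic metric is mutually bounded with $|\psi|^{2/3}$. My first attempt would be to apply the Cheng--Yau-type lemma to $\log(\|\varphi_1\|_h^2\|\varphi_2\|^2_h\|\varphi_3\|^2_h/|\psi|^2)$, which is sub-harmonic with a good sign, and then use completeness of each $\|\varphi_i\|^2_h$ to run Omori--Yau. In practice I would cite \cite[Theorem 7.13]{LM20b} directly for all of this, so that the proof in the paper is just a verification of hypotheses.
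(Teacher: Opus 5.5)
Your plan matches the paper's. The paper gives no argument of its own here; it specializes \cite[Theorem 7.13]{LM20b} to this $4$-cyclic bundle, and the only input is that $\psi=\varphi_3\circ\varphi_2\circ\varphi_1$ is nonzero with finite zero set. As a proof by citation, your proposal is fine.

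You should drop the sketch of what lies behind the citation, because its central claim is false. The complete solution is \emph{not} mutually bounded with the flat metric $|\psi|^{2/3}$, since the true asymptotics also involve $\varphi_4$.
\begin{itemize}
\item For the bundle at hand, Lemma \ref{Lem:TrueComplete} gives $\|\varphi_i\|_h^2\approx|Q_4|^{1/2}=|AB|^{1/2}$. On the other hand, $\psi=\alpha$ for $\mathcal{C}_1$ and $\psi=\beta$ for $\mathcal{C}_2$.
\item For $(A,B)=(z^n,1)$ with $n\geq1$, this compares $|z|^{n/2}$ against $|z|^{2n/3}$ (for $\mathcal{C}_1$), resp.\ against $1$ (for $\mathcal{C}_2$). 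Since the paper shows $h^{(1)}=h^{(2)}$, your claim would even force $|A|^{2/3}\approx|B|^{2/3}$.
\item Correspondingly, for $\mathcal{C}_1$ the quantity $\log(\|\varphi_1\|_h^2\|\varphi_2\|_h^2\|\varphi_3\|_h^2/|\psi|^2)=\log(h_4/h_1)$ is unbounded unless $\deg A=3\deg B$. Its Laplacian is a multiple of $2\|\varphi_4\|_h^2-\|\varphi_1\|_h^2-\|\varphi_3\|_h^2$, which has no sign.
\end{itemize}
So the proposed Cheng--Yau step fails. Your uniqueness sketch (Omori--Yau relative to $|\psi|^{2/3}$, using mutual boundedness with it) also rests on a false premise.

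What the theorem actually gives is mutual boundedness of the $\|\varphi_i\|_h^2$ among themselves. Equivalently, each is comparable to the $h$-dependent quantity $\|\psi\|_h^{2/3}$. Comparison with a fixed flat metric is a separate step, which the paper obtains only in Lemma \ref{Lem:TrueComplete} via \cite[Proposition 3.37]{LM25}.

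Two smaller points:
\begin{itemize}
\item Diagonality of the Dirichlet solutions comes from the $\Z_4$ symmetry $g=\diag(1,i,-1,-i)$, which satisfies $g\varphi g^{-1}=i\varphi$. A $\U(1)$ symmetry is not available, since $\varphi_4\neq0$.
\item Uniqueness requires the normalization $\det h=1$. Otherwise $ch$ is again a complete diagonal harmonic metric for any constant $c>0$, so your ``by symmetry'' step must use $\sum_i w_i=0$.
\end{itemize}
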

Note that the theorem does \emph{not} assert that $||\varphi_4||_h^2$ and $||\varphi_i||_h^2$, for $1\leq i \leq 3$, are mutually bounded, which is why the particular ordering of the $\mathcal{L}_i$'s is so important. That is, we must earn a nonzero lower bound for the ratio $||\varphi_4||^2_h/||\varphi_i||^2_h$. We shall address this issue momentarily.

\subsubsection{Reality of diagonal+complete metrics} By the $\Sp(4,\R)$-symmetries of the Higgs bundle $(\E,\varphi)$, we can deduce that the unique complete solution, relative to \emph{any} ordered splitting, is real. 
\begin{corollary}[Reality conditions]\label{Cor:Reality}
Let $(\E,\varphi)$ be the Higgs bundle \eqref{SL4CHiggsPolynomial} on $X=\C$. Fix an ordered splitting $\mathcal{C}$ from either \eqref{Cyclic1} or \eqref{Cyclic2}. 
Then the unique diagonal $\mathcal{C}$-complete solution $h=\diag(h_1,h_2,h_3,h_4)$ on $(\E,\varphi)$ is \textbf{real}, i.e., $h_3=h_2^{-1}$ and $h_4= h_1^{-1}$. 
\end{corollary}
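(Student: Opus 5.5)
We will deduce the reality from the uniqueness in Theorem \ref{Thm:Sp4CompleteSolution}. The idea is to build an involution on diagonal metrics that preserves harmonicity and $\mathcal{C}$-completeness. Its fixed points are precisely the real metrics.

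Given a diagonal metric $h=\diag(h_1,h_2,h_3,h_4)$ on $\E=\bigoplus_i\mathcal{L}_i$, we define the \emph{dual-transported} metric $h^\dagger$ using the symplectic (equivalently, the orthogonal) pairing $\mathcal{Q}$. This pairing identifies $\mathcal{L}_1\cong\mathcal{L}_4^{-1}$ and $\mathcal{L}_2\cong\mathcal{L}_3^{-1}$ in the ordering \eqref{Cyclic1}. Concretely, $h^\dagger$ is the metric induced on $\E\cong\E^*$ by the dual metric $h^{*}$, which gives
\[ h^\dagger=\diag(h_4^{-1},h_3^{-1},h_2^{-1},h_1^{-1}). \]
Reality of $h$ means exactly $h=h^\dagger$.

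The first step is to check that $h^\dagger$ is harmonic whenever $h$ is. This holds because $\varphi$ is $\mathcal{Q}$-self-adjoint: under $\E\cong\E^*$, the Higgs field corresponds to $\varphi^{T}$, i.e.\ to the dual Higgs bundle $(\E^*,\varphi^T)$. A metric $h$ on $(\E,\varphi)$ is harmonic if and only if the dual metric on $(\E^*,\varphi^T)$ is harmonic, since the curvature of the Chern connection and the bracket $[\varphi,\varphi^{*h}]$ both transform by minus transpose. Alternatively, one can verify this directly from the diagonal Hitchin equations written componentwise, as in \eqref{h-harmonic}. We write $F_{h_i}=\|\varphi_{i-1}\|^2-\|\varphi_i\|^2$ and note that under $i\mapsto 5-i$ with $h_i\mapsto h_{5-i}^{-1}$ the maps are exchanged as follows: $\varphi_1=1\leftrightarrow\varphi_3=1$, $\varphi_2=\alpha$ is fixed, and $\varphi_4=\beta$ is fixed. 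This is possible because $\alpha$ maps $\mathcal{L}_2\to\mathcal{L}_3$ and is symmetric, and similarly $\beta$ maps $\mathcal{L}_4\to\mathcal{L}_1$. The equations are therefore permuted among themselves.

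The second step is to check that completeness is preserved. From the Remark, $\|\varphi_i\|^2_h=|\hat\varphi_i|^2 h_{i+1}h_i^{-1}$. Under the involution, $\|1_{12}\|^2_{h^\dagger}=h_3^{-1}h_4=\|1_{34}\|^2_h$, then $\|1_{34}\|^2_{h^\dagger}=\|1_{12}\|^2_h$, and $\|\alpha\|^2_{h^\dagger}=|\alpha|^2h_2^{-1}h_3=\|\alpha\|^2_h$. So the triple of metrics $\|\varphi_i\|^2$, $1\le i\le3$, is merely permuted. The zero set $\tilde D=\mathcal{Z}(\alpha)$ and the neighborhood $N$ are unchanged, so $h^\dagger$ is $\mathcal{C}_1$-complete. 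The identical argument applies to $\mathcal{C}_2$ with $\beta$ in place of $\alpha$.

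The final step is to conclude. By the uniqueness statement of Theorem \ref{Thm:Sp4CompleteSolution}, $h^\dagger=h$, that is $h_4=h_1^{-1}$ and $h_3=h_2^{-1}$. The main obstacle is bookkeeping: making sure that the duality isomorphism $\E\cong\E^*$ respects the fixed \emph{ordered} splitting, namely that it reverses the order while fixing the position of the distinguished map $\varphi_4$. It is exactly this property that makes the completeness condition invariant. One must also confirm that the symmetric maps $\alpha$ and $\beta$ are carried to themselves rather than to some other entries, so that the equations are genuinely permuted.
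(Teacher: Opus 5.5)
Your proposal is correct and is essentially the paper's proof: the paper also forms the dual metric $h^{\vee}=\diag(h_4^{-1},h_3^{-1},h_2^{-1},h_1^{-1})$, notes that it is again harmonic and $\mathcal{C}$-complete, and concludes $h=h^{\vee}$ from the uniqueness in Theorem \ref{Thm:Sp4CompleteSolution}. Your explicit bookkeeping ($\|1_{12}\|^2\leftrightarrow\|1_{34}\|^2$ swapped, $\|\alpha\|^2$ and $\|\beta\|^2$ fixed, Hitchin's equations permuted) spells out the step the paper leaves as ``one directly verifies.''
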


The proof is nearly the same as \cite[Corollary 3.27]{LM25}. 
\begin{proof}
Let such $h$ be given. 
Consider the dual metric $h^{\vee} := \diag(\frac{1}{h_4},\frac{1}{h_3},\frac{1}{h_2},\frac{1}{h_1})$ on $\E$. 
One directly verifies that $h$ is $\mathcal{C}$-complete if and only if $h^{\vee}$ is $\mathcal{C}$-complete, and $h$ solves Hitchin's equation \eqref{HitchinsEquation} if and only if 
$h^{\vee}$ does. Thus, $h^{\vee}$ is a $\mathcal{C}$-complete solution and $h=h^{\vee}$ by Lemma \ref{Thm:Sp4CompleteSolution}. 
\end{proof}

We now have two harmonic metric to compare. 

\begin{definition}
Let us denote $h^{(i)}$ as the unique diagonal and $\mathcal{C}_i$-complete harmonic metric on $(\E,\varphi)$, which is automatically real by Corollary \ref{Cor:Reality}.
We shall write 
\[ h^{(i)} := \diag(h_1^{(i)},h_2^{(i)},h_3^{(i)},h_4^{(i)})\ ,\] where  $h_j^{(i)}$ is the induced metric on $\mathcal{L}_j$.
\end{definition}

Now, let us fix a harmonic metric $h \in \{h^{(1)},h^{(2)}\}$ on $(\E,\varphi)$. 
By the reality conditions of Corollary \eqref{Cor:Reality},
note that both the bundle endomorphisms $1_{ij} \in H^0(\Hom(\mathcal{L}_i,\mathcal{L}_j)\otimes \K)$,
where $ij=12$ or $ij=34$, satisfy $||1_{12}||_{h}^2 = || 1_{34}||_{h}^2$. 
Thus, we may unambiguously write $||1||_h^2$ to refer to their common value. 
Now, the three metrics under consideration are as follows: 
\begin{itemize}[itemsep=3pt]
    \item $||1||^2_{h}=h_2h_1^{-1},$
    \item $||\alpha||_{h}^2= |\alpha|^2{h_2^{-2}}$, 
    \item $||\beta||_{h}^2= |\beta|^2h_1^2$.
\end{itemize}

\subsubsection{Verifying total completeness} 

We now can finally show $h^{(1)}=h^{(2)}$ and hence deduce completeness properties of the common diagonal harmonic metric. Here, as in Definition \ref{Defn:Complete}, we let $N(K)$ denote a relatively compact open subset of $\C$ containing a given compact set $K \subset \C$. 

\begin{lemma}[The true complete solution] \label{Lem:TrueComplete}
Let $(\E,\varphi)$ be the Higgs bundle \eqref{SL4CHiggsPolynomial} on $X = \C$, with $\alpha$, $\beta$ each polynomial. Then $h^{(1)}= h^{(2)}$. 

Thus, there exists a unique diagonal harmonic metric $h$ on $(\E,\varphi)$ that is strongly complete. Moreover, $h$ is real and $||1||_h^2, ||\beta||_h^2,||\alpha||_h^2$ are mutually bounded
on $\C \setminus N(\mathcal{Z}(Q_4))$ with $|Q_{4}|^{1/2}$.
\end{lemma}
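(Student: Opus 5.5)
Fix $h=h^{(1)}$, the unique diagonal $\mathcal{C}_1$-complete harmonic metric; by Corollary~\ref{Cor:Reality} it is real. The plan is to show that $h$ is also $\mathcal{C}_2$-complete. The uniqueness in Theorem~\ref{Thm:Sp4CompleteSolution} for the ordering $\mathcal{C}_2$ then gives $h^{(1)}=h^{(2)}$, which yields existence of a strongly complete diagonal metric. Uniqueness follows for the same reason: any strongly complete diagonal metric is in particular $\mathcal{C}_1$-complete, hence equals $h^{(1)}$.

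From Theorem~\ref{Thm:Sp4CompleteSolution} applied to $\mathcal{C}_1$, we already know three things on $\C\setminus N(\mathcal{Z}(Q_4))$. First, $\|1\|_h^2$ and $\|\alpha\|_h^2$ are complete and mutually bounded. Second, $\|\beta\|_h^2\le C\|1\|_h^2$. Third, since $\|\alpha\|_h^2\|\beta\|_h^2=|Q_4|$ (the $h$-factors cancel, as $|\alpha|^2h_2^{-2}\cdot|\beta|^2h_1^2\cdot(h_2h_1^{-1})^{-2}$ shows after dividing by $\|1\|^4_h$), we have the identity
\[ \|\alpha\|_h^2\,\|\beta\|_h^2=|Q_4|\,\|1\|_h^4\,/\,\|1\|_h^4\cdot(\ldots) .\]
I would simply record the clean identity $\|\alpha\|_h\|\beta\|_h=|Q_4|$ as conformal metrics, each norm being viewed as a conformal metric via $|\cdot|^2$.

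The key step, and the main obstacle, is to obtain a \emph{lower} bound $\|\beta\|_h^2\ge c\,\|1\|_h^2$ near infinity. Once we have it, all three metrics are mutually bounded. Combining this with $\|\alpha\|_h^2\|\beta\|_h^2=|Q_4|$ shows that each of them is comparable to $|Q_4|^{1/2}$. Since $Q_4$ is a nonzero polynomial, $|Q_4|^{1/2}$ is complete on $\C\setminus N$, so $\|\beta\|_h^2$ is complete and $h$ is $\mathcal{C}_2$-complete.

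For the lower bound, I would mimic Corollary~\ref{cor:MutualBoundedness}. Passing to $\overline{h}$ by Proposition~\ref{Prop:CorrespondenceMetrics}, let $g=h_2^{-1}$ in the $\SO_0(2,3)$ notation (equal up to a constant to $\|1\|_h^2$). By Proposition~\ref{prop:HitchinsEquationsSO(2,3)}, $-\tfrac12\kappa_g=(\|\alpha\|^2+\|\beta\|^2)/g-4$, which is bounded above, and $g$ is complete. The route I would try first is to show that $g$ has curvature tending to $0$ at infinity, i.e.\ that the surface is asymptotically flat; Corollary~\ref{cor:MutualBoundedness} would then give the lower bound directly. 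Alternatively, I would argue via a maximum principle. Set $u=\log(\|\beta\|_h^2/\|1\|_h^2)$. Away from $\mathcal{Z}(\beta)$, the Hitchin equations give $\Delta u \ge c_1(\|1\|^2_h-\|\beta\|^2_h)$ plus terms involving $\|\alpha\|^2_h$; using $\|\alpha\|^2\approx\|1\|^2$ and $\|\alpha\|^2\|\beta\|^2=|Q_4|$, this becomes a differential inequality forcing $u$ to be bounded below, via an Omori--Yau or Cheng--Yau type argument on the complete metric $\|1\|_h^2$. The same argument bounds $\|\beta\|^2_h$ from below by $|Q_4|/\|\alpha\|^2_h\gtrsim|Q_4|/\|1\|_h^2$. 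So the statement to prove reduces to showing $\|1\|^2_h\lesssim|Q_4|^{1/2}$, an upper bound on $h$ near infinity. I would get this by comparing with the explicit flat solution built from $|Q_4|^{1/2}$ as a supersolution, in the style of Li--Mochizuki's domination results for complete solutions.
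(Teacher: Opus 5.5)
\paragraph{Overall.} Your outer logic matches the paper's in mirror image: the paper shows $h^{(2)}$ is $\mathcal{C}_1$-complete and then uses the uniqueness in Theorem~\ref{Thm:Sp4CompleteSolution}. Your reduction to a one-sided bound is also right in spirit. However, the identity you build it on is false, and the one-sided bound itself is never proved.

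\paragraph{The product identity.} From $\|1\|_h^2=h_2h_1^{-1}$, $\|\alpha\|_h^2=|\alpha|^2h_2^{-2}$ and $\|\beta\|_h^2=|\beta|^2h_1^2$, the product of the four cyclic components is
\[ \|1\|_h^4\,\|\alpha\|_h^2\,\|\beta\|_h^2=|Q_4|^2 . \]
So $\|\alpha\|_h^2\|\beta\|_h^2\approx|Q_4|$ holds only once you already know $\|1\|_h^2\approx|Q_4|^{1/2}$. With the correct identity your reduction survives, in sharper form. Since $\|\alpha\|_h^2\approx\|1\|_h^2$ on $\C\setminus N$, you get $\|\beta\|_h^2\approx|Q_4|^2/\|1\|_h^6$. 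The bound $\|\beta\|_h^2\lesssim\|1\|_h^2$ from Theorem~\ref{Thm:Sp4CompleteSolution} then already gives $\|1\|_h^2\gtrsim|Q_4|^{1/2}$. Everything therefore hinges on the upper bound $\|1\|_h^2\lesssim|Q_4|^{1/2}$.

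\paragraph{The gap.} That upper bound is the whole analytic content of the lemma, and none of your three routes establishes it.

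The asymptotic-flatness route is circular. Corollary~\ref{cor:MutualBoundedness} rests on \cite[Proposition 5.2.1]{Mor25}, which is exactly the estimate $g\approx|Q_4|^{1/2}$ for asymptotically flat surfaces. In the paper, asymptotic flatness of these surfaces (Lemma~\ref{Lem:PolynomialPolygon}) is deduced \emph{from} the present lemma.

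The maximum-principle route uses a wrong inequality. Away from $\mathcal{Z}(\beta)$ the Toda equations give $\partial_z\partial_{\bar z}u=3\|\beta\|_h^2+\|\alpha\|_h^2-4\|1\|_h^2$. At a minimum of $u$ you learn only $3e^u\geq 4-\|\alpha\|_h^2/\|1\|_h^2$. This is vacuous, because the constants in $\|\alpha\|_h^2\approx\|1\|_h^2$ are not controlled.

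The supersolution comparison is only named, not carried out.

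\paragraph{What the paper does instead.} On $Y=\C\setminus\mathcal{Z}(Q_4)$, the rescaling $\Upsilon_i$ (by $|\alpha|^{\pm1}$, resp.\ $|\beta|^{\pm1}$) identifies real diagonal harmonic metrics on $(\E,\varphi)|_Y$ with those on the $\mathfrak{sl}_4$ Hitchin section $(\mathbb{K}_{Y,4},\phi_{Hit}(Q_4))$. There \cite[Proposition 3.37]{LM25} applies; its only input is completeness of $|Q_4|^{1/2}$ on $\C\setminus N(\mathcal{Z}(Q_4))$. It makes $\|\phi_j\|^2$, $j=1,2,3$, bi-Lipschitz to $|Q_4|^{1/2}$, and the remaining component follows from the product identity.

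\paragraph{A fix inside your own framework.} Take $s=\log\big(\|1\|_h^2\|\alpha\|_h^2/|Q_4|\big)$. It satisfies
\[ \partial_z\partial_{\bar z}s=\|\alpha\|_h^2-\|\beta\|_h^2\geq|Q_4|^{1/2}\big(c\,e^{s/2}-C\,e^{-3s/2}\big), \]
using the $\mathcal{C}_1$ bounds and the product identity. A Keller--Osserman barrier on unit disks of the flat metric $|Q_4|^{1/2}$, centred at points at flat distance $\geq 1$ from $N$, then bounds $s$ from above. Since $\|\alpha\|_h^2\approx\|1\|_h^2$, this bounds $\|1\|_h^2/|Q_4|^{1/2}$ from above; on the remaining compact set the bound holds by continuity.
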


\begin{proof}
We follow the strategy of \cite[page 108]{LM20b}, but with some simplifications in our setting. 
Let $Q_{4} =\alpha\beta$ and denote $Y := \C\setminus \mathcal{Z}(Q_{4})$.  
We study the Hitchin section on $Y$. That is, consider the Higgs bundle 
$(\mathbb{K}_{Y,4}, \phi_{Hit}(Q_{4}|_{Y}))$ given by 
\[ \mathbb{K}_{Y,4} =  \K_{Y}^{3/2} \oplus \K_{Y}^{1/2}\oplus \K_{Y}^{-1/2} \oplus \K_{Y}^{-3/2}, \ \]
where $\phi_{Hit}$ is described by the following diagram: 
\begin{equation}\label{SL4Hitchin}
    \begin{tikzcd}
  \K_{Y}^{3/2} \arrow[r, "1"] & \K_{Y}^{1/2} \arrow[r, "1"]& \K_{Y}^{-1/2} \arrow[r, "1"]&\K_{Y}^{-3/2}\arrow[lll, bend left,  "Q_4"]
\end{tikzcd}.
\end{equation}
We shall write $\phi_{Hit}=\sum_{i=1}^4\phi_i$ to notationally distinguish its components from the $\varphi_i$'s of $(\E,\varphi)$.

We now introduce some notation. Let $\mathrm{Harm}(Q_{4}|_{Y})$ denote the space of harmonic metrics on the Higgs bundle 
$(\mathbb{K}_{Y,4}, \phi_{Hit}(Q_{4}|_{Y}))$ on the open Riemann surface $Y$. 
We add the decorations $\Harm_{\R}^{cyclic}$ to denote \emph{real, cyclic (diagonal)} solutions. 
On the other hand, we let 
$\Harm^{cyclic}_{\R}\big( (\E,\varphi )|_{Y}\big)$ denote the space of diagonal and real harmonic metrics on the restricted Higgs bundle $(\E,\varphi)|_Y$. 
We now observe that there are bijections
\[ \Upsilon_i: \Harm_{\R}^{cyclic}(Q_{4}|_{Y}) \stackrel{\simeq}{\longrightarrow}  \Harm^{cyclic}_{\R}\big((\E,\phi )|_{Y}\big)\ ,\]
given explicitly by 
\begin{align}
    \Upsilon_1:=& \big( \diag(h_1,h_2,h_2^{-1},h_{1}^{-1}) \longmapsto \diag(\,|\alpha|h_1, |\alpha|h_2,|\alpha|^{-1}h_2^{-1} \label{Upsilon1}, |\alpha|^{-1}h_1^{-1}) \, \big)\ , \\
    \Upsilon_2:=& \big(\diag(h_1,h_2,h_2^{-1},h_{1}^{-1}) \longmapsto \diag(\, |\beta|h_2^{-1}, |\beta|h_1^{-1},|\beta|^{-1}h_1, |\beta|^{-1}h_2) \, \big)\label{Upsilon2}\ .
\end{align}
Here, one can quickly and directly verify that $\Upsilon_i$ is well-defined: that is, $\hat{h}:=\Upsilon_i(h)$ is an (obviously real, cyclic) harmonic metric on $(\mathbb{K}_{Y,4}, \phi_{Hit}(Q_4))$. 

By the conversion process, we can invoke Li-Mochizuki's earlier work on the $\mathfrak{sl}_n\C$ affine Toda equations, namely the harmonicity equations for Higgs bundles $(\mathbb{K}_{Y,n}, \phi_{Hit}(Q_n))$ in the Hitchin section on non-compact Riemann surfaces $Y$. 
Let us define $\hat{h}^{(i)}:=\Upsilon_i^{-1}(h^{(i)})$ and write 
\[ \hat{h}^{(i)}= \diag\bigg(\hat{h}^{(i)}_1,\hat{h}^{(i)}_2,\frac{1}{\hat{h}^{(i)}_1},\frac{1}{\hat{h}^{(i)}_2}\bigg). \]
Then we can apply \cite[Proposition 3.37]{LM25} to 
$\hat{h}^{(1)},\hat{h}^{(2)} \in \Harm(Q_4|_Y)$, since the flat metric $|Q_4|^{1/2}|_{\overline{Y}_1}$, 
restricted to the bordered Riemann surface $\overline{Y}_1:=\C \setminus N(\mathcal{Z}(Q_4))$, is complete. 
Their result asserts that $\hat{h}^{(1)},\hat{h}^{(2)} \in \Harm(Q_{4}|_{Y})$ are mutually bounded 
on $\overline{Y}_1$. Moreover, each of the associated metrics $||\phi_j||_{\hat{h}^{(i)}}^2$, for $1\leq j\leq 3$ and $i \in \{1,2\}$, are mutually bounded with respect to $|Q_4|^{1/2}$ on $\overline{Y}_1$. In other words, the metrics $||\phi_j||_{\hat{h}^{(i)}}^2$ are each bi-Lipschitz with $|Q_4|^{1/2}$ on $\overline{Y}_1$.

We can now finally show $h^{(1)}=h^{(2)}$.
To see the equality, by Lemma \ref{Thm:Sp4CompleteSolution}, we need only see $h^{(2)}$ is $\mathcal{C}_1$-complete (which then implies $h^{(1)}=h^{(2)}$ is $\mathcal{C}_2$-complete). We now observe the following:
\[ ||\alpha||_{h^{(2)}}^2= ||\phi_4||^2_{\hat{h}^{(2)}}=|Q_4|^2 \frac{\hat{h}^{(2)}_1}{\hat{h}^{(2)}_4}=|Q_4|^2 \frac{\hat{h}^{(2)}_1}{\hat{h}^{(2)}_2}\frac{\hat{h}^{(2)}_2}{\hat{h}^{(2)}_3}\frac{\hat{h}^{(2)}_3}{\hat{h}^{(2)}_4}= |Q_4|^2 ||\phi_1||_{\hat{h}^{(2)}}^{-2}||\phi_2||_{\hat{h}^{(2)}}^{-2}||\phi_3||_{\hat{h}^{(2)}}^{-2}\]
Thus, $||\alpha||_{h^{(2)}}^2$ is quasi-isometric to $|Q_{4}|^{1/2}$ on $\overline{Y}_1$. Hence, $h^{(2)}$ is $\mathcal{C}_1$-complete, and by the above reasoning, $h^{(2)}=h^{(1)}$. 

We have already shown that $h:=h^{(1)}=h^{(2)}$ is a diagonal harmonic metric that is $\mathcal{C}_1$ and $\mathcal{C}_2$-complete, which is real. 
The uniqueness statement follows immediately from Lemma \ref{Thm:Sp4CompleteSolution}. The mutual boundedness of $\sigma_i(h)$, for $ 1\leq i\leq 3$, with $|Q_{4}|^{1/2}$ on $\overline{Y}_1$ was proven earlier. 
\end{proof}

\subsection{Polygonal maximal surfaces}\label{Subsec:PolynomialSurfaces}

We can now deduce that the maximal surfaces $\Sigma$ associated to the polynomial Higgs bundles of interest are polygonal. Recall that a complete maximal surface in $\H$ is said to be \emph{polygonal} whenever its asymptotic boundary is a polygon in $\Ein$. 

The proof below closely follows the strategy used in \cite[Section 5.3 to 5.5]{Mor25} to show that an asymptotically flat maximal surface is indeed a polygonal surface. Here is the outline. By Lemma \ref{Lem:TrueComplete}, the induced metric $g$ of $\Sigma$ and the flat metric $|Q_{4}|^{1/2}$ are bi-Lipschitz. Hence, so are their Tits boundaries. Thus, the Tits boundary of the induced metric $g$ is compact and so $\Sigma$ is polygonal.

\begin{lemma}[Polynomial to polygon]\label{Lem:PolynomialPolygon}
    Let $(\E, \varphi(\alpha,\beta))$ be the Higgs bundle \eqref{SO(2,3)Higgs} with polynomial differentials $\alpha$, $\beta$ and quartic differential $Q_4=\mathbf{Q}(\alpha,\beta)$ of degree $n$. Let $h$ be the  diagonal, real $\SO_0(2,3)$-harmonic metric described by Lemma \ref{Lem:TrueComplete} and Proposition \ref{Prop:CorrespondenceMetrics}. Define $\Sigma \subset \H$ as the maximal surface associated to the harmonic bundle $(\E, \varphi, h)$ by Proposition \ref{Prop:HiggsToMaximalSurface}.
    Then the asymptotic boundary of $\Sigma$ is a semi-positive polygon in $\Ein$ with $n+4$ vertices.
\end{lemma}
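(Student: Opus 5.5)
By Proposition \ref{Prop:HiggsToMaximalSurface}, the harmonic bundle $(\E,\varphi,h)$ produces an immersed maximal surface $\sigma:\C\to\Hh$. The plan is to show that $\sigma$ is complete and that its induced metric $g$ is asymptotically flat with finite total curvature. We then invoke the characterization of \cite{Mor25}: a complete maximal surface whose induced metric has finite total curvature, equivalently is asymptotically flat, has polygonal asymptotic boundary. By Proposition \ref{Prop:SemiPositive} and Corollary \ref{Cor:CompleteSpacelike}, that boundary is automatically a semi-positive loop. The vertex count $n+4$ is then read off from the quartic differential, as in \cite{Mor25}.

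\textbf{Step 1: compare $g$ with the flat metric.} Under Proposition \ref{Prop:CorrespondenceMetrics}, $g=\overline{h}_2^{-1}$, which is $\|2\|^2_{\overline h}$ up to a constant factor. This equals $4\|1\|_h^2$ in the $\Sp_4$ picture. By Lemma \ref{Lem:TrueComplete}, $\|1\|^2_h$ is mutually bounded with $|Q_4|^{1/2}$ on $\overline{Y}_1=\C\setminus N(\mathcal{Z}(Q_4))$. So $g$ is bi-Lipschitz to the flat cone metric $|Q_4|^{1/2}$ outside a compact set. Since $Q_4$ is a polynomial of degree $n$, $|Q_4|^{1/2}$ is complete at infinity, with cone angle $(n+4)\pi/2$ at infinity. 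Hence $g$ is complete, because on the compact set $g$ is smooth and positive.

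\textbf{Step 2: asymptotic flatness.} By Proposition \ref{prop:HitchinsEquationsSO(2,3)}, the curvature is
\[
\kappa_g=\tfrac{2}{g}\big(\|\alpha\|^2+\|\beta\|^2-\|2\|^2\big).
\]
Each term is bounded relative to $g$, which gives bounded curvature. To get decay, I would use the Li--Mochizuki asymptotics for the Hitchin-section solution $\hat h=\Upsilon_i^{-1}(h)$ on $Y$. The cited \cite[Proposition 3.37]{LM25} type estimates give that $\hat h$ converges exponentially, in $|Q_4|^{1/2}$-distance, to the flat diagonal solution. Concretely, each $\|\phi_j\|^2_{\hat h}/|Q_4|^{1/2}\to1$ with error $O(e^{-c\,d(z)})$. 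Transferring through $\Upsilon_1$ and $\Upsilon_2$ gives
\[
\|\alpha\|^2/|Q_4|^{1/2}\to 1,\qquad \|\beta\|^2/|Q_4|^{1/2}\to 1,\qquad \|1\|^2/|Q_4|^{1/2}\to 1.
\]
Thus $\|\alpha\|^2+\|\beta\|^2-\|2\|^2\to0$ relative to $g$ with exponential decay. Hence $\kappa_g\to0$ and $\int|\kappa_g|\,dA_g<\infty$, since area grows polynomially in distance while the curvature decays exponentially.

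\textbf{Step 3: conclude polygonality and count vertices.} Apply \cite[Section 5.3--5.5]{Mor25}: $\sigma$ is complete and asymptotically flat, and its Tits boundary is bi-Lipschitz to that of $|Q_4|^{1/2}$. The latter is a circle of length $(n+4)\pi/2$ made of $n+4$ arcs of length $\pi/2$, one per Stokes direction of the degree-$n$ polynomial. Therefore $\partial_\infty\Sigma$ is a polygon with $n+4$ lightlike edges. Semi-positivity follows from completeness as noted above.

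\textbf{Main obstacle.} The hard step is Step 2, the exponential decay. Lemma \ref{Lem:TrueComplete} only gives mutual boundedness, so the real work is importing the sharp Li--Mochizuki decay for $\mathfrak{sl}_4$ Toda solutions that are complete near infinity. I would try to derive it from their comparison with the flat solution on sectors. If that fails, a fallback is to run a subharmonicity or maximum principle argument on $\log(\|\phi_j\|^2/|Q_4|^{1/2})$ directly, using the Toda system on half-planes of the flat coordinate $w=\int Q_4^{1/4}$.
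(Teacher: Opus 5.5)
Your Steps 1 and 3 match the paper. Step 2, which you flag as the crux, is not actually supplied. Lemma~\ref{Lem:TrueComplete} extracts from \cite[Proposition 3.37]{LM25} only \emph{mutual boundedness} of $\|\phi_j\|^2_{\hat h}$ with $|Q_4|^{1/2}$. It does not give exponential convergence to the flat solution. Such decay is available in principle from asymptotic-decoupling estimates for Higgs bundles with regular semisimple spectral data, but you would have to prove it or cite it precisely. As written, your finite-total-curvature claim rests on an unestablished estimate, and you hedge it with a fallback.

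The missing idea is that no decay is needed at all. The paper's proof uses only the bi-Lipschitz comparison from your Step 1, as follows.
\begin{enumerate}
\item By \cite[Lemma 5.5.2]{Mor25}, the identity $(\Sigma,d)\to(\Sigma,d_4)$ is a quasi-isometry.
\item By \cite[Section 5.4]{Mor25}, $d_4$ (the distance of $|Q_4|^{1/2}$) is a Hadamard distance with compact Tits boundary of perimeter $\frac{\pi}{2}(n+4)$.
\item Following \cite[Corollary 5.3.3]{Mor25}, the Tits boundary of the nonpositively curved surface $(\Sigma,g)$ is then compact and bi-Lipschitz to that of $d_4$, hence a circle of finite length. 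This forces finite total curvature.
\item \cite[Main Theorem]{Mor25} then gives a polygon with $n+4$ vertices.
\end{enumerate}
Your Step 3 already invokes exactly this Tits-boundary comparison, so once you have it, Step 2 is redundant. If completed, your route would buy quantitative, exponential asymptotic flatness in the spirit of \cite{TW24}. The paper deliberately avoids that analytic input, since the coarse bounds of Lemma~\ref{Lem:TrueComplete} already suffice.

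There is also a constant slip in Step 2. You set $\|2\|^2_{\overline h}=4\|1\|_h^2$ and let $\|\alpha\|^2$, $\|\beta\|^2$, $\|1\|^2$ all be asymptotic to $|Q_4|^{1/2}$. Then $\|\alpha\|^2+\|\beta\|^2-\|2\|^2$ tends to $-2|Q_4|^{1/2}$, not to $0$. Compare the two equations for $F_{h_1h_2}$ in Proposition~\ref{Prop:CorrespondenceMetrics}: the $\mathcal{O}$-arrow must contribute $2\|1\|_h^2$, since the entry is effectively $\sqrt2$ for a unit section of $\mathcal{O}$. With that normalization the bracket does tend to $0$.
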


\begin{proof}
    Denote by $g$ the induced metric of $\Sigma$ (which is equal to $||1||_h^2$ in the notations of the previous section) and by $g_4$ the flat metric $|Q_{4}|^{1/2}$.
    By Corollary \ref{Lem:TrueComplete}, $g$ is bi-Lipschitz to $g_4$ outside a compact set. Denote by $d$ and $d_4$ the distances associated with the metrics $g$ and $g_4$, respectively.

    Following the proof of \cite[Lemma 5.5.2]{Mor25}, the identity map is a quasi-isometry from $(\Sigma,d)$ to $(\Sigma,d_4)$. By \cite[Section 5.4]{Mor25}, $d_4$ is a Hadamard distance with compact Tits boundary of perimeter $\frac{\pi}{2}(n+4)$. We want to show that $(\Sigma,d)$ also has compact Tits boundary.
    
    Following the proof of \cite[Corollary 5.3.3]{Mor25}, the Tits boundary of $(\Sigma,d)$ is compact (and bi-Lipschitz to that of $(\Sigma,d_4)$). Thus, the Tits boundary of $(\Sigma,g)$ is a circle of finite perimeter, and hence $(\Sigma,g)$ has finite total curvature.

    We can now apply \cite[Main Theorem]{Mor25} to conclude that $(\Sigma,g)$ is a polygonal surface with $n+4$ vertices.
\end{proof}

\begin{corollary}\label{Cor:AllPolynomialMaximalSurfaces}
    Every polygonal maximal surface $\Sigma$ in $\Hh$ is realized by a Higgs bundles $(\E,\varphi(\alpha,\beta))$ of the form \eqref{SO(2,3)Higgs} with polynomial $\alpha$ and $\beta$. 
\end{corollary}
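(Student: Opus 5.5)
Let $\Sigma\subset\Hh$ be a polygonal maximal surface. The plan is to use \cite{Mor25} to put $\Sigma$ on $\C$ with polynomial $Q_4$, then pass to the Higgs bundle of Proposition \ref{Prop:MaximaltoHiggs}, trivialize the normal bundles to make $\alpha$ and $\beta$ polynomial, and finally identify the metric with the strongly complete one of Lemma \ref{Lem:TrueComplete}.

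First, by \cite{Mor25}, $\Sigma$ admits a conformal parametrization $\sigma:\C\to\Hh$ whose quartic differential $Q_4(\sigma)$ is a polynomial of degree $n$, where $\partial_\infty\Sigma$ is an $(n+4)$-gon. Proposition \ref{Prop:MaximaltoHiggs} then gives a harmonic bundle $(\E,\Phi,h)$ of the form \eqref{SO(2,3)Higgs}, with $h=\diag(h_1,h_2,1,h_2^{-1},h_1^{-1})$ diagonal and real, and with $\alpha=\alpha(\sigma)$, $\beta=\beta(\sigma)$. By Proposition \ref{Prop:HiggsToMaximalSurface}, this harmonic bundle recovers $\sigma$ up to $\SO_0(2,3)$. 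So the first content to check is polynomiality of $\alpha$ and $\beta$.

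Over $\C$ every holomorphic line bundle is trivial. Choose a holomorphic trivialization $s$ of $\mathcal{N}$ and the dual trivialization $s^{*}$ of $\mathcal{N}^{-1}$, and write $\alpha=A\,\dz^2\otimes s$ and $\beta=B\,\dz^2\otimes s^{*}$ with $A,B$ entire and $AB=\hat Q_4$ a polynomial. Since $\hat Q_4$ has only finitely many zeros, $A$ has finitely many zeros, so $A=p\,e^{f}$ with $p$ a polynomial and $f$ entire. Replacing $s$ by $e^{f}s$ (and $s^{*}$ by $e^{-f}s^{*}$) makes $A=p$, and then $B=\hat Q_4/p$ is also a polynomial. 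This rescaling is a holomorphic gauge of $\E$ preserving the form \eqref{SO(2,3)Higgs} and $\mathcal{Q}$. It changes the diagonal metric by the real factors $|e^{f}|^{\pm2}$ in the $\mathcal{N}^{\pm1}$ slots, so the metric stays diagonal and real.

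It remains to show that the metric we obtained is the one used in Lemma \ref{Lem:PolynomialPolygon}; this is the main step. Since $\Sigma$ is polygonal, \cite{Mor25} says its induced metric $g=h_2^{-1}$ is asymptotically flat and mutually bounded with $|Q_4|^{1/2}$ outside $N(\mathcal{Z}(Q_4))$. Corollary \ref{cor:MutualBoundedness} then gives that $\|\alpha\|_h^2$ and $\|\beta\|_h^2$ are both mutually bounded with $g$, hence with the complete flat metric $|Q_4|^{1/2}$, on $\C\setminus N(\mathcal{Z}(Q_4))$.

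Translate $h$ to the $\Sp(4,\R)$ side through Proposition \ref{Prop:CorrespondenceMetrics}. There $\|1\|_h^2$ corresponds to $g$ up to a constant, and $\|\alpha\|_h^2$, $\|\beta\|_h^2$ are unchanged. So all three metrics are complete off a compact set, which means $h$ is both $\mathcal{C}_1$- and $\mathcal{C}_2$-complete, i.e.\ strongly complete.

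By the uniqueness in Theorem \ref{Thm:Sp4CompleteSolution} and Lemma \ref{Lem:TrueComplete}, $h$ is then the preferred metric of Lemma \ref{Lem:PolynomialPolygon}. Hence $\Sigma$ is realized by $(\E,\varphi(\alpha,\beta))$ with $\alpha,\beta$ polynomial and this metric, as claimed.
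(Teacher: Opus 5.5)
Your proposal is correct and follows the paper's proof: you use \cite{Mor25} to parametrize $\Sigma$ by $\C$ with polynomial $Q_4$, and then factor the entire coefficient and rescale the trivialization of $\mathcal{N}$ so that both $\alpha$ and $\beta$ become polynomial. Factoring only $A=p\,e^{f}$ and reading off $B=\hat Q_4/p$ is a slightly cleaner variant of the paper's step. Your additional identification of $h$ with the strongly complete metric, via Corollary \ref{cor:MutualBoundedness} and the uniqueness in Lemma \ref{Lem:TrueComplete}, goes beyond what the paper proves at this point; the paper makes exactly this argument later, in Lemma \ref{LemmaSigmaDeltaInverse}.
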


\begin{proof}
    Let $\Sigma$ be a polygonal maximal surface. Then by \cite[Main Theorem]{Mor25} $\Sigma$ is conformally of parabolic type and admits a conformal parametrization 
    $\sigma: \C \rightarrow \Sigma$ with polynomial quartic differential $Q_4(\sigma)$. Let us write $Q_4=\hat{Q}_4\dz^4$. 
    Let $\alpha,\beta$ be the components of $\II^{2,0}$ of this parametrization and $\eta \in H^0(\C, \mathcal{N})$ be a non-vanishing holomorphic section of $\mathcal{N}$. Then we can write $\alpha=a(z)dz^2\otimes \eta$ and $\beta=b(z)dz^2\otimes \eta^{-1}$ with $a,b:\C \rightarrow \C$ holomorphic functions. Note the equality $ab=ab\mathbf{Q}(\eta,\eta^{-1})=\hat{Q}_4$, by definition of $Q_4$ and $\eta^{-1}$. 
   As $\hat{Q}_4$ is a polynomial, each of $a$ and $b$ have a finite number of zeros. 
   Thus, we can write $a=A\exp(f)$ and $b=B\exp(g)$ with $A$ and $B$ complex polynomials and $f$, $g$ holomorphic functions. Now, since $ab=AB\exp(f+g)$ is a polynomial, $\exp(f+g)$ is a constant, thus $f=-g+c$ for $c\in\C$.

    Finally, up to multiplying the trivializing sections of $\mathcal{N}$ by $\exp(f)$, we obtain that $a(z)\in\C[z]$ and $b(z)\in\C[z]$. Thus, in an appropriate $\SO(2,\C)$-frame for $\N^{\C}=\mathcal{N}\oplus \mathcal{N}^{-1}$, we 
    have $\alpha$, $\beta$ each represented by polynomial functions.

    Now, by the discussion in Section \ref{Subsec:MaximaltoHiggs}, we conclude that the associated Higgs bundle to $\Sigma$ can be gauged to the form $(\E, \Phi(\alpha,\beta))$ with $\alpha,\beta$ polynomial.
\end{proof}

As bundle valued differentials, we cannot compare $\alpha$'s, $\beta$'s for different maximal surfaces. To allow for such a comparison, we introduce a procedure that extracts an equivalence class of pairs of holomorphic quadratic differentials. 
We will need the following spaces for this conversion process.
\begin{definition}[Polynomial differential pairs]
Let $\mathcal{M}_{n}(\mathcal{PD}^{o})$ and $\mathcal{M}_n(\mathcal{PD})$ be the moduli spaces of oriented, (resp. unoriented) pairs of polynomial quadratic differentials $A\dz^2, B\dz^2$, up the diagonal action of $\Aff(\C)$ by pushforward and by the $\C^*$-action $\lambda\cdot (A\dz^2, B\dz^2)= (\lambda A \dz^2, \lambda^{-1}B\dz^2)$. 
\end{definition}

Now, here is the desired construction. We use the insights from Corollary \ref{Cor:AllPolynomialMaximalSurfaces} in this definition.

\begin{definition}[Associated polynomial differentials]\label{Defn:AssociatedDifferentials}
Let $\Sigma \subset\Hh$ be a polygonal maximal surface. 
Choose any conformal parametrization $\sigma:\C \rightarrow \Sigma$ and holomorphic section $\eta \in H^0(\mathcal{N})$ such that 
\begin{align}
    \alpha &= A\dz^2\otimes \eta\ ,\\
    \beta &= B \dz^2 \otimes \eta^{-1}\ ,
\end{align}
where $A, B:\C\rightarrow \C$ are polynomials. 
Then we define $\boldsymbol{\delta}^{o}(\Sigma) := [(A\dz^2, B\dz^2)] \in \mathcal{M}_n(\mathcal{PD}^o)$.\medskip 

We set $\boldsymbol{\delta}^{o}: \mathcal{C}_{n+4}(\Hh)\rightarrow \mathcal{M}_n(\mathcal{PD}^{o})$ to be the resulting map. \medskip 

We define $\boldsymbol{\delta}: \mathcal{C}_{n+4}(\H)\rightarrow \mathcal{M}_n(\mathcal{PD})$ analogously.
\end{definition}
Note that changing $\eta$, with $\sigma$ fixed, gives the $\C^*$-ambiguity, and changing the conformal parametrization $\sigma$ gives the $\Aff(\C)$-ambiguity. 
\begin{remark}\label{Defn:DescendedDifferentialMap}
The maps $\boldsymbol{\delta}^{o}$, $\boldsymbol{\delta}$ are each $\SO_0(2,3)$-invariant and thus induce maps of the form $\boldsymbol{\delta}^{o}: \mathcal{M}_{n+4}(\Hh)\rightarrow \mathcal{M}_n(\mathcal{PD}^{o})$ and $\boldsymbol{\delta}: \mathcal{M}_{n+4}(\H)\rightarrow \mathcal{M}_n(\mathcal{PD})$, respectively.
\end{remark}

\section{Polygons}\label{Sec:Polygon}

In this section, we define and study \emph{polygons} of two different flavors: semi-positive polygons in $\Ein$ and spherical polygons in $\S^2$. By introducing a turning invariant for the latter, we are able to count the connected components of each respective moduli space of polygons.

\subsection{Polygons in \texorpdfstring{$\Ein$}{Ein12}}

We recall that the definition of semi-positive loops in $\Ein$ and its equivalent formulations were presented in Proposition \ref{Prop:SemiPositive}. Now, here is some additional terminology for $\Ein$-polygons that we shall frequently use. 

\begin{definition}
    A \textbf{polygon} in $\Ein$ is a loop consisting of finitely many segments of photons. In a polygon, a maximal segment of photon is called an \textbf{edge} and the point joining two consecutive edges is called a \textbf{vertex}. We refer to a polygon with $N$ vertices as an $\boldsymbol{N}$\textbf{-gon}. 
\end{definition}

Throughout Section \ref{Sec:Polygon}, we shall write $N$ to denote the number of vertices of a semi-positive polygon $\Lambda$ in $\Ein$, so that $N=n+4$ when $\Lambda$ is the asymptotic boundary of a maximal surface $\Sigma$ with $\deg(Q_4)=n$.

\subsubsection{In a splitting}

Recall (Section \ref{sec:splitting}) that in a splitting:
\begin{itemize}
    \item we see $\Hh\simeq\S^2_+\times\S^2$ and $\Einh\simeq\S^1\times\S^2$,
    \item a semi-positive loop is the graph of a 1-Lipschitz map from $\S^1$ to $\S^2$, that is an isometry only where the loop contains segments of photons.
\end{itemize}

\begin{proposition}\label{prop:polygon piecewise isometry}
    Let $\Lambda$ be a semi-positive polygon in $\Einh$. In a splitting $\Einh\simeq\S^1\times\S^2$, the polygon $\Lambda$ is the graph of a piecewise isometry from $\S^1$ to $\S^2$. Moreover, the image of this piecewise isometry does not contain antipodal points.
\end{proposition}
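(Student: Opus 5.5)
We first fix the splitting $\Einh\simeq\S^1\times\S^2$ coming from a pointed hyperbolic plane $(l,P,R)$. By Proposition \ref{Prop:SemiPositive}, $\Lambda$ is the graph of a 1-Lipschitz map $f:\S^1\to\S^2$ whose image contains no antipodal points. This already gives the \emph{moreover} statement. So the real work is to show that $f$ is a piecewise isometry, with one isometric piece for each edge of $\Lambda$.

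The key step is to describe photons in the splitting. A point of $\Einh$ has the form $u+v$ with $u\in\S^1\subset P$ and $v\in\S^2\subset l\oplus R$, both of unit norm for the respective definite forms. Take two such points $x_i=u_i+v_i$, $i=1,2$. Then
\[
\Q(x_1,x_2)=\langle u_1,u_2\rangle-\langle v_1,v_2\rangle=\cos d_{\S^1}(u_1,u_2)-\cos d_{\S^2}(v_1,v_2).
\]
A photon is the projectivization of an isotropic plane $\spann\{x_1,x_2\}$ with $\Q(x_1,x_2)=0$. So the ray $x_1+t x_2$, after renormalizing its $P$ and $l\oplus R$ components, projects to great-circle arcs in both factors. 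The $P$-component traces the arc from $u_1$ to $u_2$ and the $(l\oplus R)$-component traces the arc from $v_1$ to $v_2$, with the same angular parameter. The two components have equal norm, and the relation $\cos d(u_1,u_2)=\cos d(v_1,v_2)$ forces the two arcs to have equal length. Hence a photon segment is the graph, over an arc of $\S^1$, of a unit-speed parametrization of a geodesic arc in $\S^2$. In particular the restriction of $f$ to the corresponding arc of $\S^1$ is an isometry onto a great-circle arc.

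Since $\Lambda$ is a finite concatenation of photon segments and is the graph of $f$, the edges project to finitely many arcs of $\S^1$ that cover $\S^1$ with disjoint interiors. On each arc, $f$ is an isometry onto its image by the previous step, so $f$ is a piecewise isometry. The vertices correspond to the breakpoints; at a vertex, maximality of the edges means $f$ is not a single geodesic across that point.

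The main obstacle is the photon computation: checking that the projection of a photon to $\S^1$ really is an arc and not a degenerate point, and that the parametrizations match. I would handle this directly. The $P$-component cannot be constant along the photon, since two isotropic vectors with the same $u$-part satisfy $\Q(x_1,x_2)=1-\cos d(v_1,v_2)$, which vanishes only if $v_1=v_2$. I expect the rest to be routine.
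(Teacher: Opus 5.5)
Your proof is correct, but it justifies the two claims differently from the paper.

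\textbf{Piecewise isometry.} The paper simply cites \cite[Lemma 2.12]{LTW24}. You instead compute $\Q(x_1,x_2)=\cos d_{\S^1}(u_1,u_2)-\cos d_{\S^2}(v_1,v_2)$ and match the angular parameters along $x_1+tx_2$. This is a self-contained proof of exactly the fact being cited.

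\textbf{No antipodal points.} The paper argues geometrically. The projection to $\S^2$ is a closed curve of length $2\pi$; such a curve through two antipodal points must be a 2-gon or a great circle; and there are no semi-positive 2-gons by \cite[Proposition 3.1.4]{Mor25}. You read the claim directly off Proposition \ref{Prop:SemiPositive}. That is shorter and legitimate. The paper's version has the advantage of showing the mechanism, and it does not depend on how the phrase ``contains no antipodal points'' in Proposition \ref{Prop:SemiPositive} is parsed. If that phrase is read as a condition on the graph in $\Einh$ rather than on the image of $f$, you need one more line. From $f(u_1)=-f(u_2)$ you get $\pi=d_{\S^2}(f(u_1),f(u_2))\le d_{\S^1}(u_1,u_2)\le\pi$. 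Hence $u_2=-u_1$, and the graph contains an antipodal pair.

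\textbf{A small gap in the photon computation.} The ray $x_1+tx_2$, $t\ge 0$, only sweeps the short arc of the photon circle between $x_1$ and $x_2$. You can close this by noting that the long arc would contain $-x_1$, which you have already excluded. A cleaner fix handles arbitrary edges and your non-degeneracy check at once. An isotropic plane $W$ meets the definite subspaces $P$ and $l\oplus R$ trivially. So $W=\{u+Au\mid u\in P\}$ with $A$ a linear isometry from $P$ onto a plane of $l\oplus R$. The whole photon is then the graph of $A|_{\S^1}$.
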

\begin{proof}
    The first part of the proposition follows from \cite[Lemma 2.12]{LTW24}. For the last point, this follows from the fact that there are no 2-gons \cite[Proposition 3.1.4]{Mor25} and a length 2$\pi$ loop on $\S^2$ containing two antipodal points must be a 2-gon or a great circle.
\end{proof}

\begin{remark}[Inherited orientation]\label{rem:orientation}
    Let $\Lambda$ be a semi-positive loop in $\Einh$. We shall explain why $\Lambda$ carries a canonical orientation.  
    
    Fix an orthogonal space+time splitting of $\R^{2,3}= U\oplus V$, inducing a splitting $\Einh \simeq \S^1\times \S^2$, where $\S^1 \simeq Q_+(U)$, the spacelike unit vectors in $U$ and $\S^2\simeq Q_-(V)$, the timelike unit vectors in $V$. Hence, we may write 
    $\Lambda = \mathrm{graph}(f:\S^1 \rightarrow \S^2)$ for a 1-Lipschitz map $f$. Thus, $\Lambda$ obtains an orientation from $\S^1$. 
    As the group $\SO_0(2,3)$ preserves the space-orientation of $\R^{2,3}$, the resulting orientation on $\Lambda$ is the independent of the orthogonal space+time splitting. 
\end{remark}

\subsection{Spherical polygons}
We begin by introducing the spherical polygons of interest, which are exactly the projections to $\S^2$ of a polygonal semi-positive loop in $\Ein$. In particular, we shall be interested in fixing only the \emph{total length} of the polygon, but not the side lengths, as is quite commonly the case in the study of \emph{polygonal linkages} (see e.g. \cite{Gal, KM99}). 

\begin{definition}\label{Defn:SphericalPolygon}
    A \textbf{spherical polygon} $\Delta$ is an oriented piecewise geodesic loop in $\S^2$ of total length $2\pi$ that does not contain a pair of antipodal points. 
    
    Let $\Delta$ be such a spherical polygon. A maximal geodesic segment $e \subset \Delta$ is called an \textbf{edge},
    and the point $v \in \Delta$ joining two consecutive edges is called a \textbf{vertex}.
\end{definition}

We record a formal remark regarding some nonstandard notation. 
\begin{remark}
We shall write $\Delta = (v_i)_{i \in I_N}$ for the vertices of a spherical $N$-gon. 
Here the index set $I_N$ is a $\Z_N$-torsor. In other words, $\Z_N$ acts simply transitively on $I_N$, so that if $i \in I_N$, then $i+1$, $i-1 \in I_N$ are defined, but there is no canonical choice of basepoint in $I_N$.
\end{remark}

We note here a basic property of these spherical polygons.
\begin{proposition}[Hemisphere confinement]\label{Prop:HemisphereContainment}
Let $\Delta \subset \S^2$ be a spherical polygon. Then there is an open hemisphere $H \subset \S^2$ such that $\Delta \subset H$. 
\end{proposition}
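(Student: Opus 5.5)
A closed curve in $\S^2$ of length $2\pi$ containing no antipodal pair should lie in an open hemisphere; the plan is to produce an explicit center $c \in \S^2$ with $\langle c, x\rangle > 0$ for every $x \in \Delta$. Parametrize $\Delta$ by arc length as $\gamma:[0,2\pi]\to\S^2$ with $\gamma(0)=\gamma(2\pi)$. Choose the basepoint $p=\gamma(0)$ and the antipodal point $q=\gamma(\pi)$ on the curve, so that the two arcs $\gamma_1=\gamma|_{[0,\pi]}$ and $\gamma_2=\gamma|_{[\pi,2\pi]}$ each have length exactly $\pi$. Since $d_{\S^2}(p,q)\leq \pi$, with equality only if $p,q$ are antipodal, the hypothesis gives $d(p,q)<\pi$. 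Thus the midpoint $m$ of the minimizing geodesic from $p$ to $q$ is well defined, and $m$ is the candidate for the center $c$ of the hemisphere.

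The key step is the classical estimate: if $x$ lies on an arc of length $\pi$ from $p$ to $q$, then $d(x,m)\leq \pi/2$. This is the standard Crofton/midpoint argument used to show that closed curves of length $<2\pi$ lie in open hemispheres. Concretely, suppose $x=\gamma(t)$ with $d(x,m)\geq\pi/2$. Reflect $x$ through $m$, meaning apply the rotation by $\pi$ about the axis through $m$. This rotation swaps $p$ and $q$ and sends $x$ to a point $x'$. Then
\[ d(p,x)+d(x,q) = d(p,x)+d(p,x') \geq d(x,x') . \]
Since $x$ and $x'$ are symmetric about $m$, we have $d(x,x')=2d(x,m)\geq \pi$ when $d(x,m)\leq\pi/2$, and more generally $d(x,x')=2\min\{d(x,m),\pi-d(x,m)\}$. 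I need to handle this carefully. In $\R^3$, $x'=2\langle x,m\rangle m - x$, so $\langle x,x'\rangle = 2\langle x,m\rangle^2-1$. If $\langle x,m\rangle\leq 0$, a sharper route is needed. Instead I would use the spherical triangle inequality directly: the length of $\gamma_1$ is $\pi \geq d(p,x)+d(x,q)$. Writing $p,q$ symmetric about $m$ at angle $\theta=d(p,q)/2<\pi/2$, one computes $\cos d(p,x)+\cos d(q,x)=2\cos\theta\,\langle x,m\rangle$. If $\langle x,m\rangle\leq 0$, then $\cos d(p,x)\leq -\cos d(q,x)=\cos(\pi-d(q,x))$, so $d(p,x)\geq \pi-d(q,x)$, that is $d(p,x)+d(x,q)\geq\pi$.

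This forces equality throughout, and this equality case is the main obstacle. Equality requires $\langle x,m\rangle=0$, the arc $\gamma_1$ to be the broken geodesic $p\to x\to q$ of length exactly $\pi$, and $d(p,x)+d(x,q)=\pi$. Then the point at distance $\pi$ along $\gamma_1$ extended through $x$ is antipodal to $p$: the geodesic $p\to x$ continued by $d(x,q)$ reaches $-p$. I must rule this out. One option is to argue that $\gamma_1$ itself contains $x$ and, together with the triangle equality and the spherical law of cosines, this forces $q=-p$ or $x=-p$ (or a similar antipodal configuration), contradicting the hypothesis. The cleaner alternative is to use the freedom in choosing the basepoint. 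If the closed set of bad basepoints were all of $[0,2\pi]$, every point would sit on a great circle through $m$ with a degenerate structure, which would force $\Delta$ to be a great circle; a great circle contains antipodal points, a contradiction. So some basepoint gives strict inequality $\langle x,m\rangle>0$ for all $x\in\gamma_1\cup\gamma_2$, and then compactness puts $\Delta$ in the open hemisphere centered at $m$. I expect pinning down this equality case rigorously to take the most care.
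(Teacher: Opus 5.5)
Your inequality step is sound. With $\theta=d(p,q)/2$ one has $p+q=2\cos\theta\, m$, so $\cos d(p,x)+\cos d(q,x)=2\cos\theta\,\langle x,m\rangle$. Since every $x$ on $\gamma_1$ or $\gamma_2$ satisfies $d(p,x)+d(x,q)\le\pi$, you get $\langle x,m\rangle\ge 0$ on all of $\Delta$. The reflection paragraph before this has inconsistent hypotheses and should be deleted. This only gives a \emph{closed} hemisphere. For total length exactly $2\pi$, the whole content of the proposition is the equality case, and that is the gap.

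Your first option for the equality case is false. The same identity shows that \emph{every} point $x$ of the great circle $m^\perp$ satisfies $d(p,x)+d(x,q)=\pi$. Take $m=(0,0,1)$, $p=(\sin\theta,0,\cos\theta)$, $q=(-\sin\theta,0,\cos\theta)$ with $0<\theta<\pi/2$, and two distinct, non-antipodal points $x,y\in m^\perp$. The quadrilateral $p\,x\,q\,y$ has length $2\pi$ and satisfies the non-collinearity condition. It has no antipodal pair, since only $x$ and $y$ lie on $m^\perp$ and everything else lies in the open hemisphere about $m$. Yet with basepoint $p$ your construction gives $\gamma(\pi)=q$, midpoint $m$, and $\langle x,m\rangle=0$. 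So for a fixed basepoint the equality case genuinely occurs and cannot be ruled out by producing antipodal points.

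Your second option, varying the basepoint, is the right idea, but what you wrote is not an argument, and its heuristic is off. Bad basepoints can fill whole intervals for polygons far from any great circle; this already happens in the quadrilateral above whenever $d(p,x)\neq d(q,y)$. The precise statement follows from the identity together with one fact: a piecewise geodesic arc of length $\le\pi$ realizes the distance between its endpoints only if it has no turn. Hence basepoint $s$ is bad if and only if one of the two half-arcs from $\gamma(s)$ to $\gamma(s+\pi)$ has exactly one vertex in its interior.

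This gives a repair.
\begin{itemize}
\item For $s$ such that neither $\gamma(s)$ nor $\gamma(s+\pi)$ is a vertex, let $k(s)$ be the number of vertices in $\gamma((s,s+\pi))$.
\item Then $k(s+\pi)=N-k(s)$, and $k$ changes by at most $1$ as $s$ crosses a non-generic value.
\item Also $1\le k\le N-1$, because an edge of length $\ge\pi$ would contain antipodal points.
\item Since $N\ge 4$, some such $s$ has $2\le k(s)\le N-2$.
\item For that basepoint, each $x$ on $\gamma_1$ (resp.\ $\gamma_2$) cuts it into two pieces, one of which contains a turning vertex strictly inside. Hence $d(p,x)+d(x,q)<\pi$ and $\langle x,m\rangle>0$ on all of $\Delta$.
\end{itemize}

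For comparison, the paper avoids choosing a center altogether. It shows $0$ is not in the convex hull of $\Delta$, using $\langle f(u),f(v)\rangle\ge\langle u,v\rangle$ for the arc-length parametrization $f:\S^1\to\S^2$. Then $\sum_i\lambda_i f(u_i)=0$ would force $\|\sum_i\lambda_i u_i\|^2<0$, with strictness coming from the fact that pairs realizing equality lie on a common edge.
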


The result follows by the same reasoning as in \cite[Lemma 2.8]{Hei05}. However, in that lemma, only \emph{strictly} 1-Lipschitz maps are considered, so we include the necessary (minor) alterations here. 
 
\begin{proof}
$\Delta$ is by definition piecewise geodesic and of length $2\pi$. Thus, an arc length parametrization $f: \S^1\rightarrow \Delta$ is a 1-Lipschitz map. Since $f$ is piecewise geodesic, the condition $d_{\S^2}(f(u),f(v)) = d_{\S^1}(u,v)$ implies that $f(u),f(v)$ lie on the same edge of $\Delta$. We shall invoke this observation later.

Now, to prove the claim, it suffices to show that the convex hull of $\Delta$ in $\R^3$ does not contain the origin. To prove this, we proceed by contradiction. 
Suppose the convex hull of $\Delta$ does contain the origin, and there are points $u_1,\ldots,u_k \in \Delta$ and scalars $\lambda_1,\ldots,\lambda_k \in [0,1]$ such that $\sum_{i=1}^k \lambda_k=1$ such that
\begin{align}\label{eq:EqualsZero}
     \sum_{i=1}^k \lambda_if(u_i)=0\ .
\end{align}

The fact that $f$ is 1-Lipschitz ensures that for all $u,v \in \S^1$, 
    \[d_{\mathbf{S}^2}(f(u),f(v)) \leq d_{\mathbf{S}^1}(u,v)\ ,\]
hence, applying the inverse of cosine, 
    \[\langle f(u),f(v)\rangle \geq \langle u,v \rangle\ . \]
Now, we observe that $\langle f(u_i),f(u_j)\rangle > \langle u_i, u_j\rangle$ must occur for some index $i\neq j$. Indeed, otherwise, by our observation from the beginning of the proof, we would find that all the points $(f(u_i))_{i=1}^k$ lie on the same edge of $\Delta$, which is impossible according to \eqref{eq:EqualsZero} and Definition \ref{Defn:SphericalPolygon}.

Fix any index $i$, then observe that 
    \[\sum_{j=1}^k\langle \lambda_i u_i,\lambda_j u_j\rangle \leq \sum_{j=1}^k\langle\lambda_if(u_i),\lambda_jf(u_j)\rangle=0\ .\]
By our earlier observation, for at least one index $i$, the above inequality must be strict.
Hence, 
    \[\|\lambda_1u_1+\cdots+\lambda_ku_k\|^2=\sum_{i=1}^k\sum_{j=1}^k\langle \lambda_iu_i,\lambda_ju_j\rangle<0\ ,\]
a contradiction. 

Thus, the convex hull of $\Delta$ does not contain the origin and the proof is complete.
\end{proof}

We now introduce two configuration spaces of spherical polygons: 
\begin{itemize}
    \item $\mathcal{C}_{N}(\S^2)$: the configuration space of all spherical $N$-gons as in Definition \ref{Defn:SphericalPolygon}. These polygons have a fixed cyclic ordering $(v_i)_{i\in I_N}$ of their vertices. 
    \item $\mathcal{C}_{N}^{pt}(\S^2)$: the configuration space of all \emph{pointed} spherical $N$-gons. These polygons have a fixed enumeration $(v_1,\dots, v_N)$ of their vertices, coming from the marked vertex $v_1$. 
\end{itemize}
We shall denote $\mathcal{M}_{N}(\S^2)$ and $\mathcal{M}_{N}^{pt}(\S^2)$ for the corresponding moduli spaces achieved under the quotient by $\SO(3)$. 

\subsubsection{Turning invariant}
We now introduce an invariant for spherical polygons. We will see this invariant classifies the connected components of the moduli space of spherical polygons. In short, the turning invariant $\Theta(\Delta)$ of a spherical polygon $\Delta$ is defined as follows: for any enclosing hemisphere $H$ of $\Delta$, we measure how many turns $\Delta$ makes around the boundary circle $\mathcal{C} = \partial H$. This turning integer is invariant of the chosen hemisphere $H$. 
To measure this turning, we will need both the orientation the polygon $\Delta$ and the orientation of the ambient space $\S^2$. 

To start, given a spherical polygon and an enclosing hemisphere, we can define a projection of the polygon to the boundary of the hemisphere. 
\begin{definition}[Boundary Projection]
Let $\Delta = (v_i)_{i\in I_N}$ be cyclically ordered spherical $N$-gon and suppose $\Delta \subset H$ for an open hemisphere $H$.

Denote $\mathcal{C} = \partial H$. Define a projection map $\pr_{\mathcal{C}}: \Delta \rightarrow \mathcal{C}$ 
as follows: let $\pr_{\mathcal{C}} (v_i)$ be the first intersection of the geodesic ray from $v_i$ through $v_{i+1}$ with $\mathcal{C}$. 
Then the $\boldsymbol{H}$\textbf{-boundary projection} $\pr_{\mathcal{C}}(\Delta) $ is the cyclically-ordered $N$-tuple $(\pr_{\mathcal{C}}(v_i))_{i \in I_N} \subset \mathcal{C}$. See Figure \ref{fig:boundaryprojection}.
\end{definition}

\begin{figure}[h!]
    \centering
    \includegraphics[width=0.6\linewidth]{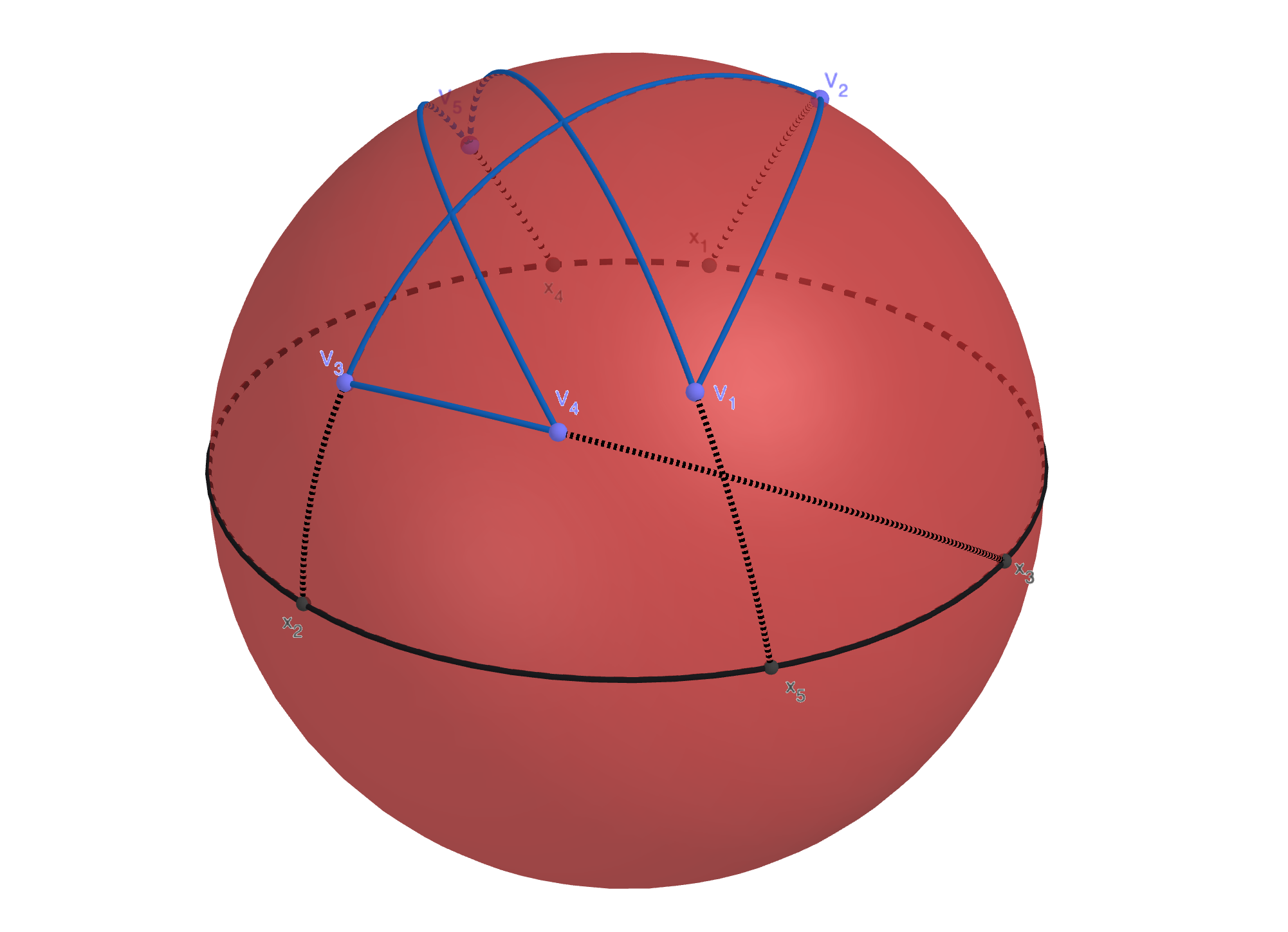}
    \caption{Boundary projection for a spherical 5-gon, that can be manipulated at \href{https://www.geogebra.org/classic/eum7urds}{Geogebra}.}
    \label{fig:boundaryprojection}
\end{figure}

Observe that $\pr_{\mathcal{C}}(v_i)\neq \pr_{\mathcal{C}}(v_{i+1})$ by definition. 
We can endow $\mathcal{C}$ with the unique orientation such that $\Delta$ lies in the positive direction of the normal vector of $\mathcal{C}$ in $\S^2$. 
Thus, given a pair $(\Delta,H)$, we obtain a cyclically ordered $N$-tuple of points on $\S^1$ such that neighboring vertices are distinct. There is a combinatorial invariant associated to such tuples as follows. We thank Timoth\'e Lemistre for his suggestion of this definition, which simplified our equivalent original notion. 

\begin{definition}
Let $\mathcal{C}_N(\S^1)$ denote the space of 
cyclically ordered $N$-tuples $\mathbf{x}=(x_i)_{i \in I_N}$ of points in $\S^1$ such that 
$x_i\neq x_{i+1}$. Define a map $\boldsymbol{\Theta}: \mathcal{C}_N(\S^1) \rightarrow \{1,2,\dots, N-1\}$ as follows. 

Identify $\S^1 \cong \R/\Z$. Let $\mathbf{x}=(x_i)_{i \in I_N} \in \mathcal{C}_N(\S^1)$ be arbitrary. 
For any index $i\in I_N$, define $a_i\in (0,1)$ as the unique representative of $(x_{i+1}-x_i)\in \R/\Z$ in $(0,1)$. Finally, we set 
\[ \Theta(\mathbf{x}) := \sum_{i\in I_N} a_i\ .\]
Note that $\Theta$ is an integer and $1\leq \Theta \leq N-1$ by definition. 
\end{definition}
In other words, $\Theta$ unrolls the configuration of points along the universal cover $\R$ of $\S^1$ and measures the total turning. 

We now define a `relative' turning invariant of a spherical polygon as advertised: it is the turning invariant of its boundary projection.  

\begin{definition}
Let $\Delta \subset \S^2$ be a polygon and $H \subset \S^2$ an enclosing hemisphere of $\Delta$. Then we define the \textbf{turning invariant of $\boldsymbol{\Delta}$ with respect to $\boldsymbol{H}$ as}
\[ \Theta_{H}(\Delta) := \Theta(\mathcal{C}_{\partial H}(\Delta))\ .\]
\end{definition}

The turning invariant of a spherical polygon is independent of the enclosing hemisphere $H$. 
\begin{lemma}[Well-definedness of turning invariant]
Let $\Delta\subset \S^2$ be a polygon. Then for any enclosing hemispheres $H_1, H_2$ of $\Delta$, we have $\Theta_{H_1}(\Delta)=\Theta_{H_2}(\Delta)$. 
\end{lemma}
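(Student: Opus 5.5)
The plan is to show that $H\mapsto\Theta_H(\Delta)$ is a locally constant, integer-valued function on the set of enclosing hemispheres, and that this set is connected. Constancy then follows.

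\emph{Step 1: the enclosing hemispheres form a connected set.} Parametrize open hemispheres by their centers $c\in\S^2$, with $H_c=\{x : \langle c,x\rangle>0\}$. The set
\[ U_\Delta:=\{c\in \S^2 : \langle c,x\rangle>0 \ \text{for all } x\in\Delta\} \]
is open, since $\Delta$ is compact, and nonempty by Proposition \ref{Prop:HemisphereContainment}. It is also path connected. For $c_1,c_2\in U_\Delta$ we have $c_1\neq -c_2$, because both pair positively with any point of $\Delta$. So $c_t=\frac{(1-t)c_1+tc_2}{\|(1-t)c_1+tc_2\|}$ is well defined. Moreover $\langle c_t,x\rangle>0$ for all $x\in\Delta$, since $\langle c_t,x\rangle$ is a positive multiple of $(1-t)\langle c_1,x\rangle+t\langle c_2,x\rangle$. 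Hence $c_t\in U_\Delta$ for all $t\in[0,1]$.

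\emph{Step 2: the boundary projection varies continuously with $c$.} Let $w_i$ be the unit tangent at $v_i$ pointing toward $v_{i+1}$, and set $p_i(s)=\cos(s)\,v_i+\sin(s)\,w_i$. Then $\pr_{\partial H_c}(v_i)=p_i(s_i(c))$, where $s_i(c)$ is the smallest positive root of
\[ f(s):=\langle c,v_i\rangle\cos s+\langle c,w_i\rangle\sin s = 0 . \]
This is $R\cos(s-s_0)$ with $R>0$, because $\langle c,v_i\rangle>0$. So its positive roots are simple, and the first one lies in $(0,\pi)$. By the implicit function theorem, $s_i(c)$ is continuous (indeed smooth) on $U_\Delta$.

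To compare the circles $\partial H_c$ for different $c$, I would choose a continuous path $g_c\in\SO(3)$ with $g_c c=e_3$. Locally one can take the rotation in the plane spanned by $c$ and $e_3$, and along a path in $U_\Delta$ it can be lifted continuously. This $g_c$ carries $\partial H_c$ onto the equator, and it does so orientation-preservingly for the orientation convention stated before the definition of $\Theta$, because that convention is phrased invariantly in terms of the normal $c$. After identifying the equator with $\R/\Z$, the points $x_i(c)\in\R/\Z$ depend continuously on $c$.

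\emph{Step 3: local constancy.} The definition of the boundary projection guarantees $x_i(c)\neq x_{i+1}(c)$ for every $c$. Therefore the class of $x_{i+1}(c)-x_i(c)$ in $\R/\Z$ never hits $0$, and its representative $a_i(c)\in(0,1)$ is a continuous function of $c$, being a branch of a continuous lift that avoids $\Z$. Thus $\Theta_{H_c}(\Delta)=\sum_i a_i(c)$ is continuous and integer-valued on the connected set $U_\Delta$, hence constant.

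The main point of care is Step 2: making sure that the ``first intersection'' stays a continuous function of $c$ and never jumps to the antipodal intersection point. This is handled by the transversality $\langle c,v_i\rangle>0$, which keeps the first root simple and inside $(0,\pi)$. A second point is that the orientation of $\partial H_c$ is chosen consistently as $c$ varies, which is built into the choice of $g_c$ in Step 2.
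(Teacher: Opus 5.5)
Your proposal is correct and follows essentially the paper's route: join the two enclosing hemispheres by a continuous family of enclosing hemispheres, and observe that the integer-valued $\Theta$ is continuous along this family, hence constant. Your interpolated centers $c_t$ give exactly one of the paper's two families of great circles rotating about the axis through the antipodal pair $\partial H_1\cap\partial H_2$. Your positivity argument, together with the explicit continuity and orientation checks in Steps 2--3, makes precise what the paper states without detail.
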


\begin{proof}
Let $K$ be the convex hull of $\Delta$ in $\S^2$ and $H_1,H_2$ any two distinct hemispheres enclosing $\Delta$. Define $C_i=\partial H_i$. Then $\S^2 \setminus (C_1\cup C_2)$ has four connected components and $K$ is contained in one of them. 

Write $C_i = \S^2 \cap P_i$ for two-planes $P_i \subset \R^3$. Then take $v \in P_1 \cap P_2$ and write 
$P_1 = \spann \{v,w_1\}$ and $P_2 =\spann \{ v,w_2\}$. Then consider the homotopies $C_t =\S^2 \cap P_t$ for $P_t = \spann \{v, (1-t)w_1+tw_2\}$ and $C_t'= \S^2 \cap P_t'$, where $P'_t=\spann\{ v, (1-t)w_1-tw_2\}$. Either $C_t$ or $C_t'$, say $C_t$, will not intersect $K$ and hence provides a homotopy of great circles not intersecting $K$ from $C_1$ to $C_2$. The integer $\Theta_{C_t}(\Delta)$ varies continuously in $t$ and hence $\Theta_{H_1}(\Delta) = \Theta_{H_2}(\Delta)$. 
\end{proof}

We now extend the notion of $\Theta$-invariant to semi-positive polygons in $\Einh$. 
\begin{definition}
Let $\Lambda \subset \Einh$ be a semi-positive polygon. Then relative to any splitting $\Einh\simeq \S^1 \times \S^2$, we set
$\Theta(\Lambda):=\Theta(\Delta)$, where $\Delta$ is the projection of $\Lambda$ to $\S^2$. The resulting value is independent of splitting, by similar reasoning as in Remark \ref{rem:orientation}. 
\end{definition}

We now observe that reflections do not preserve the turning invariant. 

\begin{proposition}[Turning invariant under antipodal map]\label{prop:turningantipodal}
    Let $\Delta\subset\S^2$ be a spherical $N$-gon and let $-\Delta$ be the antipodal polygon. Then $\Theta(-\Delta)=N-\Theta(\Delta)$.
\end{proposition}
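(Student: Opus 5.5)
The plan is to compute $\Theta(-\Delta)$ directly from the definition, using a hemisphere adapted to $\Delta$. By Proposition \ref{Prop:HemisphereContainment}, choose an open hemisphere $H$ with $\Delta\subset H$, and set $\mathcal{C}=\partial H$. The antipodal map $a:\S^2\to\S^2$ sends $H$ to the opposite open hemisphere $-H$, and $-H$ encloses $-\Delta=(-v_i)_{i\in I_N}$. Both hemispheres have the same boundary great circle $\mathcal{C}$, since $a(\mathcal{C})=\mathcal{C}$. Because $\Theta_H$ does not depend on the enclosing hemisphere (the well-definedness lemma), it suffices to show
\[\Theta_{-H}(-\Delta)=N-\Theta_H(\Delta).\]

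First I compare the two boundary projections as point sets. The map $a$ is an isometry, so it sends geodesic rays to geodesic rays. In particular, it sends the ray from $v_i$ through $v_{i+1}$ to the ray from $-v_i$ through $-v_{i+1}$. That first ray stays in $H$ until it first meets $\mathcal{C}$, so its image stays in $-H$ until it first meets $\mathcal{C}$, and the hitting point is the antipode. Hence the projection of $-\Delta$ is
\[\pr_{\mathcal{C}}(-v_i)=-\pr_{\mathcal{C}}(v_i),\]
with the same cyclic index set $I_N$.

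Next I compare the orientations of $\mathcal{C}$. The orientation attached to $(\Delta,H)$ makes the normal pointing into $H$ positive, relative to the fixed orientation of $\S^2$. For $(-\Delta,-H)$, the normal at each point of $\mathcal{C}$ pointing into $-H$ is the negative of the one pointing into $H$. Since the orientation of $\S^2$ is fixed (we do not transport it by $a$), flipping the normal reverses the induced orientation of $\mathcal{C}$. This orientation bookkeeping is the step I expect to need the most care. The point is to check that the two sign changes — the antipodal shift of the points and the reversal of $\mathcal{C}$ — enter in the right way, and do not cancel or get double counted.

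Finally I translate into coordinates. Identify $\mathcal{C}$ with $\R/\Z$ using the orientation coming from $H$, and write $x_i=\pr_{\mathcal{C}}(v_i)$. In these coordinates the antipodal map on $\mathcal{C}$ is $x\mapsto x+\tfrac12$, and passing to the reversed orientation is $x\mapsto -x$. So the projection of $-\Delta$, in its own oriented coordinate, is
\[y_i=-x_i-\tfrac12 .\]
Then $y_{i+1}-y_i=-(x_{i+1}-x_i)$. If $a_i\in(0,1)$ represents $x_{i+1}-x_i$, then the representative in $(0,1)$ of $y_{i+1}-y_i$ is $1-a_i$; this uses $x_i\neq x_{i+1}$, so $a_i\neq 0$. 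Summing over $i\in I_N$ gives
\[\Theta_{-H}(-\Delta)=\sum_{i\in I_N}(1-a_i)=N-\Theta_H(\Delta),\]
which is the claim.
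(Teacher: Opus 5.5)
Your proposal is correct and follows essentially the same route as the paper: enclose $\Delta$ in $H$, note that $-H$ encloses $-\Delta$ with the same boundary circle $\mathcal{C}$, observe that the two boundary projections are antipodal (a rotation of $\mathcal{C}$), and use that the orientation of $\mathcal{C}$ induced from $-H$ is the reverse of the one induced from $H$. Your explicit computation $y_i=-x_i-\tfrac12$, giving representatives $1-a_i$, just spells out the paper's final step ``thus $\Theta(\Delta)+\Theta(-\Delta)=N$'' in coordinates.
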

\begin{proof}
    If $\Delta$ is enclosed in the hemisphere $H$, then $-\Delta$ will be enclosed in the complementary hemisphere $-H$, thus we can look at the boundary projections of $\Delta$ and $-\Delta$ on the same circle $C=\partial H$. By construction, the $C$-boundary projections of $\Delta$ and $-\Delta$ are antipodal, hence they are images of each other by an orientation preserving rotation.

    Finally, note that the orientation of $C$ seen from $H$ is opposite to its orientation seen from $-H$, thus $\Theta(\Delta) +\Theta(-\Delta)= N$.
\end{proof}
\begin{remark}[$\Theta$ does not descend]
Proposition \ref{prop:turningantipodal} implies that the turning invariant is only well-defined in $\Einh$. Indeed, if $\Lambda =\mathrm{graph}(f:\S^1\rightarrow \S^2)$ is a semi-positive $N$-gon, then the antipodal polygon $-\Lambda $ is the graph 
of $\overline{f}: \S^1 \rightarrow \S^2$ given by $\overline{f}=-f(-x)$. In particular, 
$\Theta(-\Lambda) = N-\Theta(\Lambda)$. 
\end{remark}
\begin{proposition}[Turning invariant under reflection]\label{Prop:ThetaReflection}
    Let $\Delta\subset\S^2$ be a spherical $N$-gon, enclosed in hemisphere $H$ and $\overline\Delta$ be the reflection of $\Delta$ across $\partial H$. Then $\Theta(\overline{\Delta})= N-\Theta(\Delta)$.
\end{proposition}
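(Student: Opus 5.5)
The plan is to compute $\Theta(\overline{\Delta})$ directly from the definition, using the same enclosing hemisphere data as for $\Delta$. Let $\rho$ be the reflection of $\S^2$ across the great circle $\mathcal{C}=\partial H$. Then $\rho$ fixes $\mathcal{C}$ pointwise, swaps $H$ with $-H$, and maps geodesics to geodesics. Hence $\overline{\Delta}=\rho(\Delta)$ is enclosed in the hemisphere $\rho(H)=-H$, which also has boundary $\mathcal{C}$. Since the turning invariant does not depend on the enclosing hemisphere (by the well-definedness lemma), we may compute $\Theta(\overline{\Delta})$ as $\Theta_{-H}(\overline{\Delta})$.

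The first step is to compare the boundary projections. The geodesic ray from $\rho(v_i)$ through $\rho(v_{i+1})$ is the image under $\rho$ of the ray from $v_i$ through $v_{i+1}$, so its first hit with $\mathcal{C}$ is $\rho(\pr_{\mathcal{C}}(v_i))=\pr_{\mathcal{C}}(v_i)$. The reflection preserves the cyclic order of the vertices, so the boundary projection of $\overline{\Delta}$ with respect to $-H$ is the same cyclically ordered $N$-tuple of points of $\mathcal{C}$ as the boundary projection of $\Delta$ with respect to $H$.

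The second step is to compare orientations. By definition, $\mathcal{C}$ is oriented so that the enclosed polygon lies on the positive side of its normal. Viewed from $-H$, the circle therefore carries the opposite orientation to the one it carries viewed from $H$; this is the same observation used in the proof of Proposition \ref{prop:turningantipodal}. Identify $\mathcal{C}\cong\R/\Z$ using the $H$-orientation, and write $a_i\in(0,1)$ for the representative of $x_{i+1}-x_i$. Reversing the orientation replaces each $x_i$ by $-x_i$, so the new representatives are $1-a_i$; this uses $x_i\neq x_{i+1}$, which guarantees $a_i\neq 0$. Summing over $i$ gives $\Theta(\overline{\Delta})=\sum_i(1-a_i)=N-\Theta(\Delta)$.

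The only point needing care is the orientation convention in the second step. Under $\rho$, the positive normal of $\mathcal{C}$ pointing into $H$ corresponds to the normal pointing into $-H$. Together with the fact that $\rho$ reverses the orientation of $\S^2$, this means the induced orientation on $\mathcal{C}$ flips, just as it did in the antipodal case.
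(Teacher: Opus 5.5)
Your argument is correct, but it takes a different route from the paper. The paper's proof is one line. It writes the reflection as $\rho=-R$, where $R\in\SO(3,\R)$ is the half-turn about the pole of $H$, and then reduces to Proposition~\ref{prop:turningantipodal} ($\Theta(-\Delta)=N-\Theta(\Delta)$) together with the $\SO(3,\R)$-invariance of $\Theta$.

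You instead compute from the definition. Since $\rho$ fixes $\partial H$ pointwise and sends geodesic rays to geodesic rays, the boundary projection of $\overline{\Delta}$ onto $\partial H$ (now seen from $-H$) is literally the same cyclic tuple. The only change is the reversal of the orientation of $\partial H$, which turns each $a_i$ into $1-a_i$.

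Your version is self-contained and slightly cleaner than the paper's antipodal argument. No auxiliary rotation is needed to match the two projections, and you make the $a_i\mapsto 1-a_i$ count explicit, which the paper leaves implicit. The paper's version is shorter, but it leans on the earlier antipodal proposition and on rotation invariance, which it uses without separate proof.

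One small remark on your last paragraph: the fact that $\rho$ reverses the orientation of $\S^2$ is not needed. The flip of the orientation of $\partial H$ comes directly from replacing the inward normal $n_H$ by $n_{-H}=-n_H$ while keeping the fixed orientation of $\S^2$, which is the one used to compute $\Theta(\overline{\Delta})$. Bringing in $\rho^*$ only recovers the same sign in a roundabout way. It also invites confusion: if you transported both the normal and the ambient orientation by $\rho$, the two sign changes would cancel.
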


\begin{proof}
    The reflection is the composition of the antipodal map and of a rotation in $\SO(3,\R)$. Thus, it has the same effect as the antipodal map.
\end{proof}

We now highlight that the turning invariant depends on the  orientation of a given polygon. 

\begin{proposition}[Turning-invariant under reversal]
Let $\Delta \subset \S^2$ be a cyclically ordered $N$-gon and $\Delta^- $ its reversal. Then $\Theta(\Delta)+ \Theta(\Delta^-)=N$. 
\end{proposition}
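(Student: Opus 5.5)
The plan is to compute both turning invariants with the \emph{same} enclosing hemisphere $H$, and to express the boundary projection of $\Delta^-$ directly in terms of that of $\Delta$.

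\textbf{Setup.} Write $\Delta=(v_i)_{i\in I_N}$. The reversal is $\Delta^-=(w_j)$ with $w_j=v_{-j}$, after re-indexing $I_N$ by $j\mapsto -j$, so that the successor of $w_j$ is $w_{j+1}=v_{-j-1}$. By Proposition \ref{Prop:HemisphereContainment}, choose an open hemisphere $H$ containing $\Delta$, and put $\mathcal{C}=\partial H$.
\begin{itemize}
\item Since $\Delta^-$ has the same underlying set, $H$ also encloses $\Delta^-$.
\item The orientation of $\mathcal{C}$ is defined through the side of $\mathcal{C}$ on which the polygon lies, so it is the same for $\Delta$ and $\Delta^-$.
\item By the well-definedness lemma, both invariants may be computed using $H$.
\end{itemize}

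\textbf{Step 1: the projections are antipodal.} Let $x_i:=\pr_{\mathcal{C}}(v_i)\in\mathcal{C}$ be the first hit of the ray from $v_i$ through $v_{i+1}$. The projection of $\Delta^-$ at the vertex $v_i$ is the first hit $y_i$ of the ray from $v_i$ through $v_{i-1}$.

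I claim that $y_i=-x_{i-1}$. The points $v_{i-1}$ and $v_i$ are distinct and non-antipodal, so they span a unique great circle $G$. Since $G\neq \mathcal{C}$, the set $G\cap H$ is an open half great circle whose two endpoints on $\mathcal{C}$ are antipodal. Along this arc, the ray from $v_{i-1}$ through $v_i$ runs to one endpoint, which is $x_{i-1}$. The ray from $v_i$ through $v_{i-1}$ runs in the opposite direction to the other endpoint, so it lands at $-x_{i-1}$.

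This is the only genuinely geometric step, and I expect it to be the main point to check carefully. The subtlety is that the defining ray for each reversed vertex is the backward continuation of a different edge than for $\Delta$.

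\textbf{Step 2: the combinatorial count.} Identify $\mathcal{C}\cong\R/\Z$ using the common orientation, so that antipodal points differ by $1/2$. The boundary projection of $\Delta^-$ is then the sequence $z_j:=y_{-j}=x_{-j-1}+\tfrac12$.

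Its consecutive increments are
\[ z_{j+1}-z_j = x_{-j-2}-x_{-j-1} = -(x_{k+1}-x_k), \qquad k:=-j-2 .\]
The shift by $1/2$ cancels in these differences. Because $x_{k+1}\neq x_k$, we have $a_k\in(0,1)$, and the representative of $-(x_{k+1}-x_k)$ in $(0,1)$ is $1-a_k$.

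As $j$ runs over $I_N$, so does $k$. Summing gives
\[ \Theta(\Delta^-)=\sum_{k}(1-a_k)=N-\Theta(\Delta),\]
which is the claimed identity.
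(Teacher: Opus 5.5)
Your proof is correct and follows the same route as the paper. Like the paper, you compute both invariants on a common enclosing hemisphere and observe that the boundary projection of $\Delta^-$ consists of the antipodes of the $x_i$ traversed in the opposite cyclic order, so each increment $a_k$ is replaced by $1-a_k$. Your Step 1 (the identity $y_i=-x_{i-1}$ via the half great circle $G\cap H$) and your explicit tracking of the order reversal just make precise what the paper compresses into the single line ``the boundary projection of $\Delta^-$ is exactly $(-x_i)_{i\in I_N}$''.
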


\begin{proof}
    Denote by $(x_i)_{i\in I_N}$ the boundary projection of $\Delta$. Then the boundary projection of $\Delta^-$ is exactly $(-x_{i})_{i\in I_N}$. Hence $\Theta(\Delta)+\Theta(\Delta^-)=N$.
\end{proof}

The turning invariant $\Theta$ obeys a mild restriction: it is forbidden to take a prori maximal and minimal values. 
\begin{proposition}
Let $\Delta \subset \S^2$ be a spherical $N$-gon. Then $2 \leq \Theta(\Delta) \leq N-2$. 
\end{proposition}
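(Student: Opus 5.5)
By the reversal formula just proved, $\Theta(\Delta)+\Theta(\Delta^-)=N$, and $\Delta^-$ is again a spherical $N$-gon. So it suffices to show $\Theta(\Delta)\neq 1$ for every spherical $N$-gon: applying this to $\Delta^-$ then gives $\Theta(\Delta)\neq N-1$. I will assume $\Theta(\Delta)=1$ and derive a contradiction with the length-$2\pi$ condition, using Proposition \ref{Prop:HemisphereContainment} to place $\Delta$ in an open hemisphere $H$.

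\textbf{Step 1: translate $\Theta$ into a planar turning number.} Apply the gnomonic (central) projection from the center of $H$ onto the tangent plane $\R^2$ at that center. This maps $H$ diffeomorphically onto $\R^2$ and sends great-circle arcs in $H$ to straight segments. So $\Delta$ becomes a closed Euclidean polygon $P=(w_i)$, and each edge $v_iv_{i+1}$ becomes the segment $w_iw_{i+1}$.

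A point of $\mathcal{C}=\partial H$ corresponds to an oriented direction in $\R^2$, since antipodal points of $\mathcal{C}$ give opposite directions. The first exit point $\pr_{\mathcal{C}}(v_i)$ of the ray from $v_i$ through $v_{i+1}$ is then exactly the unit direction of $w_{i+1}-w_i$. I will check that, with the orientation of $\mathcal{C}$ fixed in the definition, this identification respects orientation. Granting that, $a_i\in(0,1)$ is the counterclockwise exterior angle $\theta_i\in(0,2\pi)$ of $P$ at $w_{i+1}$, divided by $2\pi$. Hence $\Theta_H(\Delta)=\frac{1}{2\pi}\sum_i\theta_i$. Here $\theta_i\neq0$ because of condition (ii), and $\theta_i=\pi$ exactly at a backtrack.

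\textbf{Step 2: $\Theta=1$ forces convexity.} If $\Theta=1$, then $P$ is a closed polygon all of whose turns are counterclockwise, with angles in $(0,2\pi)$ summing to $2\pi$. In particular every $\theta_i\le\pi$, except in the degenerate case where a single turn absorbs everything, which I will handle directly. The standard fact I will invoke is that a closed polygon with nonnegative turning and total turning $2\pi$ is the boundary of a convex set, traversed once. The boundary may be degenerate: if some $\theta_i=\pi$, then $P$ is a segment traversed out and back.

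Gnomonic projection preserves convexity, since it maps geodesics to lines. So $\Delta$ is the boundary of a (possibly degenerate) convex spherical polygon $K\subset H$.

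\textbf{Step 3: length contradiction.} A convex set $K$ compactly contained in an open hemisphere has perimeter strictly less than $2\pi$. This follows from monotonicity of perimeter for nested convex sets in $\S^2$, for instance via Crofton's formula, where perimeter is proportional to the measure of great circles meeting $K$. We have $K\subsetneq\overline{H}$, whose boundary has length $2\pi$, and strictness holds because $K$ misses $\partial H$. In the degenerate case, $\Delta$ is a segment of length $<\pi$ traversed twice, so its length is again $<2\pi$. Either way this contradicts total length $2\pi$, so $\Theta\ne1$.

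I expect the main obstacle to be Step 2, together with the orientation bookkeeping in Step 1. The issue is making the convexity rigorous when backtracking turns of $\pi$ are allowed and the polygon is non-simple a priori. If the planar fact is troublesome, I would try first a direct argument: total turning $2\pi$ with all turns positive forces the direction map to be monotone of degree one. Hence each edge line supports $P$ on its left, so $P$ lies in the intersection of the left half-planes of its edges, which is convex.
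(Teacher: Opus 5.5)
Your argument is correct and is essentially the paper's proof: reduce to ruling out $\Theta=1$ (you use reversal where the paper uses reflection across $\partial H$; either works), show that $\Theta=1$ forces $\Delta$ to be convex, and conclude because a convex closed curve in an open hemisphere has length less than $2\pi$; your gnomonic projection and Crofton argument supply details the paper only asserts. The one thing to fix is the exceptional case in Step 2, which is not a turn that ``absorbs everything'' but a single $\theta_j\in[\pi,2\pi)$: closure rules it out, because the other turns then sum to at most $\pi$, so every edge direction lies in a closed half-plane with the intermediate directions strictly inside (your support-line fallback also catches this), and hence all $\theta_i<\pi$ and the out-and-back degenerate case never occurs for $N\geq 4$.
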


\begin{proof}
Let $\Delta$ be given and take $H \subset \S^2$ any enclosing hemisphere. Note that if $\Theta(\Delta)=N-1$, then the reflected polygon $\overline{\Delta}$ satisfies $\Theta(\overline{\Delta}) =1$ by Proposition \ref{Prop:ThetaReflection}. Thus, it suffices to show that $\Theta(\Delta) =1$ never transpires. 

Now, suppose $\Theta(\Delta) = 1$. In this case, it means the boundary projection $\pr_{\partial H}(\Delta) = (v_i)_{i\in I_N}$ has the following property: the vertices $v_1, v_2,\dots,v_N$ appear in this cyclic ordering along $\partial H$. However, this means that $\Delta$ is convex. On the other hand, the only convex curve of length $2\pi$ in $\S^1$ is a great circle. In particular, a spherical $N$-gon under Definition \ref{Defn:SphericalPolygon} is never\footnote{Here, by `never convex' we must exclude the pathological case that $\Delta$ is confined to a single great circle, and hence when $\Delta$, as a subset of $\S^2$, is homeomorphic to an interval.} a convex curve in $\S^2$. 
\end{proof}

\subsection{Symmetric polygons}

\begin{definition}
    A semi-positive polygon with $N$ vertices in $\Einh$ is said to be \textbf{symmetric} when its $\SO_0(2,3)$-stabilizer is cyclic of order $N$.
\end{definition}

\begin{remark}
    A semi-positive polygon is cyclic whenever its $\SO_0(2,3)$-stabilizer is the biggest possible.
\end{remark}

We will see in the two next sections that these symmetric semi-positive polygons exist and are classified by the $\Theta$-invariant, up to ambient isometries.

\begin{proposition}[Unique symmetric $\Einh$-polygon]\label{Prop:SymmetricLightlikePolygon}
    Given integers $N \geq 4$, $\Theta \in \{2,\ldots,N-2\}$, there exists a unique symmetric semi-positive polygon in $\Einh$ with $N$ vertices whose projection to $\S^2$ has turning invariant $\Theta$, up to the $\SO_0(2,3)$-action.
\end{proposition}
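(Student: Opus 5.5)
We plan to reduce the statement to a classification of regular spherical polygons, using the splitting $\Einh\simeq \S^1\times \S^2$ adapted to the symmetry group.

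\emph{Step 1: normalizing the symmetry.} Let $\Lambda$ be symmetric with stabilizer $\Gamma\cong \Z_N$. Since $\Gamma$ is finite, it is compact, so it fixes a point of the Riemannian symmetric space of $\SO_0(2,3)$. Hence $\Gamma$ is contained in the stabilizer $\SO(2)\times\SO(3)$ of some orthogonal space-time splitting $\R^{2,3}=U\oplus V$. In the induced splitting $\Einh\simeq \S^1\times\S^2$, each $g\in\Gamma$ acts as $(R_g,S_g)$ with $R_g$ a rotation of $\S^1$ and $S_g\in\SO(3)$. By Proposition \ref{prop:polygon piecewise isometry}, $\Lambda=\mathrm{graph}(f)$ for a piecewise isometry $f$, and invariance means $f\circ R_g=S_g\circ f$.

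We next claim that $\Gamma$ acts freely on the vertices. If some $g\neq \id$ fixed a vertex, then $R_g$ would fix a point of $\S^1$ and hence $R_g=\id$. Then $S_g$ would fix $f(\S^1)$ pointwise, so $g$ would fix $\Lambda$ pointwise. Since $\Lambda$ spans $\R^{2,3}$ (there are no $2$-gons, and the graph of a non-constant piecewise isometry is not contained in a proper subspace; this should be checked), this forces $g=\id$. So $\Gamma$ acts simply transitively on the $N$ vertices, and we can pick a generator $g$ with $g\cdot p_i=p_{i+1}$. The vertex projections to $\S^1$ are then equally spaced, $R_g$ is rotation by $2\pi/N$, all edges have length $2\pi/N$, and the vertices of $\Delta=\pi_-(\Lambda)$ are $v_i=S^i v_0$ with $S=S_g$.

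\emph{Step 2: classifying regular spherical polygons.} Since $g^N=\id$, the map $S$ is a rotation about an axis $e$ by angle $\theta=2\pi k/N$ for some $k\in\{0,\dots,N-1\}$. The case $k=0$ is impossible because then $v_1=v_0$. All $v_i$ therefore lie on the circle of colatitude $r\in(0,\pi)$ about $e$. The edge-length condition reads
\[\cos(2\pi/N)=\cos^2 r+\sin^2 r\cos\theta .\]
Replacing $e$ by $-e$ if necessary we take $r\le\pi/2$. This equation has a unique solution $r\in(0,\pi/2]$ exactly when $2\pi/N\le\min(\theta,2\pi-\theta)$, which always holds. We have $r=\pi/2$ exactly when $k\in\{1,N-1\}$; in that case $\Delta$ is a great circle traversed once and contains antipodal points, which is excluded. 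So $2\le k\le N-2$ and $r<\pi/2$, and $\Delta$ lies in the open hemisphere $H$ centered at $e$. The remaining conditions are then direct to verify. The polygon has total length $2\pi$ and contains no antipodal pair, since all points lie in $H$. It also turns at every vertex, since $\theta\neq 0$ and the points are distinct. By Proposition \ref{Prop:SemiPositive}, the graph of the corresponding piecewise isometry is therefore a semi-positive $N$-gon. This argument also gives existence: for each $k$ with $2\le k\le N-2$, take this regular polygon together with the rotation $g=(R_{2\pi/N},S)$.

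\emph{Step 3: computing $\Theta$, and uniqueness.} Project onto $\partial H$, the equator for $e$. By rotational symmetry, $\pr_{\partial H}(v_{i+1})=S\,\pr_{\partial H}(v_i)$, so consecutive projections differ by rotation by $\theta$. Each $a_i$ then equals $k/N$ or $1-k/N$, depending on the orientation convention. Hence $\Theta=k$ or $N-k$, and in either case $k\mapsto\Theta$ is a bijection of $\{2,\dots,N-2\}$; the convention would need to be fixed carefully. For uniqueness, all space-time splittings are $\SO_0(2,3)$-conjugate, and the data $(R_{2\pi/N},\text{axis } e,\ \theta,\ r)$ is determined by $\Theta$ up to $\SO(2)\times\SO(3)$. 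Two symmetric polygons with the same $\Theta$ are therefore conjugate.

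We must also check that the stabilizer is exactly of order $N$ and not larger. This is automatic, since the argument of Step 1 shows that any stabilizer acts freely on the $N$ vertices, so it has order at most $N$.

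We expect the main obstacle to be Step 1, specifically the claim that a nontrivial element fixing $\Lambda$ pointwise must be the identity, that is, that a semi-positive polygon spans $\R^{2,3}$. For this we would first use that each photon spans an isotropic plane and that consecutive photons in a polygon without $2$-gons span at least a $3$-dimensional degenerate subspace. The secondary difficulty is pinning down the orientation convention in the computation of $\Theta$ in Step 3.
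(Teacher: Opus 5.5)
Your argument is correct in its main line, but it reaches uniqueness by a different route from the paper's.

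\textbf{The two routes.} The paper goes through the maximal surface. A generator $\r$ of the symmetry group preserves the surface $\Sigma$ bounded by $\Lambda$, by uniqueness in the asymptotic Plateau problem. Since $\Sigma$ is non-positively curved, Cartan's theorem on $\Sigma$ gives a unique fixed point $o$. The paper then works in the splitting $\R^{2,3}=\T_o\Sigma\oplus(\R o\oplus\N_o\Sigma)$, and obtains existence separately from the classification of symmetric spherical polygons (Proposition \ref{Prop:SymmetricSphericalPolygon}).

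You instead apply Cartan's theorem to the finite group acting on the Riemannian symmetric space of $\SO_0(2,3)$. This places $\Gamma$ in $\SO(2)\times\SO(3)$ directly. You then classify regular spherical polygons through $\cos(2\pi/N)=\cos^2 r+\sin^2 r\cos\theta$, so existence and uniqueness come out of a single computation.

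Your route avoids the Plateau theory and the curvature of maximal surfaces entirely. It also justifies points the paper only asserts: that a generator shifts the vertices by exactly one and rotates the $\S^1$-factor by exactly $2\pi/N$, and the turning count via equivariance of the boundary projection. The paper's route has the advantage that its adapted splitting is built from $\T_o\Sigma$ and $\N_o\Sigma$, which is exactly what Corollary \ref{cor:SymmetricMonomials} reuses to read off the degrees of $\alpha,\beta$. The orientation convention in your Step 3 does not affect the statement, since both $k\mapsto k$ and $k\mapsto N-k$ are bijections of $\{2,\dots,N-2\}$.

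\textbf{First sub-claim to correct: spanning.} $\Lambda$ need not span $\R^{2,3}$. For even $N$ and $k=N/2$ you get $\theta=\pi$, so $\Delta$ bounces between two points. Then $\Lambda$ lies in a copy of $\mathbf{Ein}^{1,1}$ and spans only a $4$-dimensional subspace (compare Corollary \ref{cor:ADS}). You do not need spanning anyway. If $R_g=\id$, then $S_g\in\SO(3)$ fixes two distinct non-antipodal points of $\Delta$, hence a $2$-plane of $V$ pointwise, hence $S_g=\id$ because it has determinant one. So your ``main obstacle'' disappears.

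\textbf{Second sub-claim to correct: the full stabilizer.} Your final claim, that any stabilizer acts freely on the vertices, does not follow from Step 1. Step 1 used finiteness of $\Gamma$ to conjugate it into $\SO(2)\times\SO(3)$, and the full stabilizer is not known to be compact. For $N=4$ it is not: the Barbot $4$-gon is preserved by a $2$-dimensional Cartan subgroup of $\SO_0(2,2)\subset\SO_0(2,3)$ fixing each of its vertices. Every semi-positive $4$-gon is equivalent to it, so under the literal definition of ``symmetric'' the existence part fails at $N=4$.

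What you actually prove is the statement with ``the stabilizer contains a cyclic subgroup of order $N$.'' The same holds for the paper, whose existence proof only exhibits the cyclic group generated by $\r$. That weaker property is also the only one your uniqueness argument uses. For $N\geq 5$ the stabilizer is indeed exactly $\Z_N$, but this needs a separate finiteness argument. For example, the stabilizer acts by holomorphic automorphisms of $\Sigma\cong\C$ preserving $Q_4$, a polynomial of degree $n\geq 1$, so it is finite. Your Step 1 then applies and bounds its order by $N$.
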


For this aim, we will show that symmetric spherical polygons are classified by the turning invariant, and then extend this result to semi-positive polygons in $\Einh$.

\subsubsection{Symmetric spherical polygons}

\begin{proposition}\label{Prop:SymmetricSphericalPolygon}
    Given integers $N \geq 4$ and $\Theta$ in $\{2,\ldots,N-2\}$, there exists a unique symmetric spherical polygon $\Delta \in \mathcal{C}_{N}(\S^2)$ with turning invariant $\Theta$, up to $\SO(3,\R)$. 
\end{proposition}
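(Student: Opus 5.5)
We will pin down the structure of a symmetric spherical polygon by the symmetry alone, reduce to a one-parameter family indexed by an integer $k$, and then read off $\Theta=k$.

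\textbf{Structure.} Let $\Delta=(v_i)_{i\in I_N}$ be symmetric, so its $\SO(3)$-stabilizer (preserving the cyclic order) is cyclic of order $N$, generated by some $g$.

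1. An element of the stabilizer fixing one vertex preserves the cyclic order, so it fixes every vertex. The vertices of a spherical polygon are not all on one great circle, because of the length $2\pi$ and non-collinearity conditions. Hence such an element is the identity, and the stabilizer acts simply transitively on $I_N$.
2. Choose the generator so that $g(v_i)=v_{i+1}$. Then $g$ is a rotation of angle $2\pi k/N$ about an axis $p\in\S^2$, for some $k\in\{1,\dots,N-1\}$.
3. Replacing $p$ by $-p$ changes $k$ to $N-k$. Since $\Delta$ has no antipodal pairs, we normalize so that all $v_i$ lie at spherical distance $r\in(0,\pi/2]$ from $p$.
4. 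Thus $v_i=g^i(v_0)$, and the vertices are determined up to $\SO(3)$ by the pair $(r,k)$.

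\textbf{The radius.} All edges have equal length, so each edge has length $2\pi/N$ by the total-length condition. The spherical law of cosines then forces
\[\cos(2\pi/N)=\cos^2 r+\sin^2 r\cos(2\pi k/N),\qquad\text{i.e.}\qquad \sin^2 r=\frac{\sin^2(\pi/N)}{\sin^2(\pi k/N)}.\]
This determines $r\in(0,\pi/2]$ uniquely from $k$.

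\textbf{Admissible values of $k$.}
1. For $k\in\{1,N-1\}$ we get $r=\pi/2$. The polygon is then a great circle, which contains antipodal points and makes no turns, so it is excluded.
2. For $2\le k\le N-2$ we get $r<\pi/2$. The polygon lies in the open hemisphere centred at $p$, so it has no antipodal pairs.
3. Non-collinearity at each vertex must also be checked. The angle at $v_i$ is the same at every vertex by symmetry. The only way $v_i$ could lie on the segment $[v_{i-1},v_{i+1}]$ is that three consecutive points of the small circle of radius $r<\pi/2$ lie on one great circle, which is impossible.
4. Conversely, for each $2\le k\le N-2$ the construction yields a spherical $N$-gon. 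Its stabilizer contains $\langle g\rangle$. By the free action from the first step it is exactly this group, so the polygon is symmetric.

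\textbf{Computing $\Theta$.}
1. Take the enclosing hemisphere $H$ centred at $p$, with boundary the equator $\mathcal{C}$.
2. The rotation $g$ preserves $H$, $\mathcal{C}$ and its orientation, and it maps $\Delta$ to itself respecting the cyclic order. Hence the boundary projection satisfies $\pr_{\mathcal{C}}(v_{i+1})=g(\pr_{\mathcal{C}}(v_i))$.
3. Consequently all increments $a_i$ equal a common value $a\in(0,1)$ with $a\equiv k/N \pmod 1$, which forces $a=k/N$.
4. Therefore $\Theta(\Delta)=Na=k$, possibly up to replacing $k$ by $N-k$ depending on the orientation convention for $\mathcal{C}$. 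That choice is fixed once and for all and does not affect bijectivity.

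\textbf{Conclusion and main obstacle.} The map $\Delta\mapsto k$ is a bijection from symmetric polygons modulo $\SO(3)$ to $\{2,\dots,N-2\}$, and $k=\Theta$, which gives existence and uniqueness. The main obstacle is the first structural step: ruling out a stabilizer element that fixes a vertex, including degenerate back-and-forth polygons. If a cleaner route is needed, I would argue that a nontrivial rotation fixing two consecutive non-antipodal vertices $v_i,v_{i+1}$ fixes the great circle through them pointwise. It is therefore the identity unless it is the reflection-type rotation by $\pi$ about that circle's pole, and the cyclic order rules that case out.
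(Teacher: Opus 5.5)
Your outline matches the paper's. The vertices $v_i=g^i(v_0)$ are equidistributed on a circle about the rotation axis, the radius is fixed by the length condition, and $\Theta$ is read off from the rotation angle. Your explicit formula $\sin^2 r=\sin^2(\pi/N)/\sin^2(\pi k/N)$ and the equivariance $\pr_{\mathcal{C}}(v_{i+1})=g(\pr_{\mathcal{C}}(v_i))$ are clean substitutes for the paper's intermediate-value and shrinking-circle arguments.

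There is, however, a genuine problem with the notion of \emph{symmetric} you adopt. If the $\SO(3)$-stabilizer is cyclic of order $N$ and generated by $g$, then $g$ has order exactly $N$. Its angle $2\pi k/N$ must therefore satisfy $\gcd(k,N)=1$. Now take $\gcd(k,N)=d>1$. Your construction gives a periodic vertex tuple, $v_{i+N/d}=v_i$. Its stabilizer is exactly $\langle g\rangle$, of order $N/d$, since any $h$ with $h(v_i)=v_{i+j}$ satisfies $g^{-j}h=\mathrm{id}$. So your step ``it is exactly this group, so the polygon is symmetric'' is false. Under your definition the proposition actually fails for every $\Theta$ not coprime to $N$. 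For instance, when $N=4$ the only candidate ($k=2$) is $v_0,v_1,v_0,v_1$ with $d(v_0,v_1)=\pi/2$, and its stabilizer has order $2$. The notion that makes the statement true, and that the paper uses, is the existence of $g\in\SO(3)$ with $g(v_i)=v_{i+1}$ for all $i$, of order dividing $N$. The paper's proof says ``restricts to a cyclic symmetry of $P_R$''. In Proposition \ref{Prop:SymmetricLightlikePolygon}, the order-$N$ symmetry comes from the factor $R_{2\pi/N}$ on $\S^1$, not from $\S^2$.

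With that definition, your first structural step must be redone, and its justification is false as written. The vertices \emph{can} all lie on one great circle. When $N$ is even, the symmetric polygon with $\Theta=N/2$ alternates between two points at distance $2\pi/N$, and the paper explicitly allows this degeneracy. The correct argument runs as follows. The rotation $g^N$ fixes $v_0$ and $v_1$, which are distinct by condition (ii) and not antipodal. A rotation of $\S^2$ fixing two such points is the identity, so $g$ has angle $2\pi k/N$. Your fallback is muddled: no nontrivial rotation fixes two non-antipodal points, so there is no great circle ``fixed pointwise''.

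Likewise, your non-collinearity check invokes three distinct points on a small circle. That fails for $k=N/2$, where $v_{i-1}=v_{i+1}$. In that case condition (ii) holds for the trivial reason that $[v_{i-1},v_{i+1}]$ is the single point $v_{i-1}\neq v_i$. With these repairs, the rest of your argument (radius, hemisphere containment, $\Theta=k$ up to the fixed convention $k\leftrightarrow N-k$, and uniqueness up to $\SO(3)$) goes through for all $2\le k\le N-2$.
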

Recall that $\S^2$ is the unit 2-sphere in $\R^3$, so that $\S^1$ identifies with the unit circle in $\C$ and by stereographic projection with respect to the south pole there is a preferred meridian on $\S^2$ and $\S^2_+$: the projection of $\R\subset\C$. Thus, complex conjugation is a well-defined map on $\S^1$, $\S^2_+$ and $\S^2$.
\begin{proof}
Let us denote by $(m_i)_{i=1}^N$ the equidistributed and ordered meridians of $\S^2$ starting at the preferred meridian. If $C_r$ denotes the geodesic circle, in $\S^2$, of radius $r$ smaller than or equal to $\pi/2$ and centered at north pole, then for every index $j$ the point $v_{j,r}:=C_r\cap m_j$ is a point of the upper hemisphere. The polygon $P_r$ defined as being the polygon with vertices $(v_{\Theta j,r})_{j=1}^N$ and edges the minimizing geodesic segments between neighboring vertices is of turning invariant $\Theta$ by construction.

The function $r\mapsto \mathrm{Length}(P_r)$ is continuous and increasing from $[0,\pi/2]$ to $[0,2\pi\Theta]$. Thus, there is a unique positive number $R$, smaller than $\pi/2$, so that $P_R$ is a spherical polygon of length $2\pi$, with $N$ vertices, and with turning invariant $\Theta$. We claim that $P_R$ is symmetric. Indeed, the rotation of angle $2\pi\Theta/N$ centered at the north pole sends the point $v_{\Theta j,R}$ to $v_{(\Theta j+\Theta),R} = v_{\Theta(j+1),R}$ and hence restricts to a cyclic symmetry of $P_R$.

It is straightforward to show that every symmetric spherical polygon is of this form, because the rotation in the subgroup of symmetries must send a vertex $v_i$ of such a polygon to $v_{i+1}$, thus the vertices $v_i$ are equidistributed along a circle on $\S^2$.

For uniqueness, let $\Delta$ be a symmetric spherical polygon, assumed to be centered on the north pole (up to an element of $\SO(3,\R)$). The projection of $P$ onto the equator give the same turning invariant when shrinking the equator to a metric circle of radius $t$ centered at the north pole, until this circle touches $\Delta$ at a certain radius $t_0$. Now, on the circle with radius $t_0$, the projection $x_i$ correspond exactly to the vertex $v_{i+1}$ for each $i$. Thus, the turning invariant is exactly $N/2\pi$ times the angle of the rotation that sends $v_1$ to $v_2$, centered at the north pole. See Figure \ref{fig:symmetric octogon}.
\end{proof}

\begin{remark}
By Proposition \ref{Prop:ThetaReflection}, we see that symmetric spherical polygons are truly unique only up to $\SO(3,\R)$ and not $\O(3,\R)$. Indeed, reflections in $\R^3$ exchange $\Theta$ and $N-\Theta$ for a $N$-gon. 
\end{remark}

\begin{figure}[h!]
    \centering
    \includegraphics[width=0.40\linewidth]{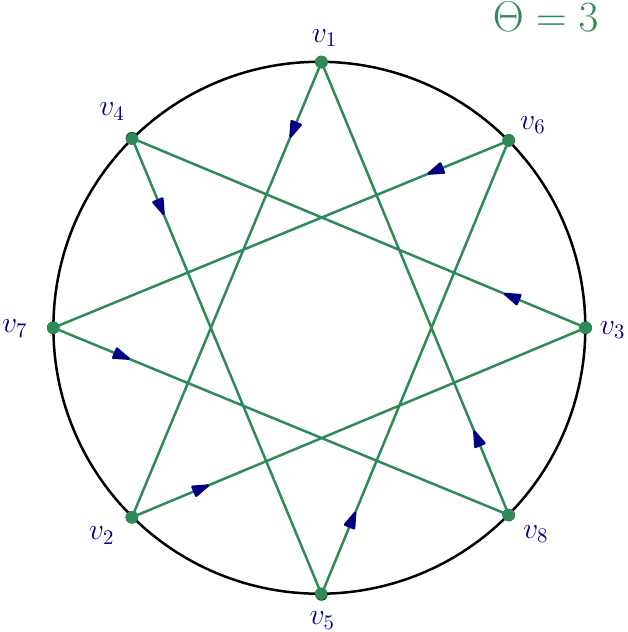}
    \caption{A representation of a symmetric octagon with turning invariant $3$.}
    \label{fig:symmetric octogon}
\end{figure}

\subsubsection{Existence of symmetric polygons}
\begin{proof}[Proof of Proposition \ref{Prop:SymmetricLightlikePolygon}, existence]
    Fix a splitting $\Hh=\S^2_+\times\S^2$ and $\Einh=\S^1\times\S^2$. Denote by $(u_k)_{k=1}^N$ the points on $\S^1$ defined by 
    \[u_k:=\exp(2i\pi k/N)\ .\]
    Denote by $\Delta$ the symmetric spherical polygon constructed above, with $N$ vertices and turning invariant $\Theta$. Denote by $(v_k)_{k=1}^N$ the ordered vertices of $\Delta$. The points $((u_k,v_k))_{k=1}^N$ are then the ordered vertices of a semi-positive polygon $\Lambda$ of $\Ein$.

    We claim that $\Lambda$ is symmetric. Denote by $\r$ the isometry defined, in the splitting $\S^2_+\times\S^2$, as follows:
    \[\r(u,v) := \left(R_{2\pi/N}u,R_{2\pi\Theta/N}v\right)\ ,\]
    where $R_\theta$ denotes the rotation of $\S^2_+$ or $\S^2$ both centered at the north pole and with angle $\theta$. Then, by construction, $\Lambda$ is stabilized by $\r$, hence by the cyclic subgroup generated by $\mathbf{r}$ that acts faithfully on $\Lambda$.
\end{proof}

\subsubsection{Uniqueness of symmetric polygons}

\begin{proof}[Proof of Proposition \ref{Prop:SymmetricLightlikePolygon}, uniqueness]
    Let $\Lambda$ be a symmetric semi-positive polygon in $\Ein$ with $N$ vertices and turning invariant $\Theta$. Denote by $\Sigma$ the complete maximal surface in $\H$ bounded by $\Lambda$. Denote by $\r$ an order $N$ symmetry of $\Lambda$, generating the cyclic group of symmetries of $\Lambda$. Since $\r$ stabilizes $\Lambda$, it stabilizes $\Sigma$, and thus is a finite order isometry of $\Sigma$. Now, by \cite{Cheng} (or \cite{LT23}), $\Sigma$ is non-positively curved, hence by Cartan fixed point theorem (see \cite[Theorem 6.2.3]{PetersenRiemannianGeometry} for example) $\r_{|\Sigma}$ has a fixed point. Since the order of $\r$ is $N\geq 4$, the fixed point is unique. Indeed, $\r$ having two fixed points implies a fixed geodesic, then either the isometry is a symmetry with respect to this geodesic (order 2) or the isometry is the identity (order 1). We now denote by $o$ the unique fixed point of $\r_{|\Sigma}$.

    Take the splitting of $\Hh$ using the decomposition $\R^{2,3}=\T_o\Sigma\oplus\left(\R o\oplus\N_o\Sigma\right)$. Hence $\Hh$ is identified with $\S^2_+\times\S^2$ where $o$ corresponds to the point $(*,*)$ for $*$ the north pole of $\S^2$. Since $\r$ fixes $o$, it stabilizes the two factors $\S^2_+$ and $\S^2$.

     Note that $\r_{|\Sigma}$ is determined by $\r_{|\S^2_+}$ through the derivative of $\r_{|\Sigma}$ at $o$. As an order $N$ isometry of $\Sigma$, the map $\r_{|\S^2_+}$ must be a rotation of angle $2\pi/N$ on $\S^2_+$ corresponding to a rotation in $\T_{o}\Sigma$. When restricted to $\S^2$, $\mathbf{r}$ is a counterclockwise rotation of a certain angle $\theta$ in $\frac{2\pi}{N}\Z$, centered at north pole. Write the vertices of $\Lambda$ as $(u_i,v_i)_{i\in I_N}$ in cyclic order. Then $\r(u_i,v_i)=(u_{i+1},v_{i+1})$. The turning invariant $\Theta$ measures exactly how many turns $(v_i)_{i=1}^N$ make around $C_r$. 
     Hence, $\theta= \frac{2\pi\Theta}{N}$. 
     Up to rotations of both factors, preserving both the space and time orientations, $\Lambda$ is exactly the symmetric polygon obtained in the previous paragraph, concluding the proof.
\end{proof}

\begin{corollary}[Polynomials for symmetric polygons]\label{cor:SymmetricMonomials}
    Let $\Lambda \in \mathcal{C}_{n+4}(\Einh)$ be symmetric with turning invariant $\Theta$, and $\Sigma \subset \Hh$ be the unique complete maximal surface with $\partial_{\infty}\Sigma = \Lambda$. 
    Then $\Sigma$ has differentials $\boldsymbol{\delta}^{o}(\Sigma) = [(z^{n-\Theta+2}\dz^2, z^{\Theta-2}\dz^2)]$.
\end{corollary}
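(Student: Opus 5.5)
The plan is to exploit the cyclic symmetry of $\Lambda$ to force $\alpha$ and $\beta$ to be monomials, and then read off the exponents from the rotation angle on the normal plane.

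\textbf{Step 1: linearize the symmetry.} As in the uniqueness proof of Proposition \ref{Prop:SymmetricLightlikePolygon}, the order-$N$ generator $\r$ of the stabilizer of $\Lambda$ preserves $\Sigma$ and has a unique fixed point $o\in\Sigma$. It acts on $\T_o\Sigma$ by rotation of angle $2\pi/N$, and on $\N_o\Sigma$ (the $\S^2$-factor of the adapted splitting) by rotation of angle $\theta=2\pi\Theta/N$. Fix a conformal parametrization $\sigma:\C\to\Sigma$ with polynomial $Q_4$, as given by Corollary \ref{Cor:AllPolynomialMaximalSurfaces}, normalized so that $\sigma(0)=o$. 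Then $\sigma^{-1}\circ\r\circ\sigma$ is a biholomorphism of $\C$ fixing $0$ with derivative $\omega:=e^{2\pi i/N}$. It is therefore the map $z\mapsto\omega z$.

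\textbf{Step 2: equivariance of $\alpha,\beta$.} Since $\r\in\SO_0(2,3)$ preserves $\Sigma\subset\Hh$ and the orientations used to split $\N^{\C}=\N'\oplus\N''$, it preserves $\II^{2,0}$ and each of its components. Write $\alpha=A\,\dz^2\otimes\eta$ and $\beta=B\,\dz^2\otimes\eta^{-1}$ with $A,B$ polynomial, as in Definition \ref{Defn:AssociatedDifferentials}. Set $\r^*\eta=f\eta$ for some nowhere-vanishing entire $f$. Invariance $\r^*\alpha=\alpha$ gives
\[ A(\omega z)\,\omega^2 f(z)=A(z). \]
Hence $f$ is a rational function with no zeros or poles, i.e.\ a constant $c$ (if $A\equiv 0$, use $B$ instead; $AB\neq0$ since $\deg(AB)=n$). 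The constant $c$ is the eigenvalue of $\r$ on $\N'_o$, namely $c=e^{\pm i\theta}=\omega^{\pm\Theta}$. The sign is fixed by the orientation convention on $\N$ coming from the lift to $\Hh$ and the $+i$-eigenspace convention for $\N'$. The dual relation for $B$ follows with $c^{-1}$.

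\textbf{Step 3: monomials.} Comparing coefficients of $z^k$ gives
\[ \omega^{k+2}c=1 \quad\text{whenever } z^k \text{ appears in } A. \]
So $k\equiv \mp\Theta-2 \pmod N$, and similarly for $B$ with the opposite sign. Since $\deg A,\deg B\le n=N-4<N$, each congruence class has at most one representative in $[0,n]$. Hence $A=a z^{k}$ and $B=bz^{n-k}$, with $k\in\{\Theta-2,\;n-\Theta+2\}$ determined by the sign of $c$. Finally, the $\C^*$-action and a dilation $z\mapsto\lambda z$ in $\Aff(\C)$ normalize $a=b=1$.

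\textbf{Main obstacle.} The delicate point is pinning down the sign in Step 2: whether $\r$ acts on $\N'$ by $\omega^{\Theta}$ or $\omega^{-\Theta}$. This determines whether $A$ receives $z^{n-\Theta+2}$ rather than $z^{\Theta-2}$. I would settle it by carefully tracking orientations. The canonical orientation of $\N$ is induced from those of $\R^{2,3}$, $\L$ and $\T$. The turning invariant is computed with the orientation of $\S^2=Q_-(\R o\oplus\N_o)$ in the splitting, and the rotation of $\T_o$ by $+2\pi/N$ corresponds to the counterclockwise vertex order. As a sanity check, I would compare with the reversal and antipodal relation $\Theta\mapsto N-\Theta$, which must swap $A$ and $B$.
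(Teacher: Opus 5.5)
Your route is essentially the paper's. You take the fixed point $o$ of $\r$, conjugate $\r|_\Sigma$ to $z\mapsto\omega z$, use $\r$-invariance of $\alpha$ to get a congruence $k+2\equiv\pm\Theta \pmod N$, and pin $k$ down with $\deg A\le n<N$ and $2\le\Theta\le N-2$.

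Your way of getting monomials differs from the paper's. The paper argues that a nonzero root of $Q_4$ would generate a $\Z_N$-orbit of $N>n$ roots; you compare coefficients after proving $f$ is constant. Your version is slightly more careful: it proves, rather than tacitly assumes, that $\r$ multiplies $\eta$ by a constant. Your normalization $a=b=1$ also correctly uses a dilation as well as the $\C^*$-action.

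However, as written the proposal does not prove the statement. You only reach $\{\deg A,\deg B\}=\{\Theta-2,\,n-\Theta+2\}$, which is the unordered version in $\mathcal{M}_n(\mathcal{PD})$. The content of the ordered statement, and what Corollary~\ref{Cor:ThetaDegreeAB} needs, is exactly which of $A$, $B$ carries $z^{\Theta-2}$, i.e. the sign you defer. Your proposed sanity check cannot decide this. Passing to $-\Sigma$ sends $\Theta\mapsto N-\Theta$ and swaps $\alpha,\beta$, and both candidate assignments are invariant under that operation.

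To close the gap you must actually carry out three orientation checks.
(i) The conformal orientation of $\T_o\Sigma$ must agree with the space orientation of $U=\T_o$ that orients $\S^1$ and hence the vertex order. Then the generator with $\r(u_i,v_i)=(u_{i+1},v_{i+1})$ acts as $z\mapsto\omega z$, not $z\mapsto\bar\omega z$.
(ii) The canonical orientation of $\N_o$ (time orientation on $\R o\oplus\N_o$) must be matched against the orientation of $T_o\S^2$ in which $\r$ rotates by $+2\pi\Theta/N$. This depends on the conventions for $\S^2$ and for $\partial H$ in the definition of $\Theta$.
(iii) A rotation by $\phi$ acts on the $+i$-eigenspace $\N'_o$ by $e^{i\phi}$.

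The paper settles the sign in one line. It writes $\r_*s=R_{2\pi\Theta/(n+4)}s$ and inserts $e^{2\pi i\Theta/(n+4)}$ on $\eta$ in $\r^*\alpha=\alpha$, which gives $k+2+\Theta\equiv 0$ and hence $k=n-\Theta+2$.

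Be careful here, because this is exactly where the sign slips. In your normalization, $c=f(0)$ is the eigenvalue of $\r^{-1}$ on $\N'_o$, not of $\r$, since $(\r^*\eta)(0)=\r^{-1}\eta(0)$. Equivalently, the equivariance $\r\,\II^{\C}_z(\partial_z,\partial_z)=\omega^{2}\,\II^{\C}_{\omega z}(\partial_z,\partial_z)$ gives $\omega^{k+2}=\lambda$, where $\lambda$ is the eigenvalue of $\r$ itself on $\N'_o$.

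The paper's displayed identity amounts to $\lambda=\omega^{-\Theta}$. By contrast, with the standard conventions (outward-normal orientation on $\S^2\subset\R o\oplus\N_o$, and $\partial H$ oriented so that $H$ lies to its left), (i)--(iii) give $\lambda=\omega^{\Theta}$. That would produce the swapped pair $(z^{\Theta-2}\dz^2,\,z^{n-\Theta+2}\dz^2)$. So the sign is not a formality: you need to fix those conventions explicitly and verify the resulting identity rather than import it.
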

\begin{proof}
    Let $\r: \R^{2,3} \rightarrow \R^{2,3}$ denote the rotation stabilizing $\Lambda$ and $o \in \Sigma$ denote the unique fixed point of $\r_{|\Sigma}$. Taking the decomposition $\R^{2,3}=\T_o\oplus(\R o\oplus\N_o)$, we obtain a splitting of $\H$ as $\S^2_+\times\S^2$ in which $\r_{|\S^2_+}$ is a rotation of angle $2\pi/(n+4)$ and $\r_{|\S^2}$ is a rotation of angle $2\pi\Theta/(n+4)$.
    
    First, remark that $Q_4=\alpha\beta$ vanishes only at the center $o$ of $\Sigma$. Indeed, if $Q_4$ has another zero $z$, then the orbit of $z$ through $\r$ would provide $n+4$ distinct zeroes, but $\deg(Q_4)=n$. Thus, in any conformal parametrization where $\alpha$ and $\beta$ are polynomials, they must be monomials. We now look determine the degree of $\alpha$ and $\beta$ as functions of $\Theta$ and $n$.

    Denoting $\sigma:\C\to\Hh$ a conformal parametrization of $\Sigma$ sending $0$ to $o$, we have $\r\circ\sigma=\sigma\circ R_{2\pi/(n+4)}$ and for every section $s$ of $\N_{\sigma}$, $\r_* s=R_{2\pi\Theta/(n+4)}s$. Now, since $\r$ is a global isometry of $\H$ that fixes $\Sigma$, then $\r$ preserves the second fundamental form $\II$ of $\Sigma$ and hence
    $\alpha$ and $\beta$ as well. Let $\eta$ denote a trivialization of $\N_{\sigma}$, so that $\alpha=P(z)\dz^2\otimes\eta$ and $\beta=Q(z)\dz^2\otimes\eta^{-1}$. 
    Up to changing $\eta$ by a multiplicative constant in $\C^*$, 
    we can write $P(z) = z^k$ and $Q(z) =z^{n-k}$. 

    Since $\r^*\alpha=\alpha$, we have
    \[z^k\dz^2\otimes\eta = \exp(2i\pi k/(n+4))z^k\exp(4i\pi/(n+4))\dz^2\otimes\exp(2i\pi\Theta/(n+4))\eta\ ,\]
    and thus
    \[1 = \exp((2i\pi/(n+4))(k+2+\Theta))\ .\]
    This implies that $k+2+\Theta$ belongs to $(n+4)\Z$. Since $0\leq k\leq n$ and $2\leq \Theta\leq n+2$, \[4\leq k+2+\Theta\leq 2n+4\ .\]
    Hence, $k+2+\Theta = n+4$ and we conclude $k= n-\Theta+2$.
    \end{proof}
    
\subsection{Connected components of polygons in \texorpdfstring{$\mathbf{Ein}^{1,2}$}{Ein(1,2)}}

Denote by $\mathcal{C}_N(\S^2)$ the configuration space of spherical polygons with $N$ vertices, endowed with the topology coming from the following distance function: if $\Delta,\Delta' \in \mathcal{C}_{N}(\S^2)$ have vertex sets $(v_i)_{i\in \Z_N}$ and $(v_i')_{i\in \Z_N}$, respectively, then
\[\mathrm{dist}(\Delta,\Delta'):= \min\bigg\{\sum_{i=1}^N d_{\S^2}(v_i,v_{i+k}')\ \big|\ k\in\{1,\ldots,N\}\bigg\}\ .\]
That is to say: we take the minimum distance between any of the corresponding marked polygons. 

\subsubsection{Counting symmetric polygons}

As shown in Proposition \ref{Prop:SymmetricSphericalPolygon}, the symmetric spherical polygons are classified, up to $\SO(3,\R)$, by their number of vertices and turning invariant. Thus, there are $N-3$ distinct symmetric spherical polygons in $\mathcal{C}_N(\S^2)$. Since they all have different turning invariant, and since the turning invariant is invariant by the $\SO(3,\R)$ action, there are at least $N-3$ distinct connected components in $\mathcal{M}_N(\S^2) =\mathcal{C}_N(\S^2)/\SO(3,\R)$.

\subsubsection{Counting connected components}

We first characterize the Higgs bundle data corresponding to symmetric polygons. Then we 
count the components of all semi-positive polygons.
\begin{proposition}
[Characterizing symmetric polygons]\label{Prop:HiggsSymmetricPolygon}
 The asymptotic boundary $\Lambda \subset \Ein$ of a maximal surface $\sigma:\C \rightarrow \H$ with associated polynomials
 $\boldsymbol{\delta}(\sigma) = [(z^k\dz^2, z^{n-k}\dz^2)]$ is symmetric. 
 Conversely, every symmetric semi-positive polygon arises in this way. 
\end{proposition}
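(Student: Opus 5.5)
The plan has two directions, and the forward one rests on a symmetry argument for the parametrized surface. Let $\sigma:\C\to\Hh$ be the surface built in Lemma \ref{Lem:PolynomialPolygon} from the harmonic bundle $(\E,\varphi(\alpha,\beta),h)$, with $\alpha=z^k\dz^2\otimes\eta$, $\beta=z^{n-k}\dz^2\otimes\eta^{-1}$, and $h$ the unique diagonal, real, strongly complete harmonic metric of Lemma \ref{Lem:TrueComplete}. Set $\zeta=e^{2\pi i/(n+4)}$ and $R(z)=\zeta z$. The pullback $R^*\varphi$ is gauge equivalent to $\varphi$ by a constant diagonal gauge transformation $g$ acting on the line bundles $\mathcal{N}^{-1},\K,\mathcal{O},\K^{-1},\mathcal{N}$. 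Concretely, $\K$ is rescaled by the factor $\zeta$ coming from $\dz$, and $\eta$ by a power $\zeta^{m}$ chosen so that $R^*\alpha$ and $R^*\beta$ are returned to $\alpha,\beta$. Solving $m$ from $\zeta^{k+2}\zeta^{m}=1$, the same $m$ works for $\beta$, because $\zeta^{n-k+2}\zeta^{-m}=\zeta^{n+4}=1$; this is the one place where $\deg\alpha+\deg\beta=n$ is used. The gauge $g$ is unitary for diagonal metrics and preserves the orthogonal structure $\mathcal{Q}$. Hence $g\cdot R^*h$ is again a diagonal, real harmonic metric on $(\E,\varphi)$.

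Since $R$ is an isometry for $|Q_4|^{1/2}=|z|^{n/2}|\dz|$ and maps $\C\setminus N(\{0\})$ to itself for a rotation-invariant neighborhood, the pulled-back metric is still strongly complete. Uniqueness in Lemma \ref{Lem:TrueComplete} then gives $g\cdot R^*h=h$. This gauge therefore induces a parallel isomorphism between the flat bundles $(\E,\nabla)$ and $R^*(\E,\nabla)$ preserving the real structure. By Proposition \ref{Prop:HiggsToMaximalSurface} it yields an element $\r\in\SO_0(2,3)$ with $\r\circ\sigma=\sigma\circ R$; the element lies in the identity component because the gauge is connected to the identity through the diagonal torus. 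Taking asymptotic boundaries, $\r$ preserves $\Lambda=\partial_\infty\Sigma$. Since $\r^{n+4}$ fixes $\sigma$ pointwise and fixes all of $\R^{2,3}$ (it fixes $\sigma(0)$, $\T$ and $\N$ there), $\r$ has order exactly $n+4$ on $\Lambda$. The stabilizer of an $(n+4)$-gon acts on its cyclically ordered vertices, and a nontrivial element fixing all vertices would fix a spanning set, since the vertices of a semi-positive polygon span $\R^{2,3}$ when linearly full. So the stabilizer embeds in the dihedral group and, being orientation-preserving by Remark \ref{rem:orientation}, in $\Z_{n+4}$. It contains $\langle\r\rangle$, hence is cyclic of order $n+4$, and $\Lambda$ is symmetric. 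The non-linearly-full case needs a brief separate check; I would handle it by noting that an element fixing all vertices fixes $\Sigma$ by uniqueness in Theorem \ref{Thm:Plateau}, and hence is trivial.

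The converse direction is essentially Corollary \ref{cor:SymmetricMonomials}. Let $\Lambda$ be symmetric in $\Ein$ with a lift $\hat\Lambda$ to $\Einh$. The stabilizer of $\Lambda$ lifts to that of $\hat\Lambda$, since $\SO_0(2,3)$ acts on $\Einh$ and preserves each lift or swaps them; a swap would send $\hat\Lambda$ to $-\hat\Lambda$, which is excluded because $\Theta(-\hat\Lambda)=N-\Theta(\hat\Lambda)$ and $\Theta$ is preserved, unless $\Theta=N/2$. That remaining case also needs a brief separate check. With the symmetry in $\Einh$, Corollary \ref{cor:SymmetricMonomials} gives $\boldsymbol\delta^o(\Sigma)=[(z^{n-\Theta+2}\dz^2,z^{\Theta-2}\dz^2)]$, which is of the claimed form. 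Finally, every symmetric polygon is realized as a boundary, by Proposition \ref{Prop:SymmetricLightlikePolygon} together with Theorem \ref{Thm:Plateau}.

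I expect the main obstacle to be the uniqueness step, namely justifying that the pulled-back and regauged metric is again strongly complete and diagonal so that Lemma \ref{Lem:TrueComplete} applies. The first thing I would try is to check that the norms $\|1\|^2,\|\alpha\|^2,\|\beta\|^2$ transform by pullback under $R$ and that the rotation preserves the exceptional sets $N(\mathcal{Z}(\alpha))$ and $N(\mathcal{Z}(\beta))$, both of which are centered at the origin.
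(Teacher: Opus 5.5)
Your core argument is the paper's. In the forward direction the paper also precomposes with $z\mapsto\zeta z$ and absorbs the phase $\zeta^{k+2}$ into the trivialization $\eta$; the same rescaling fixes $\beta$ because $\zeta^{n+4}=1$. It then concludes from uniqueness that $\sigma\circ\zeta=\gamma\circ\sigma$ for some $\gamma\in\SO_0(2,3)$ of order $n+4$. Your regauging step $g\cdot R^*h=h$ via Lemma~\ref{Lem:TrueComplete} is exactly what underlies the paper's one-line ``thus $\Sigma$ and $\Sigma'$ are isometric,'' and the converse is Corollary~\ref{cor:SymmetricMonomials} in both. The step that fails is your additional claim that the whole stabilizer is exactly $\Z_{n+4}$.

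\textbf{Why the stabilizer step fails.} Vertices are rays in $\Einh$ (or lines in $\Ein$), so an element fixing all of them only satisfies $g p_i=\lambda_i p_i$. A spanning family of eigenlines does not force $g=\id$. Your fallback is also insufficient: Theorem~\ref{Thm:Plateau} gives $g(\Sigma)=\Sigma$, not $g|_{\Sigma}=\id$. The claim is in fact false for $n=0$. The boundary $4$-gon of the Barbot surface has vertices $e_1,\dots,e_4$ spanning a $(2,2)$-subspace $W$ with $e_i\perp e_{i+1}$. Every element acting by $\diag(a,b,a^{-1},b^{-1})$ on $W$ ($a,b>0$) and trivially on $W^\perp$ lies in $\SO_0(2,3)$, fixes every vertex, and preserves every edge.

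\textbf{A second failure in $\Ein$.} This is what your unresolved case $\Theta=N/2$ runs into. For $k=n/2$ one has $\div(\alpha)=\div(\beta)$, so $\Lambda$ lies in a copy of $\mathbf{Ein}^{1,1}$ by Corollary~\ref{cor:ADS}. Then $-\id_W\oplus\id_{W^\perp}\in\SO_0(2,3)$ fixes $\Lambda$ pointwise while swapping its two lifts. So you should work in $\Einh$ with $\boldsymbol{\delta}^o$ throughout, which is where ``symmetric'' is actually defined.

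\textbf{How to repair it.} In $\Einh$ and for $n\geq 1$, the step can be fixed as follows.
\begin{enumerate}
\item A stabilizing $g$ preserves $\Sigma$ and its orientation (Remark~\ref{rem:orientation}). Hence $\sigma^{-1}\circ g\circ\sigma$ is an affine map $z\mapsto az+b$ whose pullback fixes $Q_4=z^n\dz^4$, forcing $b=0$ and $a^{n+4}=1$.
\item The element $g$ is determined by $g|_\Sigma$. At a point $p$ where $\II\neq 0$, $g$ fixes $\sigma(p)$ and $\T_p$, so it acts on $\N_p$ by a rotation fixing a nonzero value of $\II$, and is therefore the identity.
\end{enumerate}
This embeds the stabilizer into $\mu_{n+4}$, and your $\r$ maps to the generator $\zeta$, so the stabilizer is $\Z_{n+4}$.

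The paper does none of this. It only exhibits the order-$(n+4)$ symmetry, tacitly reading ``symmetric'' as ``admits a cyclic symmetry of order $N$.'' Under that reading your proof is complete without the stabilizer step.
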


\begin{proof}
    Let $\sigma:\C\to \H$ be a conformal parametrization as in the statement. 
    Then by hypothesis, there is a 
    non-vanishing holomorphic section $\eta \in H^0(\C,\mathcal{N})$ such that 
    $\alpha=z^kdz^2\otimes\eta$ and $\beta=z^{n-k}dz^2\otimes \eta^{-1}$. The composition $\sigma\circ \zeta$ of $\sigma$ and the multiplication by the $(n+4)$-th root of unity $\zeta=\exp(2i\pi/(n+4))$ yields a maximal surface $\Sigma'$ with differentials $\alpha'=\zeta^{k+2}z^kdz^2\otimes\eta$ and $\beta'=\zeta^{n-k+2}z^{n-k}dz^2\otimes\eta^{-1}$. Now, up to changing the holomorphic trivialization of $\mathcal{N}$ by the multiplication by $\zeta^{-k-2}$, we obtain the same associated differentials. Thus $\Sigma$ and $\Sigma'$ are isometric, and there is an element $\gamma$ of $\SO_0(2,3)$ so that 
    $\sigma\circ\zeta=\gamma\circ\sigma$.
    One finds that $\gamma$ is an order $(n+4)$ symmetry of $\Sigma$, hence an order $(n+4)$ symmetry of $\partial\Sigma$.
    
    Conversely, by Proposition \ref{Prop:SymmetricLightlikePolygon} and Corollary \ref{cor:SymmetricMonomials}, a complete maximal surface in $\Hh$ bounded by a symmetric $N$-gon $\Lambda$ with $\Theta= \Theta(\Lambda)$ has differentials $\boldsymbol{\delta}(\Sigma)=[(z^{N-\Theta+2}\dz^2, z^{\Theta-2}\dz^2)]$.
\end{proof}

\begin{theorem}\label{thm:EinhPolygonCount}
    Let $N \geq 4$. The configuration space $\mathcal{C}_N(\Einh)$ of semi-positive $N$-gons has $N-3$ connected components, which are labeled by the turning invariant $\Theta$. 
\end{theorem}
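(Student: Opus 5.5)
We will show that $\Theta$ is a continuous function on $\mathcal{C}_N(\Einh)$, that it takes every value in $\{2,\dots,N-2\}$, and that each level set $\Theta^{-1}(k)$ is connected. The first two steps are elementary. Continuity is local. Near a given polygon $\Lambda$, fix a splitting and an open hemisphere $H$ enclosing the projection $\Delta=\pi_-(\Lambda)$. This is possible by Proposition \ref{Prop:HemisphereContainment}, and by compactness the same $H$ still encloses the projections of all nearby polygons. The boundary projection $\pr_{\partial H}$ depends continuously on the vertices, because consecutive vertices are distinct and not antipodal. Consecutive projected points also remain distinct. Hence the integer $\Theta_H$ is locally constant, and so $\Theta$ is constant on connected components. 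Surjectivity onto $\{2,\dots,N-2\}$ follows from the existence part of Proposition \ref{Prop:SymmetricLightlikePolygon}. So there are at least $N-3$ components.

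The main content is the connectedness of each level set, and we would not attempt it inside $\Einh$ directly. Instead we would transport the problem to holomorphic data through the maximal surfaces. We would use the maps $\boldsymbol{\delta}^o$ of Definition \ref{Defn:AssociatedDifferentials} and the Higgs bundle construction of Lemma \ref{Lem:PolynomialPolygon}, in the form needed for Theorem \ref{thm:B}: $\partial_\infty\circ\mathbf{\Sigma}^o:\mathcal{M}_n(\mathcal{PD}^o)\to\mathcal{M}_N(\Einh)$ is a continuous surjection. Surjectivity comes from Corollary \ref{Cor:AllPolynomialMaximalSurfaces} together with Theorem \ref{Thm:Plateau}. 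Continuity comes from continuous dependence of the strongly complete harmonic metric of Lemma \ref{Lem:TrueComplete} on $(A,B)$, combined with the compactness of the Plateau solutions from \cite{LTW24}.

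Next we stratify $\mathcal{M}_n(\mathcal{PD}^o)$ by $k=\deg B$, with $\deg A=n-k$. Each stratum is connected, since it is a quotient of the connected space of pairs of polynomials of fixed degrees with nonzero leading coefficients. Each stratum also contains the monomial pair $(z^{n-k}\dz^2,z^k\dz^2)$. Linearly scaling all roots of $A$ and $B$ to $0$ gives a path inside the stratum to the monomial pair, and by Proposition \ref{Prop:HiggsSymmetricPolygon} and Corollary \ref{cor:SymmetricMonomials} its boundary is the symmetric polygon with $\Theta=k+2$. Since $\Theta$ is locally constant and the image of a connected stratum is connected, $\Theta\equiv k+2$ on the whole image of stratum $k$. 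Because the map is surjective, the strata with $k=0,\dots,n$ map onto the level sets $\Theta^{-1}(2),\dots,\Theta^{-1}(N-2)$ respectively, and each level set is therefore connected in $\mathcal{M}_N(\Einh)$.

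Finally, we lift connectedness from the moduli space to $\mathcal{C}_N(\Einh)$. The group $\SO_0(2,3)$ is connected, so the preimage of a connected set under the quotient map by a connected group acting with connected orbits is connected. A possible gap here is that the quotient map must be open, which holds for group actions. The main obstacle we expect is the continuity of $\partial_\infty\circ\mathbf{\Sigma}^o$ in the holomorphic data. Our first attempt would be to show that the harmonic metrics $h$ converge locally uniformly, using the uniform bi-Lipschitz bounds against $|Q_4|^{1/2}$ from Lemma \ref{Lem:TrueComplete}. We would then pass to the boundary using the Hausdorff continuity of $\partial_\infty$ from \cite{LTW24}, and verify that no vertices collapse, since the degree $n$ is fixed along the deformation.
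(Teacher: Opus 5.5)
Your proposal is correct and follows the paper's proof in substance: you push the question through the polynomial parametrization $\partial_\infty\circ\mathbf{\Sigma}^o$, deform $(A,B)$ inside its degree stratum to the monomial pair, and identify the endpoint as the symmetric polygon with $\Theta=\deg B+2$. Your bookkeeping reads off $\Theta$ at the monomial endpoint by local constancy, whereas the paper writes the target monomials in terms of $\Theta_0$ and so tacitly presupposes $\deg B=\Theta_0-2$; yours is, if anything, the cleaner version.

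Two inputs should simply be cited from the paper rather than re-derived. First, surjectivity needs Lemma \ref{LemmaSigmaDeltaInverse}: the metric induced by $\Sigma$ is the strongly complete one, by Corollary \ref{cor:MutualBoundedness}. Corollary \ref{Cor:AllPolynomialMaximalSurfaces} together with the Plateau theorem does not give this on its own. Second, continuity is Lemma \ref{Lem:ContinuityPolynomialPolygon}, which is proved using the compactness theorem of \cite{LTW24} and the injectivity of $\boldsymbol{\delta}^o$. Your proposed route through uniform bi-Lipschitz bounds would not work as stated, because Lemma \ref{Lem:TrueComplete} gives no bounds that are uniform in $(A,B)$.
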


\begin{proof}
    We know that $\mathcal{C}_N(\Einh)$ has at least $N-3$ connected components, each of which contains a symmetric polygon. To prove the theorem, it suffices to show that each connected component contains a symmetric polygon.
    
    Let $\Lambda$ be a semi-positive polygon in $\Einh$ with $\Theta(\Lambda)=\Theta_0$, 
    and $\Sigma$ be the complete maximal surface of $\Hh$ bounded by $\Lambda$. 
    Choose any conformal parametrization
    $\sigma: \C \rightarrow \Sigma$. 
    Then $\sigma$ is associated with a Higgs bundle of the form \eqref{SO(2,3)Higgs} with polynomial quartic differential $Q_4=\alpha\beta$ of degree $n=N-4$, by Corollary \ref{Cor:AllPolynomialMaximalSurfaces}. Let $\alpha_{QD}=A\dz^2,\beta_{QD} =B\dz^2$ be the associated holomorphic quadratic differentials relative to a background dual trivialization of $\mathcal{N},\mathcal{N}^{-1}$ for $\sigma$. Set $\delta = (\alpha_{QD},\beta_{QD})$.

    Now, we construct a continuous path $(\delta_{t})_{t\in[0,1]}$ in $(\C_{n}[z]\mathrm{d}z^2, \C_n[z]\dz^2)$ between the current differentials $\delta$
    and the symmetric differentials $\delta^s=(z^{n-\Theta_0+2}\dz^2, z^{\Theta_0-2}\dz^2)$. 
    We upgrade the Higgs bundle 
    $(\E,\varphi(\delta_t))$ to a harmonic bundle 
    $(\E,\varphi(\delta_t),h)$ by Lemma \ref{Lem:TrueComplete}. By Lemma \ref{Prop:HarmonicMetricToHiggs}, we have an $\SO_0(2,3)$-orbit $[\Sigma_t]$ of associated surfaces in $\Hh$. By the continuity
    of the resulting map $\mathcal{PD}^{o}_n \rightarrow \mathcal{M}_{n+4}(\Einh)$,
    verified in Lemma \ref{Lem:ContinuityPolynomialPolygon}, Corollary \ref{Cor:ContinuityAsymptoticBoundary}, 
    the boundaries $\Lambda_t=\partial_{\infty}\Sigma_t$ then define a continuous path from $\Lambda$ to a new semi-positive loop $\Lambda^s = \Lambda_1$ (up to gauging $\Lambda^s$ within its $\SO_0(2,3)$ orbit). By Proposition \ref{Prop:HiggsSymmetricPolygon}, $\Lambda^{s}$ is symmetric, and the proof is thus complete. 
\end{proof}

By similar reasoning in projecting to the 2-sphere, we find that the $\Theta$-invariant classifies the components of spherical polygons.
\begin{corollary}\label{Cor:ComponentCountSpherical}
The configuration space $\mathcal{C}_{N}(\S^2)$ of spherical $N$-gons has $N-3$ connected components, which are labeled by the turning invariant $\Theta$.
\end{corollary}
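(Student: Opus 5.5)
The plan is to transport Theorem \ref{thm:EinhPolygonCount} from $\Einh$ to $\S^2$ by means of the projection $\pi_-:\Einh\simeq\S^1\times\S^2\to\S^2$. This projection sends a semi-positive $N$-gon to a spherical $N$-gon and, by definition, preserves $\Theta$. The lower bound comes for free. The symmetric spherical polygons of Proposition \ref{Prop:SymmetricSphericalPolygon} realize every value $\Theta\in\{2,\dots,N-2\}$. Since $\Theta$ is locally constant on $\mathcal{C}_N(\S^2)$ (the boundary projection moves continuously for a fixed enclosing hemisphere, and the integer $\Theta$ of a configuration with $x_i\neq x_{i+1}$ is locally constant), there are at least $N-3$ components. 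It remains to show that each level set $\{\Theta=\Theta_0\}$ is connected.

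For connectedness, I would fix a splitting and build a continuous section of the projection. Given a spherical $N$-gon $\Delta=(v_i)$ with edge lengths $\ell_i=d_{\S^2}(v_i,v_{i+1})$, we have $\sum\ell_i=2\pi$. Set $u_1=1\in\S^1$ and $u_{i+1}=u_i e^{i\ell_i}$, and let $\Lambda(\Delta)$ be the graph of the piecewise isometry $\S^1\to\S^2$ sending the arc $[u_i,u_{i+1}]$ isometrically onto the edge $[v_i,v_{i+1}]$. The image contains no antipodal points, so by Proposition \ref{Prop:SemiPositive} this is a semi-positive loop. It is polygonal with exactly $N$ vertices, because a photon segment projects to a geodesic segment, and condition (ii) of Definition \ref{defn:sphericalIntro} ensures that consecutive edges do not merge. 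The choice of basepoint $u_1$ depends on the marking, so I would work on the pointed spaces $\mathcal{C}^{pt}_N$, where this lift is plainly continuous and satisfies $\pi_-\circ\Lambda=\mathrm{id}$. The cyclic-shift ambiguity is harmless here: shifting the marking only rotates the $\S^1$ factor, which amounts to applying an element of $\SO_0(2,3)$, and this preserves components.

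With the section in hand, the argument runs as follows. Take $\Delta,\Delta'$ with the same $\Theta_0$. Their lifts lie in $\mathcal{C}_N(\Einh)$ with equal $\Theta$, so by Theorem \ref{thm:EinhPolygonCount} they are joined by a path $\Lambda_t$ of semi-positive $N$-gons. Then $\pi_-(\Lambda_t)$ is a path in $\mathcal{C}_N(\S^2)$ joining $\Delta$ to $\Delta'$. Here I must check that $\pi_-$ is continuous into $\mathcal{C}_N(\S^2)$ with the vertex-distance topology; this holds because the vertices of $\Lambda_t$ vary continuously and project to the vertices of $\pi_-(\Lambda_t)$, and the vertex count stays equal to $N$.

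I expect the main obstacle to be verifying that the lifted loop has exactly $N$ vertices, so that it is genuinely an $N$-gon whose projection recovers $\Delta$ with the same vertex set. This fails only if two consecutive lifted edges lie on a common photon. That would force $v_{i-1},v_i,v_{i+1}$ to lie along one minimizing geodesic with $v_i$ in between, which is exactly what condition (ii) excludes. Backtracking along the same great circle is still permitted, and it gives a genuine corner in $\Einh$, since the $\S^1$ coordinate keeps advancing while the $\S^2$ direction reverses.
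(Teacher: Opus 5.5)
Your proposal is correct and follows the paper's route: the paper obtains the corollary from Theorem~\ref{thm:EinhPolygonCount} ``by projecting to the 2-sphere,'' and your lift of a spherical $N$-gon to the graph of a piecewise isometry $\S^1\to\S^2$ supplies the surjectivity of $\pi_-:\mathcal{C}_N(\Einh)\to\mathcal{C}_N(\S^2)$, with $\Theta$ preserved, that this one-line argument leaves implicit. You never use continuity of the section, so the pointed-space and cyclic-shift discussion can be dropped; what the argument needs is surjectivity and continuity of $\pi_-$, local constancy of $\Theta$, and the bound $2\leq\Theta\leq N-2$ (which you should cite explicitly to get exactly $N-3$ components).
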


After quotienting by the connected group $\SO_0(2,3)$, we find the following. 
\begin{corollary}\label{Cor:ConnectedComponentCountUp}
The moduli space $\mathcal{M}_N(\Einh)$ has $N-3$ connected components, labeled by $\Theta$.
\end{corollary}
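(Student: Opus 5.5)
The plan is to deduce the statement from Theorem \ref{thm:EinhPolygonCount}, which says that the configuration space $\mathcal{C}_N(\Einh)$ has exactly $N-3$ connected components, labeled by $\Theta$. The key points are that $\SO_0(2,3)$ is connected and that $\Theta$ is $\SO_0(2,3)$-invariant.

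First, I will check that $\Theta$ descends to a continuous function on $\mathcal{M}_N(\Einh)=\mathcal{C}_N(\Einh)/\SO_0(2,3)$.
\begin{enumerate}
\item \emph{Invariance.} An element of $\SO_0(2,3)$ preserves the space and time orientations, hence the canonical orientation of polygons (Remark \ref{rem:orientation}). The value $\Theta(\Lambda)$ is independent of the chosen splitting. Now $g\in\SO_0(2,3)$ carries a splitting to another splitting, and carries $\Lambda$ with its projection to $\S^2$ onto $g\Lambda$ with the corresponding projection, up to an orientation-preserving isometry of $\S^2$. Hence $\Theta(g\Lambda)=\Theta(\Lambda)$.
\item \emph{Continuity.} $\Theta$ is locally constant on $\mathcal{C}_N(\Einh)$, since it labels the components. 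The quotient map $\pi:\mathcal{C}_N(\Einh)\to\mathcal{M}_N(\Einh)$ is continuous and surjective. Therefore the induced map $\overline{\Theta}$ is locally constant in the quotient topology: its preimages of points are unions of components of $\mathcal{C}_N(\Einh)$, hence open, and they are saturated.
\end{enumerate}

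Second, I will count components using the following topological facts.
\begin{enumerate}
\item The image under $\pi$ of each component $\Theta^{-1}(k)$ is connected, as the continuous image of a connected set.
\item These images are pairwise disjoint, because $\Theta$ is invariant.
\item Each image is open and closed in $\mathcal{M}_N(\Einh)$, since it equals $\overline{\Theta}^{-1}(k)$ and $\overline{\Theta}$ is locally constant.
\item Each image is nonempty for $2\le k\le N-2$, by Proposition \ref{Prop:SymmetricLightlikePolygon}.
\end{enumerate}
It follows that $\mathcal{M}_N(\Einh)$ has exactly $N-3$ components, labeled by $\Theta$.

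Connectedness of $\SO_0(2,3)$ is not strictly needed for this argument, since images of connected sets are already connected. It would matter only for the converse statement that components upstairs do not split further under the quotient.

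The only delicate step is invariance, specifically checking that the projected spherical polygons are related by an \emph{orientation-preserving} rotation of $\S^2$ rather than a reflection. A reflection would replace $\Theta$ by $N-\Theta$ by Proposition \ref{Prop:ThetaReflection}. To handle this, I would use the time-orientation on the factor $\R^{0,3}$. Alternatively, I would note that $\SO_0(2,3)$ is connected, so $\Theta(g\Lambda)$ varies continuously along a path from the identity to $g$ and is integer-valued, which already forces $\Theta(g\Lambda)=\Theta(\Lambda)$. I would use this path argument as the clean route.
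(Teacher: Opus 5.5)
Your argument is correct and matches the paper's, which deduces the count directly from Theorem \ref{thm:EinhPolygonCount} by passing to the quotient by the connected group $\SO_0(2,3)$. Your path argument for the invariance of $\Theta$ is exactly where connectedness enters, since it prevents the action from identifying distinct components; so your side remark about its role is slightly misplaced (components can never split under a continuous surjection), but this does not affect the proof.
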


Now, by considering the action of the antipodal map, we see the $\Theta =k$ component in $\mathcal{C}_{N}(\Einh)$ and 
the $\Theta = N-k$ component are exchanged. Hence,
we lose roughly half of the components after descending to $\Ein$. 

\begin{corollary}\label{Cor:ConnectedComponentCountDown}
The moduli space $\mathcal{M}_N(\Ein)$ has $\lceil \frac{N-3}{2} \rceil$ connected components, labeled by equivalence classes in $\{2,3,\dots, N-2\}/\sim$, where $k \sim N-k$. 
\end{corollary}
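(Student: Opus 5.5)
The plan is to deduce Corollary \ref{Cor:ConnectedComponentCountDown} from Corollary \ref{Cor:ConnectedComponentCountUp} by passing to the quotient under the antipodal map. The double cover $\Einh\rightarrow\Ein$ induces a surjection $\pi:\mathcal{M}_N(\Einh)\rightarrow\mathcal{M}_N(\Ein)$. The antipodal map $x\mapsto -x$ is central, so it commutes with $\SO_0(2,3)$. Hence it descends to an involution $\iota$ of $\mathcal{M}_N(\Einh)$, and $\mathcal{M}_N(\Ein)=\mathcal{M}_N(\Einh)/\iota$. Two checks come first. A semi-positive polygon in $\Ein$ has exactly two lifts, $\hat\Lambda$ and $-\hat\Lambda$, both of which are semi-positive $N$-gons, since semi-positivity is defined through signatures of spans and photons lift to photons. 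Second, $\pi$ is a quotient map for the quotient topologies; it is a continuous open surjection because the covering $\mathcal{C}_N(\Einh)\rightarrow \mathcal{C}_N(\Ein)$ is a $2$-to-$1$ covering.

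Next I compute how $\iota$ acts on components. By the remark following Proposition \ref{prop:turningantipodal}, $\Theta(-\Lambda)=N-\Theta(\Lambda)$. By Corollary \ref{Cor:ConnectedComponentCountUp}, $\Theta$ labels the components of $\mathcal{M}_N(\Einh)$ bijectively by $\{2,\dots,N-2\}$. Since $\iota$ is a homeomorphism, it therefore sends the component $\{\Theta=k\}$ onto $\{\Theta=N-k\}$.

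The components of a quotient by a finite group acting by homeomorphisms are the images of the orbits of components. Each component $C_k$ is connected, so $\pi(C_k)$ is connected. The images $\pi(C_k)$ and $\pi(C_{k'})$ are disjoint unless $k'\in\{k,N-k\}$. Each image is open and closed: its preimage $C_k\cup C_{N-k}$ is open and closed, and $\pi$ is a quotient map. Hence the components of $\mathcal{M}_N(\Ein)$ are in bijection with $\{2,\dots,N-2\}/(k\sim N-k)$.

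Finally I count the orbits. There are $N-3$ labels. A fixed point $k=N-k$ occurs exactly when $N$ is even, namely $k=N/2$, which lies in range since $N\geq4$. If $N$ is odd, there are $(N-3)/2$ orbits. If $N$ is even, there are $(N-4)/2+1=(N-2)/2$ orbits. In both cases the count is $\lceil (N-3)/2\rceil$.

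The only step with real content is confirming that $\Theta$ for the $\Ein$-class is well defined as an element of $\{2,\dots,N-2\}/\sim$. This holds because the two lifts have $\Theta$-values $k$ and $N-k$. I expect no real obstacle beyond the point-set check that $\pi$ is open, which follows from the covering property.
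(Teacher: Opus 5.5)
Your proposal is correct and follows the same route as the paper, which simply notes that the antipodal map exchanges the $\Theta=k$ and $\Theta=N-k$ components of $\mathcal{M}_N(\Einh)$ (via $\Theta(-\Lambda)=N-\Theta(\Lambda)$) and then passes to the quotient. You additionally supply the point-set justification (openness of the projection, components of the quotient as images of orbits of components) and the explicit even/odd orbit count giving $\lceil \frac{N-3}{2}\rceil$, both of which the paper leaves implicit.
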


We note that Corollary \ref{Cor:ConnectedComponentCountDown} agrees with results in \cite{TW24} for $N=5$ and $N=6$.  \medskip 

For later, we shall consider \emph{pointed} polygons, which carry a marked vertex. Let $\mathcal{C}^{pt}_N(\cdot)$ and $\mathcal{M}_{N}^{pt}(\cdot)$ denote the configuration space and moduli space of pointed semi-positive polygons in $\Ein$ or $\Einh$. 
We quickly note that the above count applies also for pointed polygons. 
\begin{corollary}[Count of pointed polygons] \label{Cor:ComponentCountDownPointed}
The moduli space $\mathcal{M}_N^{pt}(\Ein)$ has $\lceil \frac{N-3}{2} \rceil$ connected components, labeled by equivalence classes in $\{2,3,\dots, N-2\}/\sim$, where $k \sim N-k$. 
\end{corollary}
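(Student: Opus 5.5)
The plan is to deduce the pointed count from Corollary \ref{Cor:ConnectedComponentCountDown} by analyzing the forgetful map $\pi:\mathcal{M}_N^{pt}(\Ein)\to\mathcal{M}_N(\Ein)$, which forgets the marked vertex. First, $\pi$ is surjective and its fibers are finite: a fiber over $[\Lambda]$ is the set of $N$ possible markings modulo the (finite) stabilizer of $\Lambda$. Second, the turning invariant, defined on the lifts to $\Einh$ and taken modulo $k\sim N-k$, does not depend on the marking. Hence it is a continuous invariant on $\mathcal{M}_N^{pt}(\Ein)$, and it takes every value in $\{2,\dots,N-2\}/\sim$ by Proposition \ref{Prop:SymmetricLightlikePolygon}. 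This already gives at least $\lceil\frac{N-3}{2}\rceil$ components, so the content lies in showing each $\Theta$-class of pointed polygons is connected.

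For connectedness I will work upstairs in $\mathcal{C}^{pt}_N(\Einh)$ and then project. Let $\Lambda$ be a pointed polygon with invariant $\Theta_0$. Repeating the path argument of Theorem \ref{thm:EinhPolygonCount} yields a continuous path of polygons $\Lambda_t$ from $\Lambda$ to a symmetric polygon $\Lambda^s$. The vertices of $\Lambda_t$ vary continuously because the number of vertices stays equal to $N$ along the path. The marked vertex can therefore be transported continuously along the path, giving a path in $\mathcal{C}^{pt}_N(\Einh)$ from $\Lambda$ to $\Lambda^s$ with some marking. The remaining task is to connect the $N$ markings of $\Lambda^s$ to one another. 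This is immediate: the order-$N$ symmetry $\r\in\SO_0(2,3)$ acts transitively on the vertices, so all markings of $\Lambda^s$ coincide in the moduli space. Therefore every pointed polygon with invariant $\Theta_0$ lies in the component of the pointed symmetric polygon $[(\Lambda^s,v_1)]$, which is unique by Proposition \ref{Prop:SymmetricLightlikePolygon}.

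Finally, I would descend to $\Ein$. The antipodal map carries the pointed $\Theta=k$ class to the pointed $\Theta=N-k$ class, so the components are indexed by $\{2,\dots,N-2\}/\sim$. This gives exactly $\lceil\frac{N-3}{2}\rceil$ components.

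The main obstacle is the step of lifting the path $\Lambda_t$ to pointed polygons. This requires that the continuity supplied by Lemma \ref{Lem:ContinuityPolynomialPolygon} and Corollary \ref{Cor:ContinuityAsymptoticBoundary} hold at the level of vertex tuples, not merely Hausdorff continuity of the boundary loops. I would get this from the fact that the moduli space is locally a quotient by a finite group, and that vertices cannot merge or split while the vertex count stays $N$. With that in hand, the path-lifting property of the finite-to-one map $\pi$ on vertex tuples provides the lift.
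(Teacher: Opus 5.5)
Your proposal is correct and follows the paper's route. You deform a pointed polygon through the polynomial-differential path to one whose underlying polygon is symmetric, carry the marking along, and use the order-$N$ rotation to identify all markings of the symmetric polygon; the paper phrases this last step as a homotopy in $\mathcal{C}^{pt}_N(\Ein)$, while you identify the markings directly in the moduli space via $\r$, which suffices for the statement about $\mathcal{M}_N^{pt}(\Ein)$. You are also more explicit than the paper, which passes over the marking-independence of $\Theta$ (giving the lower bound) and the transport of the marked vertex along the path ``by similar reasoning''; the latter does hold, since a uniform limit of $N$-gons that still has exactly $N$ vertices forces the vertex tuples to converge.
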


\begin{proof}
It suffices to prove that $\mathcal{C}_N^{pt}(\Ein)$ has $\lceil \frac{N-3}{2} \rceil$ connected components, as the group $\SO_0(2,3)$ is connected. Now, the count for pointed polygons follows by similar reasoning as above. First, each pointed semi-positive polygon $\Lambda \in \mathcal{C}_{N}^{pt}(\Ein)$ is homotopic to a pointed polygon $\Lambda^s = (w_i)_{i=1}^{N}$ whose underlying unpointed polygon is symmetric. However, for any cyclic transformation $g \in \Z_{N}$ changing basepoints by $g\cdot (w_i)_{i=1}^N =(w_{i+j})_{i=1}^N$, one easily sees
$g \cdot \Lambda^s$ and $\Lambda^s$ are homotopic in $\mathcal{C}_{N}^{pt}(\Ein)$ by the description of symmetric polygons in the proof of Proposition \ref{Prop:SymmetricLightlikePolygon}. 
\end{proof}

Corollary \ref{Cor:ComponentCountDownPointed} confirms a conjecture of \cite{TW24} that this space is disconnected for $N \geq 6$.

\subsection{Connected components of polygons in  \texorpdfstring{$\mathbf{Ein}^{1,q}$}{Ein(1,q)}, \texorpdfstring{$q\geq3$}{q>=3} }

In this section, we show that the moduli space $\mathcal{M}_{N}(\mathbf{Ein}^{1,q})$ is connected when $q\geq 3$, in contrast to the case $q=2$. 
Again, we work mostly with the related configuration and moduli spaces $\mathcal{C}_N(\S^q)$ and $\mathcal{M}_N(\S^q)$ of spherical polygons and then deduce the desired result for the polygons in the Einstein universe. 

\subsubsection{Higher dimensional spherical polygons}
An important notion for the study of spherical polygons in the higher dimensional case is as follows. 

\begin{definition}
    Let $\Delta$ be a spherical polygon in $\S^{q}$, for $q\geq 2$ and let $v_1$, $v_2$, $v_3$ be three consecutive vertices of $\Delta$. The \textbf{ellipse} of $v_2$ is defined by
    \[\mathcal{E}(v_2):=\{x\in\S^{q}\mid d_{\S^{q}}(v_1,x)+d_{\S^{q}}(x,v_3) = d_{\S^{q}}(v_1,v_2)+d_{\S^{q}}(v_2,v_3)\}\ .\]
\end{definition}
In what follows, we continuously move spherical $N$-polygons using basic moves called \textbf{elliptical deformations}, namely moving a vertex along its ellipse. We want to perform this procedure while staying in the configuration space $\mathcal{C}_N(\S^{q})$. To do so, we need to avoid the following two barriers.
\begin{definition}
    Let $\Delta$ be a spherical polygon in $\S^{q}$, for $q\geq 2$, and let $v_1,\ldots,v_5$ be five consecutive vertices of $\Delta$. The \textbf{barriers} $\boldsymbol{B_-(v_3)}$ and $\boldsymbol{B_+(v_3)}$ are as follows. 
    \begin{itemize}
        \item Let $B_-(v_3)$ be the first intersection of the geodesic ray $\gamma_-$ emanating from $v_2$ and aiming at $-v_1$ with the ellipse $\mathcal{E}(v_3)$. 
        \item Let $B_+(v_3)$ be the first intersection of the geodesic ray $\gamma_+$ emanating from $v_4$ and aiming at $-v_5$ with the ellipse $\mathcal{E}(v_3)$. 
    \end{itemize}
    We denote $\boldsymbol{B(v_3)} = \{ B_-(v_3),B_{+}(v_3)\}$ as the set of both barriers of $v_3$. 
\end{definition} 
The utility of elliptical deformations appears in $\S^{q}$ when $q\geq 3$, as in this case the freedom for elliptical deformations of a given vertex $v_i$ lies in $\mathcal{E}(v_i)\setminus B(v_i)$, which is connected. When $q=2$, on the other hand, $\mathcal{E}(v_i)\setminus B(v_i)$ is disconnected. 

Using elliptical deformations, we can verify the connectivity of the configuration space of spherical polygons in the higher-dimensional case. 
\begin{proposition}
    The configuration space $\mathcal{C}_N(\S^q)$ is connected for $N,q\geq 3$. 
\end{proposition}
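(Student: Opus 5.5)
We will show directly that any two spherical $N$-gons in $\S^q$, $q\geq 3$, can be joined by a path in $\mathcal{C}_N(\S^q)$, passing through a common normal form. The basic moves are the elliptical deformations just introduced. Moving $v_i$ along its ellipse $\mathcal{E}(v_i)$ keeps the total length equal to $2\pi$. It remains inside the configuration space as long as no antipodal pair is created and the non-collinearity condition holds. By the definition of the barriers, these degenerations occur exactly at $B_\pm(v_i)$, together with the endpoints where $v_i$ lands on the segment $[v_{i-1},v_{i+1}]$, which are excluded anyway. For $q\geq 3$ the ellipse is a topological sphere of dimension $q-1\geq 2$. Removing the finite set $B(v_i)$, and the points on the geodesic through $v_{i-1},v_{i+1}$, leaves a connected set. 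So any two admissible positions of $v_i$ on its ellipse can be joined without leaving $\mathcal{C}_N(\S^q)$. I would first state and check this as a lemma. The main point is that the only forbidden positions are the barriers and the two collinear points, not a codimension-one locus.

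Next comes the normal form. By Proposition \ref{Prop:HemisphereContainment} (whose proof works verbatim in $\S^q$), each polygon lies in an open hemisphere. I would use elliptical moves, interleaved with the obvious length-redistributing moves (sliding $v_i$ so that $d(v_{i-1},v_i)+d(v_i,v_{i+1})$ changes but the total stays $2\pi$), to push the polygon into a generic position. In that position the vertices span a $3$-dimensional subsphere $\S^2\subset\S^q$ only up to a small perturbation. The essential trick is to rotate a single vertex $v_i$ out of a great $\S^2$ containing $v_{i-1},v_{i+1}$ and back in on the other side. This is possible because $\mathcal{E}(v_i)\setminus B(v_i)$ is connected when $q\geq3$. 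Such a ``flip'' reflects the local turning at $v_i$, and iterating flips lets us change the planar turning invariant $\Theta$ by one at a time. Concretely, I would show that a flip at a vertex whose neighbours make a short turn changes the $\S^2$-turning invariant of a nearby planar polygon from $\Theta$ to $\Theta\pm1$. Hence every planar component (indexed by $2\leq\Theta\leq N-2$ by Corollary \ref{Cor:ComponentCountSpherical}) is connected inside $\mathcal{C}_N(\S^q)$ to every other. Combined with the fact that each $\Theta$-component of $\mathcal{C}_N(\S^2)$ is connected, this shows that all planar polygons in any fixed great $\S^2$ lie in one component. Any two great $\S^2$'s are related by $\SO(q+1)$, which is connected. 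It remains to deform an arbitrary polygon into some great $\S^2$, again by successive elliptical moves: move $v_3,v_4,\dots$ in turn into the $\S^2$ spanned by $v_1,v_2$ and a fixed direction, using connectivity of the admissible part of each ellipse.

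The hypothesis $N\geq 3$ should be read as allowing triangles. Triangles of length $2\pi$ with no antipodes are degenerate cases; I would check these by hand, or note that the statement is only needed for $N\geq4$ in what follows. The expected main obstacle is the flattening step. When moving $v_i$ into a prescribed $\S^2$, the target intersection $\mathcal{E}(v_i)\cap\S^2$ might consist only of barrier or collinear points. I would handle this by first perturbing the neighbouring vertices generically, via small elliptical moves, so that this intersection is a circle minus finitely many points. Finally, the statement for $\mathcal{M}_N(\mathbf{Ein}^{1,q})$ would follow as in Theorem \ref{thm:EinhPolygonCount}: lift the path of spherical polygons to graphs of piecewise isometries, keeping the $\S^1$-parameters fixed, and then quotient.
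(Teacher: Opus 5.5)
Your route differs from the paper's, and the outline can be made to work, but the step that carries all the new content is missing and, as announced, would not close the argument.

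\textbf{The paper's route.} It stays within elementary spherical geometry. It uses the ellipse move to push $w_3,\dots,w_{N-1}$ (and $w_N$ when $N$ is even) onto the great circle $C$ through $v_1,v_2$. The only bad case, where both points of $\mathcal{E}(w_i)\cap C$ are barriers, is handled by perturbing $w_{i+1}$. It then redistributes edge lengths to reach an explicit equilateral normal form, unique up to $\SO(q+1)$.

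\textbf{Your route.} You flatten only into a great $\S^2$, so you must then merge the $\Theta$-components of $\mathcal{C}_N(\S^2)$. For this you (a) import Corollary~\ref{Cor:ComponentCountSpherical}, and (b) rely on the flip lemma, which you only announce. Point (a) is not circular, but it rests on Theorem~\ref{thm:EinhPolygonCount}, hence on the whole maximal-surface and Higgs-bundle machinery.

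\textbf{The gap in the flip step.} As stated, the flip lemma does not suffice. Knowing that some flip moves $\Theta$ by $\pm1$ does not connect $\{2,\dots,N-2\}$. You need every consecutive pair $\{k,k+1\}$, $2\le k\le N-3$, to be realized.

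Moreover, reflecting $v_i$ across the great circle through $v_{i-1},v_{i+1}$ is not what changes $\Theta$. On $\mathcal{C}_N(\S^2)$ the increments $a_j\in(0,1)$ of the boundary projection vary continuously except when $x_j=x_{j+1}$. Inside an open hemisphere this happens exactly when $v_{j+1}\in[v_j,v_{j+2}]$, i.e.\ at a barrier, where exactly one $a_j$ jumps between $\approx 0$ and $\approx 1$. So along the planar ellipse $\mathcal{E}(v_i)\cap\S^2$, $\Theta$ is locally constant away from $B_\pm(v_i)$ and jumps by exactly one each time $v_i$ crosses a single barrier. Consequently, your reflection changes $\Theta$ by $-1$, $0$ or $+1$, depending on how the barriers separate $v_i$ from its mirror image.

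\textbf{How to fix it.} For $2\le k\le N-3$, take a planar $(N-1)$-gon $Q=(q_1,\dots,q_{N-1})$ with $\Theta(Q)=k$; the symmetric one of Proposition~\ref{Prop:SymmetricSphericalPolygon} works, since $N-1\ge 4$. Insert a point $m$ in the interior of the edge $[q_{N-1},q_1]$. Move $q_1$ slightly along its ellipse with foci $m,q_2$, to either side of the great circle through $q_{N-1},m$. That great circle is transverse to the ellipse at $q_1$ because $Q$ genuinely turns at $q_1$.

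Both results are planar $N$-gons, with $\Theta=k$ and $\Theta=k+1$ respectively. For $q\ge 3$ they are joined by letting $q_1$ go around the barrier $B_-(q_1)$ on the $(q-1)$-sphere $\mathcal{E}(q_1)$. With this lemma your argument closes.

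\textbf{Comparison.} Your route then has a genuine merit: it shows that the $q=2$ components are separated exactly by barrier walls across which $\Theta$ jumps by one, and that the extra dimension lets you walk around them.

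The obstacle you anticipated in the flattening step does not arise. Let $f(x)=d(v_{i-1},x)+d(x,v_{i+1})$ and let $c$ be the ellipse constant. Then $f(v_{i-1})<c<f(-v_{i-1})=2\pi-d(v_{i-1},v_{i+1})$. The second inequality is strict because, for $N\ge4$, the complementary path has at least two edges and turns at each interior vertex. So $\mathcal{E}(v_i)\cap\S^2$ separates $v_{i-1}$ from $-v_{i-1}$ in $\S^2$. It is therefore infinite and always contains admissible points, with no perturbation needed.

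The price of your route remains its dependence on the $q=2$ classification. The paper's flattening onto a great circle avoids that dependence entirely.
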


\begin{proof}
    The proof goes as follows: We provide an inductive process to simplify an arbitrary initial $N$-gon $\Delta$.
    When $N$ is even, we continuously deform $\Delta$ 
    into an equilateral $N$-gon, which has all its vertices in a fixed great circle.
    When $N$ is instead odd, we can deform $\Delta$ to become equilateral, and with all of its vertices but one on a fixed great circle. 
    The described `normal form' after deformation is obviously unique (up to $\SO(q+1,\R)$). 
    Thus, once we have verified the possibility of such deformations, the proof is complete. 
    
    Let $\Delta$ be a spherical $N$-gon, with vertices ordered as $(v_1,\ldots,v_N)$, along the cyclic orientation. Denote by $C$ the great circle spanned by $v_1$ and $v_2$. We show by induction that $\Delta$ is homotopic to a polygon with every vertex in $C$ (if $N$ is even) or every vertex is in $C$ but one (if $N$ is odd). The algorithm is as follows.\medskip  

    \textbf{Initialization}. By definition, $v_1$ and $v_2$ lie on the great circle $C$. \medskip 
    
    \textbf{Inductive Step}. Take an index $i$ such that 
    $2\leq i \leq N-1$, and suppose that $\Delta$ is already homotopic, in $C_N(\S^q)$, to a polygon $\Delta_{i-1}$ with every vertex $w_l$ lying in $C$ for $1\leq l\leq i-1$. We would like to move $w_i$ along its ellipse $\mathcal{E}(w_i)$ until it reaches $C$. There are two obstructions: the two barriers in $\mathbf{B}(w_i)$. Note that the ellipse of $w_i$ intersects $C$ at two points $p_-$, $p_+$. Since $q\geq 3$, the path-connected space $\mathcal{E}(w_i)$ is of dimension $\geq 2$. Thus, $\mathcal{E}(w_i)\setminus\{p_-,p_+\}$ is still connected. As a consequence, it is always possible to perform the procedure unless these two points $p_-$ and $p_+$ are exactly equal to the barriers. If the unique obstruction $\mathbf{B}(w_i) = \{p_-,p_+\}$ transpires, then an adequate small perturbation of 
    $w_{i+1}$ will remove the obstacle. Indeed, moving $w_{i+1}$ on its ellipse $\mathcal{E}(w_{i+1})$, we can ensure that the geodesic passing through $w_{i+2}$ and $w_{i+1}$ no longer crosses $C$. \medskip

    \textbf{Conclude inductive step}. 
    By induction, $\Delta$ is thus homotopic to a spherical $N$-gon $\Delta'$ with all its vertices $w_l$ on the great circle $C$ for $l\neq N$. \medskip

    \textbf{Finish the deformation}. If $N$ is odd, we have already managed to place all vertices $w_1,\dots,w_{N-1}$ onto a single great circle, and we can do no better.

    If $N$ is even, we now move $w_{N}$ onto $C$. In this case, the barriers $B_-(v_N),B_+(v_N)$ coincide. Indeed, all four vertices $w_1$, $w_2$, $w_{N-1}$, and $w_{N-2}$ lie on $C$ and the directions $\overrightarrow{w_2w_1}$, $\overrightarrow{w_{N-2}w_{N-1}}$ coincide. Thus, in this case $w_{N}$ can also be moved continuously by an elliptical deformation to $C$. \medskip 

    \textbf{Redistribute lengths}. We now transform the deformed polygons to become equilateral. 
    We have no choice at this point but to separately treat the even and odd cases. \medskip
    
    \textbf{Case 1}: Suppose that $N$ is even. Then there is a path from $\Delta$ to a polygon $\Delta'$ having all its vertices $w_i$ in $C$. The polygon $\Delta'$ is explicitly described by its first vertex $w_1$ and the collection of spherical distances $(l_i=d_{\S^{q}}(v_i,v_{i+1}))_{i=1}^{N}$. Change continuously all the lengths $l_i$ by $l_i(t):=(1-t)l_i+2\pi t/N$. This shows that $\Delta'$ can further be deformed into an equilateral $N$-gon with all vertices on $C$. Such a polygon being unique up to $\SO(n+1,\R)$, the space of $N$-gons is connected.\medskip  

    \textbf{Case 2}: Suppose that $N$ is odd. Then there is a path from $\Delta$ to a polygon all of whose vertices are in $C$, but its last vertex $v_{N}$. In this case, the barriers of $v_{N}$ are exactly the two points at intersection between $C$ and the ellipse of $v_{N}$. Then, move $v_N$ along its ellipse so that $d_{\S^{q}}(v_1,v_{N})=d_{\S^{q}}(v_{N-1},v_{N})$. Now, deform the polygon $\Delta$ as follows: fix $v_1$, and move $v_2,\ldots,v_{N-1}$ by modifying continuously the distances $(l_i=d_{\S^{q}}(w_i,w_{i+1}))_{i=1}^{N-2}$ by $l_i(t):=(1-t)l_i+ts$, where 
    \[s:=(2\pi - d_{\S^{q}}(v_1,v_N)-d_{\S^{q}}(v_N,v_{N-1}))/(N-2)\ .\]
    Along the process, $v_{N-1}$ may move, but always towards $v_1$. Thus it is always to move $v_{N}$ accordingly, without changing the lengths $d_{\S^{q}}(v_1,v_{N})$ or $d_{\S^{q}}(v_{N},v_{N-1})$. We thus moved the vertices $v_2,\ldots,v_{N}$ so that the edges of $\Delta$ have the same length, but maybe not its last two edges. Thus all odd-indexed vertices are equal, but $w_{N}$, and all even-indexed vertices are equal. It is then easy to redistribute length so that $\Delta$ is continuously deformed to an equilateral polygon having all its vertices, but the last, in $C$. Such a polygon is unique, up to $\SO(q+1,\R)$, hence the space of $N$-gons is connected.
\end{proof}

\begin{corollary}
The moduli spaces $\mathcal{M}_{N}(\widehat{\mathbf{Ein}}^{1,q})$ and $\mathcal{M}_{N}(\mathbf{Ein}^{1,q})$ are connected. 
\end{corollary}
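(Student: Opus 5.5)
The statement to prove is the final corollary: $\mathcal{M}_{N}(\widehat{\mathbf{Ein}}^{1,q})$ and $\mathcal{M}_{N}(\mathbf{Ein}^{1,q})$ are connected for $q\geq 3$. The plan is to transfer the connectivity of $\mathcal{C}_N(\S^q)$, just established, to the configuration space $\mathcal{C}_N(\widehat{\mathbf{Ein}}^{1,q})$, and then pass to the quotients.

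First fix a splitting $\widehat{\mathbf{Ein}}^{1,q}\simeq \S^1\times\S^q$ coming from an orthogonal decomposition $\R^{2,q+1}=U\oplus V$. The analogues of Proposition \ref{Prop:SemiPositive} and Proposition \ref{prop:polygon piecewise isometry} in $\mathbf{Ein}^{1,q}$ hold by the same references \cite{LT23,LTW24,Mor25}. With them, a semi-positive $N$-gon is exactly the graph of a piecewise isometry $f:\S^1\to\S^q$ whose image is a spherical $N$-gon (total length $2\pi$, no antipodal pairs, a genuine turn at each vertex). Conversely, a spherical $N$-gon $\Delta$ together with a point $u\in\S^1$ determines a unique such graph: send $u$ to a marked vertex and parametrize $\Delta$ by arc length in the direction of the orientation of $\S^1$.

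This gives a map
\[ \mathcal{C}^{pt}_N(\widehat{\mathbf{Ein}}^{1,q}) \longrightarrow \S^1\times \mathcal{C}^{pt}_N(\S^q), \]
which sends a pointed polygon to the $\S^1$-coordinate of its marked vertex and its projection to $\S^q$. The next step is to check that this map is a homeomorphism. Continuity in both directions is clear, because vertices and edge lengths vary continuously.

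Passing to unpointed polygons quotients both sides by the free $\Z_N$-action shifting the marked vertex. Hence $\mathcal{C}_N(\widehat{\mathbf{Ein}}^{1,q})$ is the continuous image of $\S^1\times \mathcal{C}_N^{pt}(\S^q)$.

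It remains to know that $\mathcal{C}^{pt}_N(\S^q)$ is connected. Note that the preceding proposition is stated for unpointed polygons. However, its deformation algorithm starts from a labeled $(v_1,\dots,v_N)$ and ends at the unique equilateral normal form. Moreover, a cyclic relabeling of that normal form can be undone by a path: for even $N$, rotate all vertices along $C$ by arc length; for odd $N$, first apply an elliptical deformation that moves the off-circle vertex, which is possible since $q\geq 3$. So the pointed space is connected as well.

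Consequently $\mathcal{C}_N(\widehat{\mathbf{Ein}}^{1,q})$ is connected, being a continuous image of a connected space. The moduli space $\mathcal{M}_N(\widehat{\mathbf{Ein}}^{1,q})$ is its quotient by $\SO_0(2,q+1)$, so it is connected. Finally, $\mathcal{M}_N(\mathbf{Ein}^{1,q})$ is the image of that space under the projectivization $\widehat{\mathbf{Ein}}^{1,q}\to\mathbf{Ein}^{1,q}$, so it is connected too.

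The main obstacle is the identification of semi-positive polygons with graphs over spherical polygons in arbitrary $q$, specifically the claim that the graph of every such piecewise isometry is semi-positive, which is the converse direction. I would handle it via the characterization in Proposition \ref{Prop:SemiPositive} (i)$\Leftrightarrow$(ii), whose proof in \cite{LT23} is stated for general $q$. The hypotheses then reduce to 1-Lipschitz (immediate from total length $2\pi$ and arc-length parametrization) and no antipodal points (part of the definition).
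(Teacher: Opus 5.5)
Your proposal is correct and is essentially the paper's argument. The paper observes that $\mathcal{C}_N(\widehat{\mathbf{Ein}}^{1,q})$ is a circle bundle over the connected space $\mathcal{C}_N(\S^q)$ and then passes to the quotients by $\SO_0(2,q+1)$ and by the antipodal map; your detour through pointed polygons simply trivializes that bundle as $\S^1\times\mathcal{C}^{pt}_N(\S^q)$. The extra input this requires, connectivity of $\mathcal{C}^{pt}_N(\S^q)$, follows more directly than you argue: the deformation algorithm already runs on an arbitrary labeling, and for $q\geq 3$ the labeled normal form is unique up to the connected group $\SO(q+1)$, so no relabeling step is needed. Moreover, in the even case no path keeping the vertices on $C$ can turn $(p,q,p,q,\dots)$ into $(q,p,q,p,\dots)$; one uses an element of $\SO(q+1)$ swapping $p$ and $q$.
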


\begin{proof}
Note that $\mathcal{C}_N(\widehat{\mathbf{Ein}}^{1,q})$ is a circle bundle over $\mathcal{C}_N(\S^{q})$. Thus, the former configuration space is connected. Then we quotient by the connected group $\SO_0(2,q+1)$ to conclude that $\mathcal{M}_N(\widehat{\mathbf{Ein}}^{1,q})$ is connected. Finally, we quotient by the antipodal map to find that $\mathcal{M}_N({\mathbf{Ein}}^{1,q})$ is connected. 
\end{proof}

\section{Maps of moduli: polynomials to polygons}\label{Sec:ModuliSpaces}
In this section, we prove the main theorem.

We begin with some notation, which we now reintroduce from the introduction. We shall discuss only the results from Theorem \ref{thm:A} for unordered differentials and maximal surfaces in $\H$ as the companions in the double cover are analogous.

\begin{definition}
Let $n\geq 0$ be an integer. Denote by $\mathcal{C}_n(\mathcal{PD})$ the configuration space of unordered pairs $\{A(z)\dz^2,B(z)\dz^2\}$ of holomorphic polynomial quadratic differentials whose degrees sum to $n$ and $\mathcal{M}_n(\mathcal{PD})$ as the quotient of $\mathcal{C}_n(\mathcal{PD})$ under the action of $\Aff(\C)\times \C^*$, where $\Aff(\C)$ acts by simultaneous pushforward on both pairs and $\lambda \in \C^*$ acts by $\lambda \cdot (A\dz^2,B\dz^2) = (\lambda A\dz^2, \lambda^{-1}B\dz^2)$.  
\end{definition}

We will also need to reference a few different spaces of maximal surfaces, both pointed and unpointed.
In particular: 
\begin{itemize}
    \item Recall that $\mathcal{M}_{n+4}^{pt}(\H)$ is the \emph{moduli space} of pointed maximal surfaces with polygonal boundary of size $n+4$ in $\Ein$, up to the action of $\SO_0(2,3)$. 
    \item We now let $\Max$ denote the \emph{configuration space} of unpointed maximal surfaces in $\H$, and let $\mathcal{C}_{\geq0}(\Ein)$ denote the \emph{configuration space} of all semi-positive loops in $\Ein$. 
\end{itemize}
Recall from \textsection\ref{subsec:HiggstoMaximal} and \textsection\ref{Subsec:MaximaltoHiggs} that we have a correspondence between conformally parametrized maximal surfaces $\sigma: \C \rightarrow \Hh$, up to post-composition by $\SO_0(2,3)$, and Higgs bundles of the form \eqref{SO(2,3)Higgs}. 

In this section, we shall wish to convert the bundle valued holomorphic differentials $\alpha, \beta$ to their associated holomorphic polynomial quadratic differentials. This change necessitates that we reconsider the Higgs bundle \eqref{SO(2,3)Higgs} with a fixed background dual holomorphic trivialization $(\eta,\eta^{-1})$ for $\mathcal{N}\oplus \mathcal{N}^{-1}$, to form associated holomorphic differentials $(\alpha_{QD},\beta_{QD})=(A\dz^2, B\dz^2)$, related by $\alpha =\alpha_{QD}\otimes \eta$, $\beta= \beta_{QD}\otimes \eta^{-1}$. We may choose $(\eta,\eta^{-1})$ such that $A$, $B$ are polynomial. 
This pair $(\eta,\eta^{-1})$ is well-defined up to the $\C^*$-action by $\lambda \cdot (\eta, \eta^{-1})=(\lambda \eta, \lambda^{-1} \eta^{-1})$. 
We re-gauge the Higgs bundle to then obtain the following form:

\begin{equation}\label{SO(2,3)TrivializedNormal}
    \begin{tikzcd}
\mathcal{O} \arrow[r, "A\dz^2"] & \mathcal{K}\arrow[r, "2"]&\mathcal{O}\arrow[r, "2"]&\mathcal{K}^{-1} \arrow[r, "A\dz^2"]\arrow[bend left, lll, "B\dz^2"]&\mathcal{O}\arrow[bend left, lll, "B\dz^2"]\ ,\\
    \end{tikzcd} 
\end{equation}

Our work in Section \ref{Subsec:PolynomialSurfaces} allows us to define the following map.

\begin{definition}[Polynomial maximal surfaces]\label{Defn:MS}
Let $\mathbf{\Sigma}^{pt}: \mathcal{M}_n(\mathcal{PD}) \rightarrow \mathcal{M}_{n+4}^{pt}(\H)$ be defined as follows: given $\delta =[\{\alpha_{QD},\beta_{QD}\}] \in \mathcal{M}_n(\mathcal{PD})$, choose
any lift $(\alpha_{QD},\beta_{QD})$ to $\mathcal{C}_n(\PD^{o})$,
and define 
$\mathbf{{\Sigma}}^{pt}(\delta) := [ \, \big(\Sigma(\alpha_{QD},\beta_{QD}), \sigma(0)\big) \,]$, where
\begin{itemize}
    \item $(\E,\varphi(\alpha_{QD},\beta_{QD}))$ is the $\SO_0(2,3)$-Higgs bundle \eqref{SO(2,3)HiggsRecovered} on $\C$,
    \item $h$ is the diagonal, real, and strongly complete harmonic metric on $(\E,\varphi(\alpha_{QD},\beta_{QD}))$ from Lemma \ref{Lem:TrueComplete},
    \item $\sigma: \C \rightarrow \H$ is a parametrized maximal surface associated to $(\E,\varphi(\alpha_{QD},\beta_{QD}),h)$ by Proposition \ref{Prop:HiggsToMaximalSurface}, well-defined up to $\SO_0(2,3)$, and complete by the strong completeness of $h$,
    \item $\Sigma(\alpha_{QD},\beta_{QD})$ is the image of $\sigma$.
\end{itemize} 
This map $\mathbf{{\Sigma}}^{pt}$ is well-defined by Lemma \ref{Lem:PolynomialPolygon}.
\end{definition}
In other words, this procedure engineers a maximal surface with prescribed polynomial quadratic differentials, as verified in Lemma \ref{Lem:DeltaInjective}.

We define also the related map from polynomial differentials to the moduli space of semi-positive polygons. 
\begin{definition}
Let $\Xi: \mathcal{M}_n(\mathcal{PD}) \rightarrow \mathcal{M}_{n+4}(\Ein)$ be defined as follows: 
\[ \Xi(\,[\{\alpha_{QD},\beta_{QD}\}]\,)= [\partial_{\infty}\Sigma(\alpha_{QD},\beta_{QD})]\ . \]
\end{definition}

Recall (Definition \ref{Defn:AssociatedDifferentials}) that $\boldsymbol{\delta}:\mathcal{M}_{n+4}(\H)\to\mathcal{M}_n(\mathcal{PD})$ associates to an $\SO_0(2,3)$-orbit of maximal surfaces the associated polynomial holomorphic data, defined up to $\Aff(\C)\times\C^*$. We observe the first relation between $\boldsymbol{\delta}$ and $\boldsymbol{\Sigma}$.  

\begin{lemma}\label{Lem:DeltaInjective}
We have $\boldsymbol{\delta}^{o}\circ \boldsymbol{\Sigma}^{o} = \id$ and $\boldsymbol{\delta}\circ \boldsymbol{\Sigma} = \id$. 
\end{lemma}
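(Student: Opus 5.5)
The plan is to unwind the definitions and check that the associated-differential construction of Definition~\ref{Defn:AssociatedDifferentials} recovers the input data used to build the surface. Start with $\delta^o=[(A\dz^2,B\dz^2)]\in\mathcal{M}_n(\mathcal{PD}^o)$ and a representative $(A\dz^2,B\dz^2)$. Form the Higgs bundle \eqref{SO(2,3)TrivializedNormal}, that is, \eqref{SO(2,3)Higgs} with $\mathcal{N}=\mathcal{O}$ trivialized by $\eta$, $\alpha=A\dz^2\otimes\eta$ and $\beta=B\dz^2\otimes\eta^{-1}$. Equip it with the strongly complete real diagonal metric $h$ of Lemma~\ref{Lem:TrueComplete}, transported to $\SO_0(2,3)$ by Proposition~\ref{Prop:CorrespondenceMetrics}. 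Then take the parametrized surface $\sigma:\C\to\Hh$ of Proposition~\ref{Prop:HiggsToMaximalSurface}.

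The key input is the ``moreover'' clause of Proposition~\ref{Prop:HiggsToMaximalSurface}: the bundle-valued differentials $\alpha(\sigma),\beta(\sigma)$ of Definition~\ref{def:HoloDiffAlphaBeta} coincide with the $\alpha,\beta$ of the Higgs field. To use it, I must identify the holomorphic line bundles $\mathcal{N}^{\pm1}\subset\E$ with the normal subbundles $\N',\N''$ of $\sigma$, carrying their natural holomorphic structures $D_h^{0,1}$. This identification must also respect the canonical orientation of $\N_\sigma$ coming from the lift to $\Hh$, so that $\mathcal{N}$ really is the $+i$-eigenspace and not the $-i$-eigenspace. I would verify this directly. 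Parallel transport identifies $\E^{\R}$ with $\R^{2,3}$ and sends $s$ to $\sigma$. It sends $\mathcal{K}^{\mp1}$ to $\T^{\C}$ via $\nabla_z s=\varphi(s)$, which lies in the $\mathcal{K}^{-1}$ slot, and it sends $\mathcal{N}^{\pm1}$ to $\N^{\C}$, because $\nabla_z\nabla_z s$ projected normally equals $\alpha(2\,\cdot)$ in $\mathcal{N}$ plus $\beta$-terms in $\mathcal{N}^{-1}$. The orientation convention is the step where care is needed. If the identification turned out to be reversed, it would merely swap $(A,B)$. That swap is harmless for the unordered statement, and in the ordered setting it would be fixed by the convention defining $\mathbf{\Sigma}^o$.

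Granting this, $\eta$ is a holomorphic section of $\mathcal{N}$ for which $\alpha(\sigma)=A\dz^2\otimes\eta$ and $\beta(\sigma)=B\dz^2\otimes\eta^{-1}$ with $A,B$ polynomial. Definition~\ref{Defn:AssociatedDifferentials} therefore applies to the conformal parametrization $\sigma$ with this choice of $\eta$, and gives $\boldsymbol{\delta}^o(\Sigma^o(\delta))=[(A\dz^2,B\dz^2)]=\delta$. The result does not depend on the choices made along the way. Changing the representative of $\delta$ by $\Aff(\C)\times\C^*$ changes $\sigma$ by reparametrization and $\eta$ by scaling, and changing the framing changes $\Sigma$ only by $\SO_0(2,3)$; both maps are already well defined on these quotients (Remark~\ref{Defn:DescendedDifferentialMap}). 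Lemma~\ref{Lem:PolynomialPolygon} guarantees that $\Sigma$ is polygonal with $n+4$ vertices, so $\boldsymbol{\delta}^o$ is defined on it.

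For the unoriented version, take $\overline\delta$ and a lift $\delta$. Then $\mathbf{\Sigma}(\overline\delta)$ is the image in $\H$ of $\Sigma^o(\delta)$, and applying $\boldsymbol{\delta}$ means choosing either lift to $\Hh$. The lift $\Sigma^o(\delta)$ returns $\delta$ by the above. The antipodal lift has the opposite normal orientation, which exchanges $\N'$ and $\N''$, so it returns $(B\dz^2,A\dz^2)$, as in the remark after Theorem~\ref{thm:A}. Both have the same unordered class $\overline\delta$, so $\boldsymbol{\delta}\circ\mathbf{\Sigma}=\id$.
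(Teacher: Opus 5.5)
Your proposal follows the paper's proof, which consists exactly of the appeal to the `moreover' clause of Proposition~\ref{Prop:HiggsToMaximalSurface} (the paper's text cites Proposition~\ref{Prop:HiggsSymmetricPolygon}, evidently a misreference). Your additional remarks on the orientation of $\N_\sigma$, on independence of the choices, and on the unordered case via the antipodal lift are correct elaborations of what the paper leaves implicit.

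One caveat applies to both your proof and the paper's. Your own computation $\nabla_z s=2\,\partial_z$ gives, after parallel transport, $\II(\partial_z,\partial_z)=2(\alpha+\beta)(\partial_z,\partial_z)$. So with the `2' normalization of the Higgs field one literally has $\alpha(\sigma)=2\alpha$ and $\beta(\sigma)=2\beta$. This common rescaling of $(A,B)$ is not an $\Aff(\C)\times\C^*$-symmetry once $Q_4$ has two distinct zeros. The identity $\boldsymbol{\delta}^o\circ\mathbf{\Sigma}^o=\id$ holds once this universal constant is absorbed into the definition of $\mathbf{\Sigma}^o$, for example by feeding $(\tfrac12 A,\tfrac12 B)$ into the Higgs field, and this does not affect the remaining arguments.
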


\begin{proof}
We need only prove the former. \medskip 

Take any $[(\alpha,\beta)]=\delta \in \mathcal{M}_n(\mathcal{PD}^o)$. 
Then we build the surface $\Sigma = \boldsymbol{\Sigma}(\delta)$. By the `moreover' statement in Proposition \ref{Prop:HiggsSymmetricPolygon}, the associated surface $\Sigma$ satisfies $\boldsymbol{\delta}^{o}\circ \boldsymbol{\Sigma}^{o}(\delta) = \boldsymbol{\delta}^{o}(\Sigma)=\delta$.  
\end{proof}

Recall that the topology on $\Max$ is that of smooth convergence as graphs over compacts sets, in any splitting (see \cite{LTW24}). Moreover, the topology of the space of semi-positive loops is that of pointwise convergence as graphs. Given a semi-positive loop $\Lambda \subset \Ein$, any map $\partial \varphi: \S^1 \rightarrow \S^2$ whose graph is a lift of $\Lambda$ to $\Einh$ is 1-Lipschitz by Proposition \ref{Prop:SemiPositive}. Due to this  regularity, pointwise convergence of semi-positive loops is equivalent to uniform convergence. 

As an easy application of the work of \cite{LTW24}, the asymptotic boundary map is continuous. 
\begin{proposition}\label{Prop:AsymptoticBoundaryCnty}
    The map $\partial_\infty : \Max\to \mathcal{C}_{\geq 0}(\Ein)$ that associates to a maximal surface its asymptotic boundary is continuous.
\end{proposition}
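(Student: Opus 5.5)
To prove that $\partial_\infty:\Max\to\mathcal{C}_{\geq 0}(\Ein)$ is continuous, I would work in a fixed splitting and argue by sequences. The key tools are the compactness and uniqueness results of \cite{LTW24}, together with the graph description of complete spacelike surfaces (Proposition \ref{prop:CompleteGraph}). Both spaces are metrizable in the stated topologies, so sequential continuity suffices.

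Fix a splitting $\Hh\simeq\S^2_+\times\S^2$, with $\Einh\simeq\S^1\times\S^2$ its continuous extension, and work with lifts to the double cover; the quotient by $\pm1$ is harmless. Let $\Sigma_k\to\Sigma$ in $\Max$. By Proposition \ref{prop:CompleteGraph}, each $\Sigma_k$ is the graph of a strictly $1$-Lipschitz map $u_k:\S^2_+\to\S^2$, and $\Sigma$ is the graph of $u$. Convergence in $\Max$ means $u_k\to u$ smoothly on compact subsets of the open hemisphere. By Corollary \ref{Cor:CompleteSpacelike}, each $u_k$ extends continuously to the closed hemisphere $\overline{\S^2_+}$, and its restriction $\partial u_k:\S^1\to\S^2$ is $1$-Lipschitz with graph $\partial_\infty\Sigma_k$. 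I must show $\partial u_k\to\partial u$ uniformly.

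The family $\{u_k\}$, extended to $\overline{\S^2_+}$, is uniformly $1$-Lipschitz, hence equicontinuous with values in the compact set $\S^2$. By Arzel\`a--Ascoli, any subsequence has a further subsequence converging uniformly on $\overline{\S^2_+}$ to some $1$-Lipschitz $v$. On the open hemisphere, $v=u$ because of the local smooth convergence. Continuity of $u$ and $v$ up to the boundary then forces $v|_{\S^1}=\partial u$. Since every subsequence has a further subsequence with the same limit, the full sequence satisfies $\partial u_k\to\partial u$ uniformly. This gives convergence of the asymptotic boundaries as graphs, which, as noted before the proposition, is the topology on semi-positive loops.

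The one point needing care is whether the $1$-Lipschitz property really holds with respect to a single fixed metric on the closed hemisphere for all $k$. The graph property is with respect to the round metrics of the splitting, uniformly in $k$, so it does. If I had to fall back on less, I would instead cite the compactness theorem of \cite{LTW24}: a uniformly convergent subsequence of boundaries bounds surfaces converging to the maximal surface spanning the limit loop, and uniqueness in Theorem \ref{Thm:Plateau} identifies that limit loop with $\partial_\infty\Sigma$. The direct Arzel\`a--Ascoli argument should already suffice, however.
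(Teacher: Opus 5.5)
Your proof is correct and follows the paper's route. Both fix a splitting, write the surfaces as graphs of strictly $1$-Lipschitz maps $\S^2_+\to\S^2$, use local smooth (hence pointwise) convergence in the interior, and pass to the boundary extensions. Your Arzel\`a--Ascoli step, which rests on the uniform $1$-Lipschitz bound for the round metrics, makes explicit the equicontinuity that the paper's brief appeal to compactness of $\S^2_+\cup\S^1$ and $\S^2$ leaves implicit.
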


\begin{proof}

For this proof, fix a warped product decomposition $\Hh=\S^2_+\times\S^2$ and the corresponding decomposition $\Einh=\S^1\times\S^2$.

    Let $(\Sigma_n)_{n\in\mathbb{N}}$ be a sequence of  maximal surfaces converging to the maximal surface $\Sigma$. Let $\Lambda_n:=\partial_\infty\Sigma_n$ and $\Lambda:=\partial_\infty\Sigma$. We want to show that $\lim_{n\to+\infty}\Lambda_n=\Lambda$.

    First, describe every surface and loop as graphs as follows:
    \[\Sigma_n = \mathrm{graph}(\varphi_n:\S^2_+\to\S^2),\ \Sigma = \mathrm{graph}(\varphi:\S^2_+\to\S^2)\ ,\]
    where $\varphi_n$ and $\varphi$ are strictly 1-Lipschitz maps, as in Proposition \ref{prop:CompleteGraph}. Since $\Lambda_n$ and $\Lambda$ are the asymptotic boundaries of $\Sigma_n$ and $\Sigma$, respectively, then
    \[\Lambda_n=\mathrm{graph}(\partial\varphi_n),\ \mbox{and}\ \Lambda=\mathrm{graph}(\partial\varphi)\ ,\]
    where $\partial f $ denotes the continuous extension of a 1-Lipschitz map $f:\S^2_+\to\S^2$ to $\S^1=\partial\S^2_+$.

    By hypothesis, $(\varphi_n)_n$ converges to $\varphi$ smoothly over compact sets. In particular, $(\varphi_n)_n$ converges pointwise to $\varphi$. Since $\S^2_+\cup\S^1$ and $\S^2$ are compact, and $\partial\varphi_n$ and $\partial\varphi$ are continuous 
    extensions of $\varphi_n$ and $\varphi$, respectively, $(\partial\varphi_n)_n$ converges pointwise to $\partial\varphi$. 
    That is, $(\Lambda_n)_{n\in\mathbb{N}}$ converges to $\Lambda$. 
\end{proof}

\begin{corollary}\label{Cor:ContinuityAsymptoticBoundary}
The map $\partial_\infty : \mathcal{M}_{n+4}(\H) \rightarrow \mathcal{M}_{n+4}(\Ein)$ is continuous. 
\end{corollary}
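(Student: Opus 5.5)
The plan is to deduce the corollary directly from Proposition \ref{Prop:AsymptoticBoundaryCnty} by passing to quotient topologies. The moduli spaces $\mathcal{M}_{n+4}(\H)$ and $\mathcal{M}_{n+4}(\Ein)$ carry the quotient topologies from the configuration spaces of polygonal maximal surfaces and of semi-positive $(n+4)$-gons. These configuration spaces are subspaces of $\Max$ and $\mathcal{C}_{\geq 0}(\Ein)$, respectively, and their quotient maps are by $\SO_0(2,3)$. First I restrict the map $\partial_\infty:\Max\to\mathcal{C}_{\geq0}(\Ein)$ of Proposition \ref{Prop:AsymptoticBoundaryCnty} to the subspace of maximal surfaces with polygonal boundary of size $n+4$. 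By definition, it lands in the subspace of semi-positive $(n+4)$-gons. Restriction to subspaces, with the induced topologies on domain and target, preserves continuity.

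Second, I check equivariance. For $g\in\SO_0(2,3)$ we have $\partial_\infty(g\Sigma)=g\,\partial_\infty\Sigma$, since $g$ acts by a projective transformation of $\RP^4$ preserving both $\H$ and $\Ein$. Hence $g$ maps the closure of $\Sigma$ in $\RP^4$ to the closure of $g\Sigma$, and the frontier in $\Ein$ to the frontier. Thus the composite of the restricted $\partial_\infty$ with the projection $\mathcal{C}_{n+4}(\Ein)\to\mathcal{M}_{n+4}(\Ein)$ is continuous and constant on $\SO_0(2,3)$-orbits.

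Third, I apply the universal property of the quotient topology: a continuous map constant on fibers of a quotient map descends to a continuous map on the quotient. This gives continuity of the induced map $\mathcal{M}_{n+4}(\H)\to\mathcal{M}_{n+4}(\Ein)$. The main point needing care, rather than a genuine obstacle, is to make sure the topologies on the moduli spaces are indeed the quotient topologies of the configuration-space topologies used in Proposition \ref{Prop:AsymptoticBoundaryCnty}. These are smooth convergence over compacts for surfaces, and uniform convergence of graphs for loops, which is equivalent to pointwise convergence by the 1-Lipschitz property. If instead the moduli topology is defined intrinsically, I would verify that it agrees with the quotient one; for instance, I would use that the quotient map by a group action is open, so the two descriptions coincide.
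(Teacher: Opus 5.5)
Your proposal is correct and is essentially the paper's argument. The paper states this corollary immediately after Proposition~\ref{Prop:AsymptoticBoundaryCnty} without a separate proof; the intended deduction is exactly yours: restrict $\partial_\infty$ to polygonal surfaces, use the $\SO_0(2,3)$-equivariance $\partial_\infty(g\Sigma)=g\,\partial_\infty\Sigma$, and descend via the universal property of the quotient topology.
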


\begin{lemma}[Continuity: polygonal surfaces to differentials]
The map $\boldsymbol{\delta}^{o}: \mathcal{M}_{n+4}(\Hh) \rightarrow \mathcal{M}_n(\mathcal{PD}^o)$ is continuous.
\end{lemma}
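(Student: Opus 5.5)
We want to show that if $[\Sigma_k]\to[\Sigma]$ in $\mathcal{M}_{n+4}(\Hh)$, then $\boldsymbol{\delta}^{o}(\Sigma_k)\to\boldsymbol{\delta}^{o}(\Sigma)$ in $\mathcal{M}_n(\mathcal{PD}^o)$. The plan is to first normalize representatives, then use compactness of the polynomial data, then uniqueness. Since the topology on surfaces is smooth convergence on compact sets as graphs, we may choose representatives $\Sigma_k\to\Sigma$ in $\Max$ (lifted to $\Hh$), after acting by elements of $\SO_0(2,3)$.

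\textbf{Normalizing the parametrizations.} Fix a basepoint $o\in\Sigma$ and points $o_k\in\Sigma_k$ converging to $o$. Choose conformal parametrizations $\sigma_k:\C\to\Sigma_k$ with $\sigma_k(0)=o_k$, and fix the remaining $\Aff(\C)$ freedom by normalizing the conformal factor and the direction at $0$: $\|\partial_z\sigma_k(0)\|=1$, with a fixed tangent direction. Normalize the $\C^*$-ambiguity of $\eta_k$ by requiring, for instance, that $A_k$ and $B_k$ have comparable leading sizes.

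\textbf{Convergence of $\sigma_k$.} Local smooth convergence of the surfaces gives smooth convergence of the induced metrics $g_k\to g$ on compact subsets. The step I expect to be the \emph{main obstacle} is upgrading this to locally uniform convergence of the uniformizing maps $\sigma_k\to\sigma$. Since all $\Sigma_k$ are parabolic, the normalized uniformizations are not automatically controlled by local data alone. I would get this control from uniform flatness at infinity. By \cite[Proposition 5.2.1]{Mor25} and Corollary \ref{cor:MutualBoundedness}, $g_k$ is comparable to $|Q_4^{(k)}|^{1/2}$ outside compact sets, so the exhaustion of $\Sigma_k$ by $g_k$-balls is uniformly comparable to exhaustion by discs. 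Then:
\begin{itemize}
\item normal families applied to the quasiconformal or conformal charts give a subsequence $\sigma_k\to\sigma_\infty$ locally uniformly;
\item $\sigma_\infty$ is a conformal parametrization of $\Sigma$, because the degrees of $A_k,B_k$ are fixed and the total curvature, equal to $-2\pi n/ 2$-type constant, is fixed, which prevents loss of topology at infinity.
\end{itemize}

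\textbf{Convergence of the polynomial data.} Given $\sigma_k\to\sigma_\infty$ smoothly, the second fundamental forms converge, and hence so do $\alpha_k,\beta_k$ as bundle-valued sections. Normal bundles converge, so trivializations $\eta_k$ can be chosen to converge to $\eta$. Then $A_k\to A$ and $B_k\to B$ locally uniformly as holomorphic functions. These are polynomials of bounded degree, and $\deg A_k+\deg B_k=n$ is fixed. Locally uniform convergence of such polynomials on a disc implies convergence of their coefficients. The degree of the limit cannot drop because $AB$ has degree exactly $n$, being the quartic differential of a polygonal $(n+4)$-gon surface.

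\textbf{Conclusion.} The limit pair represents $\boldsymbol{\delta}^{o}(\Sigma)$. Since every subsequence has a further subsequence converging to this same class (well-defined by Definition \ref{Defn:AssociatedDifferentials}), the whole sequence $\boldsymbol{\delta}^{o}(\Sigma_k)$ converges in the quotient topology of $\mathcal{M}_n(\mathcal{PD}^o)$.
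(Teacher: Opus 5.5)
\textbf{The gap is in the step you yourself flag as the main obstacle.} Your overall route is the paper's: take representatives $\Sigma_k\to\Sigma$ smoothly on compacta, take conformal parametrizations $\sigma_k\to\sigma_\infty$, deduce convergence of $\II^{2,0}$ and hence of $\alpha_k,\beta_k$, and rescale the trivializations so that the polynomial coefficients converge. The paper simply asserts that the $\sigma_k$ converge ``because each surface is conformal to $\C$''. Your justification of this step does not work.

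The comparison $g_k\approx|Q_4^{(k)}|^{1/2}$ from \cite[Proposition 5.2.1]{Mor25} and Corollary \ref{cor:MutualBoundedness} holds outside a compact set and with constants that both depend on the surface, so it gives no control uniform in $k$. Making it uniform would already presuppose control of the $\sigma_k$ (the $Q_4^{(k)}$ live in the $\sigma_k$-coordinates), which is circular. Likewise, the fact that the total curvature is fixed does not by itself control normalized uniformizations.

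\textbf{How to close it.} The step is true, and no uniform flatness is needed: parabolicity of the limit alone prevents any loss at infinity.
\begin{enumerate}
\item Using the graph description, identify a compact disc $D_R\subset\Sigma$ with the corresponding pieces of $\Sigma_k$. The maps $f_k:=\sigma_k^{-1}$ then become $(1+\varepsilon_k)$-quasiconformal embeddings $D_R\to\C$ with respect to $g$, with $\varepsilon_k\to0$. They are normalized by $f_k(o_k)=0$ and $|df_k(o_k)|=1$.
\item By Koebe/quasiconformal distortion these form a normal family. By a diagonal argument they subconverge locally uniformly to a conformal embedding $f:\Sigma\to\C$, which is nonconstant by the normalization.
\item Since $\Sigma\cong\C$, the map $f\circ\sigma$ is an injective entire function, hence affine, hence onto $\C$.
\item So every compact $K\subset\C$ lies in some $f(D_R)$. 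By the argument principle, $K\subset f_k(D_{R'})$ for $k$ large, and $\sigma_k=f_k^{-1}\to f^{-1}$ uniformly on $K$. Elliptic regularity upgrades this to smooth convergence.
\end{enumerate}

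\textbf{A smaller point.} You fix the $\C^*$-ambiguity twice, in incompatible ways: first ``comparable leading sizes'', then ``$\eta_k$ chosen to converge''. Moreover, $\eta_k$ cannot be chosen freely, since the polynomiality of $A_k,B_k$ pins it down up to a constant. It is cleaner to leave this freedom until the end:
\begin{itemize}
\item note that $A_kB_k\to AB$ coefficientwise, so all zeros stay in a fixed disc;
\item the divisors of $\alpha_k$ converge, so by Hurwitz the zeros of $A_k$ converge to those of $A$;
\item then choose $\lambda_k\in\C^*$ to match the leading coefficient of $A_k$ with that of $A$.
\end{itemize}
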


\begin{proof}
Suppose that the sequence $([\Sigma_j])_{j}$ converges to $[\Sigma_{\infty}]$. We choose representatives 
$\Sigma_j$ such that $\Sigma_j \rightarrow \Sigma_{\infty}$ smoothly over compact sets.
Because each $\Sigma_j$ and $\Sigma_\infty$ are conformal to $\C$, it follows that there are conformal parametrizations 
$\sigma_j: \C \rightarrow \Sigma_j$ and $\sigma_{\infty}:\C \rightarrow \Sigma_{\infty}$ such that
$\sigma_j \rightarrow_{C^{\infty}_c}\sigma_{\infty} $. 
Hence, we conclude that the associated quartic differentials $Q_{4,j}$ converge smoothly over compacta to $Q_{4,\infty}$. 
We can take non-vanishing sections $\eta_j \in H^0(\mathcal{N}_{\sigma_j})$ and write $\alpha_j = A_j\dz^2\otimes \eta_j$, $\beta_j = B_j\dz^2\otimes \eta_j^{-1}$, where $A_j, B_j$ are polynomials, and do the same for $\alpha_{\infty}$, $\beta_{\infty}$. 
Then up to adjusting the sections $\eta_j$, we can guarantee that $A_j, B_j$ converge smoothly over compact sets to $A_{\infty}, B_{\infty}$. In other words, 
$\boldsymbol{\delta}([\Sigma_j])  = [(A_j\dz^2, B_j\dz^2)]\rightarrow [(A_{\infty}\dz^2, B_{\infty}\dz^2)]= \boldsymbol{\delta}([\Sigma_{\infty}])$. 
\end{proof}

\begin{corollary}\label{CorollaryDeltaContinuous}
    The map $\boldsymbol{\delta}:\mathcal{M}_{n+4}(\H)\rightarrow \mathcal{M}_n(\mathcal{PD})$ is continuous.
\end{corollary}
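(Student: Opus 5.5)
The plan is to deduce the statement for $\boldsymbol{\delta}$ from the continuity of $\boldsymbol{\delta}^{o}$ just established, by passing to the quotients on both sides.

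First I will set up the commutative square. The antipodal map $x\mapsto -x$ on $\R^{2,3}$ commutes with $\SO_0(2,3)$. So it induces an involution $\iota$ on $\mathcal{M}_{n+4}(\Hh)$, and $\mathcal{M}_{n+4}(\H)$ is the quotient of $\mathcal{M}_{n+4}(\Hh)$ by $\iota$. The quotient map $p:\mathcal{M}_{n+4}(\Hh)\to\mathcal{M}_{n+4}(\H)$ sends $[\Sigma]$ to the class of its image in $\H$. Similarly, $\mathcal{M}_n(\mathcal{PD})$ is the quotient of $\mathcal{M}_n(\mathcal{PD}^o)$ by the swap $s:[(A\dz^2,B\dz^2)]\mapsto[(B\dz^2,A\dz^2)]$, with quotient map $q$ forgetting the order.

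By the remark following Theorem \ref{thm:A}, replacing $\Sigma$ by $-\Sigma$ reverses the orientation of $\N_\sigma$. This exchanges $\N'$ and $\N''$, and hence exchanges $\alpha$ and $\beta$. Therefore $\boldsymbol{\delta}^{o}\circ\iota = s\circ\boldsymbol{\delta}^{o}$. By Definition \ref{Defn:AssociatedDifferentials}, $\boldsymbol{\delta}\circ p = q\circ\boldsymbol{\delta}^{o}$.

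Next I will check the topology. Every surface in $\H$ lifts to $\Hh$, so $p$ is surjective. Moreover $p$ is a quotient map: it is the quotient by a finite group action, and such quotients are open maps. Now $q\circ\boldsymbol{\delta}^{o}$ is continuous as a composition of continuous maps, using the preceding lemma. By the universal property of the quotient topology, $\boldsymbol{\delta}$ is then continuous.

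The only point I expect to need care is identifying $\mathcal{M}_{n+4}(\H)$ with the quotient topology of $\mathcal{M}_{n+4}(\Hh)$ under $\iota$. This holds because the topology of smooth convergence on compacta for surfaces in $\H$ agrees, locally, with that of either lift. Alternatively, I can avoid this step by repeating the sequential argument of the previous lemma directly. Given $[\Sigma_j]\to[\Sigma_\infty]$ in $\mathcal{M}_{n+4}(\H)$, choose lifts $\widehat\Sigma_j\to\widehat\Sigma_\infty$ in $\Hh$ that converge smoothly on compacta. Then $\boldsymbol{\delta}^{o}(\widehat\Sigma_j)\to\boldsymbol{\delta}^{o}(\widehat\Sigma_\infty)$, and applying $q$ gives the result.
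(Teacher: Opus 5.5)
Your proposal is correct and follows the paper's own argument. The paper's proof is the one-line remark that it suffices to take coherent lifts to the double cover $\Hh$ and invoke the continuity of $\boldsymbol{\delta}^{o}$; your sequential version (lift a convergent sequence in $\mathcal{M}_{n+4}(\H)$ to a convergent sequence of lifts, apply $\boldsymbol{\delta}^{o}$, then forget the order of the pair) is exactly this, and your quotient-map formulation repackages the same idea.
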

\begin{proof}
    It suffices to take coherent lifts to the double cover.
\end{proof}

We can now verify that $\boldsymbol{\delta}$ is the true inverse of $\boldsymbol{\Sigma}$. 

\begin{lemma}\label{LemmaSigmaDeltaInverse}
We have $\boldsymbol{{\Sigma}}^{o} \circ \boldsymbol{\delta}^o =\id$ and $\boldsymbol{{\Sigma}}\circ \boldsymbol{{\delta}}= \id$. 
\end{lemma}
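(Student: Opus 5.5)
To prove $\boldsymbol{\Sigma}^{o}\circ\boldsymbol{\delta}^{o}=\id$, I will start from an arbitrary polygonal maximal surface $\Sigma\subset\Hh$ and a conformal parametrization $\sigma:\C\to\Sigma$. By Corollary \ref{Cor:AllPolynomialMaximalSurfaces} and Definition \ref{Defn:AssociatedDifferentials}, I can choose $\eta\in H^0(\mathcal{N})$ so that $\alpha=A\dz^2\otimes\eta$ and $\beta=B\dz^2\otimes\eta^{-1}$ with $A,B$ polynomial. Then $\boldsymbol{\delta}^o([\Sigma])=[(A\dz^2,B\dz^2)]$. By Proposition \ref{Prop:MaximaltoHiggs}, $\sigma$ determines a harmonic bundle $(\E,\Phi,h_\sigma)$ that, after re-gauging by $\eta$, has exactly the form \eqref{SO(2,3)TrivializedNormal}. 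Its harmonic metric is $h_\sigma=\diag(h_1,h_2,1,h_2^{-1},h_1^{-1})$, which is diagonal and real. The surface $\boldsymbol{\Sigma}^o(\delta)$ is built from the same Higgs bundle, but with the strongly complete metric $h$ of Lemma \ref{Lem:TrueComplete}. So it suffices to show $h_\sigma=h$. Once that holds, Proposition \ref{Prop:HiggsToMaximalSurface} recovers $\sigma$ up to $\SO_0(2,3)$, because the construction of $\sigma$ from $(\E,\Phi,h)$ inverts that of Section \ref{Subsec:MaximaltoHiggs}. This gives $[\boldsymbol{\Sigma}^o(\boldsymbol{\delta}^o[\Sigma])]=[\Sigma]$. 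The unoriented statement follows by passing to lifts and noting that exchanging $\alpha,\beta$ corresponds to the antipodal lift.

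The key step, and the main obstacle, is to show that $h_\sigma$ is strongly complete, i.e.\ both $\mathcal{C}_1$- and $\mathcal{C}_2$-complete. I will then appeal to the uniqueness in Theorem \ref{Thm:Sp4CompleteSolution}, transported through Proposition \ref{Prop:CorrespondenceMetrics}. First, translate $h_\sigma$ to the $\Sp(4,\R)$ side. There the relevant metrics are $\|1\|^2_{h}$, $\|\alpha\|^2_h$ and $\|\beta\|^2_h$, and they coincide with the corresponding quantities on the $\SO_0(2,3)$ side up to harmless constants.

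Next, observe that $\|1\|^2$ is, up to a constant factor, the induced metric $g=h_2^{-1}$ of $\Sigma$, which is complete. I must also bound $\|\alpha\|^2$ and $\|\beta\|^2$ from below by complete metrics outside a compact set $N(\mathcal{Z}(Q_4))$. Because $\Sigma$ is polygonal, \cite{Mor25} gives that $g$ has finite total curvature and is asymptotically flat. Corollary \ref{cor:MutualBoundedness} then says that $\|\alpha\|^2_h$ and $\|\beta\|^2_h$ are mutually bounded with $g$ on $\C\setminus K$. I expect the delicate point here to be the passage from mutual boundedness to completeness on the bordered surface $\C\setminus N$. I plan to handle it by noting that $g$ restricted to $\C\setminus N$ is complete, since any divergent path in $\C\setminus N$ escapes every compact set and so has infinite $g$-length. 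Mutual boundedness then transfers completeness to $\|\alpha\|^2$ and $\|\beta\|^2$.

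Having both completeness conditions, Theorem \ref{Thm:Sp4CompleteSolution}, applied with either ordering $\mathcal{C}_1$ or $\mathcal{C}_2$, yields $h_\sigma=h^{(1)}=h^{(2)}=h$. It then remains only to check that the identification is compatible with the choice of $\eta$ and of the parametrization. Changing $\eta$ by $\lambda\in\C^*$, or $\sigma$ by an element of $\Aff(\C)$, acts compatibly on both sides, so the construction is well-defined on moduli classes. Combined with Lemma \ref{Lem:DeltaInjective}, this shows that $\boldsymbol{\Sigma}$ and $\boldsymbol{\delta}$ are mutually inverse.
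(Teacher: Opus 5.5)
Your proposal is correct and follows essentially the same route as the paper: realize $\sigma$ by its diagonal, real harmonic bundle with polynomial data $(A\dz^2,B\dz^2)$, use asymptotic flatness of polygonal surfaces together with Corollary~\ref{cor:MutualBoundedness} to see that this metric is strongly complete, and conclude by the uniqueness in Lemma~\ref{Lem:TrueComplete}. Your added details (the $\Sp(4,\R)$ translation, and transferring completeness from $g$ to $\|\alpha\|_h^2$ and $\|\beta\|_h^2$ on $\C\setminus N$) fill in steps the paper leaves implicit.
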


\begin{proof}
We prove only the result for $\boldsymbol{\delta}^{o}$, as the latter follows immediately from the former. 

Now, take $[\Sigma] \in \mathcal{M}_{n+4}(\Hh)$. We want to show that $\boldsymbol{\Sigma}^{o} \circ \boldsymbol{\delta}^{o}( [\Sigma] )=[\Sigma]$. We can understand the composition as follows.

Let us first take any representative $\Sigma$ for the class and a corresponding conformal parametrization $\sigma: \C \rightarrow \Sigma$. Choose a holomorphic trivialization
$\eta \in H^0(\mathcal{N}_{\sigma})$, yielding the trivialization $(\eta, \eta^{-1})$ for $\mathcal{N}\oplus \mathcal{N}^{-1}$, such that the associated quadratic differentials are polynomial. Relative to this parametrization, 
let the polynomial quadratic differentials of $\sigma$ be $(A \dz^2, B \dz^2)$, which are well-defined up to the $\C^*$-action. 
Then from $\sigma$, and this data, we obtain a harmonic bundle 
$(\E, \varphi(A\dz^2,B\dz^2),h)$, where $h$ is diagonal, real, and as described in \textsection\ref{Subsec:MaximaltoHiggs}.
Furthermore, $h$ is strongly complete in the sense of Definition \ref{Defn:StrongCompleteness} by Corollary \ref{cor:MutualBoundedness}. 
On the other hand, by Lemma \ref{Lem:TrueComplete}, 
the Higgs bundle $(\E, \varphi(A\dz^2,B\dz^2))$ 
admits a unique harmonic metric that is diagonal, strongly complete, and real.
Hence, we must have that 
\[ [\Sigma] = \boldsymbol{\Sigma}^{o}\circ \boldsymbol{\delta}^{o}(\Sigma) = \boldsymbol{\Sigma}^{o}(A\dz^2, B\dz^2)\ .\qedhere\] 
\end{proof}

To deduce the continuity of $\Xi$, we shall use the following lemma.

\begin{lemma}\label{Lem:ContinuityPolynomialPolygon}
The map
$\mathbf{\Sigma}^{pt}:\mathcal{M}_n(\mathcal{PD}^o)\rightarrow \mathcal{M}_{n+4}^{pt}(\Hh)$
is continuous.
\end{lemma}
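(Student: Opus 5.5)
The plan is to prove continuity by a compactness-plus-uniqueness argument at the level of harmonic metrics, then pass to surfaces via parallel transport. Take a sequence $\delta_j=[(A_j\dz^2,B_j\dz^2)]\to\delta_\infty=[(A_\infty\dz^2,B_\infty\dz^2)]$ in $\mathcal{M}_n(\mathcal{PD}^o)$. Using the $\Aff(\C)\times\C^*$-action, I first choose lifts so that the polynomial coefficients of $A_j,B_j$ converge to those of $A_\infty,B_\infty$; since $\deg(A_jB_j)=n$ is fixed, the leading coefficient of $Q_{4,j}=A_jB_j$ stays away from zero. Hence there is a single compact disc $K\supset \mathcal{Z}(Q_{4,j})$ for all large $j$, and the flat metrics $|Q_{4,j}|^{1/2}$ are uniformly mutually bounded with $|Q_{4,\infty}|^{1/2}$ on $\C\setminus K$.

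The second step is to obtain uniform control on the strongly complete harmonic metrics $h_j$ of Lemma \ref{Lem:TrueComplete}. Through the bijections $\Upsilon_1,\Upsilon_2$, these metrics correspond to real cyclic harmonic metrics on the Hitchin-section bundles $(\mathbb{K}_{Y_j,4},\phi_{Hit}(Q_{4,j}))$. I will use the Li--Mochizuki estimates from \cite{LM20b, LM25} (the same mechanism as in \cite[Proposition 3.37]{LM25} used in Lemma \ref{Lem:TrueComplete}), which give bounds for $\|\phi_i\|^2_{\hat h_j}$ relative to $|Q_{4,j}|^{1/2}$ that depend only on the geometry of the flat metric outside $K$. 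This yields uniform $C^0$ bounds on $\log h_{1,j},\log h_{2,j}$ on compact subsets of $\C$. The main obstacle is the neighborhood of $K$, where $Q_{4,j}$ vanishes. There I plan to use the diagonal Hitchin equations \eqref{Eq:HitchinsEquationsSO(2,3)} together with a maximum-principle comparison: subharmonicity of $\log$ of ratios of two solutions, as in the Li--Mochizuki uniqueness argument, should give uniform local bounds on compacta, uniformly in $j$. Elliptic regularity then gives $C^\infty_{loc}$ subconvergence of $h_j$ to a diagonal real harmonic metric $h_*$ on $(\E,\varphi(\delta_\infty))$.

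The third step identifies the limit: $h_*=h_\infty$. The uniform mutual boundedness of $\|1\|^2_{h_j},\|\alpha_j\|^2_{h_j},\|\beta_j\|^2_{h_j}$ with $|Q_{4,j}|^{1/2}$ on $\C\setminus K$, with constants independent of $j$, passes to the limit. So $h_*$ is strongly complete, and the uniqueness part of Lemma \ref{Lem:TrueComplete} gives $h_*=h_\infty$. Since every subsequence has a further subsequence converging to $h_\infty$, the whole sequence converges.

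The final step passes from metrics to surfaces. The flat connections $\nabla_j=\nabla^C_{h_j}+\varphi_j+\varphi_j^{*h_j}$ converge in $C^\infty_{loc}$. Normalizing frames at $0$ via the $\SO_0(2,3)$-freedom, the parallel transports converge, so the maps $\sigma_j$ of Proposition \ref{Prop:HiggsToMaximalSurface} converge smoothly on compacta to $\sigma_\infty$, with $\sigma_j(0)\to\sigma_\infty(0)$. Pointed surfaces then converge in the topology of smooth convergence of graphs on compacta, because spacelike graphs that converge as immersions on compacta converge as graphs. This proves continuity of $\mathbf{\Sigma}^{pt}$.
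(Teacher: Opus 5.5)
Your strategy is sound, but it takes a genuinely different route from the paper's. The paper never works with the metrics $h_j$ in this proof. It normalizes the pointed surfaces $(\Sigma_j,x_0)$ to have a fixed tangent plane at $x_0$ and extracts a convergent subsequence from the geometric compactness theorem \cite[Theorem 6.1(i)]{LTW24}. It then uses continuity of $\partial_\infty$ to see that the limit bounds a polygon, so it is parabolic with polynomial data. Next it passes $A_j,B_j$ to the limit along converging conformal parametrizations. Finally it identifies the limit via $\boldsymbol{\Sigma}\circ\boldsymbol{\delta}=\mathrm{id}$, which rests on the same uniqueness of strongly complete metrics that you use in your third step.

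The paper thus gets compactness for free, with no estimates uniform in $j$. The cost is that it must reconstruct a conformal parametrization of the limit carrying exactly the data $(\alpha,\beta)$ (affine normalization, $\deg Q_{4,\infty}=n$), which it does briskly. In your route this is automatic: the maps $\sigma_j$ themselves converge in the fixed coordinate where the coefficients converge, so the basepoints $\sigma_j(0)$ and the limiting differentials are controlled by construction.

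The price of your route is that everything hinges on constants independent of $j$. \cite[Proposition 3.37]{LM25}, as used in Lemma~\ref{Lem:TrueComplete}, does not supply these; it gives mutual boundedness only for each fixed $Q_4$. Two inputs are needed instead:
\begin{enumerate}[label=(\roman*)]
\item In the far region, a Simpson--Mochizuki type local estimate: exponential decay of $\big[\phi_{Hit},\phi_{Hit}^{*\hat h_j}\big]$ on flat discs avoiding $\mathcal{Z}(Q_{4,j})$, with constants depending only on the rank.
\item Near the possibly colliding zeros, your maximum principle must compare $h_j$ with a metric that is not harmonic for $\varphi_j$, because the ratio of solutions of two different equations is not subharmonic as such. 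One way is Simpson's inequality $\Delta\log\mathrm{tr}(h_\infty^{-1}h_j)\geq -C\,\big|[\varphi_j,\varphi_j^{*h_\infty}]-[\varphi_\infty,\varphi_\infty^{*h_\infty}]\big|_{h_\infty}$.
\end{enumerate}
Both inputs are standard, so these are details to supply rather than a gap.

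Note also that $\sigma(0)$ moves under translations in $\Aff(\C)$. So what you and the paper actually prove is continuity along coefficient-convergent representatives, which is the meaningful content of the statement.
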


\begin{proof}
Suppose $[(\alpha_j,\beta_j)]_{j\in\mathbb{N}}$ is a sequence of elements in $\mathcal{M}_n(\mathcal{PD}^o)$ converging to $[(\alpha,\beta)]$.
Fix a pointed hyperbolic plane $x_0\oplus \T_0\oplus \N_0$. That is, $x_0 \in \H$, $\T_0$, $\N_0$ are pairwise orthogonal, where the latter two are spacelike and timelike 2-planes, respectively. 
We can choose normalized representatives $(\Sigma_j,x_0)$ of the class of pointed surfaces 
$\mathbf{{\Sigma}}^{pt}(\alpha_j, \beta_j)$ such that $\mathrm{T}_{x_0}\Sigma_j= \T_0$. 
The $\SO_0(2,3)$-freedom remaining lies in the maximal compact torus $\SO(2)\times \SO(2)$, which merely rotates $\T_0$ and $\N_0$. 
Here, we proceed with any such pointed surface $(\Sigma_j,x_0)$ normalized as described. 

We then want to show that the sequence $(\mathbf{{\Sigma}}^{pt}(\alpha_j,\beta_j))_{j\in\mathbb{N}}  \subset \mathcal{M}_{n+4}^{pt}(\Hh)$ converges by using these partially normalized representatives $(\Sigma_j,x_0)$. By the subsubsequence lemma, it suffices to prove that any subsequence of $((\Sigma_j,x_0))_{j\in\mathbb{N}}$ has a subsequence that converges to a pointed surface in the class ${\mathbf{\Sigma}}^{pt}(\alpha,\beta)$. Thus, fix any subsequence $\{(\Sigma_{j_k},x_0)\}_{k\in\mathbb{N}}$ and we undertake this task. 

By \cite[Theorem 6.1(i)]{LTW24}, the sequence of pointed maximal surfaces $(\Sigma_{j_k}, x_0)_{k\in\mathbb{N}}$ subconverges, and hence we can extract a limiting surface $(\Sigma_{\infty},x_0)$. 
Let $((\Sigma_{{j_k}_{\ell}},x_0))_{\ell\in\mathbb{N}}$ be the convergent subsequence. Now, for the original sequence, we know \emph{a priori} that $\partial_{\infty} \Sigma_{j} = \Lambda_{j}$ is a sequence of semi-positive polygons in $\Ein$, with $n+4$ vertices. By Proposition \ref{Prop:AsymptoticBoundaryCnty}, we conclude that $\Lambda = \partial_{\infty}(\Sigma_{\infty})$ is also a polygon, with at most $n+4$ vertices. In particular, the limiting surface $\Sigma_{\infty}$ is biholomorphic to the complex plane $\C$ by Corollary \ref{Cor:AllPolynomialMaximalSurfaces}.

Now, by the convergence of the subsubsequence and since every surface is conformal to $\C$, we choose conformal parametrizations 
$\sigma_{j_{k_{\ell}}}: \C \rightarrow \Sigma_{{j_{k_{\ell}}}}$ that converge smoothly on compact sets to a conformal parametrization $\sigma_{\infty}: \C \rightarrow \Sigma_{\infty}$. 
Then $Q_{4,j_{k_{\ell}}}\rightarrow Q_{4,\infty}$ smoothly over compact sets. 
Hence, $\deg(Q_{4,\infty})=n$ and the polygon $\Lambda_\infty$ has $n+4$ vertices. 
Now, again by choosing appropriate trivializations of the corresponding holomorphic normal line bundles, write $\alpha_{j_{k_{\ell}}}= A_{j_{k_{\ell}}}\dz^2\otimes \eta_{j_{k_{\ell}}}$, $\beta_{j_{k_{\ell}}}= B_{j_{k_{\ell}}}\dz^2\otimes \eta_{j_{k_{\ell}}}^{-1}$ 
and similarly for $\alpha_{\infty}$, $\beta_{\infty}$, with $A_{j_{k_{\ell}}}$, $B_{j_{k_{\ell}}}$, $A_\infty$ and $B_\infty$ being polynomials. 
Then $A_{j_{k_{\ell}}} \rightarrow A_{\infty}$ and $B_{j_{k_{\ell}}} \rightarrow B_{\infty}$ smoothly over compact sets, up to adjusting the trivializations $\eta_{j_{k_{\ell}}}$. Thus, by the original convergence 
of $\alpha_j\rightarrow \alpha$, $\beta_{j} \rightarrow \beta$, we conclude $\beta=\beta_{\infty}$ and $\alpha =\alpha_{\infty}$. 
In other words, we have found $\boldsymbol{\delta}^o(\Sigma_{\infty})=[(\alpha,\beta)]$. 
By the injectivity of $\boldsymbol{\delta}$, 
we conclude that $\Sigma_{\infty} = \boldsymbol{\Sigma}(\alpha,\beta)$. This completes the proof. 
\end{proof}

We now conclude Theorem \ref{thm:A}. 
\begin{theorem}\label{thm:Polynomial-Maximal}
    The map $\boldsymbol{\Sigma}:\mathcal{M}_n(\mathcal{PD})\rightarrow\mathcal{M}_{n+4}(\H)$ is a homeomorphism.
\end{theorem}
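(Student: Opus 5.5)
We assemble the homeomorphism from the pieces already established. Lemma \ref{Lem:DeltaInjective} and Lemma \ref{LemmaSigmaDeltaInverse} give $\boldsymbol{\delta}\circ\boldsymbol{\Sigma}=\id$ and $\boldsymbol{\Sigma}\circ\boldsymbol{\delta}=\id$. Hence $\boldsymbol{\Sigma}$ is a bijection with inverse $\boldsymbol{\delta}$. By Corollary \ref{CorollaryDeltaContinuous}, $\boldsymbol{\delta}$ is continuous. It therefore remains only to prove that $\boldsymbol{\Sigma}$ itself is continuous.

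For continuity of $\boldsymbol{\Sigma}$, we pass through the double cover and pointed surfaces. Lemma \ref{Lem:ContinuityPolynomialPolygon} gives continuity of $\boldsymbol{\Sigma}^{pt}:\mathcal{M}_n(\mathcal{PD}^o)\to\mathcal{M}^{pt}_{n+4}(\Hh)$. The forgetful map $\mathcal{M}^{pt}_{n+4}(\Hh)\to\mathcal{M}_{n+4}(\Hh)$, which forgets the marked point, is continuous: it is induced from the continuous map on configuration spaces forgetting the basepoint, and the topology on both sides is smooth convergence on compacta. So $\boldsymbol{\Sigma}^o$ is continuous as a composition.

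Next we descend. The projections $\mathcal{M}_n(\mathcal{PD}^o)\to\mathcal{M}_n(\mathcal{PD})$ and $\mathcal{M}_{n+4}(\Hh)\to\mathcal{M}_{n+4}(\H)$ are quotient maps by the $\Z_2$-actions: swapping $(A,B)$, and the antipodal map. The remark after Theorem \ref{thm:A} (exchanging $\mathcal{N},\mathcal{N}^{-1}$ and $\alpha,\beta$) shows $\boldsymbol{\Sigma}^o$ intertwines these actions. Since $\boldsymbol{\Sigma}^o$ is continuous, the universal property of the quotient topology then gives continuity of $\boldsymbol{\Sigma}$. A continuous bijection whose inverse $\boldsymbol{\delta}$ is continuous is a homeomorphism.

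The main subtlety is the topology. We need the quotient $\mathcal{M}_n(\mathcal{PD})$ to carry the quotient topology from $\mathcal{C}_n(\mathcal{PD})$, and we need sequential arguments, such as the subsubsequence argument in Lemma \ref{Lem:ContinuityPolynomialPolygon}, to suffice. For this I would note that all spaces involved are first countable: they are quotients of finite-dimensional spaces of polynomials by the action of $\Aff(\C)\times\C^*$, and spaces of surfaces with the $C^\infty_{loc}$ topology. I would also check the intertwining claim carefully. Replacing $\Sigma$ by $-\Sigma$ reverses the orientation of $\N$, hence swaps $\N'$ and $\N''$, so $\boldsymbol{\delta}^o(-\Sigma)$ is the swapped pair. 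Together with $\boldsymbol{\delta}^o\circ\boldsymbol{\Sigma}^o=\id$ and injectivity, this gives $\boldsymbol{\Sigma}^o(B,A)=-\boldsymbol{\Sigma}^o(A,B)$.
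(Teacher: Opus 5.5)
Your proposal is correct and follows the paper's proof: bijectivity comes from Lemmas \ref{Lem:DeltaInjective} and \ref{LemmaSigmaDeltaInverse}, continuity of $\boldsymbol{\delta}$ from Corollary \ref{CorollaryDeltaContinuous}, and continuity of $\boldsymbol{\Sigma}$ from Lemma \ref{Lem:ContinuityPolynomialPolygon}. Your extra steps, forgetting the basepoint and descending through the $\Z_2$-quotients via $\boldsymbol{\Sigma}^o(B,A)=-\boldsymbol{\Sigma}^o(A,B)$, spell out what the paper leaves implicit when it passes from $\boldsymbol{\Sigma}^{pt}$ on the double cover to $\boldsymbol{\Sigma}$.
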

\begin{proof}
    By Lemma \ref{Lem:DeltaInjective} and Lemma \ref{LemmaSigmaDeltaInverse}, the maps $\boldsymbol{\Sigma}$ and $\boldsymbol{\delta}$ are inverse of each other. By Lemma \ref{Lem:ContinuityPolynomialPolygon}, the map $\boldsymbol{\Sigma}$ is continuous and by Corollary \ref{CorollaryDeltaContinuous}, the map $\boldsymbol{\delta}$ is continuous.
\end{proof}

Let us note the properties of the asymptotic boundary map $\partial_{\infty}$. 

\begin{proposition}\label{Prop:AsymptoticBoundaryHomeo}
The map $\partial_{\infty}: \mathcal{M}_{n+4}(\H)\rightarrow \mathcal{M}_{n+4}(\Ein)$ is a homeomorphism. 
\end{proposition}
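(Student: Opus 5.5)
We need to show that $\partial_{\infty}$ descends to a homeomorphism $\mathcal{M}_{n+4}(\H)\to\mathcal{M}_{n+4}(\Ein)$, and we argue at the level of the $\SO_0(2,3)$-quotients.

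\textbf{Well-definedness and bijectivity.} First, $\partial_\infty$ takes values in $\mathcal{M}_{n+4}(\Ein)$. By \cite[Main Theorem]{Mor25} together with Corollary \ref{Cor:AllPolynomialMaximalSurfaces}, a polygonal surface whose quartic differential has degree $n$ has boundary an $(n+4)$-gon. Second, $\partial_\infty$ is $\SO_0(2,3)$-equivariant, since isometries extend continuously to $\Ein$. Hence it descends to the quotients, and Corollary \ref{Cor:ContinuityAsymptoticBoundary} gives continuity. Surjectivity and injectivity both come from Theorem \ref{Thm:Plateau}. Every semi-positive $(n+4)$-gon bounds a complete maximal surface, which is polygonal by definition, so $\partial_\infty$ is onto. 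The uniqueness part of the theorem gives injectivity on configuration spaces. Injectivity then passes to the quotients: if $\partial_\infty\Sigma_1 = g\,\partial_\infty \Sigma_2$, then $g\Sigma_2$ and $\Sigma_1$ bound the same loop, so $\Sigma_1 = g\Sigma_2$.

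\textbf{Continuity of the inverse.} I would argue by sequences, using the subsubsequence trick. Suppose $[\Lambda_j]\to[\Lambda]$, and choose representatives with $\Lambda_j\to\Lambda$ uniformly as graphs in a fixed splitting. Let $\Sigma_j$ and $\Sigma$ be the maximal surfaces they bound. By the compactness theorem \cite[Theorem 6.1]{LTW24}, maximal surfaces whose boundaries converge to a semi-positive loop subconverge smoothly on compacta to a complete maximal surface $\Sigma_\infty$. Proposition \ref{Prop:AsymptoticBoundaryCnty} then gives $\partial_\infty\Sigma_\infty = \lim \Lambda_j = \Lambda$, and uniqueness forces $\Sigma_\infty=\Sigma$. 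Since every subsequence has a further subsequence converging to $\Sigma$, the whole sequence converges, hence $[\Sigma_j]\to[\Sigma]$ in $\mathcal{M}_{n+4}(\H)$.

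\textbf{Main obstacle.} The delicate point is invoking compactness in the right form. One must make sure that convergence of the boundary loops prevents the surfaces from escaping to infinity, meaning some fixed compact set meets every $\Sigma_j$. This is exactly where the hypothesis that the limit $\Lambda$ is semi-positive (not a degenerate loop such as a photon) is used in \cite{LTW24}. Since $\Lambda$ is a genuine semi-positive $(n+4)$-gon, the hypotheses hold.

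Alternatively, the quotients are first countable and the moduli spaces are manifolds/orbifolds, so one could try a properness or invariance-of-domain argument. Here $\mathcal{M}_{n+4}(\H)\cong\mathcal{M}_n(\mathcal{PD})$ by Theorem \ref{thm:Polynomial-Maximal}. I would keep the sequential argument above as the primary route.
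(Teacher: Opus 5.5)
Your proposal is correct and follows essentially the same route as the paper: continuity from Proposition \ref{Prop:AsymptoticBoundaryCnty}, bijectivity from the Plateau theorem of \cite{LTW24}, and continuity of the inverse from the compactness theorem \cite[Theorem 6.1]{LTW24}. Your subsubsequence-plus-uniqueness argument, together with the lifting of convergent sequences through the quotient, simply unpacks what the paper cites directly as \cite[Theorem 6.1(ii)]{LTW24}.
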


\begin{proof}
The map is continuous by Proposition \ref{Prop:AsymptoticBoundaryCnty}, bijective by \cite[Theorem A]{LTW24}, and has continuous inverse by \cite[Theorem 6.1(ii)]{LTW24}. 
\end{proof}

We finally deduce Theorem \ref{thm:B}

\begin{theorem}[Polynomial $\leftrightarrow$ polygon]\label{thm:PolynomialPolygon}
The map $\Xi: \mathcal{M}_n({\mathcal{PD}})\rightarrow \mathcal{M}_{n+4}(\Ein)$ is a homeomorphism. 
\end{theorem}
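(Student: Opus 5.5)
The plan is to write $\Xi$ as a composition of two homeomorphisms that are already in hand. By definition, $\Xi([\{\alpha_{QD},\beta_{QD}\}])=[\partial_\infty\Sigma(\alpha_{QD},\beta_{QD})]$. Since $\boldsymbol{\Sigma}([\{\alpha_{QD},\beta_{QD}\}])$ is exactly the $\SO_0(2,3)$-class of $\Sigma(\alpha_{QD},\beta_{QD})$ (with either lift to the ordered space), we get
\[ \Xi=\partial_\infty\circ\boldsymbol{\Sigma}\ , \]
as maps $\mathcal{M}_n(\mathcal{PD})\to\mathcal{M}_{n+4}(\H)\to\mathcal{M}_{n+4}(\Ein)$. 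For this to make sense, we first check that $\partial_\infty$ descends to moduli spaces. It does, because $\partial_\infty(g\Sigma)=g\,\partial_\infty\Sigma$ for $g\in\SO_0(2,3)$, since $g$ acts projectively on $\RP^4$ and so preserves closures.

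Next we check that the polygon counts match. By Lemma \ref{Lem:PolynomialPolygon}, $\partial_\infty\Sigma(\alpha_{QD},\beta_{QD})$ is a semi-positive $(n+4)$-gon, so $\Xi$ lands in $\mathcal{M}_{n+4}(\Ein)$. Conversely, every semi-positive $(n+4)$-gon bounds a unique complete maximal surface by Theorem \ref{Thm:Plateau}. That surface is polygonal with $\deg Q_4=n$ by \cite{Mor25}, so it lies in $\mathcal{M}_{n+4}(\H)$. Hence $\partial_\infty$ restricts to a bijection between these two strata.

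With these in place, the conclusion follows from two earlier results. Theorem \ref{thm:Polynomial-Maximal} says $\boldsymbol{\Sigma}$ is a homeomorphism. Proposition \ref{Prop:AsymptoticBoundaryHomeo} says $\partial_\infty:\mathcal{M}_{n+4}(\H)\to\mathcal{M}_{n+4}(\Ein)$ is a homeomorphism. A composition of homeomorphisms is a homeomorphism, so $\Xi$ is one, with inverse $\boldsymbol{\delta}\circ\partial_\infty^{-1}$. The ordered version $\partial_\infty\circ\boldsymbol{\Sigma}^o$ into $\mathcal{M}_{n+4}(\Einh)$ follows identically, using the double-cover analogues of these results.

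The only real subtlety is the identification $\Xi=\partial_\infty\circ\boldsymbol{\Sigma}$ on the quotient. We must confirm that the two lifts $(\alpha_{QD},\beta_{QD})$ and $(\beta_{QD},\alpha_{QD})$ produce antipodal surfaces $\pm\Sigma$ in $\Hh$. These project to the same surface in $\H$, and hence to the same boundary in $\Ein$. This is the symmetry of \eqref{SO(2,3)Higgs} under exchanging $\mathcal{N}\leftrightarrow\mathcal{N}^{-1}$, which reverses the normal orientation. Continuity issues have already been absorbed into Lemma \ref{Lem:ContinuityPolynomialPolygon}, Corollary \ref{CorollaryDeltaContinuous} and Proposition \ref{Prop:AsymptoticBoundaryCnty}.
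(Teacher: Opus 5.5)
Your proposal is correct and is the paper's argument: the paper proves this in one line by writing $\Xi=\partial_\infty\circ\boldsymbol{\Sigma}$ and combining Theorem~\ref{thm:Polynomial-Maximal} with Proposition~\ref{Prop:AsymptoticBoundaryHomeo}. Your extra checks are sound and make explicit what the paper leaves implicit: the $\SO_0(2,3)$-equivariance of $\partial_\infty$, the matching vertex counts, and the fact that swapping $\alpha_{QD}$ and $\beta_{QD}$ yields antipodal lifts with the same image in $\H$.
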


\begin{proof}
The result follows from Theorem \ref{thm:Polynomial-Maximal} and Proposition \ref{Prop:AsymptoticBoundaryHomeo}.
\end{proof}

Given the classification of components in $\mathcal{M}_{n+4}(\H)$ via the turning invariant of their asymptotic boundaries, we obtain the relation between $\Theta$ and and the polynomial differentials of a maximal surface in all cases, not just the symmetric case. 

\begin{corollary}\label{Cor:ThetaDegreeAB}
Let $\delta =[(A\dz^2,B\dz^2)] \in \mathcal{M}_n(\mathcal{PD}^o)$ and $\Sigma \subset \Hh$ be a maximal surface representing the class $\mathbf{\Sigma}^o(\delta)$ from Theorem \ref{thm:A} with asymptotic boundary $\Lambda \subset \Einh$. Then \[\Theta(\Lambda) =\deg(B)+2=n-\deg(A)+2\ .\]
\end{corollary}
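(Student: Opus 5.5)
The plan is a continuity-plus-symmetric-model argument, using the homeomorphism $\mathbf{\Sigma}^{o}$ of Theorem \ref{thm:A} and the computation of Corollary \ref{cor:SymmetricMonomials}. For $0\leq k\leq n$, let $\mathcal{P}_k\subset \mathcal{M}_n(\mathcal{PD}^o)$ be the set of classes $[(A\dz^2,B\dz^2)]$ with $\deg(B)=k$, hence $\deg(A)=n-k$. The degrees are invariant under $\Aff(\C)\times\C^*$, so $\mathcal{P}_k$ is well defined. Consider the composite
\[\Theta\circ\partial_\infty\circ\mathbf{\Sigma}^{o}:\mathcal{M}_n(\mathcal{PD}^o)\longrightarrow\{2,\dots,n+2\}.\]
It is continuous: $\mathbf{\Sigma}^{o}$ is continuous by Theorem \ref{thm:A}, $\partial_\infty$ is continuous by Proposition \ref{Prop:AsymptoticBoundaryCnty} and Corollary \ref{Cor:ContinuityAsymptoticBoundary} (lifted to $\Einh$), and $\Theta$ is locally constant on $\mathcal{C}_{n+4}(\Einh)$ by Theorem \ref{thm:EinhPolygonCount}. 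Hence the composite is constant on each connected component of $\mathcal{M}_n(\mathcal{PD}^o)$.

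First I show that $\mathcal{P}_k$ is path connected and contains the monomial class $[(z^{n-k}\dz^2,z^k\dz^2)]$. Given $(A,B)$ with $\deg A=n-k$ and $\deg B=k$, write $A=a\prod(z-a_i)$ and $B=b\prod(z-b_j)$ with $a,b\neq0$. Shrinking the roots linearly, $a_i(t)=(1-t)a_i$ and $b_j(t)=(1-t)b_j$, keeps both degrees fixed. Then I connect the nonzero leading coefficients to $1$ inside $\C^*$. At no point does $\deg(AB)$ drop below $n$, so the whole path stays in $\mathcal{C}_n(\mathcal{PD}^o)$ and projects to a path in $\mathcal{P}_k$. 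Hence $\Theta$ takes a single value on $\mathcal{P}_k$.

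To identify that value, I use Proposition \ref{Prop:HiggsSymmetricPolygon}: the surface $\mathbf{\Sigma}^{o}([(z^{n-k}\dz^2,z^k\dz^2)])$ has symmetric boundary. Say its turning invariant is $\Theta_0$. Corollary \ref{cor:SymmetricMonomials} then gives $\boldsymbol{\delta}^o(\Sigma)=[(z^{n-\Theta_0+2}\dz^2,z^{\Theta_0-2}\dz^2)]$. By Lemma \ref{Lem:DeltaInjective}, $\boldsymbol{\delta}^o(\Sigma)$ is also the class we started from, and degrees are invariants of the class, so $k=\Theta_0-2$. Therefore $\Theta=\deg(B)+2$ on all of $\mathcal{P}_k$. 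The second equality, $\deg(B)+2=n-\deg(A)+2$, is just $\deg A+\deg B=n$.

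The main obstacle is the continuity step, because continuity of $\Theta\circ\partial_\infty\circ\mathbf{\Sigma}^o$ must be checked on lifts to $\Einh$. I would pass to the double cover as in Corollary \ref{CorollaryDeltaContinuous}, choosing coherent lifts along a path. A second point is the convention for lifts in Corollary \ref{cor:SymmetricMonomials}: one must make sure that the ordered pair there matches the orientation convention of $\mathbf{\Sigma}^o$ rather than its antipodal swap. I would take that corollary as stated, since the swap would send $\Theta$ to $n+4-\Theta$ and give the wrong sign of correspondence.
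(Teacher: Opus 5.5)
Your argument is correct and is essentially the paper's. The paper derives the corollary in one line from Corollary \ref{cor:SymmetricMonomials}, the $\Theta$-labelling of the components of $\mathcal{M}_{n+4}(\Einh)$ (Corollary \ref{Cor:ConnectedComponentCountUp}) and the homeomorphism $\partial_\infty\circ\mathbf{\Sigma}^o$, leaving implicit your observation that each degree stratum $\mathcal{P}_k$ is connected and contains a monomial pair. Your version uses only continuity of $\partial_\infty\circ\mathbf{\Sigma}^o$ and local constancy of $\Theta$, and you deform to $(z^{\deg A}\dz^2,z^{\deg B}\dz^2)$ rather than to $(z^{n-\Theta_0+2}\dz^2,z^{\Theta_0-2}\dz^2)$. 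The latter is the target of the path in the proof of Theorem \ref{thm:EinhPolygonCount}, and with $\deg(AB)=n$ fixed it is reachable only if $\deg B=\Theta_0-2$ already holds; your choice therefore avoids the slight circularity in the paper's chain of citations.
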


\begin{proof}
This follows immediately from Corollary \ref{cor:SymmetricMonomials}, Corollary \ref{Cor:ConnectedComponentCountUp}, and the fact that the asymptotic boundary map 
$\partial_{\infty}: \mathcal{M}_{n+4}(\Hh) \rightarrow \mathcal{M}_{n+4}(\Einh)$ is a homeomorphism by Proposition \ref{Prop:AsymptoticBoundaryHomeo}. 
\end{proof}

There are two further corollaries of interest to record regarding special maximal surfaces. 

\begin{corollary}[Hitchin maximal surfaces]
Let $\Sigma \subset \H$ be a polygonal maximal surface with asymptotic boundary $\Lambda =\partial_{\infty}\Sigma$. Then $\Sigma$ lies in the Hitchin component if and only if the turning invariant
of either lift of $\Lambda$ to $\Einh$ is extremal. 
\end{corollary}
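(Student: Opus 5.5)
The plan is to reduce the statement to Corollary \ref{Cor:ThetaDegreeAB} by first pinning down what ``Hitchin'' means in the polynomial parametrization. Following the introduction, a polygonal maximal surface lies in the Hitchin component exactly when, for some (equivalently any) conformal parametrization and dual trivialization, one of $\alpha_{QD},\beta_{QD}$ can be taken to be $\dz^2$. Up to the $\Aff(\C)\times\C^*$-action, this is the same as asking that one of the two polynomials $A,B$ have degree $0$, i.e.\ be a nonzero constant. This condition is intrinsic to the class in $\mathcal{M}_n(\mathcal{PD})$: degrees are preserved by affine pushforward and by the $\C^*$-rescaling, and swapping $A$ and $B$ only exchanges which of the two is constant. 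So the first step is to record that $[\Sigma]$ is Hitchin if and only if $\boldsymbol{\delta}(\Sigma)=[\{A\dz^2,B\dz^2\}]$ has $\min(\deg A,\deg B)=0$.

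Next I lift $\Sigma$ to $\hat\Sigma\subset\Hh$ with boundary $\hat\Lambda$. By Theorem \ref{thm:A}(iii), $\boldsymbol{\delta}^o(\hat\Sigma)=[(A\dz^2,B\dz^2)]$ is some ordering of the pair, with $\deg A+\deg B=n$. The antipodal lift $-\hat\Sigma$ has the opposite normal orientation, so it gives the swapped pair $[(B\dz^2,A\dz^2)]$, as remarked after Theorem \ref{thm:A}. Applying Corollary \ref{Cor:ThetaDegreeAB} to each lift gives
\[ \Theta(\hat\Lambda)=\deg B+2,\qquad \Theta(-\hat\Lambda)=\deg A+2 . \]
These values sum to $n+4=N$, which is consistent with Proposition \ref{prop:turningantipodal}.

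To conclude, recall that the range of $\Theta$ is $\{2,\dots,N-2\}$, so its extremal values are $2$ and $N-2=n+2$. Since $\Theta(\hat\Lambda)+\Theta(-\hat\Lambda)=N$, one lift has extremal $\Theta$ exactly when the other does. Moreover, $\Theta(\hat\Lambda)\in\{2,n+2\}$ holds if and only if $\deg B=0$ or $\deg A=0$. By the first step, this is exactly the Hitchin condition, and both implications follow.

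The only real obstacle is the first step: making the definition of the Hitchin component precise and showing it matches ``one of $A,B$ is constant.'' This has to be done at the level of classes, with care about which lift carries which label. I would take the description from \cite{TW24} through the Higgs bundle \eqref{SO(2,3)TrivializedNormal}. With $A\equiv1$, the bundle $\mathcal{N}$ becomes $\K^{-2}$ and one recovers the $\SO_0(2,3)$ Hitchin-section form. Conversely, a constant $A=c$ can be normalized to $1$ using the $\C^*$-action. Everything after this step is formal bookkeeping of degrees.
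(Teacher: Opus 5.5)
Your proposal is correct and follows the paper's route: you identify Hitchin surfaces as those for which one lift has a constant coefficient among $A,B$, and then read off $\Theta=\deg(B)+2$ from Corollary \ref{Cor:ThetaDegreeAB}. Your version is more complete than the paper's one-line proof, which only spells out that nonvanishing $\beta$ gives $\Theta(\hat\Lambda)=2$; you also make explicit the converse direction and the role of the two antipodal lifts, using $\Theta(\hat\Lambda)+\Theta(-\hat\Lambda)=N$.
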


\begin{proof}
By definition of the $\SO_0(2,3)$-Hitchin section, $\Sigma$ is maximal exactly when one of its lifts $\hat{\Sigma}$ admits a parametrization with $\beta$ non-vanishing. Thus, the asymptotic boundary $\hat{\Lambda} = \partial_{\infty}(\hat{\Sigma})$ satisfies $\Theta(\hat{\Lambda})=2$ by Corollary \ref{Cor:ThetaDegreeAB}. 
\end{proof}

We now consider the `reducible' maximal surfaces in $\H$ that are trapped in a copy of $\mathbf{H}^{2,1}$. 

\begin{corollary}[Reducible maximal surfaces]\label{cor:AdS3}
Let $\Sigma \subset \H$ be a polygonal maximal surface with asymptotic boundary $\Lambda = \partial_{\infty}\Sigma$ a $2k$-gon in $\Ein$. Then the following are equivalent: 
\begin{enumerate}[label=(\roman*)]
    \item $\partial_{\infty}\Sigma$ is contained in a copy of $\mathbf{Ein}^{1,1}$, 
    \item For any conformal parametrization $\sigma:\C \rightarrow \Sigma$, we have $\mathrm{div}(\alpha)=\mathrm{div}(\beta)$. 
\end{enumerate}
\end{corollary}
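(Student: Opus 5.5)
The plan is to pass through the intermediate condition (i$'$) that $\Sigma$ is contained in a copy of $\mathbf{H}^{2,1}$, and to translate that condition into the existence of a constant timelike vector $u\in\R^{2,3}$ that is normal to $\Sigma$ at every point.

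\emph{Step 1: (i) $\Leftrightarrow$ (i$'$).} Suppose $\partial_\infty\Sigma\subset\mathbb{P}(W)\cap\Ein$, where $W=u^\bot$ has signature $(2,2)$. Then $\partial_\infty\Sigma$ is semi-positive in $\mathbf{Ein}^{1,1}$, so it bounds a complete maximal surface $\Sigma'$ in the totally geodesic copy $\mathbb{P}(W)\cap\H\cong\mathbf{H}^{2,1}$ \cite{Tam19,LTW24}. A maximal surface in a totally geodesic submanifold is also maximal in $\H$. Uniqueness in Theorem \ref{Thm:Plateau} then gives $\Sigma=\Sigma'$. The converse direction (i$'$) $\Rightarrow$ (i) is immediate.

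\emph{Step 2: (i$'$) $\Rightarrow$ (ii).} Lift $\Sigma$ to $\Hh$ and let $u$ be a unit timelike vector with $\Sigma\subset u^\bot$.
\begin{itemize}
\item Since $\Q(\sigma,u)=\Q(\sigma_z,u)=0$, the constant section $u$ lies in $\N_\sigma$, and it is $\nabla_{\N}$-parallel because $Du=0$.
\item Differentiating once more gives $\Q(\sigma_{zz},u)=0$, so $\Q(\II^{2,0},u)=0$.
\item Hence $\II^{2,0}=f\,\nu$, where $\nu=Ju$ is the other real unit normal and $f$ is a complex-valued function.
\item Write $\nu=c(e'+e'')$ with $e'=\tfrac12(\nu-iJ\nu)\in\N'$, $e''=\overline{e'}\in\N''$ and $c$ a fixed constant.
\end{itemize}
This gives $\alpha=cf\,e'$ and $\beta=cf\,e''$. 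Since $e',e''$ are parallel, hence non-vanishing holomorphic sections, we conclude $\mathrm{div}(\alpha)=\mathrm{div}(\beta)=\mathrm{div}(f\dz^2)$. This holds for every conformal parametrization.

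\emph{Step 3: (ii) $\Rightarrow$ (i$'$).} Write $\alpha=A\dz^2\otimes\eta$ and $\beta=B\dz^2\otimes\eta^{-1}$ with $A,B$ polynomials, as in Definition \ref{Defn:AssociatedDifferentials}. Equal divisors force $A/B$ to be a nowhere-vanishing entire rational function, hence a constant. Using the $\C^*$-action, we normalize to $A=B$.

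The Higgs bundle \eqref{SO(2,3)TrivializedNormal} then has the extra holomorphic automorphism $\tau$ that swaps the two copies of $\mathcal{O}$ playing the roles of $\mathcal{N}$ and $\mathcal{N}^{-1}$. Up to conjugation by $\tau$, this swap exchanges $\alpha$ and $\beta$, and with $A=B$ it preserves $\varphi$ and $\mathcal{Q}$.

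\begin{itemize}
\item The pullback $\tau^*h$ is again diagonal, real and strongly complete, since strong completeness is symmetric in $\alpha$ and $\beta$.
\item By the uniqueness in Lemma \ref{Lem:TrueComplete}, transported through Proposition \ref{Prop:CorrespondenceMetrics}, we get $\tau^*h=h$.
\item Because $h$ is real, this forces $h_1=h_1^{-1}$, i.e.\ $h_1\equiv1$. This is consistent with Proposition \ref{prop:HitchinsEquationsSO(2,3)}, whose first equation reads $\log(h_1)_{z\zbar}=\|\beta\|^2_h-\|\alpha\|^2_h=0$.
\end{itemize}

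Now take the section $s=i(\eta-\eta^{-1})$, normalized so that it lies in the real locus and is timelike; the precise sign or scalar is to be fixed by the real structure $\lambda$.
\begin{itemize}
\item $s$ is Chern-parallel, since $h_1\equiv1$ on these trivial summands.
\item $\varphi(s)=0$, because the $\alpha$- and $\beta$-components cancel when $A=B$.
\item $\varphi^{*h}(s)=0$, by the same cancellation together with $h_1=1$.
\end{itemize}
Therefore $s$ is $\nabla$-parallel, so it defines a constant vector $u\in\R^{2,3}$. Since $s\perp\L$, we have $\Q(\sigma,u)\equiv0$. Thus $\Sigma\subset\mathbb{P}(u^\bot)$, which proves (i$'$), and (i) follows by Step 1.

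The step I expect to be the main obstacle is the symmetry argument in Step 3. One must check carefully that $\tau$ commutes with the real structure and preserves strong completeness, so that uniqueness genuinely forces $h_1\equiv1$. One must also check, via a short explicit matrix computation in \eqref{SO(2,3)TrivializedNormal}, that the chosen combination of $\eta$ and $\eta^{-1}$ is real, timelike, and killed by both $\varphi$ and $\varphi^{*h}$.
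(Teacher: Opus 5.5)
Your proof is correct. It differs from the paper's mainly in the direction (i)$\Rightarrow$(ii).

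\textbf{The direction (i)$\Rightarrow$(ii).} The paper never passes explicitly through $\Sigma\subset\mathbf{H}^{2,1}$ here. Its argument runs as follows:
\begin{itemize}
\item take the reflection $g\in\O(2,3)$ across the great circle containing the $\S^2$-projection of a lift $\hat\Lambda$;
\item observe that $g$ fixes $\hat\Lambda$, hence fixes $\Sigma$ by Plateau uniqueness;
\item use that $g$ reverses the time orientation, so it exchanges $\N'$ and $\N''$ and therefore the labels $\alpha$, $\beta$.
\end{itemize}
You instead first upgrade (i) to $\Sigma\subset u^\bot$ using uniqueness, and then compute directly that $\II^{2,0}$ is a multiple of a single parallel normal field. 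Hence $\alpha$ and $\beta$ have literally the same coefficient function in parallel frames. Your route is more explicit and avoids having to know how $g$ acts on the parametrization $\sigma$, a point the paper leaves implicit. The paper's route is shorter and highlights the orientation-reversal mechanism behind the swap $\alpha\leftrightarrow\beta$.

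\textbf{The direction (ii)$\Rightarrow$(i).} The paper says only that, once $A=B$, the Higgs bundle reduces to an $\SO_0(2,2)$-Higgs bundle. Your swap $\tau$ is exactly the Higgs-bundle incarnation of the same reflection $g$: it fixes $e_{\mathcal{O}}$ and $\eta+\eta^{-1}$, and negates $\eta-\eta^{-1}$. You then use the uniqueness in Lemma \ref{Lem:TrueComplete} to get $\tau^*h=h$, hence $h_1\equiv 1$. This supplies precisely the check the paper omits: that the harmonic metric, and not only the holomorphic data, respects the reduction. After that, the $\nabla$-parallel timelike section $i(\eta-\eta^{-1})$ exhibits the copy of $\mathbf{H}^{2,1}$ concretely.

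\textbf{One input to make explicit.} You should state that the harmonic metric attached to $\sigma$ is the strongly complete one. This is what Lemma \ref{LemmaSigmaDeltaInverse} provides, via Corollary \ref{cor:MutualBoundedness}; equivalently, it follows from Theorem \ref{thm:A}.
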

 
\begin{proof}
(i)$\rightarrow$(ii). Let $\sigma$ be given and $\Lambda = \partial_{\infty}\Sigma$. Let $\hat{\Lambda}$ be a lift of $\partial_\infty\Sigma$ to $\Einh$. In any splitting $\Einh \simeq \S^1\times\S^2$, the projection $\Delta$ of $\hat\Lambda$ is contained in a great circle $C$. Then the isometry $g \in \mathsf{O}(2,3)$ defined as the identity on $\S^1$ and the reflection along the great circle $C$ on $\S^2$ fixes the semi-positive polygon $\hat{\Lambda}$. As a consequence, $g$ preserves $\Sigma$ by the uniqueness result of \cite{LTW24}. 
However, since $g$ reverses the time-orientation on $\R^{2,3}$, post-composition by $g$ will exchange the labels $\alpha$, $\beta$ for the parametrization $\sigma$. Hence, $\div(\alpha)= \div(\beta)$.

(ii)$\rightarrow$(i). In this case, one can choose dual holomorphic trivializations $\eta$, $\eta^{-1}$ of $\mathcal{N}$, $\mathcal{N}^{-1}$ such that $\alpha =A\dz^2\otimes \eta$
 and $\beta=B\dz^2\otimes \eta^{-1}$, where $A=B$. Thus, in this framing,
the Higgs bundle \eqref{SO(2,3)TrivializedNormal} associated to $\sigma$ reduces to an $\SO_0(2,2)$-Higgs bundle. Consequently, the resulting maximal surface $\Sigma= \Sigma(A\dz^2,B\dz^2)$, realized as in the description of Lemma \ref{Lem:PolynomialPolygon}, is contained in a copy of $\mathbf{H}^{2,1}$. 
\end{proof}

\bibliographystyle{alpha}
\bibliography{bib}

@phdthesis{Bar10,
	author = {Baraglia, David},
	school = {University of Oxford},
	title = {{$G_2$} {Geometry} and {Integrable Systems}},
	year = {2010}}

@article{DL24,
    author = {Dai, Song and Li, Qiongling}, 
    title = {Bounded differentials on the unit disk and the associated geometry},
    journal = {Trans. Amer. Math. Soc.},
    year = {2024},
    pages = {5445–5481},
    volume = {377},
    number = {8}
}

@article{LM19,
    author = {Loftin, John and McIntosh, Ian},
    title = {The moduli space of equivariant minimal surfaces in $\mathbb{RH}^3$ and $\mathbb{RH}^4$ via {Higgs} bundles},
    journal = {Geom. Ded.},
    year = {2019},
    volume = {201}, 
    pages = {325-351}
}

@book {Hei05,
    AUTHOR = {Heinonen, Juha},
     TITLE = {Lectures on {L}ipschitz analysis},
    SERIES = {Report. University of Jyv\"askyl\"a{} Department of
              Mathematics and Statistics},
    VOLUME = {100},
 PUBLISHER = {University of Jyv\"askyl\"a, Jyv\"askyl\"a},
      YEAR = {2005},
     PAGES = {ii+77},
      ISBN = {951-39-2318-5},
   MRCLASS = {49Q15 (26B35 30C35 46E35 49J52 52A01 54E40)},
  MRNUMBER = {2177410},
MRREVIEWER = {Jan\ Mal\'y},
}

@article{CT24,
	author = {Collier, Brian and Toulisse, J{\'e}r{\'e}my},
	journal = {Trans. Amer. Math. Soc.},
	title = {Holomorphic curves in the 6-pseudosphere and cyclic surfaces},
	year = {2024},
        pages = {6465-6514},
        issue = {377}
}

@article{KMT00,
    author = {Kapovich, Michael and Millson, John and Treloar, Thomas},
    title = {The symplectic geometry of polygons in hyperbolic space},
    journal = {Asian J. Math.},
    year = {2000},
    pages = {123-164}
}

@article{KM95,
    author = {Kapovich, Michael and Millson, John},
    title = {On the moduli space of polygons in the {Euclidean} plane},
    journal = {J. Differential Geom.},
    year = {1995},
    pages = {133-164},
    volume = {42}
}

@article{CDR03,
    author = {Connelly,Robert and Demaine, Erik D. and Rote, G\"unter},
    title = {{Straightening Polygonal Arcs and Convexifying Polygonal Cycles}},
    journal = {Disc. \& Comp. Geom. (DCG)},
    year = {2003},
    volume = {30},
    pages = {205-239}
}

@article{Col19,
	author = {Collier, Brian},
	journal = {SIGMA. Symmetry, Integrability and Geometry: Methods and Applications},
	title = {Studying {Deformations} of {Fuchsian} {Representations} with {Higgs} {Bundles}},
	volume = {15},
	year = {2019},}

@article{FN21,
	author = {Fredrickson, Laura and Neitzke, Andrew},
	journal = {Math. Proc. Cambridge Philos. Soc.},
	number = {3},
	pages = {623--656},
	title = {Moduli of wild {Higgs} bundles on $\mathbb{CP}^1$ with $\mathbb{C}^*$ actions},
	volume = {171},
	year = {2021}}

@misc{MT25,
      title={On the scalar curvature of complete maximal spacelike submanifolds in pseudo-hypebolic spaces}, 
      author={Alex Moriani and Enrico Trebeschi},
      year={2025},
      eprint={2411.10352},
      archivePrefix={arXiv},
      primaryClass={math.DG},
      note={arXiv: 2411.10352}
}

@article{BB04,
    author = {Biquard, O. and Boalch, P.},
    title = {Wild non-abelian Hodge theory on curves},
    journal ={Compos. Math.},
    year = {2004}, 
    volume = {140},
    isse = {1},
    pages = {179-204}
}

@phdthesis{Col16,
    author = {Collier, Brian},
    title = {Finite order automorphisms of {Higgs} bundles: theory and applications},
    school = {University of Illinois at Urbain-Champaign},
    year = {2016}
}

@article{LM25,
    author = {Li, Qiongling and Mochizuki, Takuro},
     year = {2025}, 
    title = {Complete solutions of {Toda} equations and cyclic {Higgs} bundles over non-compact surfaces}, 
    journal = {Int. Math. Res. Not.}
}

@article{LM20b,
    AUTHOR = {Li, Qiongling and Mochizuki, Takuro},
     TITLE = {Isolated Singularities of Toda Equations and cyclic Higgs Bundles},
    VOLUME = {},
    journal = {arXiv:2010.06129},
    URL = {https://arxiv.org/abs/2010.06129},
      YEAR = {2020},
}

@article{Cheng,
  title = {Space-like Surfaces in an Anti-de {{Sitter}} Space},
  author = {Cheng, Qing Ming},
  year = 1994,
  journal = {Colloquium Mathematicum},
  volume = {66},
  number = {2},
  pages = {201--208},
  issn = {0010-1354,1730-6302},
  doi = {10.4064/cm-66-2-201-208},
  mrnumber = {1268064}
}

@book{PetersenRiemannianGeometry,
  title = {Riemannian {{Geometry}}},
  author = {Petersen, Peter},
  year = 2016,
  series = {Graduate {{Texts}} in {{Mathematics}}},
  volume = {171},
  publisher = {Springer International Publishing},
  address = {Cham},
  doi = {10.1007/978-3-319-26654-1},
  urldate = {2022-10-25},
  isbn = {978-3-319-26652-7 978-3-319-26654-1},
  langid = {english}
}

@article{KM99, 
title={On the Moduli Space of a Spherical Polygonal Linkage}, 
volume={42},
number={3},
journal={Canadian Mathematical Bulletin}, 
author={Kapovich, Michael and Millson, John J.},
year={1999}, 
pages={307–320}
}

@phdthesis{Gal,
    author = {Galitzer, A.J.},
    title = {On the moduli space of closed polygonal linkages on the {2-sphere}},
    school = {University of Maryland, College Park},
    year = {1997},
}

@misc{Ma26,
      title={Holonomy Asymptotics along Quartic Differential Rays}, 
      author={Weihan Ma},
      year={2026},
      eprint={2608.04729},
      archivePrefix={arXiv},
      primaryClass={math.DG},
      note={arXiv: 2608.04729}, 
}

@misc{Lem26,
      title={Volume of maximal representations into $\mathrm{SO}_0(2,3)$}, 
      author={Timothé Lemistre},
      year={2026},
      eprint={2605.04272},
      archivePrefix={arXiv},
      primaryClass={math.DG},
      note={arXiv:2605.04272}, 
}

@article{Mor25,
  title = {Polygonal Surfaces in Pseudo-Hyperbolic Spaces},
  author = {Moriani, Alex},
  year = 2025,
  journal = {Adv. Math.},
  volume = {480},
  pages = { 110484, 79},
  issn = {0001-8708,1090-2082},
  doi = {10.1016/j.aim.2025.110484},
}

@article{Li19,
	author = {Li, Qiongling},
	journal = {SIGMA. Symmetry, Integrability and Geometry: Methods and Applications},
	month = may,
	pages = {035},
	title = {An {Introduction} to {Higgs} {Bundles} via {Harmonic} {Maps}},
	volume = {15},
	year = {2019}}

@article{LTW24, 
author = {Labourie, Francois and Toulisse, Jeremy and Wolf, Michael}, 
title = {Plateau problems for maximal spacelike surfaces in pseudo-hyperbolic space},
Year = {2024}, 
Volume = {57},
Pages = {473-552},
journal = {Ann. Sci. \'Ec. Norm. Sup\'er.}}

@article{TW24, 
title = {Planar minimal surfaces with polynomial growth in the {$\mathrm{Sp}_4(\R)$}-symmetric space},
author = {Tamburelli, Andrea and Wolf, Michael}, 
journal = {Amer. J. Math.},
publisher = {Johns Hopkins University Press},
volume ={146}, 
number = {4},
year = {2024}, 
pages = {871-944}}

@misc{Eva24,
      title={Polynomial Almost-Complex Curves in $\hat{S}^{2,4}$}, 
      author={Parker Evans},
      year={2024},
      eprint={2208.14409},
      archivePrefix={arXiv},
      primaryClass={math.DG},
      note={arXiv:2208.14409}, 
}

@article{DW15,
 author = {Dumas, David and Wolf, Michael}, 
year = {2015},
title = {Polynomial cubic differentials and convex polygons in the projective plane},
journal = {Geom. Funct. Anal.},
pages ={1734-1798},
volume = {25}}

@article{LT23, 
    title = {Quasicircles and quasiperiodic surfaces in pseudo-hyperbolic spaces}, 
    author = {Labourie, Fran\c{c}ois and Toulisse, J\'er\'emy},
    year = {2023},
    pages = {81 - 168}, 
    journal = {Invent. Math.}, 
    volume = {233}, 
    issue = {1},
}

@article{HTTW95,
    title = {Harmonic maps from the complex plane into surfaces with nonpositive curvature}, 
    author = {Han, Zheng-Chao and Tam, Luen-Fai and Treibergs, Andrejs and  Wan, Tom},
    page = {85-114},
    journal = {Comm. Anal. Geom.}, 
    volume = {3}, 
    number = {1},
    year = {1995}, }

@article{ GS25, 
 author = {Gupta, Subhojoy and Sau, Gobinda},
 year = {2025}, 
 title = {On Harmonic Maps from the Complex Plane to Hyperbolic 3-Space},
 journal = {J. Geom. Anal.},
 volume = {35}
}

@article{CTT19,
	author = {Brian Collier and Nicolas Tholozan and J{\'e}r{\'e}my Toulisse},
	journal = {Duke Math. J.},
	number = {15},
	pages = {2873 -- 2949},
	title = {{The geometry of maximal representations of surface groups into $\mathrm{SO}_{0}(2,n)$}},
	volume = {168},
	year = {2019}}

@article{Got01,
title = {Components of spaces of representations and stable triples},
journal = {Topology},
volume = {40},
number = {4},
pages = {823-850},
year = {2001},
issn = {0040-9383},
author = {Peter B. Gothen},
}

@misc{LLX26,
      title={Harmonic Maps from Punctured Riemann Surfaces to the Hyperbolic Plane with Prescribed {Scherk} Asymptotics}, 
      author={Qiongling Li and Jinsong Liu and Yihui Xu},
      year={2026},
      eprint={2609.18297},
      archivePrefix={arXiv},
      primaryClass={math.DG},
      note = {arXiv:2609.18297}
}

@misc{Hua16,
      title={Harmonic maps of punctured surfaces to the hyperbolic plane}, 
      author={Andy C. Huang},
      year={2016},
      eprint={1605.07715},
      archivePrefix={arXiv},
      primaryClass={math.DG},
      note = {arXiv: 1605.07715}
}

@article{MV25,
  author = {{Mazzoli}, Filippo and {Viaggi}, Gabriele},
  title = {{\({\mathrm{SO}}_0(2,n+1)\)-maximal representations and hyperbolic surfaces}},
  journal = {Mem. Amer. Math. Soc.},
  volume = {313},
  year = {2025},
  number = {1587},
  pages = {v+119},
  issn = {0065-9266,1947-6221},
}

@misc{Via25,
      title={Geometric components of representation spaces via robust families of submanifolds}, 
      author={Gabriele Viaggi},
      year={2025},
      eprint={2508.14842},
      archivePrefix={arXiv},
      primaryClass={math.GT},
      note = {arXiv: 2508.14842}
}

@article{Tam19, 
    title = {Polynomial quadratic differentials on the complex plane and light-like polygons in the Einstein Universe},
    journal = {Adv. Math.}, 
    author = {Tamburelli, Andrea}, 
    Volume = {352}, 
    year = {2019}, 
    page = {483-515}, 
}

@misc{MNS26,
    author = {Maloni, Sara and Nguyen, Lam and Sagman, Nathaniel},
    title = {On harmonic maps with polynomial {Hopf} differential},
    year = {2026},
    note = {In progress}
}

@article{SST23,
    author = {Seppi, Andrea and Smith, Graham and Toulisse, J\'er\'emy},
    title = {On complete maximal submanifolds in pseudohyperbolic space},
    year = {2023},
    journal = {J. Eur. Math. Soc. (to appear)}
}

\end{document}